\newcommand{\Exp}{\mathbf{E}}
\newcommand{\hffNn}{\hat{\ff}^{N,n}}
\newcommand{\fFN}{{\fF}^{N}}
\renewcommand{\R}{\mathbb{R}}
\renewcommand{\N}{\mathbb{N}}
\newcommand{\T}{\mathbb{T}}
\renewcommand{\Z}{\mathbb{Z}}
\renewcommand{\E}{\mathbb{E}}
\renewcommand{\P}{\mathbb{P}}
\newcommand{\J}{\mathbb{J}}
\newcommand{\cA}{\mathcal{A}}
\newcommand{\cD}{\mathcal{D}}
\newcommand{\cF}{\mathcal{F}}
\newcommand{\cJ}{\mathcal{J}}
\newcommand{\cK}{\mathcal{K}}
\newcommand{\cL}{\mathcal{L}}
\newcommand{\cM}{\mathcal{M}}
\newcommand{\cN}{\mathcal{N}}
\newcommand{\cS}{\mathcal{S}}
\newcommand{\cV}{\mathcal{V}}
\newcommand{\dd}{\mathrm{d}}      %%%%%% d
\newcommand{\SH}{\mathscr{H}}
\newcommand{\fFNn}{\mathfrak{F}^{N,n}}
\newcommand{\fb}{\mathfrak{b}}
\newcommand{\fc}{\mathcal{c}}
\newcommand{\ff}{\mathfrak{f}}
\newcommand{\ft}{\mathfrak{t}}
\newcommand{\fp}{\mathfrak{p}}
\newcommand{\fg}{\mathfrak{g}}
\newcommand{\fs}{\mathfrak{s}}
\newcommand{\fh}{\mathfrak{h}}
\newcommand{\nf}{\mathfrak{n}^N}
\newcommand{\ffNn}{\ff^{N,n}}
\newcommand{\tffNn}{\tilde{\ff}^{N,n}}
\newcommand{\fbNn}{\fb^{N,n}}
\newcommand{\tfbNn}{\tilde{\fb}^{N,n}}
\newcommand{\fhNn}{\fh^{N,n}}
\newcommand{\tfhNn}{\tilde{\fh}^{N,n}}
\newcommand{\sym}{\mathrm{sym}}
\newcommand{\bv}{\mathbf{v}}
\def\one{\mathrm{(I)}}
\def\two{\mathrm{(II)}}
\newcommand{\1}{\mathds{1}}
\def\restr{\mathord{\upharpoonright}}
\def\Wick#1{\mathopen{{:}}#1\mathclose{{:}}}
\newcommand{\vphi}{\varphi}
\def\sym{{\mathrm{sym}}}
\newcommand{\Di}{\mathrm{Diag}}
\newcommand{\oD}{\mathrm{off}}
\newcommand{\oDi}{\mathrm{off_1}}
\newcommand{\ooDi}{\mathrm{off_2}}
\newcommand{\bulk}{\mathrm{bulk}}
\newcommand{\Dbulk}{\mathrm{D}_\bulk}
\newcommand{\nueff}{\nu_{\rm eff}}
\colorlet{darkblue}{blue!90!black}
\colorlet{darkred}{red!90!black}
\colorlet{darkgreen}{green!50!black}
\colorlet{darkyellow}{yellow!90!black}
\def\one{\mathrm{(I)}}
\def\two{\mathrm{(II)}}
\newcommand{\half}{\frac{1}{2}}
\def\indN#1{{	\J^N_{#1}	}}
\newcommand{\fock}{\Gamma L^2}	% Fock space
\def\sint{I}					% Stochastic integral
\def\wc{\SH}					% Wiener Chaos
\def\swn{{	\eta	}}
\newcommand{\gen}{\cL^N}
\newcommand{\gensy}{\cL_0}
\newcommand{\gena}{\cA^{N}}
\newcommand{\genap}{\cA^{N}_+}
\newcommand{\genam}{\cA^{N}_-}
\newcommand{\nonlin}{\cK^{N}}
\newcommand{\Nonlin}{\cN^{N}}
\newcommand{\Op}{\mathcal{H}^N}
\tikzstyle{tinydots}=[dash pattern=on \pgflinewidth off 2*\pgflinewidth]
\def\drawx{\draw[-,solid] (-3pt,-3pt) -- (3pt,3pt);\draw[-,solid] (-3pt,3pt) -- (3pt,-3pt);}
\tikzset{
	n/.style={circle,fill=black!15,inner sep=0pt, minimum size=0.7mm},
	l/.style={inner sep=2pt,label=center:{...}},
	r/.style={circle,fill=red,inner sep=0pt, minimum size=1mm},
	simple/.style={circle,inner sep=0pt, minimum size=0.5mm},
	d/.style={circle,fill=black,inner sep=0pt, minimum size=0.7mm},
	c/.style={circle,draw=black,fill=white,inner sep=0pt, minimum size=1.2mm},
	root/.style={circle,fill=testcolor,inner sep=0pt, minimum size=2mm},
	dot/.style={circle,fill=black,inner sep=0pt, minimum size=1mm},
	var/.style={circle,fill=black!10,draw=black,inner sep=0pt, minimum size=2mm},
	dotred/.style={circle,fill=black!50,inner sep=0pt, minimum size=2mm},
	%pindot/.style={circle,fill=black!10,draw=black,inner sep=0pt, minimum size=2mm},
	%% above is pinned point
	generic/.style={semithick,shorten >=1pt,shorten <=1pt},
	dist/.style={ultra thick,draw=testcolor,shorten >=1pt,shorten <=1pt},
	testfcn/.style={ultra thick,testcolor,shorten >=1pt,shorten <=1pt,<-},
	testfcnx/.style={ultra thick,testcolor,shorten >=1pt,shorten <=1pt,<-,
		postaction={decorate,decoration={markings,mark=at position 0.6 with {\drawx}}}},
	kprime/.style={semithick,shorten >=1pt,shorten <=1pt,densely dashed,->},
	kprimex/.style={semithick,shorten >=1pt,shorten <=1pt,densely dashed,->,
		postaction={decorate,decoration={markings,mark=at position 0.4 with {\drawx}}}},
	kernel/.style={semithick,shorten >=1pt,shorten <=1pt,->},
	%% above is the most basic kernel
	multx/.style={shorten >=1pt,shorten <=1pt,
		postaction={decorate,decoration={markings,mark=at position 0.5 with {\drawx}}}},
	kernelx/.style={semithick,shorten >=1pt,shorten <=1pt,->,
		postaction={decorate,decoration={markings,mark=at position 0.4 with {\drawx}}}},
	kernel1/.style={->,semithick,shorten >=1pt,shorten <=1pt,postaction={decorate,decoration={markings,mark=at position 0.45 with {\draw[-] (0,-0.1) -- (0,0.1);}}}},
	kernel2/.style={->,semithick,shorten >=1pt,shorten <=1pt,postaction={decorate,decoration={markings,mark=at position 0.45 with {\draw[-] (0.05,-0.1) -- (0.05,0.1);\draw[-] (-0.05,-0.1) -- (-0.05,0.1);}}}},
	kernelBig/.style={semithick,shorten >=1pt,shorten <=1pt,decorate, decoration={zigzag,amplitude=1.5pt,segment length = 3pt,pre length=2pt,post length=2pt}},
	rho/.style={dotted,semithick,shorten >=1pt,shorten <=1pt},
	%% above is a dotted line
	renorm/.style={shape=circle,fill=white,inner sep=1pt},
	labl/.style={shape=rectangle,fill=white,inner sep=1pt},
	%% Define cumulant nodes:
	cumu2n/.style={inner sep=3pt},
	cumu2/.style={draw=red!50,fill=red!20},
	cumu3/.style={regular polygon, regular polygon sides=3,draw=red!50,rounded corners=3pt,fill=red!20,minimum size=5mm},
	cumu4/.style={regular polygon, regular polygon sides=4,draw=red!50,rounded corners=3pt,fill=red!20,minimum size=7mm},
	cumu5/.style={regular polygon, regular polygon sides=5,draw=red!50,rounded corners=3pt,fill=red!20,minimum size=5mm},
	xi/.style={circle,fill=symbols!10,draw=symbols,inner sep=0pt,minimum size=1.2mm},
	xix/.style={crosscircle,fill=symbols!10,draw=symbols,inner sep=0pt,minimum size=1.2mm},
	xib/.style={circle,fill=symbols!10,draw=symbols,inner sep=0pt,minimum size=1.6mm},
	xibx/.style={crosscircle,fill=symbols!10,draw=symbols,inner sep=0pt,minimum size=1.6mm},
	not/.style={circle,fill=symbols,draw=symbols,inner sep=0pt,minimum size=0.5mm},
	>=stealth,
}
\tikzset{
	% style to apply some styles to each segment of a path
	on each segment/.style={
		decorate,
		decoration={
			show path construction,
			moveto code={},
			lineto code={
				\path [#1]
				(\tikzinputsegmentfirst) -- (\tikzinputsegmentlast);
			},
			curveto code={
				\path [#1] (\tikzinputsegmentfirst)
				.. controls
				(\tikzinputsegmentsupporta) and (\tikzinputsegmentsupportb)
				..
				(\tikzinputsegmentlast);
			},
			closepath code={
				\path [#1]
				(\tikzinputsegmentfirst) -- (\tikzinputsegmentlast);
			},
		},
	},
	% style to add an arrow in the middle of a path
	mid arrow/.style={postaction={decorate,decoration={
				markings,
				mark=at position .5 with {\arrow[#1]{stealth}}
	}}},
}
\def\DeclareSymbol#1#2#3{\expandafter\gdef\csname MH@symb@#1\endcsname{\tikzsetnextfilename{symbol#1}\tikz[baseline=#2,scale=0.15,draw=symbols]{#3}}\expandafter\gdef\csname MH@symb@#1s\endcsname{\scalebox{0.7}{\tikzsetnextfilename{symbol#1s}\tikz[baseline=#2,scale=0.15,draw=symbols]{#3}}}}
\def\<#1>{\csname MH@symb@#1\endcsname}
\definecolor{Red}{rgb}{1,0,0}
\definecolor{Blue}{rgb}{0,0,1}
\definecolor{Olive}{rgb}{0.41,0.55,0.13}
\definecolor{Yarok}{rgb}{0,0.5,0}
\definecolor{Green}{rgb}{0,1,0}
\definecolor{MGreen}{rgb}{0,0.8,0}
\definecolor{DGreen}{rgb}{0,0.65,0}
\definecolor{Yellow}{rgb}{1,1,0}
\definecolor{Cyan}{rgb}{0,1,1}
\definecolor{Magenta}{rgb}{1,0,1}
\definecolor{Orange}{rgb}{1,.5,0}
\definecolor{Violet}{rgb}{.5,0,.5}
\definecolor{Purple}{rgb}{.75,0,.25}
\definecolor{Brown}{rgb}{.75,.5,.25}
\definecolor{Grey}{rgb}{.7,.7,.7}
\definecolor{Black}{rgb}{0,0,0}
\definecolor{dr}{rgb}{0.8,0,0}
\definecolor{db}{rgb}{0,0,0.8}
\newcounter{list_size}
\newcommand{\connect}[4][dr]{
	\edef\myleftx{10000}
	\edef\myrightx{-10000}
	\edef\mycentery{0}
	\setcounter{list_size}{0}
	\foreach \j in {#2}{\stepcounter{list_size}};
	\edef\n{\arabic{list_size}}
	\ifthenelse{\n=1}
	{
		\draw (#3#2) -- (#4#2);
	}
	{
		\foreach \j in {#2} {
			\path (#3\j); \pgfgetlastxy{\XCoord}{\YCoord};
			\pgfmathsetmacro{\lx}{min(\myleftx,\XCoord)};
			\pgfmathsetmacro{\rx}{max(\myrightx,\XCoord)};
			\pgfmathsetmacro{\cy}{\mycentery+\YCoord/(2*\n)};
			\path (#4\j); \pgfgetlastxy{\XCoord}{\YCoord};
			\pgfmathsetmacro{\cy}{\cy+\YCoord/(2*\n)};
			\global\let\myleftx=\lx
			\global\let\myrightx=\rx
			\global\let\mycentery=\cy
		}
		\foreach \j in {#2} {
			\draw (#3\j) -- (#4\j);
		}
		\draw[thick,#1] (\myleftx pt,\mycentery pt) -- (\myrightx pt,\mycentery pt);
		\foreach \j in {#2} {
			\path (#3\j); \pgfgetlastxy{\XCoord}{\YCoord};
			\node at (\XCoord,\mycentery pt) [simple,fill=#1] {};
		}
	}
}
\newcommand{\const}{\lambda_N}
\def\energy{\CE^N}
\newcommand{\Ll}{\mathrm{L}^N}
\title{Weak coupling limit of the Anisotropic KPZ equation}
\begin{document}

\maketitle

\vspace{-2cm}

\noindent{\large \bf Giuseppe Cannizzaro$^1$,  Dirk Erhard$^2$, Fabio Toninelli$^3$}
\newline

\noindent{\small $^1$University of Warwick, UK\\%
    $^2$Universidade Federal da Bahia, Brazil\\%
    $^3$Technical University of Vienna, Austria\\}

\noindent\email{giuseppe.cannizzaro@warwick.ac.uk, 
erharddirk@gmail.com, \\
fabio.toninelli@tuwien.ac.at}
\newline

\begin{abstract}
In the present work, we study the two-dimensional anisotropic KPZ
equation (AKPZ), which is formally given by
\begin{equation*}
\partial_t h=\tfrac12 \Delta h + \lambda ((\partial_1 h)^2)-(\partial_2 h)^2) +\xi\,,
\end{equation*}
where $\xi$ denotes a space-time white noise and $\lambda>0$ is the so-called coupling constant. 
The AKPZ equation is a {\it critical} SPDE, meaning that not only it is analytically ill-posed but also the
breakthrough path-wise techniques for singular SPDEs [M. Hairer, Ann. Math. 2014] and 
[M. Gubinelli, P. Imkeller and N. Perkowski, Forum of Math., Pi, 2015] are not applicable. 
As shown in [G. Cannizzaro, D. Erhard, F. Toninelli, arXiv, 2020], 
the equation regularised at scale $N$ has a diffusion coefficient 
that diverges logarithmically as the regularisation is removed in the limit $N\to\infty$. 
Here, we study the \emph{weak coupling limit} where $\lambda=\lambda_N=\hat\lambda/\sqrt{\log N}$: 
this is the correct scaling that guarantees that the nonlinearity has a still non-trivial but non-divergent effect. 
In fact, as $N\to\infty$ the sequence of equations converges to the linear stochastic heat equation 
\begin{equ}
\partial_t h =\tfrac{\nueff}{2} \Delta h + \sqrt{\nueff}\xi\,,
\end{equ}
where $\nueff >1$ is explicit and depends non-trivially on $\hat\lambda$. 
This is the first full renormalization-type result for a
critical, singular SPDE which cannot be linearised via Cole-Hopf or any other transformation. 
%and mapped to a polymer in random environment. \fabioText{do you like calling ours a ``renormalization result? I think it can be called so. Maybe we can use this terminology also in the introduction!}\fabioText{I removed the sentence about proof techniques, as it sounded like an anti-climax and was unclear anyway} %% The proof
%        relies on a refined analysis of the
        %% generator of the equation and builds upon the fact that it can
        %% be well approximated by a suitably in Fourier space modulated
        %% version of the generator of the linear equation.
\end{abstract}

\bigskip\noindent
{\it Key words and phrases.}
Stochastic Partial Differential equations, critical dimension, Anisotropic KPZ equation, weak coupling scaling, 
diffusion coefficient, stochastic growth

\setcounter{tocdepth}{2}       % Put subsubsections in the table of contents
\tableofcontents

\section{Introduction}

%\newpage

The KPZ equation is a (singular) stochastic partial differential equation (SPDE) of evolution type
which models the time-growth of a random surface $h$. Its (formal) expression in $d$-dimensions reads
\begin{equation}\label{eq:KPZ}
	\partial_t h= \tfrac12 \Delta h + \lambda \langle \nabla h, Q\nabla h\rangle +  \xi\,,%\quad t\geq 0,\, x\in\R^d\,.
\end{equation}
where $h=h(t,x)$ for $t\geq 0,\, x\in\T^d$ (the $d$-dimensional unit torus), $\xi$ is a space-time white noise on $\R\times\T^d$, 
i.e. a centred Gaussian process whose covariance is formally given by
$\mathbb E({\xi(x,t)\xi(y,s)})=\delta (x-y)\delta(t-s)$, $Q$ is a $d\times d$ symmetric matrix encoding how 
the growth mechanism depends on the slope  
and $\lambda\geq 0$ is the so-called {\it coupling constant}, tuning the strength of such dependence. 

Introduced in the seminal work~\cite{Kardar},~\eqref{eq:KPZ} owes its importance to the fact that 
a wide variety of dynamical surface growth phenomena such as growth of combustion fronts or bacterial colonies, 
crystal growth on thin films, growth of vicinal surfaces and many others, are conjectured to share the same 
large-scale fluctuations as those of its solution. 

From a mathematical viewpoint, the first problem one faces when dealing with~\eqref{eq:KPZ} is that 
it is analytically ill-posed in any dimension $d\geq 1$ as the space-time white noise $\xi$ is way too rough 
for the nonlinearity to be canonically defined. 
Nevertheless, in the last years significant progress has been made in the study of its large-scale properties and behaviour, 
at least in the {\it sub-} and {\it super-critical} regimes, 
corresponding to dimensions $d=1$ and $d\geq 3$ respectively. 
The case $d=1$ is the only one for which a full (local) solution theory is expected and 
can be obtained via either the breakthrough pathwise theories of  
regularity structures~\cite{Hai} and paracontrolled calculus~\cite{KPZreloaded} (inspired by  
rough paths~\cite{KPZ}), or 
via the energy solutions approach based on martingale arguments~\cite{GubinelliJara2012,GPuni}. 
Concerning the large-scale behaviour, it was shown that the solution $h$ is {\it superdiffusive}, 
meaning that its fluctuations evolve non-trivially on time scales shorter than those of the stochastic 
heat equation (SHE), given by~\eqref{eq:KPZ} with $\lambda=0$. This can be quantified, e.g. 
by means of  the bulk diffusion coefficient $\Dbulk$, 
which measures how the correlations of a process spread in space as a function 
of time and formally is such that the correlation length behaves like $\ell(t)\sim \sqrt{t \times \Dbulk(t)}$\footnote{see~\eqref{def:Dbulk} below, for the definition we will be using and~\cite[Appendix A]{CET} for 
a heuristic connecting the latter and $\ell(t)$}.  For the 1-dimensional KPZ equation, $\Dbulk(t)$ grows like $t^{1/3}$ for $t$ large (see~\cite{BQS}), 
as opposed to SHE whose bulk diffusion coefficient is constant. On a quantitative 
level, these fluctuations have been fully characterised and the scaling limit of~\eqref{eq:KPZ} 
determined~\cite{QSkpz,Vir}. 

In the supercritical regime instead, a recent series of works~\cite{Magnen, Gu2018b, CCM1, LZ, CNN} 
has confirmed, at least under the assumption that $Q$ is the identity matrix (but the result 
should hold for any $Q$), the prediction
first implicitly made in~\cite{Kardar}, that for $\lambda$ small enough the solution $h$ of~\eqref{eq:KPZ} 
is {\it diffusive} and its scaling limit is the solution of a stochastic heat equation with {\it renormalised} coefficients.  

Yet, the {\it critical} case $d=2$ %, by far the most challenging, 
remains poorly understood. 
Formally, in $2$ dimensions~\eqref{eq:KPZ} is scale invariant under diffusive scaling and therefore it is expected 
that finer features of the equation, and in particular the nature of the slope dependence determined by the matrix $Q$, 
might qualitatively influence its properties. %large-scale properties.
Via non-rigorous one-loop Renormalisation Group computations, Wolf conjectured in~\cite{W91} 
that its large scale behaviour will depend on the sign of $\det Q$ - in the Isotropic case, corresponding to 
$\det Q>0$, $\Dbulk(t)\sim t^\beta$ for some   universal $\beta>0$ (known only numerically), while in 
the Anisotropic case, corresponding to $\det Q\leq 0$, $\Dbulk(t)\sim t^\beta$ for $\beta=0$. 
%
%the existence of two distinct large-scale behaviours for~\eqref{eq:KPZ} depending on the sign of $\det Q$ - 
%the Isotropic for $\det Q>0$, and the Anisotropic for $\det Q\leq 0$. 

The present paper focuses on the latter, and more specifically on the case of $Q=Q_{\rm AKPZ}={\rm Diag}(1,-1)$. 
We will refer to~\eqref{eq:KPZ} with $Q_{\rm AKPZ}$ as the Anisotropic KPZ (AKPZ) equation, which reads
\begin{equ}[e:AKPZ1]
\partial_t h =\tfrac{1}{2}\Delta h +\lambda \big((\partial_1 h)^2-(\partial_2 h)^2\big) +\xi\,.
\end{equ}
Let us stress that~\eqref{e:AKPZ1} is {\it critical} as $d=2$ is the
dimension at which Hairer's theory of Regularity Structures~\cite{Hai}
and the other pathwise approaches break down for~\eqref{eq:KPZ} and a
local solution theory is not even expected to hold.  One is therefore
naturally led to first regularise the equation and consequently (try
to) determine the large-scale properties of its solution as the
regularisation is removed.  Contrary to the folklore
belief~\cite{BCT}, in~\cite{CET} we showed that~\eqref{e:AKPZ1} is not
diffusive but {\it logarithmically superdiffusive}.  Translating the
result therein to the torus $\mathbb T^2$, we proved that there exists
$\delta\in(0,1/2]$, bounded away from zero as $\lambda\to 0$, such
  that the bulk diffusion coefficient $\Dbulk^N$ of the solution $h^N$
  of~\eqref{e:AKPZ1} regularised at level $N\in\mathbb N$ (see~\eqref{e:NonLin}
  for the precise form of the regularisation) satisfies 
\begin{equ}[e:SuperDiff]
(\log N)^\delta \,\lesssim\, \Dbulk^N(t)\,\lesssim\, (\log N)^{1-\delta}\,,\qquad \text{for $N$ large and $t$ fixed}
\end{equ}
%for large $t$, 
and we conjectured the exact value of $\delta$ to be $1/2$. See~\cite[Appendix B]{CET} for a heuristic and~\cite{CHT} 
for another system in the same universality class for which such conjecture was recently established up to lower order corrections. We emphasise that the limit $N\to\infty$ corresponds to removing the regularisation.

As the linear part of~\eqref{e:AKPZ1} is clearly diffusive, 
the origin of the observed logarithmically superdiffusive behaviour 
must lie in the slope dependence. In order to find a regime in which the nonlinearity 
has a non-trivial but not divergent effect, % the logarithmic divergence it causes and derive a scaling limit for~\eqref{e:AKPZ1}, 
it is therefore natural to tune 
the coupling constant $\lambda=\const$, which modulates the strength of the nonlinearity, 
together with the regularisation parameter $N$. 
In other words, the present paper is devoted to the study the AKPZ equation~\eqref{e:AKPZ1} 
under the {\it weak coupling scaling}, i.e. 
\begin{equ}\label{e:AKPZintro}
\partial_t h^N = \tfrac{1}{2} \Delta h^N
+
\const \Nonlin[h^N] + \xi\,,%\qquad h^N(0,\cdot)=\chi
\end{equ}
where $\Nonlin$ is a regularisation of the nonlinearity in~\eqref{e:AKPZ1} at level $N$ %has been smoothened with a Fourier cut-off 
%$\Pi_N$ that removes all the Fourier modes bigger than $N$,  
%\begin{equ}[e:NonLin]
%\Nonlin[h^N]\eqdef \Pi_N \Big((\Pi_N \partial_1 h^N)^2 - (\Pi_N \partial_2 h^N)^2\Big)
%\end{equ}
and the coupling constant $\const$ has been chosen, in light of~\eqref{e:SuperDiff} (together with the conjecture $\delta=1/2$), as 
\begin{equ}[e:const]
\const\eqdef \frac{\hat\lambda}{\sqrt{\log N}}
\end{equ}
for $\hat\lambda>0$. Our specific goal is twofold.

\begin{itemize}[ leftmargin=*]
\item In Theorem~\ref{thm:Dbulk}, we will prove that, 
as $N\to\infty$, the solution $h^N$ of~\eqref{e:AKPZintro} is {\it asymptotically diffusive} for {\it any} value 
of $\hat\lambda$. To do so, we will determine the large $N$ limit of the bulk diffusion coefficient 
and show that it converges to an explicit constant $\nueff=\nueff(\hat\lambda)$ (see~\eqref{e:nueff}). 
Let us point out that, {\it for every $\hat\lambda>0$}, the resulting $\nueff$ is strictly bigger than $1$, 
which in turn is the bulk diffusion coefficient 
of the SHE obtained by~\eqref{eq:KPZ} simply setting $\lambda=0$, 
and therefore the choice of~\eqref{e:const} 
is meaningful in that it does not wash away the nonlinearity $\Nonlin$ but only ``cures'' its divergence. 

\item In Theorem~\ref{thm:Conv}, we will derive the {\it full scaling limit} of~\eqref{e:AKPZintro} and 
show that, as $N\to\infty$, 
$h^N$ converges in law to the solution of the stochastic heat equation with {\it renormalised coefficients} 
given by 
\begin{equ}[e:SHE1]
\partial_t h=\tfrac{\nueff}{2}\Delta h + \sqrt{\nueff} \xi
\end{equ}
where $\nueff$ is the $\hat\lambda$-dependent constant determined above. This is saying that 
even though the nonlinearity is tuned down by a logarithmic factor, not only it does not vanish but 
actually it produces a new noise (and a new Laplacian) in the limit where the regularisation is removed.
\end{itemize}

The scaling regime in~\eqref{e:const} and the phenomenon observed above 
have already appeared for the Isotropic case in~\cite{CD,CSZ,Gu2020}. 
In all of these works, the matrix $Q$ is chosen to be the identity matrix
so that the nonlinearity in~\eqref{eq:KPZ} becomes $|\nabla h|^2$. 
We will refer to such equation as the Isotropic KPZ (IKPZ) equation. 
The work~\cite{CSZ} shows, under a different regularisation, that the weak coupling limit 
of IKPZ is given again by a SHE with renormalised coefficients 
but that this holds true {\it only} for $\hat\lambda$ smaller than a critical threshold $\hat\lambda_{\rm c}$ 
and the coefficients of the limiting 
SHE explode as $\hat\lambda\to\hat\lambda_{\rm c}$. 
The absence of such phase transition in the AKPZ case is already implied by~\cite{CES} 
(and it is further strengthened by the results in the present paper), 
thus providing a clear indication of the different nature of the Isotropic and Anisotropic settings 
and therefore supporting Wolf's prediction~\cite{W91}. 
\medskip

Let us emphasise that there is a major difference between our work and~\cite{CD,CSZ,Gu2020}
together with all those available in the supercritical regime. 
As mentioned above, the equation considered in therein is IKPZ, 
i.e.~\eqref{eq:KPZ} with $Q$ given by the identity matrix. Such an equation 
can be {\it linearised} via a nonlinear transformation, the so-called Cole-Hopf transform, 
which turns IKPZ into the linear stochastic heat equation with multiplicative noise. 
Once in possession of a linear SPDE, it is then possible to obtain an explicit 
representation of its solution thanks to the Feynman-Kac formula and therefore 
reduce the analysis of IKPZ to a problem of directed polymers in random environment. 
For the AKPZ equation there exists neither a transformation that linearises the equation 
nor (up to the authors' knowledge) an explicit representation for its solution, 
so that we need to resort to a completely alternative set of tools. 
What we can and will exploit is that, as shown in~\cite{CES},~\eqref{e:AKPZ} admits an invariant measure 
which is a Gaussian Free Field $\chi$. 
Our approach, partly inspired by~\cite{GPGen} (which though focuses on the {\it subcritical} KPZ equation), 
is based  on a thorough analysis of the action of the generator 
$\gen$ of the solution of~\eqref{e:AKPZintro} on the (infinite dimensional) 
space $L^2(\chi)$ of square integrable functions with respect to $\chi$. 
The idea is that the distribution of the solution of~\eqref{e:SHE1} is fully determined 
by a number of observables which is a (very small!) subset $S$ of $L^2(\chi)$. 
Loosely speaking, our goal is therefore to identify a ($N$-dependent) subset $S^N$
of $L^2(\chi)$ big enough to be able to characterise the fluctuations of $h^N$ and  
such that for each $b\in S$  
there exists $b^N\in S^N$ for which both $b$ is well-approximated by $b^N$ and 
the action of the generator of~\eqref{e:SHE1} on $b$ 
is well-approximated by $\gen b^N$. 
As we will see, while $S$ admits an easy description, the choice of $S^N$ is rather subtle 
and $S^N\cap S=\emptyset$! In other words, the structure of the $b^N$'s needs to be sufficiently rich 
to be able to capture the roughness of the nonlinearity which is encoded in $\gen$ (see Section~\ref{s:strategy} 
for a more detailed and precise explanation of the idea and its subtleties). 

Before delving into the proofs, in the next section we will rigorously introduce the 
quantities under study and state the main results.

\subsection{The equation  and the main result}
\label{Sec:1.1}

For $N\in\N$, let us begin by recalling the AKPZ equation regularised at level $N$ that is the object of the present work,
\begin{equ}\label{e:AKPZ}
\partial_t h^N = \frac{1}{2} \Delta h^N
+
\const \Nonlin[h^N] + \xi\,,\qquad h^N(0,\cdot)=\chi\,.
\end{equ}
Above $h^N=h^N(t,x)$, for $t\ge0,\, x\in \T^2$, the two-dimensional torus of size $2\pi$, 
$\chi$ is the initial condition, 
$\const$ is as in~\eqref{e:const} and the nonlinearity $\Nonlin$ is defined according to
\begin{equ}[e:NonLin]
\Nonlin[h^N]\eqdef \Pi_N \Big((\Pi_N \partial_1 h^N)^2 - (\Pi_N \partial_2 h^N)^2\Big)
\end{equ}
where $\Pi_N$ is the cut-off operator acting in Fourier space by removing the modes larger than $N$, 
i.e., for any function $w\colon \T^2\mapsto \R$,
\begin{equ}[eq:PiN]
\widehat{  \Pi_N w}(k)\eqdef \hat w(k)\mathds{1}_{|k|\le N}
\end{equ}
and $\hat w(k)$ is the $k$-th Fourier mode of $w$. 

Let us introduce the notion of solution for~\eqref{e:AKPZ} we will be working with throughout the 
rest the paper. 
%
%As mentioned above, in~\cite{CES} it was shown that the invariant measure of $h^N$ is the Gaussian Free Field 
%for all Fourier modes apart from the $0$-th. 
%Throughout the rest of the paper, we will be working at stationarity so that we now introduce the notion of 
%almost-stationary solution for~\eqref{e:AKPZ}. 

\begin{definition}\label{def:QSsol}
We say that $h^N$ is an almost-stationary solution to~\eqref{e:AKPZ} with coupling constant $\const$ if 
$h^N$ solves~\eqref{e:AKPZ} with initial condition $h(0,\cdot)\eqdef\chi$ and 
$\chi$ is such that $\eta\eqdef(-\Delta)^{1/2}\chi$ 
is distributed according to a spatial white noise on $\T^2$ (see~\eqref{e:fLapla} below for the definition of $(-\Delta)^{1/2}$), 
i.e. it is a mean zero Gaussian distribution on $\T^2$ such that 
\begin{equ}[e:CovS]
\E[\eta(\phi)\eta(\psi)]=\langle\phi,\psi\rangle_{L^2(\T^2)}\,,\qquad \text{for all $\phi,\,\psi\in L^2_0(\T^2)$}
\end{equ}
where $L^2_0(\T^2)$ is the space of zero-average square integrable functions on $\T^2$. 
\end{definition}
\begin{remark}
The reason why we call such solution ``almost-stationary'' (following the nomenclature of \cite{GonJara}) is that, 
as we will see (cf. Lemma \ref{lem:gen}), the law of $h^N$ with such initial condition is indeed stationary, 
except for the zero-mode $\frac1{2\pi}\int_{\mathbb T^2}h^N(t,x)\dd x$, 
whose law is time-dependent. 
Note also that the condition that $(-\Delta)^{1/2}\chi$ is a space white-noise does not impose any restriction 
on the zero-mode $\hat \chi(0)\eqdef\frac1{2\pi}\int_{\mathbb T^2}\chi(x)\dd x$ of the initial condition, 
that can be either deterministic or random. 
The fact that the almost-stationary solution exists and is unique (for fixed $N$) follows from soft arguments, 
see \cite{CES}.
\end{remark}

At first, we will show that the almost-stationary solution $h^N$ of~\eqref{e:AKPZ} 
is asymptotically diffusive for large $N$. 
To precisely formulate the corresponding statement, we need to introduce the {\it bulk diffusion coefficient} of $h^N$. 
Similar to \cite[eq. (1.6)]{CET}, let $\Dbulk$ be defined as
\begin{equ}[def:Dbulk]
\Dbulk^N(t)\eqdef 1 + 2\frac{\const^2}{t}\int_0^t\int_0^s \int_{\T^2}\Exp\Big[\Nonlin[h^N](r,x)\Nonlin[h^N](0,0)\Big]\dd x\dd r\dd s
\end{equ} 
where $\Nonlin$ is given by~\eqref{e:NonLin} and $\mathbf E$ is the expectation with respect 
to the law $\mathbf P$ of the process $h^N$. 
For future reference, we will instead denote by $\mathbb E$ the expectation 
taken with respect to the law $\mathbb P$ of its stationary measure.

The expression~\eqref{def:Dbulk} for the bulk diffusion coefficient is the analog of that
adopted in \cite{BQS} for the 1-dimensional KPZ equation and in \cite{spohn2012large} for interacting particle systems 
(see \cite[Appendix A]{CET} for a discussion). 

As mentioned above, recall that for the usual stochastic heat equation, corresponding to
$\lambda=0$ in~\eqref{eq:KPZ}, the bulk diffusion coefficient is identically equal to $1$.  
In the next theorem, we prove instead that 
$\Dbulk^N$ in~\eqref{def:Dbulk} converges to a time-independent
constant, which is however {\it strictly bigger than $1$} for all $\hat\lambda>0$.

\begin{theorem}\label{thm:Dbulk}
  For $N\in\N$ and $\hat\lambda>0$, let $h^N$ be the unique almost-stationary solution (see Definition~\ref{def:QSsol})
to~\eqref{e:AKPZ} with coupling constant 
$\const$ given by~\eqref{e:const}. 
For any $\hat\lambda>0$ the bulk diffusion coefficient $\Dbulk^N$ of $h^N$, defined as in~\eqref{e:Dbulk}, is such that 
for all $t\geq 0$
\begin{equ}[e:Dbulk]
\lim_{N\to\infty} \Dbulk^N(t)=\nueff
\end{equ}
where $\nueff$ is the effective diffusion coefficient 
\begin{equ}[e:nueff]
\nueff\eqdef \sqrt{2\fc(\hat\lambda)+1}\qquad\text{and}\qquad \fc=\fc(\hat\lambda)\eqdef \frac{\hat\lambda^2}{\pi}
\end{equ}
for any $\hat\lambda>0$.
%\begin{equ}[e:c]
%\fc=\fc(\hat\lambda)\eqdef \frac{\hat\lambda^2}{\pi}\,.
%\end{equ}
\end{theorem}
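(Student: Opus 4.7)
The first step is to rewrite $\Dbulk^N$ in a form amenable to spectral analysis. Performing the $s$-integral in~\eqref{def:Dbulk} and using the stationarity of $\eta\eqdef(-\Delta)^{1/2}h^N$ under $\mathbb P$ (so that $h^N(t,\cdot)$ is a GFF $\chi$ at every time $t$), one obtains the Green--Kubo identity
\begin{equation*}
\Dbulk^N(t) = 1 + \frac{2\const^2}{t}\int_0^t (t-r)\,\langle \Nonlin,\, e^{r\gen}\Nonlin\rangle_{L^2(\chi)}\, dr,
\end{equation*}
where $\gen$ is the generator of $h^N$ acting on $L^2(\chi)$ and $\Nonlin$, being quadratic in $\nabla h^N$, lies in the second Wiener chaos $\cH_2$ over $\chi$. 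Passing to the Laplace transform in $t$ (or invoking a Tauberian argument), the task reduces to showing that the resolvent
\begin{equation*}
R_\mu^N\eqdef \langle \Nonlin,\,(\mu-\gen)^{-1}\Nonlin\rangle_{L^2(\chi)}
\end{equation*}
satisfies $2\const^2 R_\mu^N \to \nueff-1$ in the iterated limit $N\to\infty$, then $\mu\downarrow 0$.

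Next I would decompose $\gen=\gens+\const\gena$, where $\gens=\tfrac12\Delta$ is the symmetric part (Ornstein--Uhlenbeck-type, acting diagonally on each chaos level) and $\gena=\genap+\genam$ is antisymmetric with $\genap:\cH_n\to\cH_{n+1}$ and $\genam=-(\genap)^*$. In these coordinates the resolvent equation $(\mu-\gen)u=\Nonlin$ becomes a tridiagonal recursion in the chaos index: writing $u_n\in\cH_n$ for the $n$-th component of $u$, the equation reads $(\mu-\gens)u_n-\const\genap u_{n-1}-\const\genam u_{n+1}=\Nonlin\,\mathds 1_{n=2}$. Adapting the generator-based scheme of~\cite{GPGen} to our critical setting, I would iteratively eliminate $u_3,u_4,\ldots$ by expressing $u_{n+1}$ as an effective operator applied to $u_n$; this produces a sequence of Schur complements $G_n^{-1}=\mu-\gens-\const^2\genam G_{n+1}\genap$ which, after diagonalisation in Fourier, can be computed explicitly on plane waves.

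The crucial calculation at each rung is that the trace of $\const^2\genam G_{n+1}\genap$ against Fourier modes $|k|\le N$ produces exactly the two-dimensional log divergence $\sum_{|k|\le N}|k|^{-2}\sim 2\pi\log N$, which is precisely absorbed by $\const^2=\hat\lambda^2/\log N$ to leave a finite contribution proportional to $\fc(\hat\lambda)=\hat\lambda^2/\pi$ per rung. A self-consistent analysis of the stabilised recursion, in which the effective resolvent $G_n$ converges as $n\to\infty$ to a multiple of $(\mu+\tfrac12\nueff|k|^2)^{-1}$ on the $k$-mode, then forces the fixed-point identity $\nueff=\sqrt{1+2\fc}$, which is~\eqref{e:nueff}. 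Reinserting this back into the expression for $R_\mu^N$ and taking $\mu\downarrow 0$ yields the desired limit of the Green--Kubo integral.

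The main obstacle is controlling the truncation of this chaos tower. Because $\genap$ carries a spatial derivative, the a priori bounds on $\|u_n\|_{L^2(\chi)}$ deteriorate with $n$, and one needs sharp $n$-dependent estimates -- uniform in $N$ -- showing that the remainder at level $n$ is negligible relative to $\log N$. I would expect to obtain these via the graphical/Schur-complement machinery already developed in~\cite{CET} to prove~\eqref{e:SuperDiff}, together with a careful comparison of the true rung operator $\const^2\genam G_{n+1}\genap$ with its asymptotic replacement $\fc\,(-\tfrac12\Delta/\nueff)$. The delicate point is that the exponents $1/2$ appearing in both $\const^2=\hat\lambda^2/\log N$ and $\nueff=\sqrt{1+2\fc}$ match only because the logarithmic divergence appears \emph{exactly once} per rung and is resummed self-consistently; any off-by-one error in the bookkeeping across chaos levels would destroy the cancellation and prevent the limit from being an explicit constant.
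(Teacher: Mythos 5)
Your overall architecture coincides with the paper's: Laplace-transforming the Green--Kubo formula to reduce to the resolvent $\langle \nf_0,(\mu-\gen)^{-1}\nf_0\rangle$, writing the resolvent equation as a tridiagonal recursion in the chaos index, eliminating higher chaoses via Schur complements $\Op_j=-\genam(\mu-\gensy+\Op_{j-1})^{-1}\genap$, replacing each rung by a diagonal multiple of $-\gensy$ that contributes one factor of $\fc=\hat\lambda^2/\pi$ per logarithmic divergence, and closing the recursion self-consistently to get $\nueff=\sqrt{1+2\fc}$ from the limiting ODE $G'=1/(1+G)$. Two steps of your plan, however, do not survive as stated.

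First, the truncation. You propose to show that "the remainder at level $n$ is negligible relative to $\log N$" with sharp $n$-dependent estimates uniform in $N$. This is not how the paper (or the Landim--Yau scheme it refines) controls the chaos tower, and it is doubtful it can be done: the solution of the full resolvent equation has non-trivial components in every chaos, and there is no direct bound showing the tail is small. Instead one uses the monotonicity of the truncated resolvents, $\langle\nf_0,\tilde\fh^{N,2n+1}\rangle\le\langle\nf_0,(\mu-\gen)^{-1}\nf_0\rangle\le\langle\nf_0,\tilde\fh^{N,2n}\rangle$ (an operator-positivity fact, not an error estimate), and then shows that the upper and lower bounds converge to the common value $\tfrac12 G_\infty(\fc)$ because $G_{2n}(\fc)$ and $G_{2n+1}(\fc)$ have the same $n\to\infty$ limit. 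Without this even/odd sandwich your argument has a genuine hole.

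Second, the passage back from Laplace to real time. Your plan "$N\to\infty$, then $\mu\downarrow0$" via a Tauberian argument would only give the $t\to\infty$ behaviour of $\Dbulk^N$, whereas the theorem asserts $\lim_N\Dbulk^N(t)=\nueff$ at every fixed $t$. What actually happens is that the resolvent limit is $\mu$-independent (equal to $(\nueff-1)/2$ for every $\mu>0$), so $\CD^N_\bulk(\mu)\to\nueff/\mu^2$ for all $\mu$; one then needs to justify interchanging the inverse Laplace transform with the $N\to\infty$ limit, which the paper does by establishing linear growth and H\"older equicontinuity of $t\mapsto t\Dbulk^N(t)$, extracting a uniformly convergent subsequence, and invoking uniqueness of Laplace transforms. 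You should replace the Tauberian step by this compactness argument.
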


The proof of the previous theorem, provided in Section~\ref{sec:Dbulk} and Appendix~\ref{a:Approx}, 
is based on a significant refinement of the iterative scheme introduced in~\cite{Yau} and 
further developed in~\cite{CET}. Up to the authors' knowledge, this iterative procedure has so 
far only been used to derive upper and lower bounds  on the 
diffusivity or on any quantity that can be similarly analysed. In turn, this is the first time 
in which it is exploited to derive a full convergence statement. 

Heuristically, such a scheme identifies a sequence of operators 
which are expressed as nonlinear functions of the generator $\gensy$ of the stochastic heat equation
and are sufficiently close to the generator $\gen$ of $h^N$ (for $N$ large). 
As we will see, an interesting point is that the operators in the sequence 
do not converge {\it per se} to $\nueff\gensy$, generator of~\eqref{e:SHE1}, 
but they do so only when applied to suitable observables 
(see Proposition~\ref{p:Hj}). 
\medskip

The previous theorem strongly suggests that in the large $N$ limit, 
the solution $h^N$ converges to a stochastic heat equation 
and explicitly identifies its expected renormalised limiting coefficients.
The next is the main result of the present paper and establishes the scaling limit of~\eqref{e:AKPZ} 
under the weak coupling regime. 

\begin{theorem}\label{thm:Conv}
% For $N\in\N$ and $\hat\lambda>0$, let $h^N$ be the unique almost-stationary solution (see Definition~\eqref{def:QSsol})
% to~\eqref{e:AKPZ} with coupling constant 
% $\const$ given by~\eqref{e:const} and initial condition $\chi$.
Let $T>0$. 
Then, the almost-stationary solution $h^N$ of~\eqref{e:AKPZ} under the weak coupling scaling 
converges in distribution in $C([0,T],\cD'(\T^2))$ to $h$ which solves 
\begin{equ}[e:SHEfinal]
\partial_t h=\tfrac{\nueff}{2}\Delta h + \sqrt{\nueff} \xi\,,\qquad h_0=\chi\,.
\end{equ}
\end{theorem}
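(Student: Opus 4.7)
My plan is the standard tightness plus martingale-problem identification strategy. The limiting SHE~\eqref{e:SHEfinal} with Gaussian initial datum $\chi$ is uniquely characterised by the requirement that, for every $\varphi\in C^\infty(\T^2)$,
\begin{equ}
M_t(\varphi):=\langle h_t,\varphi\rangle-\langle h_0,\varphi\rangle-\tfrac{\nueff}{2}\int_0^t\langle h_s,\Delta\varphi\rangle\,\dd s
\end{equ}
is a continuous martingale with quadratic variation $\nueff\, t\,\|\varphi\|_{L^2(\T^2)}^2$. Hence it suffices to establish tightness of $\{h^N\}$ in $C([0,T],\cD'(\T^2))$ and show that every subsequential limit $h^\infty$ solves this martingale problem.

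Tightness is the soft part. Almost-stationarity of the initial condition yields that the non-zero-mode part of $h^N_t$ is distributed as the GFF $\chi$ for every $t$, giving uniform-in-$N,t$ bounds in negative Sobolev spaces. Time regularity for the projections $\langle h^N_t,\varphi\rangle$ is extracted from the It\^o decomposition
\begin{equ}\label{e:ItoDec}
\langle h^N_t,\varphi\rangle-\langle h^N_s,\varphi\rangle=\tfrac12\int_s^t\langle h^N_r,\Delta\varphi\rangle\,\dd r+\const\int_s^t\langle\Nonlin[h^N_r],\varphi\rangle\,\dd r+W^N_{s,t}(\varphi),
\end{equ}
where the martingale $W^N_{s,t}(\varphi)$ has quadratic variation $(t-s)\|\varphi\|_{L^2}^2$. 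The only delicate input is a uniform-in-$N$ second-moment bound on the nonlinear time-integral, which is a Green--Kubo-type estimate of the same nature as the one underlying the boundedness of $\Dbulk^N$ in Theorem~\ref{thm:Dbulk}.

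For identification, starting from~\eqref{e:ItoDec} the task reduces to a Kipnis--Varadhan replacement:
\begin{equ}
\const\int_0^t\langle\Nonlin[h^N_s],\varphi\rangle\,\dd s=\tfrac{\nueff-1}{2}\int_0^t\langle h^N_s,\Delta\varphi\rangle\,\dd s+Z^N_t(\varphi)+o_{L^2(\P)}(1),
\end{equ}
where $Z^N_t(\varphi)$ is a martingale whose joint quadratic variation with $W^N_\cdot(\varphi)$ converges to $\nueff\, t\,\|\varphi\|_{L^2}^2$. Following the generator-level picture sketched in the introduction, I would construct $\psi^N\in L^2(\chi)$ via the iterative scheme that drives Theorem~\ref{thm:Dbulk}, chosen so that
\begin{equ}
\gen\psi^N\;\approx\;\const\langle\Nonlin[\cdot],\varphi\rangle-\tfrac{\nueff-1}{2}\langle\cdot,\Delta\varphi\rangle\ \text{in } L^2(\chi),\qquad \|\psi^N\|_{L^2(\chi)}\to 0.
\end{equ}
It\^o applied to $\psi^N(h^N_\cdot)$ then realises $Z^N_t(\varphi)$ as the martingale part of that identity, and its quadratic variation is computed and shown to converge to $(\nueff-1)t\|\varphi\|_{L^2}^2$ by invoking Proposition~\ref{p:Hj}. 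Joint tightness of $(h^N,W^N)$ combined with the replacement then identifies the martingale problem for $h^\infty$, and uniqueness closes the argument.

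The main obstacle is upgrading Theorem~\ref{thm:Dbulk}, which only delivers the scalar Green--Kubo integral, to this functional version of the replacement uniform over a class of test functions $\varphi$. Concretely, one has to show that the sequence of operators produced by the iterative scheme converges to $\nueff\gensy$ when applied to the full family of linear cylinder observables $\langle\cdot,\varphi\rangle$, and that the auxiliary $\psi^N$ can be selected so that its $L^2(\chi)$-norm vanishes while the quadratic variation of the martingale residue has the prescribed limit. This functional strengthening of Proposition~\ref{p:Hj}, together with the requisite quantitative error control, is where the bulk of the technical work lies.
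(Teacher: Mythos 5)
Your strategy coincides with the paper's: tightness (which the paper simply imports from \cite{CES}, Theorem~\ref{thm:Tightness}) plus identification of limit points via the martingale problem of Definition~\ref{def:MPSHE}, with the replacement implemented by a corrector built from the truncated generator equation and its diagonal approximation. Your corrector $\psi^N$ is exactly $\fbNn-\eta(\psi)$ in the paper's notation, and the two conditions you impose on it are~\eqref{e:bL2} and~\eqref{e:bH1} of Theorem~\ref{thm:ControlGen}. One structural point you omit is the zero Fourier mode of $h^N$, which is not determined by $u^N=(-\Delta)^{1/2}h^N$ and requires the separate observable $\fhNn$ and Corollary~\ref{c:zeromode}; this is a bookkeeping issue rather than a conceptual one.

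The genuine gap is in the treatment of the quadratic variation. To show that $\Gamma^\Psi_t=(\cM^\Psi_t)^2-t\,\nueff\|\Psi\|^2_{L^2}$ is a martingale for the limit point, it is not enough that $\Exp[\langle \CM_\cdot(\ffNn)\rangle_t]$ converges to the right constant — which is what Proposition~\ref{p:Hj} and the generator estimates deliver, via $\Exp[\langle\CM\rangle_t]=2t\|(-\gensy)^{1/2}\ffNn\|^2$. One must also show that the quadratic variation is \emph{asymptotically deterministic}, i.e.\ ${\rm Var}(\langle \CM_\cdot(\ffNn)\rangle_t)\to0$ (equation~\eqref{e:Var} of Theorem~\ref{thm:Mart}); otherwise the passage from $(\CM^{N,n})^2-\langle\CM^{N,n}\rangle$ being a martingale to the limiting statement with a \emph{constant} bracket fails. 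Your proposal asserts that the quadratic variation "is computed and shown to converge \dots by invoking Proposition~\ref{p:Hj}", but that proposition only controls scalar products, i.e.\ first moments; the fluctuations of $\sum_k|k|^2|D_k\ffNn|^2$ involve a genuinely quartic functional of the kernels and are not accessible by the replacement lemma. In the paper this concentration step is the single most technical component: after an It\^o-trick reduction to $\|(-\gensy)^{-1/2}\fFNn\|$ it requires the full Feynman-diagram expansion and case analysis of Sections~\ref{sec:Feynman}--\ref{sec:tourdeforce} (Propositions~\ref{prop:K0}, \ref{prop:Spiccolo}, \ref{prop:jpiccolo1}, \ref{prop:jpiccolo2}) together with Appendices~\ref{app:S} and~\ref{sec:example}. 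You instead locate the "main obstacle" in upgrading Proposition~\ref{p:Hj} to a family of test functions, which the paper handles comparatively easily through the Replacement Lemma~\ref{l:GenBound}; the missing idea is the variance bound on the bracket, for which your outline contains no mechanism.
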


\begin{remark}[Extensions]
With little extra effort and the same techniques developed below, 
the previous results can be translated to the full $\R^2$. 
%
%Via an easy scaling argument (also used in~\cite{CET}), it is clearly possible to translate our results 
%to the full plane $\R^2$. In this case, one would start from the equation regularised at scale $N=1$, 
%then rescale space and time diffusively ($t\mapsto t/\eps^2$, $x\mapsto x/\eps$) and simultaneously 
%tune $\lambda=\lambda_\eps=\hat\lambda/\sqrt{\log \eps^{-1}}$ as in~\eqref{e:const}

Further, exploiting the estimates in~\cite{GPabs,GPGen}, it would be possible to weaken the 
(almost) stationarity assumption and request instead the initial condition to be absolutely 
continuous with respect to $\chi$ in Definition~\ref{def:QSsol}. 
\end{remark}

%\begin{remark}
%The proof of the previous Theorem could be simplified if we omitted the 
%study of the $0$-th Fourier mode. Indeed, if 
%\end{remark}

Let us conclude this introduction by pointing out that the present paper 
(and especially the previous Theorem) represents the {\it first}
instance of a {\it critical} equation, out of the scope of the theory
of Regularity Structures~\cite{Hai} and with no Cole-Hopf
transformation, for which a full scaling limit (in the weak coupling
regime), corresponding to a sharp renormalisation-type statement, is obtained.  
Further, the techniques we adopt, based on the
analysis of the generator and martingale arguments, are bound to be
applicable to other SPDEs at criticality, e.g. the viscous stochastic Navier-Stokes
equation in~\cite{GT, CK}, or to statistical mechanics models, e.g. ASEP
in two dimensions with weak asymmetry (similarly to the AKPZ equation,  $2$-dimensional ASEP is
  logarithmically super-diffusive if the asymmetry is of order $1$
  \cite{Yau}), weak coupling version of diffusions in the curl of GFF
considered in~\cite{CHT}, and many others.

\subsection*{Organization of the article}
The rest of this work is organised as follows. 
Below we introduce the notations and conventions which will be used throughout. 
In Section~\ref{sec:prelim}, we recall some results from previous works, 
introduce the Burger's version of our equation, and prove some preliminary estimates. 
Section~\ref{sec:Dbulk} is devoted to the proof of Theorem~\ref{thm:Dbulk}, 
and, along the way, we derive a replacement lemma, 
which basically allows one to replace the generator of~\eqref{e:AKPZ} 
by the appropriately modulated (in Fourier space) generator of the stochastic heat equation. 
In Sections~\ref{sec:truncated} and~\ref{s:theconvergence}, we prove Theorem~\ref{thm:Conv}. 
First, we introduce the martingale problem and provide an outline of the proof. 
Then, we carry out a thorough analysis of the generator $\gen$ of $h^N$ and identify the 
class of observables needed to establish the statement, 
whose proof is completed at the beginning Section~\ref{s:theconvergence}. 
In the rest of the section, we show that the martingales associated to the observables 
determined in the Section~\ref{sec:truncated} have quadratic variation which is asymptotically deterministic. 
At last, in the appendix we obtain some technical results needed for the proof of the replacement lemma 
(Appendix~\ref{a:Approx}), control of the quadratic variation (Appendix~\ref{app:S}) and provide 
an example of the estimates to perform in Section~\ref{s:theconvergence} (Appendix~\ref{sec:example}). 

\subsection*{Notations and Function Spaces}

%For $N>0$, let $\Z_N\eqdef \Z/N$ and 
We let $\T^2$ be the two-dimensional torus of side length $2\pi $. % If $N=1$ then we simply write $\T^2$ instead of $\T_N^2$.
We denote by $\{e_k\}_{k\in\Z^2}$ the Fourier basis defined via 
$e_k(x) \eqdef \frac{1}{2\pi} e^{i k \cdot x}$ which, for all $j,\,k\in\Z^2$, satisfies 
$\langle e_k, e_{-j}\rangle_{L^2(\T^2)}= \mathds{1}_{k=j}  $. 
% {\color{red} I think the following sentence can be omitted: The basis functions $e_k$ can be decomposed in their real and imaginary part, so that $e_k=a_k+i b_k$ 
% and the system $\{a_k\}_{k \in \Z_{\mathrm{diag}}^2} \sqcup \{b_k\}_{k \in \Z_{\mathrm{diag}}^2\setminus\{0\}}$ 
% forms a real valued orthogonal basis of $L^2(\T_L^2)$, where $\Z_{\mathrm{diag}}^2= \{(k_1,k_2)\in\Z^2:\, k_1\geq k_2\}$. }

The Fourier transform of a given function $\phi\in L^2(\T^2)$ will be represented as 
$\cF(\phi)$ or by $\hat\phi$ and, for $k\in\Z^2$
is given by the formula
	\begin{equation}\label{e:FT}
	\cF(\varphi)(k) =\hat\varphi(k)\eqdef  \int_{\T^2} \varphi(x) e_{-k}(x)\dd x\,, 
	\end{equation}
so that in particular
\begin{equ}[e:FourierRep]
\varphi(x) = % \frac{1}{N^2}
\sum_{k\in\Z^2} \hat\varphi(k) e_k(x)\,,\qquad\text{for all $x\in\T^2$. }
\end{equ}
Let $\cD(\T^2)$ be the space of smooth functions on $\T^2$ and $\cD'(\T^2)$ 
the space of real-valued distributions given by the dual of $\cD(\T^2)$. 
For any $\eta\in\cD'(\T^2)$ and $k\in\Z^2$, 
we will denote its Fourier transform by 
\begin{equation}\label{e:complexPairing}
\hat \eta(k)\eqdef \eta(e_{-k})\,.%=\eta(a_k)-i \eta(b_k)
\end{equation} 
Note that $\overline{\hat\eta(k)}=\hat\eta(-k)$. 
Moreover, we recall that the Laplacian $\Delta$ on $\T^2$ has eigenfunctions $\{e_k\}_{k \in \Z^2}$ 
with eigenvalues $\{-|k|^2\,:\,k\in\Z^2\}$, so that, for $\theta>0$, we can define the operator $(-\Delta)^\theta$
by its action on the basis elements 
	\begin{equation}\label{e:fLapla}
	(-\Delta)^\theta e_k\eqdef |k|^{2\theta}e_k\,,\qquad k\in\Z^2\,.
	\end{equation}	
 In particular, $(-\Delta)^\theta$ is an invertible linear bijection on distributions with null $0$-th Fourier mode. 
This allows us to make the following observation:
given $\Psi\in\cD(\T^2)$, set
\begin{equs}[e:Psi]
\psi_0\eqdef \hat\Psi(0)\,,\qquad \tilde\Psi(x)\eqdef \Psi(x)-\psi_0 e_0(x) \,,\qquad \psi(x)\eqdef (-\Delta)^{-1/2} \tilde\Psi (x)
\end{equs}
for all $x\in\T^2$. Since $\psi\in \cD(\T^2)$, in particular $\psi\in H^1_0(\T^2)$, 
the space of mean zero functions such that
\[
\|\psi\|_{H^1(\T^2)}^2\eqdef \sum_{k\in\Z^2}|k|^2|\hat\psi(k)|^2<\infty\,,
\]  %\giuseppeText{define the $H^1_0(\T^2)$ space and norm...}
and the following equality holds
\begin{equ}[e:L2H1]
\|\Psi\|^2_{L^2(\T^2)}=|\psi_0|^2 + \|\psi\|_{H^1(\T^2)}^2\,.
\end{equ}
 Now, if $h\in\cD'(\T^2)$ and $u\eqdef (-\Delta)^{1/2}h\in\cD'(\T^2)$, then
\begin{equ}[e:hu]
h(\Psi)= u(\psi)+\psi_0 \hat h(0)
\end{equ}
which ensures that $h$ is fully characterised by $u$ and its spatial
average given by $\hat h(0)$.  
\medskip

Throughout the paper, we will write $a\lesssim b$ if there exists a constant $C>0$ such that $a\leq C b$ and $a\sim b$ if $a\lesssim b$ and $b\lesssim a$. We will adopt the previous notations only in case in which the hidden constants do not depend on any quantity which is relevant for the result. When we write $\lesssim_T$ for some quantity $T$, it means that the constant $C$ implicit in the bound depends on $T$.

\section{Preliminaries}
\label{sec:prelim}

As in~\cite{CES}, it turns out to be convenient to work with 
the stationary Stochastic Burgers equation, which can be derived from the almost-stationary 
AKPZ~\eqref{e:AKPZ} started from $\chi$ (see Definition~\ref{def:QSsol}) by setting 
$u^N\eqdef(-\Delta)^{\frac{1}{2}}h^N$ 
so that $u^N$ solves 
\begin{equation}\label{e:AKPZ:u}
\partial_t u^N = \tfrac{1}{2} \Delta u^N
+
\const \cM^N[u^N] + (-\Delta)^{\frac{1}{2}}\xi, \quad u^N(0)=\eta\eqdef(-\Delta)^{\frac12}\chi
\end{equation}
where $u^N=u^N(t,x)$, $t\geq 0$, $x\in\T^2$, $\const$ is as in~\eqref{e:const}
and the nonlinearity $\cM^N$ is 
\begin{equation}\label{e:nonlin}
\cM^N[u^N]\eqdef (-\Delta)^{\frac{1}{2}}\Pi_N \Big((\Pi_N \partial_1(-\Delta)^{-\frac{1}{2}} u^N)^2 - (\Pi_N \partial_2 (-\Delta)^{-\frac{1}{2}} u^N)^2\Big)\,.
\end{equation}
Before proceeding further, in the next section we recall  basic properties of the 
zero-average spatial white noise $\eta$ on $\mathbb T^2$ which will be crucial in our subsequent analysis
(for more on it, we refer to~\cite[Chapter 1]{Nualart2006}, or~\cite{GPnotes,GPGen} and~\cite[Section 2]{CES}). 

\subsection{Elements of Wiener space analysis}
\label{S:Malliavin}

Let $(\Omega,\cF,\P)$ be a complete probability space and 
$\eta$ be a mean-zero spatial white noise on the two-dimensional torus $\T^2$, i.e. 
$\eta$, defined in $\Omega$, is a centred isonormal Gaussian process 
(see~\cite[Definition 1.1.1]{Nualart2006}) on $H\eqdef L^2_0(\T^2)$, 
the space of square-integrable functions with $0$ total mass,
whose covariance function is given by 
\begin{equ}\label{eq:spatial:white:noise}
\E[\swn(\vphi)\swn(\psi)]=\langle \vphi, \psi\rangle_{L^2(\T^2)}
\end{equ}
where $\varphi,\psi\in H$ and $\langle\cdot,\cdot\rangle_{L^2(\T^2)}$ is the usual scalar product in $L^2(\T^2)$. 
For $n\in\N$, let $\SH_n$ be the {\it $n$-th homogeneous Wiener chaos}, i.e. the closed linear subspace of 
$L^2(\eta)\eqdef L^2(\Omega)$ generated by the random variables $H_n(\eta(h))$, 
where $H_n$ is the $n$-th Hermite polynomial, and 
$h\in H$ has norm $1$. By~\cite[Theorem 1.1.1]{Nualart2006}, $\wc_n$ and $\wc_m$ are orthogonal whenever 
$m\neq n$ and $L^2(\eta)=\bigoplus_{n}\SH_n$. 
Moreover, there exists a canonical contraction $\sint{}:\bigoplus_{n\ge 0} L^2(\T^{2n}) \to L^2(\eta)$, 
which restricts to an isomorphism $\sint{}:\fock \to L^2(\eta)$ on the Fock space $\fock:=\bigoplus_{n \ge 0} \fock_n$, 
where $\fock_n$ denotes the space $L_\sym^2(\T^{2n})$ of functions in $L^2_0(\T^{2n})$ which are symmetric
with respect to permutation of variables. The restriction $\sint_n$ of $\sint$ to $\fock_n$, 
called $n$-th (iterated) Wiener-It\^o integral with respect to $\eta$, is itself an isomorphism from $\fock_n$ to $\SH_n$
so that by~\cite[Theorem 1.1.2]{Nualart2006}, for every $F\in L^2(\eta)$ there exists a family of kernels 
$(f_n)_{n\in\N}\in\fock$ such that $F=\sum_{n\geq 0} I_n(f_n)$ and 
\begin{equ}[e:Isometry]
\E[F^2]\eqdef\|F\|^2 = \sum_{n\geq 0} n!\|f_n\|_{L^2(\T^{2n})}^2
\end{equ}
and we take the right hand side as the definition of the scalar product on $\fock$, i.e. 
\begin{equ}[e:ScalarPFock]
 \langle f,g\rangle=\sum_{n\geq 0} \langle f_n,g_n\rangle \eqdef \sum_{n\geq 0} n!\langle f_n,g_n\rangle_{L^2(\T^{2n})}.
\end{equ}

\begin{remark}\label{rem:Ident}
In view of the isometry~\eqref{e:Isometry}, we will abuse notations throughout the rest of the paper and 
implicitly identify a random variable in $\SH_n$  with its kernel $f_n$ in $\fock_n$. 
% In the same spirit, we will identify linear operators acting on $L^2(\eta)$ with the corresponding
% linear operators acting on $\bigoplus_n L^2_{sym}(\mathbb R^{2n})$, and
% we will denote them using the same symbol.
\end{remark}

We conclude this paragraph by mentioning that we will mainly work with the Fourier representation $\{\hat \eta(k)\}_k$ of 
$\eta$, which is a family of complex valued, centred Gaussian random variables such that 
\begin{equs}[e:NoiseFourier]
\hat \eta(0)=0\,,\qquad \overline{\hat \eta(k)}=\hat\eta({-k})\qquad\text{and}\qquad \E[\hat \eta(k)\hat \eta(j)]=\1_{k+j=0}\,.
\end{equs}
Given a random variable $f=f(\eta)$ in $\SH_n$ whose kernel $f_n$ has a Fourier transform $\hat f_n$, we write
\begin{equs}
  f(\eta)=\sum_{p_1,\dots,p_n\in\mathbb Z^2}\hat f_n(p_1,\dots,p_n):\hat\eta(p_1)\dots\hat \eta(p_n):
\end{equs}
where $:\dots:$ denotes the Wick product. A few basic properties of Wick polynomials (in the specific case where $\eta$ is a white noise) are collected at the beginning of Section~\ref{sec:case>} (see \cite{janson1997gaussian} for additional details).

\subsection{Stochastic Burgers equation and its Generator}
\label{S:Properties}

In the present section, we summarise the main results, properties and notations concerning 
the solutions $h^N$ and $u^N$ of~\eqref{e:AKPZ} 
and~\eqref{e:AKPZ:u} respectively, proven in~\cite{CES}. 
% For a complete overview and the proof of the statements, we address the reader to the above mentioned reference. 

As can be directly checked, in Fourier variables,~\eqref{e:AKPZ:u} corresponds to an infinite system of 
(complex-valued) SDEs whose $k$-th component, $k\in\Z^2\setminus\{0\}$, reads
\begin{equation}\label{e:kpz:u}
\dd \hat u^{N}(k) =
\Big(-\tfrac{1}{2}|k|^2 \hat u^{N}(k)
+
 \const\cM^N_k[u^{N}]\Big) \dd t +  |k|\dd B_t(k)\,,\quad \hat u^N_0(k)=\hat\eta(k)
\end{equation}
where $\eta$ is as in~\eqref{e:NoiseFourier}, the $B(k)$'s are complex valued Brownian motions defined via  
$B_t(k)\eqdef \int_0^t \hat \xi(s,k)\, ds$, $\hat \xi(\cdot,k)=\xi(\cdot,e_{-k})$, so that %(recalling that $\xi$ is a space-time white noise)
$\overline{B(k)}= B(-k)$ and $\dd\langle B(k), B(\ell) \rangle_t =  \mathds{1}_{\{k+\ell=0\}}\, \dd t$, and 
$\cM^N_k$ is the $k$-th Fourier component of the nonlinearity in~\eqref{e:nonlin}. 
More explicitly, the latter is given by
\begin{equation}\label{e:nonlinF}
\cM_k^N[u^{N}]\eqdef\cM^N[u^{N}](e_{-k})= |k|\sum_{\ell+m=k}\nonlin_{\ell,m}\hat  u^{N}(\ell) \hat u^{N}(m)\,,
\end{equation}
the sum running over $\ell,m\in \Z^2\setminus\{0\}$, and, for $x,y,z,w\in \R^2\setminus\{0\}$
\begin{align}
 \nonlin_{x,y} &\eqdef \frac{1}{2\pi} \frac{c(x,y)}{|x||y|} \indN{x/N,y/N}\,,\qquad c(x,y) \eqdef x_2y_2- x_1y_1 \label{e:nonlinCoefficient}\\
 \indN{z,w}&\eqdef
\mathds{1}_{1/N \le |z|\le 1, 1/N\le|w|\le 1, |z+w|\le 1}\,. \label{e:Jlm}
\end{align}

Since eventually we are interested in $h^{N}$ rather than $u^{N}$, we note that by~\eqref{e:hu} 
we can recover all the non-zero Fourier components of $h^{N}$ while  
the zero-mode $\hat h^{N}(0)$ satisfies 
\begin{equ}[eq:modozero]
\dd \hat h^{N}(0)= \const \cN^N_0[u^N]\dd t+ \dd B_t(0)\,,\qquad  \hat h^{N}_0(0)=\hat\chi(0)
\end{equ}
where $\chi$ is as in Definition~\ref{def:QSsol} and
\begin{equ}[e:modozeroN]
  \cN^N_0[u^N]= \sum_{\ell+m=0}\nonlin_{\ell,m}\hat u^{N}(\ell)\hat u^{N}(m)\,,
\end{equ}
so that also $\hat h^N(0)$ is a function of $u^{N}$ (and of an independent Brownian motion $B_\cdot(0)$). 
\medskip 

By \cite[Proposition 3.4]{CES}, for any $N\in\N$, there exists a unique global in time solution 
$t\mapsto \hat u^{N}(t)=\{\hat u^{N}(t,k)\}_{k\in\Z^2\setminus\{0\}}$ to~\eqref{e:kpz:u} (and consequently 
the same holds for~\eqref{e:AKPZ}), 
and such a solution is a strong Markov process, whose law is translation invariant and 
whose generator will be denoted by $\gen$. 
Let $F$ be a cylinder function on $\CD'(\T^2)$, i.e. $F$ is such that such that there exists a polynomial
$f=f((x_k)_{k\in\Z^2\setminus\{0\}})$ (depending 
only on finitely many variables), 
for which $F(\eta)=f((\hat \eta(k))_{k\in\Z^2\setminus\{0\}})$. 
Then, writing $\gen\eqdef\gensy+\gena$, the action of $\gensy$ and $\gena$ on $F$ is 
\begin{align}
(\gensy F)(v) &\eqdef \sum_{k \in \Z^2} \half|k|^2 (-\hat v({-k}) D_k  +  D_{-k}D_k )F(v) \label{e:gens}\\
(\gena F)(v) &= \const \sum_{m,\ell \in \Z^2\setminus\{0\}} |m+\ell|\nonlin_{m,\ell} \hat v(m) \hat v(\ell) D_{-m-\ell} F(v)\,. \label{e:gena}
\end{align}
Here, for $k\in\Z^2$, $D_k F$ is defined as\footnote{For more on the actual definition of cylinder function and Malliavin derivative, see~\cite[Section 2 and Lemma 2.1]{CES}}
\begin{equ}[e:Malliavin]
D_{k} F\eqdef (\partial_{x_k} f)((\hat \eta({k}))_{k\in\Z^2\setminus\{0\}})\,.
\end{equ}

\begin{lemma}\textnormal{\cite[Lemma 3.1]{CES}}\label{lem:gen}
For every $N\in\N$, the spatial white noise $\eta$ on $\T^2$ defined in \eqref{eq:spatial:white:noise} 
is an invariant measure for the solution $\hat u^{N}$ of~\eqref{e:kpz:u} and, with respect to $\eta$, 
$\gensy$ and $\gena$ are respectively 
the symmetric and anti-symmetric part of $\gen$. 
\end{lemma}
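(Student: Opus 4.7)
The plan is to prove the three assertions — symmetry of $\gensy$, antisymmetry of $\gena$, and invariance of $\eta$ — separately, with the last being a direct consequence of the first two. Indeed, once those symmetry properties are established, for any cylinder $F$ one has $\E_\eta[\gensy F]=\langle 1,\gensy F\rangle_{L^2(\eta)}=\langle \gensy 1,F\rangle=0$ because $\gensy 1=0$, and similarly $\E_\eta[\gena F]=-\langle \gena 1,F\rangle=0$. Hence $\E_\eta[\gen F]=0$ for every cylinder $F$. Since cylinder functions form a core for $\gen$ and the strong Markov process $u^N$ is uniquely determined (see Prop.~3.4 of \cite{CES}), a standard martingale-problem argument turns this identity into invariance of $\eta$.

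For the symmetry of $\gensy$, the cleanest route is to recognise it as the generator of the linear stochastic heat equation obtained by dropping the nonlinear term in~\eqref{e:kpz:u}. That equation decouples in the Fourier basis into a family of independent complex Ornstein--Uhlenbeck processes $\dd\hat u(k)=-\tfrac{|k|^2}{2}\hat u(k)\,\dd t+|k|\,\dd B(k)$, each of which is reversible with respect to its Gaussian invariant law; the product measure is exactly $\eta$, and reversibility mode-by-mode implies self-adjointness of $\gensy$ on $L^2(\eta)$. An equivalent direct verification uses the Gaussian integration-by-parts formula $\E[\hat\eta(k)G]=\E[D_{-k}G]$: applied to the first-order piece $-\hat v(-k)D_k$ in~\eqref{e:gens} it yields exactly the second-order piece $D_{-k}D_k$ when moved onto a test function, exhibiting $\gensy$ as (a multiple of) the number operator mode-by-mode.

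For the antisymmetry of $\gena$, pair $\gena F$ with an arbitrary cylinder $G$ and expand, by Gaussian integration by parts, the expectation
\begin{equation*}
\E_\eta\bigl[(\gena F)G\bigr]=\const\sum_{m,\ell\neq 0}|m+\ell|\,\nonlin_{m,\ell}\,\E_\eta\!\left[\hat\eta(m)\hat\eta(\ell)(D_{-m-\ell}F)G\right].
\end{equation*}
Applying $\E[\hat\eta(k)H]=\E[D_{-k}H]$ to each Gaussian factor in turn produces three families of terms. First, the diagonal contraction $m+\ell=0$ arrives multiplied by $|m+\ell|=0$ and therefore vanishes identically — this is the one place where the explicit prefactor $|m+\ell|$ in~\eqref{e:gena} is crucial. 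Second, the terms with both derivatives landing on $F$ cancel after the relabelling $m\leftrightarrow\ell$, using the symmetry $\nonlin_{m,\ell}=\nonlin_{\ell,m}$ which follows from $c(x,y)=x_2y_2-x_1y_1=c(y,x)$ together with the invariance of $\indN{x,y}$ under exchange of $x$ and $y$. Third, the cross-terms in which both derivatives strike $G$ reassemble, after the same relabelling, into exactly $-\E_\eta[F(\gena G)]$. The UV cutoff $\indN$ makes every sum finite, so no convergence issues arise.

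The main technical subtlety lies in the careful bookkeeping of the Malliavin integration-by-parts and in verifying that the ``both derivatives on $F$'' contributions indeed cancel through $m\leftrightarrow\ell$. A conceptually cleaner alternative, which is the line followed in \cite{CES}, is to view $L^2(\eta)\simeq\bigoplus_n\SH_n$ and to split $\gena=\genap+\genam$ into a piece $\genap$ that raises the chaos index by one and a piece $\genam$ that lowers it by one; the structure of $\gena$ as (quadratic in $\hat v$)$\times$(single derivative) together with the Wick product relations forces $\genap=-(\genam)^*$, from which $\gena^*=-\gena$ is immediate.
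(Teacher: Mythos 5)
First, a point of comparison: the paper does not prove this lemma at all — it is imported verbatim from \cite{CES}, so there is no internal proof to measure you against. On its own terms, your architecture is reasonable: prove $\gensy^*=\gensy$ and $\gena^*=-\gena$ in $L^2(\eta)$ and then deduce invariance from $\E[\gen F]=\langle(\gensy-\gena)1,F\rangle=0$ on a core of cylinder functions; the Ornstein--Uhlenbeck argument for $\gensy$ is correct and standard. The gap is in the antisymmetry of $\gena$, which is the entire content of the lemma. Structurally, your bookkeeping does not close: after integrating both Gaussian factors by parts, the term $(D_{-m}D_{-\ell}D_{-m-\ell}F)G$ is \emph{symmetric} under $m\leftrightarrow\ell$ (as is $\nonlin_{m,\ell}$), so the relabelling you invoke reproduces it rather than cancelling it; the term $(D_{-m-\ell}F)(D_{-m}D_{-\ell}G)$ still carries a derivative on $F$ and two bare derivatives on $G$ with no Gaussian prefactors, so it is not $-\E[F\,\gena G]$, which has the form $\E[F\,\hat\eta(m)\hat\eta(\ell)D_{-m-\ell}G]$; and the mixed terms are dropped without comment. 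The workable direct route is instead to move the single derivative $D_{-m-\ell}$ off $F$ using the adjoint relation $D_k^*=-D_k+\hat\eta(k)\cdot$ that the paper itself uses in \eqref{e:EnergyMart}; this gives $\gena^*=-\gena$ \emph{plus} a multiplication operator (a cubic polynomial in $\eta$ coming from the $\hat\eta(k)$-term, and a linear one from the divergence of the drift), and the whole point is to show that this zero-order remainder vanishes.

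More seriously, that vanishing is exactly where the anisotropy enters, and your proof never uses it. The only property of the kernel you invoke is $\nonlin_{m,\ell}=\nonlin_{\ell,m}$, which holds equally for the isotropic nonlinearity $|\nabla h|^2$ — for which the white noise is \emph{not} invariant and the drift is \emph{not} antisymmetric. So an argument relying only on that symmetry proves too much. The cancellation actually required is the symmetrised cubic identity
\begin{equation*}
|p|^2c(q,r)+|q|^2c(p,r)+|r|^2c(p,q)=0\qquad\text{whenever }p+q+r=0,
\end{equation*}
with $c(x,y)=x_2y_2-x_1y_1$ as in \eqref{e:nonlinCoefficient} (equivalently, $\int_{\T^2}\Delta h\,[(\partial_1h)^2-(\partial_2h)^2]\,\dd x=0$), which holds precisely for the anisotropic quadratic form and survives the regularisation only because $\Pi_N$ sits both inside and outside the square in \eqref{e:NonLin}, making the cutoff symmetric in $(p,q,r)$. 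Together with the matching cancellation of the contraction terms against the divergence of the drift, this is what makes the cubic remainder vanish and hence $\gena^*=-\gena$. For the same reason, your closing claim that $\genap=-(\genam)^*$ is ``forced'' by the generic structure (quadratic coefficient times one derivative, plus Wick relations) is not correct: that identity is equivalent to the antisymmetry and fails for the isotropic kernel. To repair the proof you must exhibit the cubic remainder explicitly and kill it with the identity above.
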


Our goal  is to determine the limit of $h^N$ as $N\to\infty$. 
A first result in this direction was obtained in~\cite{CES}, and it ensures that the sequence $\{h^N\}_N$ is 
tight. 

\begin{theorem}\textnormal{\cite[Theorem 1.1]{CES}}\label{thm:Tightness}
% For $N\in\N$ and $\hat\lambda>0$, let $h^N$ be the unique almost-stationary solution to~\eqref{e:AKPZ} with 
% $\const$ given by~\eqref{e:const} and initial condition $\chi$. 
% Then, f
For every $\hat\lambda>0$, the sequence $\{h^N\}_N$ is tight in $C([0,T],\CD'(\T^2))$. 
\end{theorem}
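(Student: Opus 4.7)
The plan is to apply Mitoma's tightness criterion, which reduces tightness of $\{h^N\}_N$ in $C([0,T],\cD'(\T^2))$ to tightness of each scalar process $\{h^N(\cdot,\Psi)\}_N$ in $C([0,T],\R)$ for every $\Psi\in\cD(\T^2)$. For each such $\Psi$, I would verify Kolmogorov's tightness criterion by establishing
\begin{equ}
\Exp\big|h^N(t,\Psi)-h^N(s,\Psi)\big|^{2p} \lesssim_{\Psi,p}\, |t-s|^p,\qquad 0\le s\le t\le T,
\end{equ}
for some integer $p\ge 2$, uniformly in $N$. A matching uniform bound on $\Exp|h^N(0,\Psi)|^{2p}$ is immediate from the (almost) stationarity of the initial datum prescribed in Definition~\ref{def:QSsol}.

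The Dynkin decomposition associated with the generator $\gen$ reads
\begin{equ}
h^N(t,\Psi)-h^N(s,\Psi) = \tfrac{1}{2}\int_s^t h^N(r,\Delta\Psi)\,\dd r + \const\int_s^t \Nonlin[h^N](r,\Psi)\,\dd r + M^N_{s,t}(\Psi),
\end{equ}
where $M^N(\Psi)$ is a Brownian martingale of deterministic quadratic variation $t\|\Psi\|_{L^2}^2$. The martingale term is immediately controlled via Burkholder--Davis--Gundy, giving a $|t-s|^p$ bound on its $2p$-th moment. The linear drift is handled by Jensen's inequality together with stationarity: $\Exp|h^N(r,\Delta\Psi)|^{2p}$ is $r$- and $N$-independent, since the invariant law of the non-zero Fourier modes of $h^N$ coincides with that of $\chi$ (Lemma~\ref{lem:gen}), and this contributes a $|t-s|^{2p}$ term.

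The main difficulty, and the real content of the theorem, is the nonlinear drift. Here I would invoke the It\^o trick (a Kipnis--Varadhan-type bound that applies to the almost-stationary, non-reversible process $u^N$), which yields
\begin{equ}
\Exp\Big|\const\int_s^t \Nonlin[h^N](r,\Psi)\,\dd r\Big|^2 \lesssim \const^2\,(t-s)\,\big\langle\Nonlin[h^N](\Psi),(-\gensy)^{-1}\Nonlin[h^N](\Psi)\big\rangle.
\end{equ}
Since $\Nonlin[h^N](\Psi)$ sits in the second homogeneous Wiener chaos, with an explicit Fourier kernel read off from~\eqref{e:nonlinF}--\eqref{e:Jlm}, and since $-\gensy$ acts diagonally on chaos components, this inner product reduces to the Fourier sum
\begin{equ}
\big\langle\Nonlin[h^N](\Psi),(-\gensy)^{-1}\Nonlin[h^N](\Psi)\big\rangle \lesssim \|\Psi\|_{L^2}^2\sum_{1\le|\ell|,|m|\le N}\frac{c(\ell,m)^2}{|\ell|^2|m|^2(|\ell|^2+|m|^2)} \sim \|\Psi\|_{L^2}^2\,\log N,
\end{equ}
where the logarithmic (rather than polynomial) growth in $N$ reflects both the criticality of the equation in $d=2$ and the cancellation built into $c(\ell,m)=\ell_2m_2-\ell_1m_1$. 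The weak coupling choice $\const^2=\hat\lambda^2/\log N$ is designed precisely to cancel this factor, producing an $N$-uniform $O(|t-s|)$ bound on the second moment of the nonlinear drift. Upgrading to $2p$-th moments is then routine via hypercontractivity on the finite Wiener chaos. The zero mode $\hat h^N(t,0)$, governed by~\eqref{eq:modozero}, is treated by the same It\^o trick applied to $\cN^N_0[u^N]$, producing the analogous bound.
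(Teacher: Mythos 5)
Your proposal is correct and follows essentially the same route as the proof of this statement, which the paper does not reprove but imports from \cite{CES}: there too the argument runs through a martingale decomposition combined with the It\^o trick (recalled here as Lemma~\ref{l:ItoTrick}, and explicitly described as one of the main tools in the proof of Theorem~\ref{thm:Tightness}), with the key point being exactly the bound $\|(-\gensy)^{-1/2}\Nonlin[h^N](\Psi)\|^2\lesssim\|\Psi\|^2_{L^2}\log N$ whose logarithm is cancelled by $\const^2=\hat\lambda^2/\log N$. The only difference is cosmetic: \cite{CES} actually establishes the stronger tightness in $C([0,T],\CC^\alpha(\T^2))$, $\alpha<0$, whereas your Mitoma-plus-Kolmogorov argument yields the distributional tightness stated here, which is all that is needed.
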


\begin{remark}
 Theorem~\cite[Theorem 1.1]{CES} actually shows more, namely that for the choice of $\const$ in~\eqref{e:const}, 
$\{h^N\}_N$ is tight in $C([0,T],\CC^\alpha(\T^2))$ for any $\alpha<0$, where $\CC^\alpha$ is the 
Besov-H\"older space of distribution defined in~\cite[eq. (1.11)]{CES}. This is the optimal regularity 
one can expect for the solution to~\eqref{e:AKPZ}. We will not need this fact here. % but this fact will play no role in the subsequent analysis. 
%we will not use this fact in what follows. 
\end{remark}

\subsection{The action of Burgers' generator on $L^2(\eta)$}

In this section, we want to deepen our knowledge of the generator $\gen$ and understand how 
it acts on random variables in $L^2(\eta)$. 
To lighten the exposition, we will use the following convention throughout the rest of the paper. 
In line with Remark~\ref{rem:Ident}, we will identify linear operators acting on 
$L^2(\eta)$ with the corresponding linear operators acting on $\bigoplus_n L^2_{\sym}(\T^{2n})$, 
and we will denote them using the same symbol. 

\begin{lemma}\textnormal{\cite[Lemma 3.5]{CES}}\label{lem:generator}
For $N\in\N$, let $\gensy$ and $\gena$ be defined according to~\eqref{e:gens} and~\eqref{e:gena} respectively. 
For all $n \in \N$ the operator $\gensy$ is such that $\gensy(\wc_n)\subset\wc_n$ and 
on $\fock$ it satisfies $\gensy=\half\Delta$, i.e. for $\phi_n\in\wc_n$ we have
\begin{equ}[e:gens:fock]
\CF(\gensy \varphi_n) (k_{1:n}) = -\tfrac{1}{2} |k_{1:n}|^2 \hat\phi_n(k_{1:n}). \label{e:gens:fock}
\end{equ}
The operator $\gena$ instead can be written as the sum of two operators $\genap$ and $\genam$ which, 
for every $n\in\N$, map $\wc_n$ respectively into $\wc_{n+1}$ and $\wc_{n-1}$ and 
are such that $-\genap$ is the adjoint of $\genam$. Further, for $\phi_n\in\wc_n$, 
$\genap$ and $\genam$ are given by
\begin{align}
\CF(\genap \phi_n)(k_{1:n+1}) 
&= n \const 
|k_1+k_2|\nonlin_{k_1, k_2} \hat\phi_n(k_1 + k_2, k_{3:n+1})  \label{e:genap:fock}
\\
\CF(\genam \phi_n)(k_{1:n-1})
&=
2n(n-1) \const
\sum_{\ell+m = k_1}|m|
\nonlin_{k_1, -\ell} \hat \phi_n(\ell,m, k_{2:n-1}), \label{e:genam:fock}
\end{align}
where all the variables belong to $\Z^2\setminus\{0\}$ and 
we adopted the short-hand notations $k_{1:n}\eqdef (k_1,k_2,\dots,k_n)$ and 
$|k_{1:n}|^2\eqdef |k_1|^2+\dots+|k_n|^2$. 
Strictly speaking the functions on the right hand side of~\eqref{e:genap:fock} and \eqref{e:genam:fock} 
need to be symmetrised with respect to all permutations of their arguments.
\end{lemma}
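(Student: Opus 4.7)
The plan is to reduce everything to an algebraic computation on Wick monomials and then extend by linearity and $L^2$-density. I would write a generic element of $\SH_n$ as $F(\eta) = \sum_{p_{1:n}}\hat\phi_n(p_{1:n})\Wick{\hat\eta(p_1)\cdots\hat\eta(p_n)}$, with $\hat\phi_n$ symmetric, and rely on two elementary identities. The Malliavin derivative of a Wick monomial,
\[
D_k \Wick{\hat\eta(p_1)\cdots\hat\eta(p_n)} = \sum_{j=1}^n \delta_{k+p_j,0}\,\Wick{\prod_{i\neq j}\hat\eta(p_i)},
\]
follows directly from \eqref{e:Malliavin} together with $\E[\hat\eta(k)\hat\eta(j)]=\delta_{k+j,0}$, while the Wick recursion
\[
\hat\eta(k) \Wick{\prod_{i\in I}\hat\eta(p_i)} = \Wick{\hat\eta(k)\prod_{i\in I}\hat\eta(p_i)} + \sum_{l\in I}\delta_{k+p_l,0}\Wick{\prod_{i\in I\setminus\{l\}}\hat\eta(p_i)}
\]
expresses the ordinary product of a Wick monomial with one extra factor as a Wick monomial of higher degree plus its single contractions.

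For the symmetric part, I would plug $F$ into \eqref{e:gens} and apply the Wick recursion to rewrite $\hat v(-k)D_k F$: this yields a degree-$n$ Wick monomial together with a double-contraction term of degree $n-2$. A short inspection shows that the latter is exactly $D_{-k}D_k F$, so the two cancel in the combination $-\hat v(-k)D_k + D_{-k}D_k$. What survives is the degree-$n$ piece, whose coefficient, after performing the $k$-sum against the deltas and using the symmetry of $\hat\phi_n$, collapses to $-\tfrac12(|k_1|^2+\cdots+|k_n|^2)\hat\phi_n(k_{1:n})$. This proves \eqref{e:gens:fock}, and in particular $\gensy(\SH_n)\subset\SH_n$.

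For the antisymmetric part, I would first compute
\[
D_{-m-\ell}F = n\sum_{p_{2:n}}\hat\phi_n(m+\ell,p_{2:n})\Wick{\hat\eta(p_2)\cdots\hat\eta(p_n)}
\]
by symmetrising the deltas, then multiply by $\hat v(m)\hat v(\ell)$ and expand via the product-of-Wick-monomials rule. The fully uncontracted term, after identifying $(k_1,k_2,k_{3:n+1})=(m,\ell,p_{2:n})$ and collecting the factor $n$ from the Malliavin derivative, reproduces exactly the kernel \eqref{e:genap:fock} of $\genap$. The single-contraction terms pair $\{m,\ell\}$ with one of the $p_l$ ($l\ge 2$) and split into two families that double each other by virtue of the symmetry $\nonlin_{m,\ell}=\nonlin_{\ell,m}$; the choice of contracted index supplies an additional factor $n-1$, so that together with the $n$ from the Malliavin derivative one obtains the $2n(n-1)$ prefactor of \eqref{e:genam:fock}, and a relabelling reproduces the stated kernel. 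The $\delta_{m+\ell,0}$ contribution from $\E[\hat\eta(m)\hat\eta(\ell)]$ is automatically killed by the factor $|m+\ell|$.

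Finally, the adjointness relation $-\genap^*=\genam$ on $\fock$ follows either from Lemma \ref{lem:gen} (antisymmetry of $\gena$ with respect to $\eta$ forces the chaos-raising and chaos-lowering components to be mutual negative adjoints), or by a direct comparison of \eqref{e:genap:fock} and \eqref{e:genam:fock}: plugging the formulas into the Fock inner product \eqref{e:ScalarPFock} and performing the change of variables $(k_1,k_2)\mapsto(q,\ell)$ with $q=k_1+k_2$ maps one expression into minus the other, once the $n!$ normalisations are accounted for. I expect the main technical nuisance to be the bookkeeping of symmetrisations and combinatorial prefactors (factors of $n$, $n\pm 1$, $2$) arising from the interplay between the Malliavin derivative, the Wick recursion, and the symmetry of $\hat\phi_n$; everything else is a direct consequence of Wick calculus and the explicit form of the generator in \eqref{e:gens} and \eqref{e:gena}.
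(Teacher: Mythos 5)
The paper does not actually prove this lemma --- it imports it verbatim from \cite[Lemma 3.5]{CES} --- and your overall strategy (chaos expansion, Wick calculus, explicit bookkeeping of contractions) is indeed the standard route and the one followed there. Modulo a sign-convention wobble in your Malliavin derivative formula (your $\delta_{k+p_j,0}$ versus the $\hat f_n(p_{1:n-1},k)$ appearing in \eqref{e:Mallia}; one of the two conventions makes the cancellation $-\hat v(-k)D_k+D_{-k}D_k$ exactly diagonal and you should fix it consistently), the computations of \eqref{e:gens:fock}, \eqref{e:genap:fock} and of the prefactor $2n(n-1)$ in \eqref{e:genam:fock} are sound.

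There is, however, one genuine gap. When you expand $\hat v(m)\hat v(\ell)\,D_{-m-\ell}F$ for $F\in\wc_n$ via \eqref{e:Wick}, the matchings between $\{m,\ell\}$ and the $n-1$ remaining Wick factors have zero, one, \emph{or two} edges. You account for the zero-edge term (chaos $n+1$, giving $\genap$) and the one-edge terms (chaos $n-1$, giving $\genam$), and you correctly note that the $\1_{m+\ell=0}$ contraction is killed by $|m+\ell|$; but for $n\ge 3$ the two-edge matchings produce a component in $\wc_{n-3}$, with coefficient proportional to
\begin{equation*}
\sum_{m,\ell}|m+\ell|\,\nonlin_{m,\ell}\,\hat\phi_n(m+\ell,-m,-\ell,p_{\mathrm{rest}})\,,
\end{equation*}
which you drop silently. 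The assertion that $\gena$ maps $\wc_n$ into $\wc_{n+1}\oplus\wc_{n-1}$ only is precisely the statement that this term vanishes, and that is not automatic: after symmetrising the coefficient against the symmetric kernel $\hat\phi_n$ over the three arguments $(a,b,c)=(m+\ell,-m,-\ell)$ (which satisfy $a+b+c=0$, under which the cutoff in \eqref{e:Jlm} is also symmetric), the vanishing reduces to the algebraic identity $|a|^2c(b,c)+|b|^2c(c,a)+|c|^2c(a,b)=0$ for $a+b+c=0$, with $c(x,y)=x_2y_2-x_1y_1$ as in \eqref{e:nonlinCoefficient}. This is the divergence-free structure of the AKPZ nonlinearity and is the same identity underlying the antisymmetry of $\gena$ in Lemma \ref{lem:gen}; alternatively one can kill the $\wc_{n-3}$ component by pairing against $G\in\wc_{n-3}$, using antisymmetry and an induction on the chaos ($\gena G$ has components only in chaoses $n-2$, $n-4$, $n-6$, all orthogonal to $\wc_n$). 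Either way, an argument is needed here, and without it the decomposition $\gena=\genap+\genam$ is not established.
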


Before proceeding, we recall one of the main tools in the 
proof of Theorem~\ref{thm:Tightness} which is going to be useful also in our analysis, namely 
the so-called It\^o trick. First devised in~\cite{GubinelliJara2012} and since then exploited in a number 
of references, we provide its statement below in the form that will be used later on.  

\begin{lemma}\label{l:ItoTrick}
For $N\in\N$, let $u^N$ be the stationary solution to~\eqref{e:AKPZ:u}. Then, 
for any $p\geq 2$, $T>0$ and $F\in L^2(\eta)$ with finite chaos expansion, the following estimate holds 
\begin{equ}[e:ItoTrick]
\Exp\Big[\sup_{t\in[0,T]}\left|\int_0^t F(u^N_s)\dd s\right|^p\Big]^{1/p}\lesssim_p T^{1/2}\|(-\gensy)^{-1/2}F\|\,.
\end{equ}
\end{lemma}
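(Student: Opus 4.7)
The strategy is the classical Kipnis--Varadhan forward--backward martingale decomposition, combined with the Burkholder--Davis--Gundy (BDG) inequality and Gaussian hypercontractivity to handle the supremum and the exponents $p\ge 2$.

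First I would set $G\eqdef(-\gensy)^{-1}F$, which by Lemma~\ref{lem:generator} is well-defined since $\gensy$ acts diagonally on each Wiener chaos with strictly negative eigenvalues on $\fock_n$, $n\ge 1$. Because $F$ has finite chaos expansion, so does $G$, and moreover $\langle F,G\rangle=\|(-\gensy)^{-1/2}F\|^2$. By Dynkin's formula,
\begin{equ}
M^f_t\eqdef G(u^N_t)-G(u^N_0)-\int_0^t\gen G(u^N_s)\,\dd s
\end{equ}
is a square-integrable martingale, and since $\gen G=-F+\gena G$ one has $\int_0^t F(u^N_s)\,\dd s = M^f_t-G(u^N_t)+G(u^N_0)+\int_0^t\gena G(u^N_s)\,\dd s$.

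I would then exploit stationarity to introduce the time-reversed process $\bar u^N_t\eqdef u^N_{T-t}$ on $[0,T]$. By Lemma~\ref{lem:gen}, $\eta$ is invariant for $\gen$, while $\gensy$ and $\gena$ are the symmetric and antisymmetric parts of $\gen$; hence $\bar u^N$ is Markov with generator $\gen^\ast=\gensy-\gena$, and $\bar M^b_t\eqdef G(\bar u^N_t)-G(\bar u^N_0)-\int_0^t\gen^\ast G(\bar u^N_s)\,\dd s$ is again a square-integrable martingale. Using $\gen^\ast G=-F-\gena G$ and the change of variable $s\mapsto T-s$ one obtains
\begin{equ}
\int_0^t F(u^N_s)\,\dd s=(\bar M^b_T-\bar M^b_{T-t})-G(u^N_0)+G(u^N_t)-\int_0^t\gena G(u^N_s)\,\dd s,
\end{equ}
and summing this with the forward identity the boundary and $\gena$ contributions cancel exactly, giving the central identity
\begin{equ}
2\int_0^t F(u^N_s)\,\dd s= M^f_t+\bar M^b_T-\bar M^b_{T-t}.
\end{equ}

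Taking the supremum over $t\in[0,T]$, bounding $\sup_t|\bar M^b_T-\bar M^b_{T-t}|\le 2\sup_s|\bar M^b_s|$ and applying BDG to each martingale yields
\begin{equ}
\Exp\Big[\sup_{t\le T}\Big|\int_0^t F(u^N_s)\,\dd s\Big|^p\Big]^{1/p}\lesssim_p \Exp\bigl[\langle M^f\rangle_T^{p/2}\bigr]^{1/p}+\Exp\bigl[\langle \bar M^b\rangle_T^{p/2}\bigr]^{1/p}.
\end{equ}
The forward quadratic variation equals $\int_0^T\Gamma^N(G,G)(u^N_s)\,\dd s$ where $\Gamma^N(G,G)=\gen(G^2)-2G\gen G$ is the carré du champ; stationarity, invariance of $\eta$ (which forces $\E[\gen(G^2)]=0$) and antisymmetry of $\gena$ then give
\begin{equ}
\E[\Gamma^N(G,G)]=-2\E[G\gensy G]=2\langle F,(-\gensy)^{-1}F\rangle=2\|(-\gensy)^{-1/2}F\|^2,
\end{equ}
and the same holds for $\bar\Gamma^N(G,G)$. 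For $p=2$ this already produces~\eqref{e:ItoTrick}. For $p>2$, Jensen in the time integral together with stationarity yield $\Exp[\langle M^f\rangle_T^{p/2}]\le T^{p/2}\,\E[\Gamma^N(G,G)^{p/2}]$, and since $\Gamma^N(G,G)$ lies in a finite sum of Wiener chaoses of degree controlled by that of $F$, Gaussian hypercontractivity upgrades this to $\E[\Gamma^N(G,G)^{p/2}]^{2/p}\lesssim_p \E[\Gamma^N(G,G)]$, closing the estimate. The delicate step is the hypercontractivity argument, which is precisely where the finite-chaos-expansion hypothesis on $F$ becomes essential; the forward--backward decomposition, the carré du champ computation and BDG are otherwise standard.
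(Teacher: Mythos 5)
Your proof is correct and follows essentially the same route as the paper: the paper also sets $G=(-\gensy)^{-1}F$, invokes the forward–backward (Kipnis–Varadhan) martingale decomposition with BDG — there outsourced to the citation \cite[Lemma 4.1]{CES} — then uses Gaussian hypercontractivity (valid precisely because of the finite chaos expansion) and the carré du champ/energy identity $\E[\energy(G)]=2\|(-\gensy)^{1/2}G\|^2$ obtained by integration by parts. The only difference is that you spell out the time-reversal and cancellation explicitly rather than citing them.
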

\begin{proof}
In view of Lemma~\ref{lem:generator}, $(-\gensy)$ is strictly positive and invertible on $\fock$, 
hence, for $F$ as in the statement, $G\eqdef (-\gensy)^{-1} F$ is well-defined. 
Applying~\cite[Lemma 4.1, eq. (4.7)]{CES} to $G$ with 
$\nu_N\equiv 1$, we get 
\begin{equs}[e:ItoStep1]
\Exp\Big[\sup_{t\in[0,T]}\left|\int_0^t F(u^N_s)\dd s\right|^p\Big]^{\frac{1}{p}}&=\Exp\Big[\sup_{t\in[0,T]}\left|\int_0^t \gensy G(u^N_s)\dd s\right|^p\Big]^{\frac{1}{p}}\\
&\lesssim T^{1/2} \E\Big[|\energy(G)|^{\frac{p}{2}}\Big]^{\frac{1}{p}}\lesssim_p T^{1/2} \E\Big[\energy(G)\Big]^{\frac{1}{2}}
\end{equs}
where the energy functional $\energy$ is defined as 
\begin{equ}[e:Energy]
\energy(G)=\sum_{k\in\Z^2}|k|^2|D_kG|^2
\end{equ}
with $D_k$ as in~\eqref{e:Malliavin}, and in the last inequality we used Gaussian 
hypercontractivity~\cite[Theorem 1.4.1]{Nualart2006} which holds 
as $F$, and therefore $G$, has a finite chaos expansion. 
Now, in order to simplify the right hand side of~\eqref{e:ItoStep1}, we apply 
the integration by parts formula~\cite[eq. (2.11)]{CES}, so that we obtain
\begin{equs}[e:EnergyMart]
\E\Big[\energy(G)\Big]&= %\sum_{k\in\Z^2} |k|^2 \E[D_k G D_{-k} G]\\
\sum_{k\in\Z^2} |k|^2 \E\big[\big(-D_{-k} D_{k} G(\eta) +\hat\eta(-k) D_{k} G(\eta)\big)\,G(\eta)\big]\\
&=2\E\Big[\Big(\sum_{k\in\Z^2} \frac{1}{2}|k|^2 (\big(-D_{-k} D_{k} G(\eta) +\hat\eta(-k) D_{k} G(\eta)\big)\Big) \,G(\eta)\Big]\\
&=2\E\Big[G(\eta)(-\gensy)G(\eta) \Big]=2\|(-\gensy)^{1/2} G\|^2
\end{equs}
where we used also the definition of $\gensy$ in~\eqref{e:gens}. 
Since we chose $G=(-\gensy)^{-1} F$,~\eqref{e:ItoTrick} follows at once. 
\end{proof}

According to Lemma~\ref{lem:generator}, $\gensy$ is a {\it diagonal operator} - for every $n$, 
it maps $\wc_n$ into itself and it acts in Fourier space as multiplication by a Fourier multiplier. 
In the following statement, $\cS$ will denote a generic diagonal operator with 
multiplier $\sigma$, $\sigma$ corresponding to the collection $(\sigma_n)_n$, $\sigma_n$ being the multiplier on $\wc_n$. 

\begin{lemma}\textnormal{\cite[Section 3.1.1]{CET}}\label{l:DiagOffDiag}
Let $\cS$ be a diagonal operator with Fourier multiplier $\sigma$. 
Then, for every $\phi_1,\,\phi_2\in\wc_n$, the following decomposition holds
\begin{equ}
\langle \cS\genap\phi_1,\genap\phi_2\rangle=\langle \cS\genap\phi_1,\genap\phi_2\rangle_{\Di} +\sum_{i=1}^2 \langle \cS\genap\phi_1,\genap\phi_2\rangle_{\oD_i}
\end{equ}
where the first summand will be referred to as the ``diagonal part'' and is given by 
\begin{equs}
\langle \cS\genap\phi_1,\genap\phi_2\rangle_{\Di}\eqdef& n!\, n \,2 \const^2 \times\label{e:Diag}\\
& \times\sum_{k_{1:n}}|k_1|^2\hat\phi_1(k_{1:n})\overline{\hat\phi_2(k_{1:n})}\sum_{\ell+m=k_1}\sigma_{n+1}(\ell,m,k_{2:n}){(\nonlin_{\ell,m})^2}
\end{equs}
while the other two terms will be referred to as the ``off-diagonal part of type $1$ and $2$'' and are 
respectively given by 
\begin{equs}
\langle \cS\genap\phi_1,\genap\phi_2&\rangle_{\oDi}\eqdef n! \,c_{\oDi}(n)\,\const^2\times\label{e:OffDiag1}\\
&\times\sum_{k_{1:n+1}} \sigma_{n+1}(k_{1:n+1})   |k_1+k_2|\nonlin_{k_1,k_2}|k_1+k_3|\nonlin_{k_1,k_3}\times\\
&\qquad\qquad\times
\hat{\phi}_1(k_1+k_2,k_3,k_{4:n+1})\overline{\hat{\phi}_2(k_1+k_3,k_2,k_{4:n+1})}\,,\\
\langle \cS\genap\phi_1,\genap\phi_2&\rangle_{\ooDi}\eqdef n! \,c_{\ooDi}(n)\,\const^2\times\label{e:OffDiag2}\\
&\times \sum_{k_{1:n+1}} \sigma_{n+1}(k_{1:n+1})   |k_1+k_2|\nonlin_{k_1,k_2}|k_3+k_4|\nonlin_{k_3,k_4} \times\\
&\qquad\qquad\times\hat{\phi}_1(k_1+k_2,k_3,k_4,k_{5:n+1})\overline{\hat{\phi}_2(k_3+k_4,k_1,k_2,k_{5:n+1})}
\end{equs}
where, for $i=1,\,2$, $c_{\oD_i}(n)$ is an explicit positive constant only depending on $n$ and such that $c_{\oD_i}(n)= O(n^{i})$. 
\end{lemma}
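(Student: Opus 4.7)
The plan is to expand both $\genap\phi_1$ and $\genap\phi_2$ in Fourier space via the (symmetrized) formula \eqref{e:genap:fock}, insert them into the $\fock_{n+1}$ scalar product defined in \eqref{e:ScalarPFock}, and classify the resulting terms according to how the two index-pairings introduced by the two copies of $\genap$ intersect.

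First I would rewrite the symmetrization of \eqref{e:genap:fock} in closed form. Since $\nonlin_{a,b}$ is symmetric in $(a,b)$ and $\hat\phi_i$ is symmetric in all its arguments, the symmetrized kernel collapses to a sum over unordered pairs $\{a,b\}\subset\{1,\dots,n+1\}$, and the $n\const$ prefactor recombines with the symmetrization weights $2(n-1)!/(n+1)!$ to give
\begin{equs}
\CF(\genap\phi_i)(k_{1:n+1})=\frac{2\const}{n+1}\sum_{\{a,b\}}|k_a+k_b|\,\nonlin_{k_a,k_b}\,\hat\phi_i\big(k_a+k_b,\,k_{\{1,\dots,n+1\}\setminus\{a,b\}}\big).
\end{equs}
Next, using the diagonal action of $\cS$ on $\fock_{n+1}$ via the Fourier multiplier $\sigma_{n+1}$, I would substitute this expansion into
\begin{equs}
\langle\cS\genap\phi_1,\genap\phi_2\rangle=(n+1)!\sum_{k_{1:n+1}}\sigma_{n+1}(k_{1:n+1})\,\CF(\genap\phi_1)\,\overline{\CF(\genap\phi_2)}\,,
\end{equs}
so that the inner product becomes a double sum over pairs of unordered index-pairs $(\{a,b\},\{a',b'\})$. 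I would then split this double sum according to $|\{a,b\}\cap\{a',b'\}|\in\{2,1,0\}$, giving the diagonal and the two off-diagonal contributions respectively.

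The key point is that, in each class, the symmetry of $\sigma_{n+1}$ in its $n+1$ arguments and of $\hat\phi_i$ in its $n$ arguments allows one to relabel the dummy summation variables $k_1,\dots,k_{n+1}$ so that every configuration reduces to one canonical one: $\{1,2\}=\{1,2\}$ for the diagonal, $\{1,2\}$ vs.\ $\{1,3\}$ for off-diagonal type $1$, and $\{1,2\}$ vs.\ $\{3,4\}$ for type $2$. A simple count gives the multiplicities $\binom{n+1}{2}$, $(n+1)n(n-1)$, and $\binom{n+1}{2}\binom{n-1}{2}$ respectively. For the diagonal term one additionally substitutes $\ell+m$ as the new first variable and relabels $k_{3:n+1}\to k_{2:n}$, which produces the constraint $\ell+m=k_1$, the factor $|k_1|^2$ from the two copies of $|k_a+k_b|$, and the square $\nonlin_{\ell,m}^2$.

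Combining the three multiplicities with the overall prefactor $(n+1)!\cdot 4\const^2/(n+1)^2$ gives the coefficient $n!\cdot n\cdot 2\const^2$ of \eqref{e:Diag}, and polynomial-in-$n$ coefficients $c_{\oDi}(n)$, $c_{\ooDi}(n)$ of \eqref{e:OffDiag1}--\eqref{e:OffDiag2} whose asymptotic orders in $n$ can be read off directly from the counts above. There is no conceptual obstacle; the only real difficulty is the bookkeeping, namely simultaneously tracking the symmetrization factor $2\const/(n+1)$, the scalar-product factor $(n+1)!$, and the combinatorial multiplicities without misplacing a power of $n$ or $n+1$.
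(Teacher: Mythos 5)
The paper does not actually prove this lemma; it only points to the corresponding computation in [CET, Section 3.1.1], so your proposal is supplying the argument rather than replacing a different one. The route you take is the natural (and surely the intended) one, and the pieces you compute explicitly are correct: the symmetrised kernel $\CF(\genap\phi_i)(k_{1:n+1})=\tfrac{2\const}{n+1}\sum_{\{a,b\}}|k_a+k_b|\nonlin_{k_a,k_b}\hat\phi_i(k_a+k_b,k_{\{1:n+1\}\setminus\{a,b\}})$ follows from \eqref{e:genap:fock} exactly as you say, the classification of the double sum over pairs by $|\{a,b\}\cap\{a',b'\}|\in\{2,1,0\}$ is the right decomposition, the multiplicities $\binom{n+1}{2}$, $(n+1)n(n-1)$, $\binom{n+1}{2}\binom{n-1}{2}$ are correct (they add up to $\binom{n+1}{2}^2$), and the diagonal prefactor $(n+1)!\cdot\tfrac{4\const^2}{(n+1)^2}\cdot\binom{n+1}{2}=n!\,n\,2\const^2$ matches \eqref{e:Diag}. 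One small point you should make explicit: the relabelling of dummy variables that collapses each class onto its canonical representative uses that $\sigma_{n+1}$ is symmetric in its $n+1$ arguments; this is harmless (a diagonal operator on $\fock_{n+1}=L^2_\sym$ can always be taken with a symmetrised multiplier, and all multipliers used in the paper are symmetric), but it is an assumption you invoke silently.

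The one genuine soft spot is your last sentence, where you assert that the asymptotic orders of $c_{\oDi}(n)$ and $c_{\ooDi}(n)$ ``can be read off directly from the counts above'' — if you actually read them off, they do not give the orders stated in the lemma. With the overall prefactor $n!\tfrac{4\const^2}{n+1}$ in front of each canonical term, your multiplicities yield $c_{\oDi}(n)=\tfrac{4}{n+1}(n+1)n(n-1)=4n(n-1)=O(n^2)$ and $c_{\ooDi}(n)=\tfrac{4}{n+1}\binom{n+1}{2}\binom{n-1}{2}=n(n-1)(n-2)=O(n^3)$, i.e.\ one power of $n$ more than the claimed $O(n^{i})$ for $i=1,2$ (the ratios to the diagonal coefficient $2n$ are $O(n)$ and $O(n^2)$, which may be the source of the convention in the statement). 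You should either reconcile this with the cited equations of \cite{CET} or flag the discrepancy explicitly; note that it is immaterial for every application in the paper, since $n$ is always fixed and all constants are allowed to depend on it (e.g.\ the proof of Lemma~\ref{l:A+A-} only needs a polynomial-in-$n$ bound), but as written your proof claims to recover a statement that your own computation contradicts.
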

\begin{proof}
See~\cite[eq.'s (3.28) and (3.34)]{CET} for~\eqref{e:Diag},~\cite[eq.'s (3.29)-(3.30)]{CET} for~\eqref{e:OffDiag1}-\eqref{e:OffDiag2} and~\cite[eq. (3.31)]{CET} for the magnitude of $c_{\oD_i}(n)$, $i=1,\,2$. 
\end{proof}

Before stating the next lemma, let us introduce the so-called number operator. 

\begin{definition}\label{def:NoOp}
We define the {\it number operator} $\cN\colon \fock\to\fock$ as the linear diagonal operator acting on $\phi\in\fock$ 
as $(\cN\phi)_n\eqdef n \phi_n$. 
\end{definition}

In force of the results and notations above, we are ready to derive
the first bounds on the operators $\genap$ and 
$\genam$ when the operator $\cS$ in Lemma~\ref{l:DiagOffDiag} is $\gensy$. 

\begin{lemma}\label{l:A+A-}
Let $\cN$ be the number operator of Definition~\ref{def:NoOp}. Then, 
for every $\mu\ge0$ and $\phi\in \Gamma L^2$ one has
\begin{equs}
\|(\mu-\gensy)^{-1/2}\genap \phi\|&\lesssim \|\cN(-\gensy)^{1/2}\phi\|\,,\label{b:A+}\\
\|(\mu-\gensy)^{-1/2}\genam \phi\|&\lesssim \|\cN(-\gensy)^{1/2}\phi\|\,,\label{b:A-}
\end{equs}
while 
\begin{equ}
\|(\mu-\gensy)^{-1}\genap \phi\|\lesssim \frac{1}{\sqrt{\log N}}\|\cN\phi\|\label{b:LA+}\,.
\end{equ}
\end{lemma}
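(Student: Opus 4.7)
The plan is to work chaos-by-chaos: since $\genap$ and $\genam$ respectively raise and lower the Wiener chaos index by one while $\gensy$ and $\cN$ act diagonally with respect to the chaos decomposition, it suffices by~\eqref{e:Isometry} to establish each estimate on a single chaos $\wc_n$ and then sum in $n$. Fix then $\phi_n\in\wc_n$.

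To prove~\eqref{b:A+}, I would expand $\|(\mu-\gensy)^{-1/2}\genap\phi_n\|^2=\langle(\mu-\gensy)^{-1}\genap\phi_n,\genap\phi_n\rangle$ and invoke Lemma~\ref{l:DiagOffDiag} with the diagonal operator $\cS=(\mu-\gensy)^{-1}$, whose multiplier on $\wc_{n+1}$ is $\sigma_{n+1}(k)=(\mu+\tfrac12|k|^2)^{-1}$. For the diagonal piece~\eqref{e:Diag}, the pointwise bound $|\nonlin_{\ell,m}|^2\le (2\pi)^{-2}\indN{\ell/N,m/N}$ (which follows from $|c(\ell,m)|\le |\ell||m|$) combined with the elementary logarithmic estimate
\begin{equ}
\sup_{k_1,\,k_{2:n},\,\mu\ge0}\sum_{\substack{\ell+m=k_1\\ 1\le|\ell|,|m|\le N}}\frac{\indN{\ell/N,m/N}}{\mu+\tfrac12(|\ell|^2+|m|^2+|k_{2:n}|^2)}\,\lesssim\, \log N
\end{equ}
(verified by comparison with $\int_{\R^2}(|\ell|^2+|k_1-\ell|^2+a)^{-1}\dd\ell$ after the shift $\ell\mapsto\ell+k_1/2$) yields, after the symmetrisation $|k_1|^2=\tfrac1n|k_{1:n}|^2$ in~\eqref{e:Diag} and the identity $\const^2\log N=\hat\lambda^2$, a diagonal contribution bounded by $C\|(-\gensy)^{1/2}\phi_n\|^2$. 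For the two off-diagonal pieces~\eqref{e:OffDiag1}--\eqref{e:OffDiag2}, I would apply a weighted Cauchy-Schwarz in the $(k_{1:n+1})$-sum calibrated so that each copy of $\hat\phi_n$ is multiplied by the full eigenvalue weight; one factor then reconstructs $\|(-\gensy)^{1/2}\phi_n\|^2$ while the other is controlled by the same logarithmic display. The combinatorial prefactors $c_{\oDi}(n)=O(n)$ and $c_{\ooDi}(n)=O(n^2)$ from Lemma~\ref{l:DiagOffDiag} then supply the extra factor $n^2$ needed to upgrade $\|(-\gensy)^{1/2}\phi_n\|^2$ to $\|\cN(-\gensy)^{1/2}\phi_n\|^2$.

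Bound~\eqref{b:A-} is proved in parallel by applying Cauchy-Schwarz directly to the Fourier expression~\eqref{e:genam:fock} with the weight $w(\ell,m)=\mu+\tfrac12(|\ell|^2+|m|^2+|k_{2:n-1}|^2)$: summing over $k_1$ removes the constraint $\ell+m=k_1$ and the weighted $\hat\phi_n$-sum re-indexes into $\sum_{k_{1:n}}|k_{1:n}|^2|\hat\phi_n|^2$, while the kernel sum $\sum|m|^2\nonlin_{k_1,-\ell}^2/w$ is controlled by the logarithmic display above. The prefactor $4n^2(n-1)^2\const^2$ times $\log N$ then produces the $\cN^2$ factor with the correct $\hat\lambda^2$ scaling. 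For~\eqref{b:LA+} I would rerun the argument of~\eqref{b:A+} with $\cS=(\mu-\gensy)^{-2}$, so the multiplier becomes $(\mu+\tfrac12|k|^2)^{-2}$; a direct scaling argument replaces the logarithmic display above by the \emph{logarithm-free} estimate
\begin{equ}
\sup_{k_{2:n},\,\mu\ge0}\sum_{\substack{\ell+m=k_1\\ |\ell|,|m|\le N}}\frac{1}{(\mu+\tfrac12(|\ell|^2+|m|^2+|k_{2:n}|^2))^2}\,\lesssim\, \frac{1}{\mu+\tfrac12(|k_1|^2+|k_{2:n}|^2)}\,,
\end{equ}
which absorbs the prefactor $|k_1|^2$ in~\eqref{e:Diag}. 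The diagonal part is then at most $Cn\const^2\|\phi_n\|^2\le C\hat\lambda^2(\log N)^{-1}\|\cN\phi_n\|^2$, and the off-diagonals are handled in the same fashion, with the $n^2$ absorbed by $\cN^2$.

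The main technical hurdle is the off-diagonal Cauchy-Schwarz step: the weight must simultaneously reconstruct $\|(-\gensy)^{1/2}\phi_n\|^2$ (respectively $\|\phi_n\|^2$ for~\eqref{b:LA+}) on the $\hat\phi_n$-side and produce a logarithmic (respectively logarithm-free) bound on the kernel side, and the factors $c_{\oD_i}(n)$ from Lemma~\ref{l:DiagOffDiag} must be absorbed consistently into $\cN^2$. Once the correct calibration is chosen, the proof reduces to the two elementary lattice-sum estimates displayed above, whose contrast is exactly what distinguishes~\eqref{b:A+}-\eqref{b:A-} from the sharper~\eqref{b:LA+}.
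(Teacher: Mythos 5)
Your treatment of \eqref{b:A+} and \eqref{b:LA+} matches the paper's proof: decompose $\langle\cS\genap\phi_n,\genap\phi_n\rangle$ via Lemma~\ref{l:DiagOffDiag}, dominate the off-diagonal terms by the diagonal one via Cauchy--Schwarz at the cost of $c_{\oD_i}(n)\lesssim n^2$, and then use the $\log N$ lattice sum for $\cS=(\mu-\gensy)^{-1}$, respectively the logarithm-free bound for $\cS=(\mu-\gensy)^{-2}$ (your explicit absorption of the prefactor $|k_1|^2$ in the latter case is a detail the paper leaves implicit, and it is correct).

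Your argument for \eqref{b:A-}, however, has a genuine gap. You claim that the kernel side of your Cauchy--Schwarz, $\sum_{\ell+m=k_1}|m|^2\nonlin_{k_1,-\ell}^2/w(\ell,m)$, is controlled by the logarithmic display. It is not: the only pointwise bound on the unsymmetrized kernel is $|m|\,|\nonlin_{k_1,-\ell}|\le\tfrac{|m|}{2\pi}$ (since $|c(k_1,-\ell)|\le|k_1||\ell|$), so each summand is merely $O(1)$ with no decay in $\ell$. Concretely, for $k_1=(1,0)$, $\mu=0$, $n=2$ one has $\nonlin_{k_1,-\ell}=\tfrac{\ell_1}{2\pi|\ell|}\indN{k_1/N,-\ell/N}$ and
\[
\sum_{\ell+m=k_1}\frac{|m|^2\nonlin_{k_1,-\ell}^2}{w(\ell,m)}\;\gtrsim\;\sum_{1\le|\ell|\le N/2}\frac{\ell_1^2}{|\ell|^2}\;\gtrsim\; N^2,
\]
so your bound is off by a factor of order $N^2/\log N$. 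The missing ingredient is the cancellation the paper imports from \cite[Lemma 3.10]{CET}: since $\hat\phi_n$ is symmetric in $(\ell,m)$, one first symmetrizes the kernel, and the combination $\tfrac12\big(|m|\nonlin_{k_1,-\ell}+|\ell|\nonlin_{k_1,-m}\big)$ is bounded by a constant times $|k_1|$ (the linear growth in $|m|$ of one term cancels against the other, using $\big||m|-|\ell|\big|\le|k_1|$ together with $c(k_1,-\ell)+c(k_1,-m)=-c(k_1,k_1)$). This reduces the inner sum to $|k_1|\sum_{\ell+m=k_1}|\hat\phi_n|\,\mathds 1_{|\ell|,|m|\le N}$; the resulting $|k_1|^2$ is then absorbed by the external multiplier $(\mu+\tfrac12|k_{1:n-1}|^2)^{-1}$ coming from $(\mu-\gensy)^{-1/2}$ --- a factor your scheme never exploits --- and only at that point does the weighted Cauchy--Schwarz with weight $|\ell|^2+|m|^2$ produce the $\log N$ that cancels $\const^2$. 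Without the symmetrization step the estimate fails.
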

\begin{proof}
Let $\phi=(\phi_n)_n\in \Gamma L^2$. The square of  the left hand side of~\eqref{b:A+} is
  \begin{equ}
   \|(\mu-\gensy)^{-1/2}\genap \phi\|^2= \langle (\mu-\gensy)^{-1}\genap \phi, \genap \phi\rangle=\sum_n\langle (\mu-\gensy)^{-1}\genap \phi_n, \genap \phi_n\rangle\,.
 \end{equ}
  Thanks to Lemma~\ref{l:DiagOffDiag}, for each $n$,  we can decompose the scalar product at the right hand side 
  into diagonal and
  off-diagonal terms, defined according to~\eqref{e:Diag}--\eqref{e:OffDiag2}. For
  our purposes, we will just upper bound the off-diagonal terms by the
  diagonal ones, using the Cauchy-Schwarz inequality and the bounds $c_{\oD_i}(n)\lesssim n^2$, $i=1,\,2$. 
  Therefore we get
  \begin{equs}
  \langle &(\mu-\gensy)^{-1}\genap \phi_n, \genap \phi_n\rangle\lesssim n^2 \langle (\mu-\gensy)^{-1}\genap \phi_n, \genap \phi_n\rangle_{\Di}\\
  &= n!\, n^3\,\frac{2\hat\lambda^2}{\log N} \sum_{k_{1:n}}|k_1|^2|\hat\phi_n(k_{1:n})|^2\sum_{\ell+m=k_1}\frac{(\cK^N_{\ell,m})^2}{\mu+\tfrac12(|\ell|^2+|m|^2+|k_{2:n}|^2)}\,.
  \end{equs}
  Now, the inner sum can be controlled via  
  \begin{equs}
    \label{daillog}
    \sum_{\ell+m=k_1}\frac{(\cK^N_{\ell,m})^2}{\mu+\tfrac12 (|\ell|^2+|m|^2+|k_{2:n}|^2)}
    \lesssim\sum_{0<|\ell|\le N}\frac1{|\ell|^2}\lesssim \log N
  \end{equs}
  since, by~\eqref{e:nonlinCoefficient} $|\cK^N_{\ell,m}|\lesssim \mathds{1}_{1\le |\ell|\le N, 1\le|m|\le N, |\ell+m|\le N}$. Therefore, we obtain 
  \begin{equs} 
  \|(\mu-\gensy)^{-1/2}\genap \phi\|^2&\lesssim \sum_n n! \, n^3 \sum_{k_{1:n}}\tfrac12 |k_1|^2|\hat\phi_n(k_{1:n})|^2\\
  &= \sum_n n!\, n^2\sum_{k_{1:n}}\tfrac12 |k_{1:n}|^2|\hat\phi_n(k_{1:n})|^2=\| \cN(-\gensy)^{1/2}\phi\|^2
  \end{equs}
  where in the passage from the first to the second line we used that $\hat\phi$ is symmetric in its arguments.
  
  The bound \eqref{b:LA+} follows similarly by noting that if the
  denominator in \eqref{daillog} is squared, then the corresponding sum in~\eqref{daillog} is bounded
  uniformly in $N$.  
  
  As for \eqref{b:A-}, arguing as above we can focus on the case $\phi\in\fock_n$ for $n\in\N$. 
  If $\phi\in \fock_2$ then \cite[Lemma 3.10]{CET} gives the desired estimate: 
  just choose $G\equiv 1$ in that statement, so that $g(\cdot)\lesssim \log N$, and replace $\lambda^2$ by
  $\const^2$ in~\eqref{e:const}.
  If instead $\phi\in \Gamma\L^2_n, n>2$ note first that, by symmetrizing~\eqref{e:genam:fock} one has
  \begin{equ}[eq:Fourier_asym_genam]
    \cF(\genam\phi_n)(k_{1:n-1})={2n}\frac{\hat\lambda}{\sqrt{\log N}}\sum_{j=1}^{n-1}\sum_{\ell+m=k_j}|m|\nonlin_{k_j,-\ell}\hat\phi_n(\ell,m,k_{\{1:n-1\}\setminus \{j\}})
\end{equ}
so that, using the Cauchy-Schwarz inequality, the square of the left hand side of \eqref{b:A-} is upper bounded by
\begin{equ}[3c]
   n!\,4n^3\frac{\hat\lambda^2}{\log N}\sum_{k_{1:n-1}}\frac1{\mu+\tfrac12|k_{1:n-1}|^2}\Big(\sum_{\ell+m=k_1}|m|\nonlin_{k_1,-\ell}\hat \phi_n(\ell,m,k_{2:n-1})\Big)^2\,
 \end{equ}
(the factorial comes from the conventions in \eqref{e:ScalarPFock}).
As in the proof of \cite[Lemma 3.10]{CET}, the sum within the parenthesis is upper bounded by an absolute constant times
\begin{equs}
  |k_1|\sum_{\substack{\ell+m=k_1\\ 0<|\ell|,|m|\le N}}|\hat\phi_n(\ell,m,k_{2:n-1})|\,.
\end{equs}
Plugging this into \eqref{3c} one obtains an upper bound proportional to
\begin{equs}
  n!\,n^3&\frac{\hat\lambda^2}{\log N}\sum_{k_{1:n-1}}\Big(\sum_{\substack{\ell+m=k_1\\ 0<|\ell|,|m|\le N}}\frac{\sqrt{|m|^2+|\ell|^2}|\hat\phi_n(\ell,m,k_{2:n-1})|}{\sqrt{|m|^2+|\ell|^2}}\Big)^2\\
  &\leq n!\, n^3\hat\lambda^2\sum_{k_{1:n}}(|k_1|^2+|k_2|^2)|\hat\phi_n(k_{1:n})|^2\lesssim
 n!\, n^2  \hat \lambda^2\sum_{k_{1:n}}|k_{1:n}|^2 |\hat\phi_n(k_{1:n})|^2
\end{equs}
where in the first step we used Cauchy-Schwarz and~\eqref{daillog}, while in the second  
the symmetry of $\hat\phi$. The bound \eqref{b:A-} is then proven.
\end{proof}

\section{The effective diffusion coefficient}\label{sec:Dbulk}

The aim of this section is to determine the limit as $N\to\infty$ of the bulk diffusion coefficient $\Dbulk^N$
in~\eqref{def:Dbulk} and consequently deduce that $h^N$ is asymptotically diffusive as stated in Theorem~\ref{thm:Dbulk}. 
As a first step, we will derive an alternative representation for $\Dbulk^N$, thanks to 
which we will be able to connect its Laplace transform to the inverse of the generator $\gen$ of 
$u^N$ in~\eqref{e:AKPZ:u}. 

\begin{lemma}\label{l:Dbulk}
For $N\in\N$, let $u^N$ be the stationary solution of~\eqref{e:AKPZ:u} with coupling constant $\const$ in~\eqref{e:const} 
and $\gen$ its generator. 
Then, for all $t\geq 0$, one has 
% the bulk diffusion coefficient $\Dbulk^N$ in~\eqref{def:Dbulk} satisfies the following equality
\begin{equ}[e:DbulkNew]
t \,\Dbulk^N(t)=t + \Exp\Big[\Big(\int_0^t \const\cN^N_0[u_s^N]\dd s\Big)^2\Big]
\end{equ}
where $\cN^N_0$ is the $0$-th Fourier mode 
of the nonlinearity~\eqref{e:nonlin} of $h^N$ and is given by~\eqref{e:modozeroN}.

Moreover, for all $\mu>0$, the Laplace transform $\CD^N_\bulk$ of $t\mapsto t\Dbulk^N(t)$ is given by 
\begin{equ}[e:DbulkLaplace]
\CD^N_\bulk(\mu) \eqdef \int_0^\infty e^{-\mu t}\, t\,\Dbulk^N(t) \,\dd t =\frac{1}{\mu^2} + \frac{2}{\mu^2} \langle \nf_0,\,(\mu- \gen)^{-1}\nf_0\rangle
\end{equ}
where $ \nf_0$ is the unique kernel in $\fock_2$ such that 
\begin{equ}[eq:explFock]
\const\cN_0^N[\eta]=\sint_2(\nf_0)  \,.
\end{equ}
Its explicit Fourier transform can be read off~\eqref{e:modozeroN}.
\end{lemma}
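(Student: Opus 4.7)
\medskip

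\textbf{Proof plan.}

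\emph{Step 1: Alternative representation \eqref{e:DbulkNew}.}
Starting from the definition \eqref{def:Dbulk} and multiplying by $t$, what needs to be shown is
\[
2\const^2\int_0^t\!\!\int_0^s\!\!\int_{\T^2}\Exp\big[\cN^N[h^N](r,x)\cN^N[h^N](0,0)\big]\dd x\dd r\dd s = \Exp\Big[\Big(\int_0^t\const\cN^N_0[u^N_s]\dd s\Big)^2\Big].
\]
The idea is to exploit translation invariance of the stationary law. For fixed $r$, the two-point function $(x,y)\mapsto\Exp[\cN^N[h^N](r,x)\cN^N[h^N](0,y)]$ depends only on $x-y$; averaging the identity $\Exp[\cN^N[h^N](r,x)\cN^N[h^N](0,0)]=\Exp[\cN^N[h^N](r,x+y)\cN^N[h^N](0,y)]$ over $y\in\T^2$ and applying Fubini, the spatial integral on the left becomes
\[
\frac{1}{(2\pi)^2}\Exp\Big[\Big(\int_{\T^2}\cN^N[h^N](r,z)\dd z\Big)\Big(\int_{\T^2}\cN^N[h^N](0,y)\dd y\Big)\Big]=\Exp[\cN^N_0[u^N_r]\cN^N_0[u^N_0]]\,,
\]
using the Fourier normalization $\int_{\T^2}\varphi(x)\dd x=2\pi\hat\varphi(0)$ together with the fact that the zero Fourier mode of $\cN^N[h^N]$ coincides, by construction, with $\cN^N_0[u^N]$. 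On the other hand, expanding the square in the right-hand side and invoking stationarity,
\[
\Exp\Big[\Big(\int_0^t\const\cN^N_0[u^N_s]\dd s\Big)^2\Big]=\const^2\int_0^t\!\!\int_0^t\Exp[\cN^N_0[u^N_0]\cN^N_0[u^N_{|s-s'|}]]\dd s\dd s' = 2\const^2\int_0^t\!\!\int_0^s\Exp[\cN^N_0[u^N_0]\cN^N_0[u^N_r]]\dd r\dd s\,,
\]
which matches, yielding \eqref{e:DbulkNew}.

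\emph{Step 2: Green–Kubo computation of the Laplace transform \eqref{e:DbulkLaplace}.}
Since $\int_0^\infty e^{-\mu t}t\dd t=1/\mu^2$, the first term of \eqref{e:DbulkLaplace} is immediate. For the remainder, set $G\eqdef\const\cN^N_0[u^N]$ and $C(r)\eqdef\Exp[G(u^N_0)G(u^N_r)]$. The iterated-integral representation of Step 1 combined with the elementary Laplace identity (Laplace of $\int_0^t\int_0^s f(r)\dd r\dd s$ equals $\hat f(\mu)/\mu^2$) yields
\[
\int_0^\infty e^{-\mu t}\Exp\Big[\Big(\int_0^tG(u^N_s)\dd s\Big)^2\Big]\dd t=\frac{2}{\mu^2}\int_0^\infty e^{-\mu r}C(r)\dd r\,.
\]
By stationarity of $\eta$ (Lemma~\ref{lem:gen}) and the Markov property, $C(r)=\langle G,P_r G\rangle_{L^2(\eta)}$ where $P_r$ is the semigroup of $u^N$; recognizing $\int_0^\infty e^{-\mu r}P_r\dd r=(\mu-\gen)^{-1}$ on $L^2(\eta)$ (which is justified since $\gen=\gensy+\gena$ with $-\gensy$ strictly positive on the orthogonal complement of constants) gives $\int_0^\infty e^{-\mu r}C(r)\dd r=\langle G,(\mu-\gen)^{-1}G\rangle_{L^2(\eta)}$.

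\emph{Step 3: Identification with the Fock-space kernel.}
It remains to verify that $G=\const\cN^N_0[\eta]$ lies in the second Wiener chaos $\wc_2$ and to translate the scalar product to $\fock$. From \eqref{e:modozeroN}, $G=\const\sum_{\ell+m=0}\nonlin_{\ell,m}\hat\eta(\ell)\hat\eta(m)=\const\sum_{1\le|\ell|\le N}\nonlin_{\ell,-\ell}|\hat\eta(\ell)|^2$. Passing to Wick ordering introduces a deterministic contribution proportional to $\sum_{1\le|\ell|\le N}\nonlin_{\ell,-\ell}$, but by \eqref{e:nonlinCoefficient} one has $\nonlin_{\ell,-\ell}\propto(\ell_1^2-\ell_2^2)/|\ell|^2$, whose sum over $\ell$ vanishes by the symmetry $(\ell_1,\ell_2)\leftrightarrow(\ell_2,\ell_1)$. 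Hence $G=\sint_2(\nf_0)$ with $\nf_0\in\fock_2$ the symmetrization of $\const\nonlin_{\ell,m}\mathds{1}_{\ell+m=0}$ in Fourier variables. Under the convention of Remark~\ref{rem:Ident}, using the isometry \eqref{e:Isometry}–\eqref{e:ScalarPFock}, we obtain $\langle G,(\mu-\gen)^{-1}G\rangle_{L^2(\eta)}=\langle\nf_0,(\mu-\gen)^{-1}\nf_0\rangle_{\fock}$, completing the proof.

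\emph{Expected difficulty.} There is no genuine obstacle: the argument is a translation-invariance computation followed by a textbook Green–Kubo manipulation. The only items demanding some care are tracking the $2\pi$ from the Fourier normalization in Step 1, and verifying in Step 3 that the Wick renormalisation constant for the zero-mode vanishes so that $G$ sits in $\wc_2$ with the kernel $\nf_0$ read directly off \eqref{e:modozeroN}.
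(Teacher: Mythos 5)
Your proof is correct and follows essentially the same route as the paper, whose own "proof" merely defers \eqref{e:DbulkNew} to the translation-invariance computation of [CET, Lemma 4.1] and \eqref{e:DbulkLaplace} to the resolvent/Green--Kubo identity of [CES, Lemma 5.1]; your three steps are exactly the content of those two cited lemmas, including the check that the Wick constant of $\cN^N_0$ vanishes by the $\ell_1\leftrightarrow\ell_2$ symmetry so that $\const\cN^N_0[\eta]\in\wc_2$. The only cosmetic imprecision is your justification of $\int_0^\infty e^{-\mu r}P_r\,\dd r=(\mu-\gen)^{-1}$: the relevant fact is that $(P_r)_{r\ge0}$ is a strongly continuous contraction semigroup on $L^2(\eta)$ because $\eta$ is invariant (Hille--Yosida for $\mu>0$), not the strict positivity of $-\gensy$ alone.
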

\begin{proof}
Despite the different scaling, the proof of~\eqref{e:DbulkNew} follows 
the same steps as that of~\cite[Lemma 4.1]{CET}. 
On the other hand,~\eqref{e:DbulkLaplace} is a consequence of the 
representation~\eqref{e:DbulkNew} and~\cite[Lemma 5.1]{CES}. 
\end{proof}

For the previous Lemma to be of any use, we need to control the large $N$ behaviour of 
the scalar product at the right hand side of~\eqref{e:DbulkLaplace}. 
This, in turn, seems to require us to 
determine (sufficiently explicitly!) the solution $\tilde\fh^N$ of the {\it generator equation}  
\begin{equ}[e:GenEq1]
(\mu-\gen)\tilde\fh^N = \nf_0\,.
\end{equ}
Despite being linear,~\eqref{e:GenEq1} is extremely hard to study as $\gen$ is non-diagonal, 
in its chaos decomposition and in Fourier space, so that even though $\nf_0$ lives in 
$\wc_2$, $\tilde\fh^N$ will have non-trivial components in  {\it arbitrarily high} chaoses. 
As noted in~\cite[Section 3]{CET}, a way around it is to consider instead the {\it truncated generator equation}, 
an equation involving only finitely many chaoses and obtained by replacing 
the operator $\gen$ in~\eqref{e:GenEq1}  with $\gen_n\eqdef \sint_{\leq n}\gen \sint_{\leq n}$, 
where, for $n\in\N$, $I_{\leq n}$ is the
projection onto $\fock_{\leq n}\eqdef\bigoplus_{k=0}^n \fock_k$. 
More explicitly, we are led to study the solution $\tfhNn$ of 
\begin{equ}[e:TrGenEq]
(\mu-\gen_n)\tfhNn = \nf_0
\end{equ} 
which, by Lemma~\ref{lem:generator}, is equivalent to the hierarchical system 
\begin{equation}\label{e:System}
\begin{cases}
\big(\mu-\gensy\big)\tfhNn_n-\genap\tfhNn_{n-1}=0,\\
\big(\mu-\gensy\big)\tfhNn_{n-1}-\genap\tfhNn_{n-2}-\genam\tfhNn_{n}=0,\\
\dots\\
\big(\mu-\gensy\big)\tfhNn_2-\genap\tfhNn_1-\genam\tfhNn_3=\nf_0,\\
\big(\mu-\gensy\big)\tfhNn_1-\genam\tfhNn_2=0\,.
\end{cases}
\end{equation}

\begin{lemma}\label{l:TrGenEq}
For any $N,n\in\N$, the solution $\tfhNn$ of~\eqref{e:System} is such that $\tfhNn_1=0$ and 
\begin{equation}\label{e:SysDiff}
\begin{cases}
\tfhNn_2=(\mu-\gensy+\Op_n)^{-1}\nf_0& % \text{for $i=1$ or $2$}
\\
\tfhNn_j=(\mu-\gensy+\Op_{n+2-j})^{-1}\genap\tfhNn_{j-1}\,, & \text{for $j=3,\dots, n$\,}
\end{cases}
\end{equation}
where, for $j\geq 2$ the $\Op_j$'s are positive definite operators such that, for all $n\in\N$, $\Op_k(\wc_n)\subset\wc_n$, 
and are recursively defined via 
\begin{equation}\label{def:OpH}
\begin{cases}
\Op_2= 0\,, &\\
\Op_j = -\genam[\big(\mu-\gensy\big)+\Op_{j-1}]^{-1}\genap\,, & \text{for $j\geq 3$\,}.
\end{cases}
\end{equation}
\end{lemma}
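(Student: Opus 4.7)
The plan is to solve the hierarchical system~\eqref{e:System} by Gaussian elimination from the top chaos $n$ downward, each elimination producing a Schur complement that is precisely the operator $\Op_j$ of~\eqref{def:OpH}, and then to verify separately that the remaining component $\tfhNn_1$ vanishes by means of a Fourier--support argument specific to the source $\nf_0$.

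Before running the elimination I would show by induction on $j$ that every $\Op_j$ of~\eqref{def:OpH} is positive semi-definite and preserves each chaos $\wc_n$, which guarantees that $(\mu-\gensy)+\Op_{j-1}$ is invertible throughout the recursion. Since $\gensy$ acts on $\wc_n$ as multiplication by $-\tfrac12|k_{1:n}|^2$ by~\eqref{e:gens:fock}, we have $\mu-\gensy\geq \mu$, invertible with bounded positive inverse. Using the identity $\genap^{*}=-\genam$ from Lemma~\ref{lem:generator}, the recursion~\eqref{def:OpH} can be rewritten as $\Op_j=\genap^{*}[(\mu-\gensy)+\Op_{j-1}]^{-1}\genap$, which is manifestly positive once $\Op_{j-1}\geq 0$. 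Chaos-preservation is automatic since $\genap$ and $\genam$ shift the chaos index by exactly $\pm 1$, so their composition preserves $\wc_n$.

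With these building blocks in hand, the back-substitution is routine. The top equation of~\eqref{e:System} gives $\tfhNn_n=[(\mu-\gensy)+\Op_2]^{-1}\genap\tfhNn_{n-1}$ (using $\Op_2=0$); plugging this into the equation at chaos $n-1$ produces $[(\mu-\gensy)+\Op_3]\tfhNn_{n-1}=\genap\tfhNn_{n-2}$ with $\Op_3=-\genam(\mu-\gensy)^{-1}\genap$, and iterating the elimination downward yields the second line of~\eqref{e:SysDiff} for $j=3,\dots,n$. The same elimination at $j=2$ gives $[(\mu-\gensy)+\Op_n]\tfhNn_2=\nf_0+\genap\tfhNn_1$, so the first line of~\eqref{e:SysDiff} is recovered as soon as $\tfhNn_1=0$ is established.

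For this last point I would exploit the fact that, by~\eqref{e:modozeroN} and~\eqref{eq:explFock}, the Fourier kernel of $\nf_0$ is supported on $\{(\ell,m):\ell+m=0\}$. Let $V\subset\fock$ denote the subspace of those kernels whose Fourier support lies in $\{k_1+\dots+k_n=0\}$ at each chaos level $n$. A direct inspection of~\eqref{e:gens:fock},~\eqref{e:genap:fock} and~\eqref{e:genam:fock} shows that each of $\gensy$, $\genap$ and $\genam$ maps $V$ into itself; by induction the same then holds for every $\Op_j$ and every resolvent $[(\mu-\gensy)+\Op_{j-1}]^{-1}$. Consequently $\tfhNn_2\in V\cap\wc_2$, i.e.\ its kernel is supported on $k_1+k_2=0$. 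Evaluating $\genam\tfhNn_2$ at a momentum $k_1\neq 0$ via~\eqref{e:genam:fock} forces the summation indices to satisfy simultaneously $\ell+m=k_1$ and $\ell+m=0$, which is impossible; hence $\genam\tfhNn_2=0$, and the bottom equation of~\eqref{e:System} forces $\tfhNn_1=0$. The hardest part of the argument is precisely this final verification: the stability of $V$ under all the building blocks must be checked carefully from the explicit Fourier formulas, and the vanishing of $\genam\tfhNn_2$ relies crucially on the exclusion of the zero Fourier mode from the ambient Fock space.
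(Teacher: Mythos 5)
Your proposal follows essentially the same route as the paper: back-substitution from the top chaos producing the Schur complements $\Op_j$, positivity of the $\Op_j$ from the relation $(\genap)^{*}=-\genam$, and a Fourier-support argument (preservation of the zero-total-momentum set by $\gensy$, $\genap$, $\genam$) to kill $\genam\tfhNn_2$ and hence $\tfhNn_1$. The paper merely outsources the elimination and the operator properties to the earlier reference, while you spell them out.

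One step needs reordering, because as written your deduction is circular. You recover the first line of~\eqref{e:SysDiff} ``as soon as $\tfhNn_1=0$ is established'', but your proof that $\tfhNn_1=0$ rests on $\tfhNn_2\in V\cap\wc_2$, which you only know once $\tfhNn_2=(\mu-\gensy+\Op_n)^{-1}\nf_0$ — i.e.\ once the first line of~\eqref{e:SysDiff} already holds. The repair is one more elimination step: substituting $[(\mu-\gensy)+\Op_n]\tfhNn_2=\nf_0+\genap\tfhNn_1$ into the bottom equation of~\eqref{e:System} yields $[(\mu-\gensy)+\Op_{n+1}]\tfhNn_1=\genam[(\mu-\gensy)+\Op_n]^{-1}\nf_0$, and the right-hand side vanishes by your support argument applied to $[(\mu-\gensy)+\Op_n]^{-1}\nf_0$ (which lies in $V\cap\wc_2$ because $\nf_0$ does and the resolvent preserves $V$); invertibility then gives $\tfhNn_1=0$, and the formula for $\tfhNn_2$ follows afterwards. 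Alternatively, one can simply check that the candidate $(\tfhNn_1=0,\eqref{e:SysDiff})$ satisfies all of~\eqref{e:System} and invoke uniqueness of the solution of the linear system. With that adjustment the argument is complete and matches the paper's.
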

\begin{proof}
As shown in~\cite[Section 3]{CET}, the expression~\eqref{e:SysDiff} can be obtained 
by solving~\eqref{e:System} recursively expressing 
$\tfhNn_j$ in terms of $\tfhNn_{j-1}$ starting from $j=n$.  
Concerning the first chaos component of $\tfhNn$,~\eqref{e:modozeroN} and~\eqref{e:nonlinCoefficient} 
state that the Fourier transform of 
$\nf_0$ is concentrated on those $\ell$ and $m\in\Z^2$ such that 
$\ell+m=0$, and the same holds for $\tfhNn_2$. 
In fact, one readily sees from Lemma \ref{lem:generator} that if $\hat\phi_n$ is concentrated on 
the set $A^{(n)}_0\eqdef\{k_{1:n}\in\Z^{2n}\colon \sum_{i\le n}k_i=0\}$ 
then $\cF(\mathcal A_\pm^N\phi_n)$ are concentrated on $A^{(n\pm1)}_0$. 
As a consequence, $\cF(\Op_j \phi_n)$ is concentrated on $A^{(n)}_0$, because $\Op_j$ is defined through $\genap,\genam$ and 
the diagonal operator $(-\gensy)$.
But then, by~\eqref{e:genam:fock}, $\genam\tfhNn_2=0$ 
which in turn implies that $\tfhNn_1$ solves $(\mu-\gensy) \tfhNn_1=0$ and hence $\tfhNn_1=0$. 
At last, for the properties of the operators $\Op_j$'s we refer to~\cite[Lemma 3.2]{CET}. 
\end{proof}

In order to be able to exploit the previous Lemma, we need to analyse the operators $\Op_j$'s. 
In the next section, we will show that, for each $j$, $\Op_j$ can be replaced with {\it diagonal} 
operators at a cost which is negligible in the $N\to\infty$ limit. The advantage of dealing with (positive) diagonal 
operators is that their inverse is {\it explicit} so that the terms at the right hand side of~\eqref{e:SysDiff} 
(and in particular $\tfhNn_2$) can be more easily controlled.

\subsection{The replacement argument}\label{sec:Repl}

Before stating the main result of this section, we need to introduce some notation. 
For $N\in\N$, let $\Ll$ be the non-negative function defined on $[1/2,\infty)$ as
\begin{equ}[e:L]
\Ll(x)\eqdef \frac{\fc}{\log N^2} \log \left(1+\frac{N^2}{x}\right)\,,
\end{equ}
where $\fc$ is the constant in~\eqref{e:nueff}. 

In the following proposition, we derive an approximation of the operators $\Op_j$ in terms of diagonal operators 
given by a nonlinear transformation of $\gensy$. 

\begin{proposition}\label{p:Hj}\
Let $\mu\geq 0$, $n\in\N$ and $\phi_1,\,\phi_2\in\fock_{n}$ be such that $(-\gensy)^{1/2}\phi_i\in\fock$, $i=1,2$.
Then, for every $j\in\N$, $j\geq 2$, there exists a constant $C=C(n,j)$ such that 
\begin{equs}[e:Hj]
|\langle \phi_1,\big[\Op_j+&G_j(\Ll(\mu-\gensy))\gensy\big]\phi_2\rangle|\\
&\leq C\eps_N \|(-\gensy)^{1/2}\phi_1\|\|(-\gensy)^{1/2}\phi_2\|
\end{equs}
where $\eps_N\to 0$ as $N\to\infty$ uniformly in $n$ and $j$, $\Op_j$ is defined in~\eqref{def:OpH} 
and the functions $G_j$'s on $[0,\infty)$ are such that for all $j$ and all $x$
\begin{enumerate}[noitemsep, label=(\roman*)]
\item $G_j(0)=0$ and $G_j(x)\geq 0$,  
\item $|G_j(x)|\leq x$, $|G'_j(x)|\leq 1$ and $|G''_j(x)|\leq 1$,
\end{enumerate}
and are recursively defined via the relation
\begin{equ}[e:Gj]
G_j(x)\eqdef
\begin{cases}
0 & j=2\,,\\
\int_0^x\frac{1}{1+G_{j-1}(y)}\dd y & j\geq 3\,.
\end{cases}
\end{equ}
\end{proposition}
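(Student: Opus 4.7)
The plan is to proceed by induction on $j\geq 2$. The base case $j=2$ is trivial: $\Op_2=0$ and $G_2\equiv 0$, so both sides of \eqref{e:Hj} vanish. Moreover, properties (i)-(ii) follow directly by induction from the ODE \eqref{e:Gj}: positivity and monotonicity come from $1/(1+G_{j-1})\in (0,1]$, the bound $G_j(x)\leq x$ from $G_j'\leq 1$ combined with $G_j(0)=0$, and $|G_j''|\leq 1$ from differentiating once more and using $0\le G_{j-1}'\le1$ in the induction.

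For the inductive step, assume \eqref{e:Hj} holds at level $j-1$, and consider $\Op_j=-\genam[(\mu-\gensy)+\Op_{j-1}]^{-1}\genap$. Set the ``ideal'' diagonal resolvent
\[
\tilde\cS\eqdef\bigl[(\mu-\gensy)-G_{j-1}(\Ll(\mu-\gensy))\gensy\bigr]^{-1},
\]
which is a well-defined positive diagonal operator (by property (i)). Using the second resolvent identity,
\[
\bigl[(\mu-\gensy)+\Op_{j-1}\bigr]^{-1}=\tilde\cS-\tilde\cS\,E_{j-1}\,\bigl[(\mu-\gensy)+\Op_{j-1}\bigr]^{-1},
\]
where $E_{j-1}\eqdef\Op_{j-1}+G_{j-1}(\Ll(\mu-\gensy))\gensy$ is precisely the error controlled by the inductive hypothesis. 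The task thus splits into (a) bounding the remainder $\genam\tilde\cS E_{j-1}[\dots]^{-1}\genap$ in the bilinear form, and (b) analyzing $-\genam\tilde\cS\genap$ and comparing it to $-G_j(\Ll(\mu-\gensy))\gensy$.

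For part (a), the inductive hypothesis controls $E_{j-1}$ between two vectors of the form $(-\gensy)^{1/2}(\dots)$. Inserting this between $\genam$ on the left and $\genap$ on the right, one applies Lemma \ref{l:A+A-} (in the versions \eqref{b:A+}-\eqref{b:A-}) to dispatch the raising/lowering operators at the cost of factors $\cN^{O(1)}$, which produces an $n$-dependent prefactor absorbed into $C(n,j)$ and an overall smallness $\eps_N$ inherited from the inductive hypothesis. For part (b), write $\langle\phi_1,\genam\tilde\cS\genap\phi_2\rangle=-\langle\tilde\cS\genap\phi_1,\genap\phi_2\rangle$ and invoke Lemma \ref{l:DiagOffDiag} with $\cS=\tilde\cS$ to split into the diagonal contribution \eqref{e:Diag} and the two off-diagonal ones \eqref{e:OffDiag1}-\eqref{e:OffDiag2}. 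The off-diagonal terms do not produce the dangerous logarithm (the Cauchy-Schwarz bound on them sums to $O(1)$ rather than $O(\log N)$ because the two distinct $\nonlin$-factors do not share their momentum constraint), so when multiplied by $\const^2=\hat\lambda^2/\log N$ they contribute at most $O(1/\log N)$.

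The heart of the argument is the asymptotic analysis of the diagonal multiplier. Symmetrization and the Riemann-sum approximation of $\sum_{\ell+m=k_1}\tilde\sigma_{n+1}(\ell,m,k_{2:n})(\nonlin_{\ell,m})^2$ by a 2D integral (the error is $O(1/\log N)$ uniformly) together with the angular average $\int_0^{2\pi}\cos^2(2\theta)d\theta=\pi$ of $c(\ell,-\ell)^2/(|\ell|^2|m|^2)$ reduce the leading term to a radial integral in $s=\mu+\tfrac12(|\ell|^2+|m|^2+|k_{2:n}|^2)$ of the form
\[
\frac{\const^2\,|k_1|^2}{8\pi}\int_{s_{\min}}^{\sim N^2}\frac{ds}{s\bigl(1+G_{j-1}(\Ll(s))\bigr)},\quad s_{\min}=\mu+\tfrac12|k_{2:n}|^2.
\]
Now use that $\dd \Ll(s)=-\tfrac{\fc}{s\log N^2}\dd s\,(1+O(s/N^2))$ to change variable from $s$ to $\Ll$; the prefactor $\const^2\log N^2/(8\pi\fc)$ collapses (since $\fc=\hat\lambda^2/\pi$), and the remaining integral is $\int_0^{\Ll(s_{\min})}d\Ll/(1+G_{j-1}(\Ll))=G_j(\Ll(s_{\min}))$ by the very definition \eqref{e:Gj}. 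After multiplication by the prefactors $n!\,n\cdot 2\const^2$ from \eqref{e:Diag} and symmetrization over the $k$-indices, this matches the multiplier $\tfrac12|k_{1:n}|^2G_j(\Ll(\mu+\tfrac12|k_{1:n}|^2))$ of $-G_j(\Ll(\mu-\gensy))\gensy$ on $\fock_n$.

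The main obstacle I expect is the uniformity of the error $\eps_N$ in the chaos index $n$ and in $j$: the Riemann-sum error must be shown to be $o(1)$ in an \emph{operator} sense weighted by $\|(-\gensy)^{1/2}\cdot\|$, not just pointwise in $k$, and one must keep track that the $n$-dependence of the constants produced at each inductive step (from Lemma \ref{l:A+A-} and the $c_{\oD_i}(n)$) collapses into a $j$-dependent-but-finite $C(n,j)$. Handling the boundary cases of the radial integral (small $s_{\min}$ where $\Ll$ saturates, and the tail $s\sim N^2$ where $\Ll\to0$) is delicate but routine given properties (i)-(ii) of $G_{j-1}$.
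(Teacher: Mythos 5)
Your proposal is correct and follows essentially the same route as the paper: induction on $j$, the resolvent identity to split off the error $\Op_{j-1}+G_{j-1}(\Ll(\mu-\gensy))\gensy$ (controlled by the inductive hypothesis together with the positivity of the perturbations and Lemma~\ref{l:A+A-}), and the diagonal/off-diagonal analysis of $-\genam\tilde\cS\genap$ with the Riemann-sum and change-of-variables computation producing the recursion $G_j=\int_0^x(1+G_{j-1})^{-1}$. The only difference is organisational: the paper packages your step (b) as the standalone Replacement Lemma~\ref{l:GenBound} (with general $H,H^+$, reused later in Lemma~\ref{p:RecH1}), whereas you carry it out inline for $H^+=H=1+G_{j-1}$.
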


The proof of the previous statement relies on the following Replacement Lemma which represents one of the  
main technical tools of the present paper. Its formulation, more general than what needed for the 
Proposition~\ref{p:Hj}, is chosen to be able to approximate a broader class of operators and 
it will play a crucial role also in the upcoming sections. 

\begin{lemma}[Replacement Lemma]\label{l:GenBound}
Let $H$ and $H^+$ be real-valued differentiable functions on $[0,\infty)$ such that 
\begin{enumerate}[noitemsep]
\item for every compact subset $\cK$ of $[0,\infty)$ there exists a finite constant $K>0$ for which 
  \begin{equ}
    \label{e:KAPPA}
\sup_{y\in\cK}\{|H(y)|,|H'(y)|,|H^+(y)|, |(H^+)'(y)|\}\leq K
\end{equ}
\item for all $y\ge0$, $H(y)\geq 1$ and  $H^+(y)\geq 0$.
\end{enumerate}
Let $\mu\geq 0$, $N\in\N$ and $\cJ^N$ be the operator defined by 
\begin{equs}[def:JN]
\cJ^N\eqdef %&(\mu-H(\Ll(\mu-\gensy))\gensy)^{-1}\times\\
(\mu-H^+(\Ll(\mu-\gensy))\gensy)\times (\mu-H(\Ll(\mu-\gensy))\gensy)^{-2}
\end{equs}
where $\Ll$ is as in~\eqref{e:L}. 
Let $n\in\N$ and $\phi_1,\,\phi_2\in\fock_{n}$ be such that $(-\gensy)^{1/2}\phi_i\in\fock$, $i=1,2$.
Then, there exists a constant $C=C(n)$, possibly depending on $K$, for which
\begin{equ}[e:JN]
|\langle (-\genam\cJ^N\genap+\tilde H(\Ll(\mu-\gensy))\gensy)\phi_1,\phi_2\rangle|\leq C\eps_N \|(-\gensy)^{1/2}\phi_1\|\|(-\gensy)^{1/2}\phi_2\|
\end{equ}
where $\eps_N\to 0$ as $N\to\infty$ uniformly in $n$ and $K$, and $\tilde H$ is a non-negative 
bounded differentiable function on $[0,\infty)$ with first derivative bounded on compact sets, defined by 
\begin{equ}[e:Htilde]
\tilde H(x)=\int_0^x \frac{H^+(y)}{H(y)^2}\dd y\,.
\end{equ}
\end{lemma}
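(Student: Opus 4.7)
The plan is to exploit that $-\genam$ is the adjoint of $\genap$ with respect to the $\fock$ scalar product (Lemma~\ref{lem:generator}), which lets me rewrite
\begin{equation*}
\langle -\genam\cJ^N\genap\phi_1,\phi_2\rangle = \langle \cJ^N\genap\phi_1,\genap\phi_2\rangle.
\end{equation*}
Since $\cJ^N$ is a function of $\gensy$, it is a diagonal operator with Fourier multiplier
\begin{equation*}
\sigma_{n+1}(k_{1:n+1}) = \frac{\mu + H^+(\Ll(\mu+\tfrac12|k_{1:n+1}|^2))\cdot \tfrac12|k_{1:n+1}|^2}{\bigl(\mu + H(\Ll(\mu+\tfrac12|k_{1:n+1}|^2))\cdot \tfrac12|k_{1:n+1}|^2\bigr)^2},
\end{equation*}
so I can apply Lemma~\ref{l:DiagOffDiag} with $\cS=\cJ^N$ to decompose the right-hand side into a diagonal contribution plus two off-diagonal contributions of types $1$ and $2$.

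For the off-diagonal terms, I would follow the Cauchy--Schwarz scheme used in the proof of Lemma~\ref{l:A+A-}, together with the elementary bound $\sigma_{n+1}(k_{1:n+1})\lesssim K(\mu+\tfrac12|k_{1:n+1}|^2)^{-1}$ (which follows from $H\ge 1$ and the compact-set bound \eqref{e:KAPPA}). Unlike the diagonal term, the sum constraint $\ell=k_1+k_2$, $m=k_1+k_3$ in \eqref{e:OffDiag1}--\eqref{e:OffDiag2} involves two independent summation variables tied to three ``external'' $k_i$'s, which destroys the radial singularity responsible for the $\log N$ divergence in \eqref{daillog}. Hence the off-diagonal sum remains $O(1)$, and once multiplied by the prefactor $\const^2=\hat\lambda^2/\log N$ it produces an $O(1/\log N)$ error bounded as claimed. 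The precise bound is $\eps_N\|(-\gensy)^{1/2}\phi_1\|\|(-\gensy)^{1/2}\phi_2\|$ after distributing the $|k_1|$-type weights onto $\phi_1,\phi_2$.

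The heart of the proof is the diagonal contribution. By~\eqref{e:Diag}, it reads
\begin{equation*}
2\const^2\, n!\, n\sum_{k_{1:n}}|k_1|^2\hat\phi_1(k_{1:n})\overline{\hat\phi_2(k_{1:n})}\, S^N(k_1,k_{2:n}),
\end{equation*}
where $S^N(k_1,k_{2:n})=\sum_{\ell+m=k_1}(\nonlin_{\ell,m})^2 \sigma_{n+1}(\ell,m,k_{2:n})$. I would approximate this discrete sum by the corresponding integral on $\R^2$, use that $c(\ell,k_1-\ell)^2/(|\ell|^2|k_1-\ell|^2)\to\cos^2(2\theta)$ as $|\ell|\to\infty$ so that its angular average equals $1/2$, and combine the radial piece with a change of variable $y=\Ll(\mu+u)$ where $u=\tfrac12(|\ell|^2+|m|^2+|k_{2:n}|^2)$. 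Using $\Ll'(x)\sim-\fc/(x\log N^2)$ in the relevant range, the radial integral becomes
\begin{equation*}
S^N(k_1,k_{2:n})\approx \frac{\log N}{4\pi\fc}\int_0^{\Ll(\mu+\tfrac12|k_{1:n}|^2)}\!\!\frac{H^+(y)}{H(y)^2}\dd y = \frac{\log N}{4\pi\fc}\,\tilde H(\Ll(\mu+\tfrac12|k_{1:n}|^2)),
\end{equation*}
uniformly in $k_{2:n}$. Multiplying by $2\const^2\, n$ and using $\fc=\hat\lambda^2/\pi$ gives $\tfrac{n}{2}\tilde H(\Ll(\mu+\tfrac12|k_{1:n}|^2))$; finally, symmetry of $\hat\phi_1,\hat\phi_2$ lets me replace $n|k_1|^2$ by $|k_{1:n}|^2$, reproducing precisely $-\langle \tilde H(\Ll(\mu-\gensy))\gensy\phi_1,\phi_2\rangle$.

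The main obstacle is making the integral approximation \emph{uniform} in $k_{2:n}$ and in $n$: the multiplier $\sigma_{n+1}$ depends on these auxiliary variables through $|k_{2:n}|^2$, and one needs to separate the regime $|k_{1:n}|^2\lesssim N^2$ (where the change of variables $y=\Ll(\mu+u)$ is well-behaved) from the regime $|k_{1:n}|^2\sim N^2$ (where the upper cutoff in the radial integral matters). The former contributes the main term above plus errors that vanish as $N\to\infty$ thanks to the Riemann-sum approximation and the bounds on $H,H'$ from \eqref{e:KAPPA}; the latter contributes negligibly once weighted by $(-\gensy)^{1/2}\phi_i$ in the scalar product, because $\Ll(\mu+\tfrac12|k_{1:n}|^2)\to 0$ there and $\tilde H$ is continuous with $\tilde H(0)=0$. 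Technical estimates of this type are postponed to Appendix~\ref{a:Approx}; the cleanliness of the final constants hinges on the specific form of $\Ll$ chosen in \eqref{e:L}.
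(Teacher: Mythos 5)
Your proposal follows essentially the same route as the paper: rewrite the pairing as $\langle \cJ^N\genap\phi_1,\genap\phi_2\rangle$, decompose via Lemma~\ref{l:DiagOffDiag}, show the off-diagonal contributions are $O(1/\log N)$ after Cauchy--Schwarz and the bound $J^N\lesssim (K\vee1)(\mu+\tfrac12|k_{1:n+1}|^2)^{-1}$, and identify the diagonal part with $\tilde H(\Ll(\mu-\gensy))\gensy$ through a Riemann-sum approximation, polar coordinates with the $\cos^2(2\theta)$ angular average, and the change of variables $y=\Ll(\mu+u)$, deferring the uniform error control to Appendix~\ref{a:Approx} (Proposition~\ref{p:Approx}). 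The constants check out ($S^N\approx\frac{\log N}{4\pi\fc}\tilde H$ matches the paper's $P^N=\frac{4\hat\lambda^2}{\log N}S^N\approx\tilde H$ with $\fc=\hat\lambda^2/\pi$), so the argument is correct and not materially different from the paper's.
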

\begin{proof}
In order to estimate the left hand side of~\eqref{e:JN}, we first decompose it in diagonal and 
off-diagonal terms as in Lemma~\ref{l:DiagOffDiag} and obtain
\begin{equs}
|\langle (-\genam\cJ^N\genap&+\tilde H(\Ll(\mu-\gensy))\gensy)\phi_1,\phi_2\rangle|\\
\leq&  \Big|\langle\cJ^N\genap \phi_1,\genap\phi_2\rangle_{\Di}+ \langle \tilde H(\Ll(\mu-\gensy))\gensy\phi_1,\phi_2\rangle_{\fock_n}\Big|\label{e:Di}\\
&%\qquad\qquad\qquad\qquad\qquad\qquad
+\sum_{i=1,2}|\langle (\cJ^N\genap\phi_1,\genap\phi_2\rangle_{\mathrm{off}_i}|\,.\label{e:oDi}
\end{equs}
We will separately bound~\eqref{e:Di} and~\eqref{e:oDi}, starting with the former. 
By~\eqref{def:JN}, one sees that $\cJ^N$ is a diagonal operator whose Fourier multiplier on $\wc_n$ is 
$J^N$, given by 
\begin{equ}[e:kernelJN]
J^N(\mu,k_{1:n})
\eqdef \frac{\mu+\tfrac12|k_{1:n}|^2H^+(\Ll(\mu+\tfrac12|k_{1:n}|^2))}{[\mu +\tfrac12|k_{1:n}|^2H(\Ll(\mu+\tfrac12|k_{1:n}|^2))]^2}\,.
\end{equ}
Hence, thanks to~\eqref{e:Diag} and the definition of $P^N$ in~\eqref{e:J},~\eqref{e:Di} equals
\begin{equs}
n!\Big|n&\sum_{k_{1:n}}\tfrac12|k_1|^2\hat\phi_1(k_{1:n})\overline{\hat\phi_2(k_{1:n})} \Big(P^N(\mu,k_{1:n})-\tilde H(\Ll(\mu+\tfrac12|k_{1:n}|^2))\Big)\Big|\\
&\leq \tilde\eps_N n! \,n \sum_{k_{1:n}}\tfrac12 |k_1|^2|\hat\phi_1(k_{1:n})||\hat\phi_2(k_{1:n})|= \tilde\eps_N n! \sum_{k_{1:n}} \tfrac12 |k_{1:n}|^2|\hat\phi_1(k_{1:n})||\hat\phi_2(k_{1:n})|\\
&\lesssim_n\tilde\eps_N \|(-\gensy)^{1/2}\phi_1\|\|(-\gensy)^{1/2}\phi_2\|
\end{equs}
where $\tilde\eps_N$ is given by 
\begin{equ}[e:epsN]
\tilde\eps_N\eqdef \sup_{\substack{k_{1:n}\in\Z^{2n}\setminus\{0\}\\ \mu\geq 0}}\Big|P^N(\mu,k_{1:n})-\tilde H(\Ll(\mu+\tfrac12|k_{1:n}|^2))  \Big| 
\end{equ}
and in the last two steps we respectively exploited the symmetry of $\phi_1$, $\phi_2$, and the Cauchy-Schwarz inequality. 
At this point, we only need to show that $\tilde\eps_N$ converges to $0$ as $N\to\infty$, 
which in turn is a direct consequence of Proposition~\ref{p:Approx}. 
In view of~\eqref{e:Approx} we further deduce the shape of $\tilde H$ in~\eqref{e:Htilde}. 
 so that the estimate in~\eqref{e:JN} on~\eqref{e:Di} follows.  

We now treat the off-diagonal terms for which we proceed similarly to what done in the proof of~\cite[Lemma 3.9]{CET}.  
We start with the off-diagonal terms of the first type defined in~\eqref{e:OffDiag1}. 
Relabelling the variables, we need to control
\begin{equs}
n! c_{\oD_1}(n)\frac{\hat\lambda^2}{\log N}\sum_{j_{1:3},k_{3:n}} J^N(\mu,j_{1:3},k_{3:n})& |j_1+j_2|\nonlin_{j_1,j_2}|j_1+j_3|\nonlin_{k_1,k_3}\times\\
&\times
\hat{\phi}_1(j_1+j_2,j_3,k_{3:n})\overline{\hat{\phi}_2(j_1+j_3,j_2,k_{3:n})}\,.
\end{equs}
%Note that $\tilde{H}(\Ll(\mu-\gensy))\gensy$ is a diagonal operator, so that it vanishes on the off-diagonal. Moreover, 
%By symmetry it is sufficient to bound the term in~\eqref{e:OffDiag1} for which $\underline{i}= (1,1)$ and $\underline{j}= (2,3)$. 
where $J^N$ is as in~\eqref{e:kernelJN}. Defining
\begin{equ}
\Phi_i(k_{1:n})=\hat{\phi}_i(k_{1:n})\prod_{j=1}^{n}|k_j|\,,\qquad i=1,2	
\end{equ}
applying Cauchy-Schwarz and estimating $|\nonlin_{j_1,j_2}|,\,|\nonlin_{j_1,j_3}|$ by $1$, 
we see that the expression above can be bounded by
\begin{equs}
&n! c_{\oD_1}(n)\frac{\hat\lambda^2}{\log N}\sum_{j_{1:3},k_{3:n}}\frac{|\Phi_1(j_1+j_2, j_3,k_{3:n})||\Phi_2(j_1+j_3,j_2,k_{3:n})|}{|j_3||j_2|\prod_{j=3}^n|k_j|^2} J^N(\mu,j_{1:3}, k_{3:n})\\
&\leq\frac{\hat\lambda^2}{\log N}\prod_{i=1}^{2}\Big( n! c_{\oD_1}(n)\sum_{j_{1:3},k_{3:n}}\frac{|\Phi_i(j_1+j_2, j_3,k_{3:n})|^2}{|j_3||j_2|\prod_{j=3}^n|k_j|^2} J^N(\mu,j_{1:3}, k_{3:n})\Big)^{\frac12}\\
&=\frac{\hat\lambda^2}{\log N}\prod_{i=1}^2\Big( n! c_{\oD_1}(n)\sum_{k_{1:n}}|\hat{\phi}_i(k_{1:n})|^2|k_1|^2|k_2|\sum_{j_1+j_2=k_1}\frac{1}{|j_2|} J^N(\mu,j_{1:2}, k_{2:n})\Big)^{\frac12}\,.\label{e:offi1}
\end{equs}
Note that the argument of $H^{+}$ in \eqref{e:kernelJN} is at most $\fc+1$, so that $H^+\le K$ since it is  bounded on compact sets. Also,  $H\geq 1$ and $(\mu+K x)/(\mu+x)\leq K\vee 1$, so we have
\begin{equ}
J^N(\mu,j_{1:2}, k_{2:n})\leq \frac{K\vee 1}{\mu +\tfrac12(|j_{1:2}|^2+|k_{2:n}|^2)}\,.
\end{equ}
Using further that $|j_{1:2}|^2+|k_{2:n}|^2\geq \tfrac14 (|j_2|^2+|k_{1:n}|^2)$ (whenever $j_1+j_2=k_1$), 
we obtain an upper bound for the inner sum in~\eqref{e:offi1} of the form
\begin{equs}
K\vee 1\sum_{j_1+j_2=k_1}\frac{1}{|j_2|(\mu +\tfrac{1}{8}(|j_2|^2+|k_{1:n}|^2))}	
\lesssim \frac{1}{N}\int_{\mathbb R^2} \frac{\dd x}{|x|(|k_{1:n}/N|^2 + |x|^2)}
\lesssim \frac{1}{|k_{1:n}|}\,,
\end{equs}
where, in  the last two steps, we passed to polar coordinates and computed the remaining integral. 
Hence, we conclude
\begin{equs}
|\langle (\cJ^N\genap\phi_1,\genap\phi_2\rangle_{\mathrm{off}_1}|&\lesssim \frac{\hat\lambda^2}{\log N}\prod_{i=1}^2\Big(n!c_{\oD_1}(n)\sum_{k_{1:n}}|\hat{\phi}_i(k_{1:n})|^2|k_1|^2\Big)^{\frac12}\\
&\lesssim \frac{1}{\log N}\|(-\gensy)\phi_1\|\|(-\gensy)\phi_2\|
\end{equs}
where the constant hidden into $\lesssim$ depends on $n$ and $K$. 

For the off-diagonal terms of the second type we proceed similarly. With the same notations as above, 
by~\eqref{e:OffDiag2} we have 
\begin{equs}
&|\langle (\cJ^N\genap\phi_1,\genap\phi_2\rangle_{\mathrm{off}_2}|\\
&\leq n! c_{\oD_2}(n)\frac{\hat\lambda^2}{\log N}\sum_{j_{1:4},k_{4:n}}\frac{|\Phi_1(j_1+j_2, j_{3:4},k_{3:n})||\Phi_2(j_3+j_4,j_{1:2},k_{4:n})|}{|j_1||j_2||j_3||j_4|\prod_{j=4}^n|k_j|^2} J^N(\mu,j_{1:3}, k_{3:n})\\
&\leq n! c_{\oD_2}(n)\frac{\hat\lambda^2}{\log N}\prod_{i=1}^2\Big( 	\sum_{j_{1:4},k_{4:n}}\frac{|\Phi_i(j_1+j_2,j_{3:4},k_{4:n})|^2}{|j_1||j_2||j_3||j_4|\prod_{j=4}^{n}|k_j|^2} J^N(\mu,j_{1:4},k_{4:n})\Big)^{1/2}\,.
\end{equs}
The inner sum can then be controlled as 
\begin{equs}
&\sum_{k_{1:n}} |\hat{\phi}_1(k_{1:n})|^2|k_1|^2|k_2||k_3|\sum_{j_1+j_2=k_1}\frac{|J^N(\mu,j_{1:2}, k_{2:n})|}{|j_1||j_2|}\\
&\lesssim \sum_{k_{1:n}}|\hat{\phi}_1(k_{1:n})|^2|k_1||k_2||k_3|\sum_{j_1+j_2=k_1}\frac{|J^N(\mu,j_{1:2}, k_{2:n})|}{|j_2|}\,\lesssim \sum_{k_{1:n}} |\hat{\phi}_1(k_{1:n})|^2\frac{|k_1||k_2||k_3|}{|k_{1:n}|}
\end{equs}
from which we can argue as for the off-diagonal terms of the first type 
(see the proof of~\cite[Lemma 3.9]{CET} for more details). 
\end{proof}

We are now ready to prove Proposition~\ref{p:Hj}. 
  
  \begin{proof}[of Proposition~\ref{p:Hj}]
  The proof goes via induction. If $j=2$, then $\Op_j=G_j=0$ and there is
  nothing to prove.  Assume that the result is true for
  some $j\geq 2$. We write
\begin{equs}
\langle \big[\Op_{j+1} + G_{j+1}(\Ll(\mu-\gensy))\gensy\big]\phi_1,\phi_2\rangle 	= \one+\two\,,
	\end{equs}
where 
\begin{equs}
	\one&=\langle [(\mu-\gensy+\Op_j)^{-1}-(\mu-[1+G_j(\Ll(\mu -\gensy))]\gensy)^{-1}]\genap \phi_1,\genap\phi_2\rangle\,\\
	\two&= \langle [-\genam (\mu-[1+G_j(\Ll(\mu -\gensy))]\gensy)^{-1}\genap + G_{j+1}(\Ll(\mu-\gensy))\gensy]\phi_1,\phi_2\rangle\,.	
\end{equs}
We first treat $\one$.
To that end let $A=(\mu-\gensy+\CC_A)$ and $B=(\mu-\gensy+\CC_B)$ where $\CC_A\eqdef \Op_j$ while 
$\CC_B\eqdef -G_j(\Ll(\mu -\gensy))\gensy$, and use the relation
\begin{equs}[e:OpId]
A^{-1}-B^{-1}= A^{-1}[B-A]B^{-1}
	\end{equs}
to deduce that
\begin{equs}[e:(I)]
|\one|&=\left|\langle (-G_j(\Ll(\mu-\gensy))\gensy - \Op_j)B^{-1}\genap \phi_1,A^{-1}\genap\phi_2\rangle\right|\\
&\leq C \eps_N \|(-\gensy)^{1/2}B^{-1}\genap \phi_1\|\,\|(-\gensy)^{1/2}A^{-1}\genap \phi_2\|\,,
	\end{equs}
the last passage being a consequence of the induction hypothesis which holds as 
$\CC_A,\CC_B$ are diagonal in the chaos, 
so that $A^{-1}\genap\phi_2,B^{-1}\genap\phi_1\in \wc_{n+1}$ and $C=C(n+1,j)$.
Note that both the operators $\CC_A$ and $\CC_B$ are non-negative definite, and 
this is the only property we will need in order to bound the two terms at the right hand side. 
Hence, let $\CC$ be either $\CC_A$ or $\CC_B$ and $i=1$ or $2$. Since $\mu\geq 0$, we have 
\begin{equs}[e:Positivity]
\|&(-\gensy)^{1/2}(\mu-\gensy+\CC)^{-1}\genap \phi_i\|^2\\
&=\langle (-\gensy)(\mu-\gensy+\CC)^{-1}\genap\phi_i, (\mu-\gensy+\CC)^{-1}\genap\phi_i\rangle\\
&\leq \langle (\mu-\gensy +\CC)(\mu-\gensy+\CC)^{-1}\genap\phi_i, (\mu-\gensy+\CC)^{-1}\genap\phi_i\rangle\\
&= \langle \genap\phi_i, (\mu-\gensy+\CC)^{-1}\genap\phi_i\rangle\\
&\leq \langle \genap\phi_i, (\mu-\gensy)^{-1}\genap\phi_i\rangle\lesssim \|(-\gensy)^{1/2}\phi_i\|^2
\end{equs}	
where in the last step we applied Lemma~\ref{l:A+A-}. 
Hence, we have the desired estimate for $\one$. We turn to $\two$, for which we will apply Lemma~\ref{l:GenBound}. 
In the notations therein, set $H^+\equiv H\equiv 1+G_j$ and note that, 
since by the induction hypothesis $G_j$ satisfies (i)-(ii),
$H^+$ and $H$  satisfy assumptions 1. and 2., from which~\eqref{e:Hj} follows at once with $G_{j+1}$ 
defined according to~\eqref{e:Gj}. Finally, as (i)-(ii) hold for $G_j$, it is straightforward to verify that they also 
hold for $G_{j+1}$. 
\end{proof}

\subsection{The diffusivity}\label{s:TheDiff}

The aim of this section is to prove Theorem~\ref{thm:Dbulk}. First, we will determine the large $N$ limit 
 of~\eqref{e:DbulkLaplace} when the operator $\gen$ is replaced by $\gen_n$ and $n$ is fixed.

\begin{proposition}\label{e:Dbulkn}
Let $n\in\N$, $\mu>0$ and, for $N\in\N$, $\tfhNn$ be the solution of the truncated generator equation~\eqref{e:TrGenEq}. 
Then, the following holds 
\begin{equ}[e:nDbulk]
\lim_{N\to\infty} \langle \nf_0,(\mu-\gen_n)^{-1}\nf_0\rangle=\lim_{N\to\infty} \langle \nf_0,\tfhNn\rangle=  \frac12G_{n+1}(\fc)
\end{equ}
where the functions $G_n$'s are defined in~\eqref{e:Gj} and $\fc$ is as in~\eqref{e:nueff}. 
\end{proposition}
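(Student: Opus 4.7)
The equality $\langle \nf_0, (\mu-\gen_n)^{-1}\nf_0\rangle = \langle \nf_0, \tfhNn\rangle$ is the definition of $\tfhNn$, so both limits coincide. By Lemma~\ref{l:TrGenEq}, $\tfhNn_1=0$ and $\tfhNn_2 = (\mu-\gensy+\Op_n)^{-1}\nf_0$; since $\nf_0 \in \fock_2$ and the scalar product decomposes across chaoses, the problem reduces to computing the limit of
\begin{equs}
\langle \nf_0, \tfhNn\rangle = \langle \nf_0, (\mu-\gensy+\Op_n)^{-1}\nf_0\rangle.
\end{equs}

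The first step is to replace the non-diagonal operator $\Op_n$ by the diagonal operator $\CC_n\eqdef-G_n(\Ll(\mu-\gensy))\gensy$ using Proposition~\ref{p:Hj}. Via the resolvent identity
\begin{equs}
(\mu-\gensy+\Op_n)^{-1} - (\mu-\gensy+\CC_n)^{-1} = (\mu-\gensy+\Op_n)^{-1}[\CC_n-\Op_n](\mu-\gensy+\CC_n)^{-1},
\end{equs}
pairing with $\nf_0$ on both sides and applying Proposition~\ref{p:Hj} to $\phi_1=(\mu-\gensy+\Op_n)^{-1}\nf_0$, $\phi_2=(\mu-\gensy+\CC_n)^{-1}\nf_0$ yields an error bounded by $C\eps_N \prod_{i=1}^{2}\|(-\gensy)^{1/2}\phi_i\|$. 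Since both $\Op_n$ and $\CC_n$ are non-negative definite, the argument leading to~\eqref{e:Positivity} shows each factor is bounded by $\langle \nf_0, (-\gensy)^{-1}\nf_0\rangle^{1/2}$, and a direct computation (see below) shows this latter quantity is $O(1)$ as $N\to\infty$, so the error vanishes.

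It remains to compute the diagonal scalar product. Since $\nf_0$ has kernel
\begin{equs}
\hat{\nf_0}(k_1,k_2) = \const\,\nonlin_{k_1,k_2}\,\mathds{1}_{k_1+k_2=0},
\end{equs}
concentrated on $k_2=-k_1$ (cf.~\eqref{e:modozeroN}--\eqref{e:nonlinCoefficient}), and since on this support $c(k_1,-k_1)^2/|k_1|^4 = \cos^2(2\theta_{k_1})$, while $-\gensy$ acts on $\fock_2$ by multiplication by $\tfrac12|k_{1:2}|^2 = |k_1|^2$, we get
\begin{equs}
\langle \nf_0, (\mu-\gensy+\CC_n)^{-1}\nf_0\rangle = \frac{2\hat\lambda^2}{(2\pi)^2\log N}\sum_{1\leq|k|\leq N}\frac{\cos^2(2\theta_k)}{\mu+|k|^2[1+G_n(\Ll(\mu+|k|^2))]}.
\end{equs}
Approximating the sum by the corresponding integral, the angular integration gives a factor of $\pi$, and the substitution $r=N^t$ (equivalently $\dd r/r = \log N\,\dd t$, $t\in[0,1]$) rescales the logarithmic divergence so that the $\log N$ prefactor cancels. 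Using that $\Ll(N^{2t})\to \fc(1-t)$ for $t\in(0,1)$ as $N\to\infty$, with the contributions near $t=0$ (where $\mu$ matters) and $t=1$ (where the approximation of $\Ll$ degrades) being negligible, we obtain
\begin{equs}
\lim_{N\to\infty}\langle \nf_0, \tfhNn\rangle = \frac{\hat\lambda^2}{2\pi}\int_0^1\frac{\dd t}{1+G_n(\fc(1-t))} = \frac{\hat\lambda^2}{2\pi\fc}\int_0^\fc\frac{\dd y}{1+G_n(y)} = \frac12 G_{n+1}(\fc),
\end{equs}
where in the last step we used $\fc=\hat\lambda^2/\pi$ together with the recursive definition~\eqref{e:Gj} of $G_{n+1}$.

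The main technical obstacle is a clean justification of the Riemann-sum/integral passage and of the uniform (in $t$ and $N$) approximation $\Ll(N^{2t})\approx\fc(1-t)$, together with uniform control on the integrand near the endpoints. These follow from the monotonicity and explicit bounds (i)--(ii) on $G_n$ stated in Proposition~\ref{p:Hj}, which ensure $1/(1+G_n(\cdot))$ is bounded in $[0,1]$ uniformly in $n$ and allow dominated convergence. The computation for $n=2$ (where $\Op_2=G_2=0$, $G_3(x)=x$, so $\frac12 G_3(\fc)=\fc/2$) serves as a useful consistency check.
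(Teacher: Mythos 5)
Your proposal is correct and follows essentially the same route as the paper: reduction to the second chaos, replacement of $\Op_n$ by the diagonal operator $-G_n(\Ll(\mu-\gensy))\gensy$ via the resolvent identity together with Proposition~\ref{p:Hj} and the positivity argument of~\eqref{e:Positivity}, and then an explicit Riemann-sum evaluation of the resulting diagonal scalar product. The only (immaterial) difference is in the last integral, where you substitute $r=N^t$ and invoke $\Ll(N^{2t})\to\fc(1-t)$ with dominated convergence, whereas the paper first inserts the factor $(\rho+1)$ (via~\eqref{e:Repl3}) so that $y=\Ll(N^2\rho)$ becomes an exact change of variables.
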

\begin{proof}
By orthogonality of Wiener chaoses and the definition of $\tfhNn$ in~\eqref{e:TrGenEq}, we have 
\begin{equs}[e:FirstBDiff]
\langle\nf_0,&(\mu-\gen_n)^{-1}\nf_0\rangle=\langle \nf_0,\tfhNn_2\rangle=\langle\nf_0,\big(\mu-\gensy+\Op_n\big)^{-1}\nf_0\rangle\\
=&\langle\nf_0,\big[(\mu-\gensy+\Op_n)^{-1}-(\mu-\gensy[1+G_n(\Ll(\mu-\gensy))])^{-1}\big]\nf_0\rangle\\
& \qquad\qquad\qquad\qquad\qquad+ \langle\nf_0,(\mu-\gensy[1+G_n(\Ll(\mu-\gensy))])^{-1}\nf_0\rangle\,.
\end{equs}
As in the proof of Proposition~\ref{p:Hj}, 
set $A=(\mu-\gensy+\CC_A)$ and $B=(\mu-\gensy+\CC_B)$ where $\CC_A\eqdef \Op_n$ while 
$\CC_B\eqdef -G_n(\Ll(\mu -\gensy))\gensy$. Exploiting~\eqref{e:OpId} 
we see that absolute value of the first summand at the right hand side of~\eqref{e:FirstBDiff} equals 
\begin{equs}
|\langle (-G_n(\Ll(\mu-\gensy))\gensy - \Op_n)&B^{-1}\nf_0,A^{-1}\nf_0\rangle|\\
&\lesssim_n \eps_N \|(-\gensy)^{1/2}B^{-1}\nf_0\|\,\|(-\gensy)^{1/2}A^{-1}\nf_0\|
\end{equs}
where the bound is a consequence of~\eqref{e:Hj}. 
Denoting by $\CC$ either $\CC_A$ or $\CC_B$, and arguing as in~\eqref{e:Positivity}
we get 
\begin{equ}
\|(-\gensy)^{1/2}(\mu-\gensy+\CC)^{-1}\nf_0\|^2\lesssim \|(-\gensy)^{-1/2}\nf_0\|^2\lesssim\frac{1}{\log N}\sum_{\ell\neq 0} (\nonlin_{\ell,-\ell})^2\frac{1}{|\ell|^2}\lesssim 1\,.
\end{equ}
Therefore, the first summand in~\eqref{e:FirstBDiff} goes to $0$ as $N\to\infty$ and 
we only need to focus on the second. Note that
\begin{equs}
\langle&\nf_0,(\mu-\gensy[1+G_n(\Ll(\mu-\gensy))])^{-1}\nf_0\rangle\\
&=\frac{2\hat\lambda^2}{\log N}\sum_{\ell\neq 0} (\nonlin_{\ell,-\ell})^2\frac{1}{\mu+|\ell|^2[1+G_n(\Ll(\mu+|\ell|^2))]}\\
&= \frac{2\hat\lambda^2}{\log N}\int_{\mathbb R^2}\frac{(\nonlin_{xN ,-xN})^2}{\mu_N+|x|^2[1+G_n(\Ll(N^2(\mu_N+|x|^2)))]}\dd x+o(1)\\
&=\frac{2\hat\lambda^2}{\log N}\int_{\mathbb R^2}\frac{(\nonlin_{xN,-xN})^2}{(\mu_N+|x|^2)(\mu_N+|x|^2+1)[1+G_n(\Ll(N^2(\mu_N+|x|^2)))]}\dd x+o(1)
\end{equs}
where  $\mu_N\eqdef \mu/N^2$ and 
$o(1)$ goes to $0$ as $N\to\infty$. 
The second step follows by Riemann-sum approximation: by defining $x=\ell/N$ and recalling the definition \eqref{e:nonlinCoefficient} of $\nonlin_{\ell,m}$, we replace the sum by an integral which grows like $\log N$ (see below), plus a $O(1)$ error that gives $o(1)$ once divided by $\log N$. 
The third step is a consequence of~\eqref{e:Repl1.1} (with $H\equiv 1+G_n,\,k_{1:n}=0$) 
and~\eqref{e:Repl3} (with $H^+\equiv H\equiv 1+G_n$).
For the last integral, we proceed similarly to the proof of Proposition~\ref{p:Approx}, 
i.e. we pass to polar coordinates and get that it equals
\begin{equs}
% &\frac{2\hat\lambda^2}{\log N}\int_{\mathbb R^2}\frac{(\nonlin_{x,-x})^2}{(\mu_N+|x|^2)(\mu_N+|x|^2+1)[1+G_n(\Ll(N^2(\mu_N+|x|^2)))]}\dd x\\
&\frac{\hat\lambda^2}{\log N^2} \frac{1}{2\pi^2}\int_0^{2\pi} \cos(2\theta)^2\dd\theta\int_{\mu_N+\tfrac{1}{N^2}}^{\mu_N+1}\frac{\dd \rho}{\rho(\rho+1)[1+G_n(\Ll(N^2\rho))]}+o(1)\\%\int_{1/N}^1\frac{r \dd r}{(\mu_N+\tfrac12 r^2)(\mu_N+\tfrac12 r^2+1)[1+G_n(\Ll(N^2(\mu_N+\tfrac12r^2)))]}\\
&=\frac{\fc}{2 \log N^2}\int_{\mu_N+1/N^2}^{\mu_N+1}\frac{\dd \rho}{\rho(\rho+1)[1+G_n(\Ll(N^2\rho))]}+o(1)\,,
\end{equs}
where we used the definition of $\fc$ in~\eqref{e:nueff}.
%% the $o(1)$ being due to 
%% \begin{equs}
%% \frac{\fc}{2\log N^2}\int_{\mu_N}^{\mu_N+\tfrac{1}{N^2}}&\frac{\dd \rho}{\rho(\rho+1)[1+G_j(\Ll(\rho))]}\\
%% &\leq \frac{\fc}{2\log N^2}\int_{\mu_N}^{\mu_N+\tfrac{1}{N^2}}\frac{\dd \rho}{\rho}
%% =\frac{\fc}{2\log N^2}\log\Big(1+\frac{1}{\mu}\Big)
%% \end{equs}
%% which goes to $0$ as $N\to\infty$.
% he $1/N^2$ in the lower extremum of integration comes from the fact that by definition, 
% $\nonlin_{x,-x}$ is zero when the norm of its argument is smaller than $1/N$.
By~\eqref{e:Gj}, the last expression becomes
\begin{equs}
% \frac{\fc}{2\log N^2}&\int_{\mu_N+1/N^2}^{\mu_N+1}\frac{\dd \rho}{\rho(\rho+1)[1+G_n(\Ll(N^2\rho))]}=
\frac12\int_{\Ll(N^2(\mu_N+1))}^{\Ll(\mu+1)}\frac{\dd y}{1+G_n(y)}+o(1)
=\frac12 G_{n+1}(\Ll(\mu+1))+o(1)\,.
\end{equs}
Summarising, all the estimates above give
\begin{equ}[e:ApproxDbulk]
\langle\nf_0,(\mu-\gensy[1+G_n(\Ll(\mu-\gensy))])^{-1}\nf_0\rangle=\frac12 G_{n+1}(\Ll(\mu+1))+o(1)
\end{equ}
from which, by definition of $\Ll$ and the continuity of $G_{n+1}$ (see Proposition~\ref{p:Hj}(ii)), 
\begin{equs}
\lim_{N\to\infty}\langle \nf_0,\tilde\fh^{N,n}\rangle = \lim_{N\to\infty} \frac12 G_{n+1}\Big( \frac{\fc}{\log N^2}\log\Big(1+\frac{N^2}{\mu+1}\Big)\Big)=\frac12 G_{n+1}(\fc)\,.
\end{equs}
and~\eqref{e:nDbulk} follows at once. 
\end{proof}

Thanks to the previous proposition, we are ready to complete the proof of Theorem~\ref{thm:Dbulk}. 
\begin{proof}[of Theorem~\ref{thm:Dbulk}]
At first, we will determine the limit as $N\to\infty$ of $\cD_{\bulk}^N$, 
the Laplace transform of $t\mapsto t \,\Dbulk^N(t)$. 
As mentioned in Section~\ref{sec:Dbulk}, this amounts to characterise the behaviour of the 
scalar product at the right hand side of~\eqref{e:DbulkLaplace}. To do so, we will exploit~\cite[Lemma 3.1]{CET} 
which holds {\it mutatis mutandis} in the present context as the operators we are considering are the same 
apart from the fact that $\lambda$ therein is replaced by $\const$. 
It states that, for every $\mu>0$, $n\in\N$ and $N\in\N$ the following holds 
\begin{equ}[e:Sandwich]
\langle \nf_0,\tilde \fh^{N,2n+1}\rangle\leq \langle \nf_0,(\mu-\gen)^{-1}\nf_0\rangle\leq \langle \nf_0,\tilde\fh^{N,2n}\rangle\,.
\end{equ}
As a consequence, we deduce that for all $n\in\N$ and $\mu>0$
\begin{equs}
\limsup_{N\to\infty}\langle \nf_0,(\mu-\gen)^{-1}\nf_0\rangle\leq \limsup_{N\to\infty}\langle \nf_0,\tilde \fh^{N,2n}\rangle=\frac12 G_{2n+1}(\fc)
\end{equs}
and 
\begin{equs}
\liminf_{N\to\infty}\langle \nf_0,(\mu-\gen)^{-1}\nf_0\rangle\geq \liminf_{N\to\infty}\langle \nf_0,\tilde\fh^{N,2n+1}\rangle=\frac12 G_{2n+2}(\fc).
\end{equs}
so that it suffices to argue that the sequence $G_n(\fc)$ converges as $n\to\infty$. 
Notice that, in view of Proposition~\ref{p:Hj} (ii), the functions $G_n$'s are uniformly bounded (in $n$) on $[0,\fc]$ and 
have uniformly bounded first and second derivative. Hence, by Arzel\`a-Ascoli's theorem, 
the sequence $\{G_n\}_n$ is relatively compact in $\CC^1([0,K], \R)$ for any $K<\infty$, the space of real-valued 
bounded continuously differentiable functions, and therefore converges along subsequences in $\CC^1([0,K])$. 
Thanks to Proposition~\ref{p:Hj} (i), any limit point 
must be non-negative, $0$ at $0$ and, in view of the recursive relation in~\eqref{e:Gj}, it must satisfy 
the following Cauchy-value problem
\begin{equ}
{G}'=\frac{1}{1+G}\,,\qquad G(0)=0\,,%\quad G\geq 0
\end{equ}
which has a unique solution given by 
\begin{equ}[e:LimG]
G(x)\eqdef \sqrt{2x+1}-1\,. %,\qquad x\in[0,\fc]\,.
\end{equ}
In conclusion, the sequence $\{G_n\}_n$ converges in $\CC^1([0,K])$ to $G$ for any $K< \infty$, which in particular means that
\begin{equ}[e:GjConv]
\lim_{j\to\infty} G_j(\fc)=G(\fc)=\sqrt{2\fc+1}-1=\nueff-1
\end{equ}
and consequently by~\eqref{e:DbulkLaplace}, we obtain
\begin{equ}[e:DiffLaplace]
\lim_{N\to\infty}\CD^N_\bulk(\mu)=\frac{\sqrt{2\fc +1}}{\mu^2}=\frac{\nueff}{\mu^2} \,.
\end{equ}
In order to translate the result from Laplace to real time (the only
point to be justified is the interchange between the inverse Laplace
transform and the $N\to\infty$ limit) and conclude the proof of
Theorem~\ref{thm:Dbulk}, we argue as follows. For $N\in\N$, let 
$x^N$ be the map on $[0,\infty)$ defined by the right hand side
of~\eqref{e:DbulkNew}, i.e.
\begin{equ}
x^N(t)\eqdef t\Dbulk^N(t)=t + \Exp\big[Q^N(t)^2\big]\,,\qquad\text{where}\qquad  Q^N(t)\eqdef \int_0^t \const\cN^N_0[u_s^N]\dd s\,.
\end{equ}
In the proof of~\cite[Theorem 4.8]{CES} it was shown that there exists $c>0$ such that 
for all $N\in\N$
\begin{equ}[e:linGrowth]
x^N(t)\leq ct\,,\qquad \text{for all $t\geq 0$.}
\end{equ}
By stationarity, we also deduce that 
\begin{equs}[e:Equicont]
|x^N(t)-&x^N(s)|\leq |t-s|+\Big|\Exp[Q^N(t)^2-Q^N(s)^2]\Big|\\
&=|t-s|+\Big|\Exp[(Q^N(t)-Q^N(s))(Q^N(t)+Q^N(s))]\Big|\\
&\leq |t-s|+ \Exp[|Q^N(t)-Q^N(s)|^2]^{1/2}\Big(\Exp[Q^N(t)^2]^{1/2}+\Exp[Q^N(s)^2]^{1/2}\Big)\\
&\lesssim |t-s|+ |t-s|^{1/2} (t\vee s)^{1/2}\,. 
\end{equs}
Thanks to~\eqref{e:linGrowth} and~\eqref{e:Equicont}, the assumptions of~\cite[Corollary 10.12]{D} are satisfied. 
Hence, the sequence $\{x^N\}_N$ is relatively compact with respect to the uniform on compacts topology on $C(\R_+,\R)$. 
Let  $x\in C(\R_+,\R)$ be a limit point and, slightly abusing notations, let 
$\{x^N\}_N$ be the subsequence converging to it uniformly 
on compacts (and therefore pointwise). Note that, since the constant $c$ in~\eqref{e:linGrowth} is independent of $N$ 
then also $x$ satisfies the same bound.
Therefore, by the Dominated Convergence Theorem and~\eqref{e:DiffLaplace}, we obtain that, for all $\mu>0$,
\begin{equs}
 \frac{\sqrt{2\fc +1}}{\mu^2}=\lim_{N\to\infty} \CD^N_\bulk(\mu)&=\lim_{N\to\infty}\int_0^\infty e^{-\mu t} x^N(t)\dd t = \int_0^\infty e^{-\mu t} x(t)\dd t
\end{equs}
which, by uniqueness of the Laplace transform, implies that the sequence $x^N$ 
converges to a unique limit. Therefore, for all $t>0$, we have
\begin{equ}
\lim_{N\to\infty}  \Dbulk^N(t) =\lim_{N\to\infty}  \frac{x^N(t)}{t}=\sqrt{2\fc +1}=\nueff
\end{equ}
and the proof is concluded. 
\end{proof}

\section{The truncated generator equation}
\label{sec:truncated}

\subsection{The martingale problem}

The goal of the present and the next sections is to prove the main
result of the paper, namely Theorem~\ref{thm:Conv}.  Since by
Theorem~\ref{thm:Tightness} we already know that, for every $T>0$, the
sequence $\{h^N\}_N$ is tight in the space $C([0,T], \cD'(\T^2))$, it
remains to verify that any limit point $h$ equals in law the solution to
the almost-stationary stochastic heat equation~\eqref{e:SHEfinal} that
we here recall
\begin{equ}[e:SHEakpz]
\partial_t h = \frac{\nueff}{2}\Delta h +\sqrt{\nueff} \xi\,,\qquad h_0=\chi
\end{equ}
where $\nueff$ is defined according to~\eqref{e:Dbulk}, $\xi$ is a space-time white noise on $\R_+\times\T^2$ 
and, as in Definition~\ref{def:QSsol}, $\chi$ is such that $(-\Delta)^{1/2}\chi=\eta$, for $\eta$ a 
spatial white noise on $\T^2$.

Heuristically, in order to show convergence in law of  $h^N$ 
to the solution $h$ of ~\eqref{e:SHEakpz}, 
%we need to be able to determine the large $N$ limit of 
%a wide class ``observables'' of $h^N$, i.e. of functionals of the form $\ff(h^N)$. Heuristically, 
it suffices to prove that $\lim_N \ff(h^N)$ is distributed as 
$\ff(h)$ for a sufficiently rich family of observables $\ff$'s.
%As we will see in a moment  (see in particular Definition~\ref{def:MPSHE} and Theorem
%\ref{thm:WellPosedMP} below), 
The family of functionals identified by the following martingale problem associated to~\eqref{e:SHEakpz} 
is made of those of the form $\ff(h)=h(\Psi)$ for $\Psi\in\cD(\T^2)$. 
%A family which is wide enough 
%for our purposes is formed by linear functionals of the type $\ff(h)=h(\Psi)$ for $\Psi\in\cD(\T^2)$, 
%as the following martingale characterisation of the law of~\eqref{e:SHEakpz} shows. 

\begin{definition}\label{def:MPSHE}
Let $T>0$, $\Omega=C([0,T], \cD'(\T^2))$ and $\CG=\CB(C([0,T], \cD'(\T^2)))$ the canonical Borel $\sigma$-algebra on it. 
Let $\chi$ be such that $(-\Delta)^{1/2}\chi=\eta$ for $\eta$ a zero-average space white noise on $\T^2$. 
We say that a probability measure $\P$ on $(\Omega,\CG)$ {\it solves the martingale problem for 
$\gensy^{\rm eff}\eqdef  {\nueff}\gensy$
with initial distribution $\chi$}, if for all $\Psi\in\cD(\T^2)$, the canonical process $h$ under $\P$ is such that 
\begin{equs}
&\cM^\Psi_t\eqdef h_t(\Psi) - \chi(\Psi) -\frac{\nueff}{2} \int_0^t h_s(\Delta \Psi)\dd s\label{e:Mart1}\\
&\Gamma^\Psi_t\eqdef(\cM^\Psi_t)^2-t\,\nueff  \|\Psi\|^2_{L^2(\T^2)}\label{e:Mart2}
\end{equs}
are local martingales.
\end{definition} 

In the next theorem (see e.g.~\cite{MW}) it is shown that the family of observables mentioned above is 
wide enough in that the martingale problem is well-posed (i.e. it has a unique solution) and 
its unique solution is the law of~\eqref{e:SHEakpz}.
%
%The previous martingale problem is indeed well-posed as shown, e.g., in~\cite{MW}, and 
%uniquely characterises the law of~\eqref{e:SHEakpz}.
\begin{theorem}\textnormal{\cite[Theorem D.1]{MW}}\label{thm:WellPosedMP}
The martingale problem for $\gensy^{\rm eff}$ with initial distribution $\chi$ in Definition~\ref{def:MPSHE} is well-posed 
and uniquely characterises the law of the solution to~\eqref{e:SHEakpz} on $C([0,T], \cD'(\T^2))$.
%, i.e. there exists a unique probability 
%measure $\P$ on $C([0,T], \cD'(\T^2))$ such that the processes in~\eqref{e:Mart1} and~\eqref{e:Mart2} are 
%local martingales. 
\end{theorem}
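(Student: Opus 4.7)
The plan is to treat existence and uniqueness separately, with the latter being the substantive part. For existence, I would construct the almost-stationary solution $h$ of~\eqref{e:SHEakpz} directly mode by mode: each Fourier coefficient $\hat h_t(k)$ is a (complex) Ornstein--Uhlenbeck process with drift $-(\nueff/2)|k|^2$ and diffusion $\sqrt{\nueff}$, initialised at $\hat\chi(k)$. Applying It\^o's formula to the linear functional $h_t(\Psi)=\sum_k\hat h_t(k)\hat\Psi(-k)$ for $\Psi\in\cD(\T^2)$ shows at once that $\cM^\Psi$ is a martingale (the drift coming out to $\tfrac{\nueff}{2}h_s(\Delta\Psi)\dd s$ via $\widehat{\Delta\Psi}(-k)=-|k|^2\hat\Psi(-k)$), and the It\^o isometry produces $\langle\cM^\Psi\rangle_t=t\,\nueff\|\Psi\|^2_{L^2(\T^2)}$, which is precisely~\eqref{e:Mart2}.

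For uniqueness, let $\P$ be any solution of the martingale problem with the prescribed initial law. The first key step is polarisation: replacing $\Psi$ in~\eqref{e:Mart2} by $\Psi_1\pm\Psi_2$ and subtracting yields the \emph{deterministic} cross-variation
\begin{equ}
\langle \cM^{\Psi_1},\cM^{\Psi_2}\rangle_t=t\,\nueff\,\langle\Psi_1,\Psi_2\rangle_{L^2(\T^2)}\,,
\end{equ}
and L\'evy's characterisation of Brownian motion identifies the Gaussian family $\{\cM^\Psi/\sqrt{\nueff}\}_{\Psi\in\cD(\T^2)}$ with the time integral of a cylindrical Wiener process on $L^2(\T^2)$. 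In particular, its joint law under $\P$ is completely fixed.

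With the driving martingale identified, I would recover $h$ itself by solving~\eqref{e:Mart1} mode by mode. Testing against $\Psi=e_{-k}$ produces the linear SDE
\begin{equ}
\dd\hat h_t(k)=-\tfrac{\nueff}{2}|k|^2\hat h_t(k)\,\dd t+\dd\cM^{e_{-k}}_t\,,\qquad \hat h_0(k)=\hat\chi(k)\,,
\end{equ}
whose unique strong solution is the complex Ornstein--Uhlenbeck process of the existence step. Since the joint distribution of the $\cM^{e_{-k}}$'s has been pinned down and $\chi$ is prescribed (with $(-\Delta)^{1/2}\chi$ a spatial white noise, independent of the future increments of $\cM$ since $\cM^\Psi_0=0$), the finite-dimensional distributions of $(\hat h_{t_i}(k_i))_i$ under $\P$ agree with those under the SHE law; a standard monotone-class argument over a countable dense family of test functions $\Psi\in\cD(\T^2)$ promotes this to equality of laws on $C([0,T],\cD'(\T^2))$.

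The main technical obstacles I foresee are (i) upgrading $\cM^\Psi$ from a local to a genuine $L^2$-martingale, which is standard once~\eqref{e:Mart2} is used to bound its second moment uniformly on $[0,T]$, and (ii) verifying independence of $\chi$ from the future increments of $\cM$, which is forced by the martingale property itself together with the fact that $\cM^\Psi_0=0$ almost surely. Everything else reduces to classical manipulations for Gaussian processes and linear SPDEs.
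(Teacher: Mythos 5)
The paper does not prove this statement at all: it is imported verbatim as \cite[Theorem D.1]{MW}, so there is no internal proof to compare against. Your argument is the standard well-posedness proof for the martingale problem of a linear SPDE with additive noise — polarisation of~\eqref{e:Mart2} to get deterministic cross-variations, L\'evy's characterisation to identify $\{\cM^\Psi\}$ with $\sqrt{\nueff}$ times a cylindrical Wiener process, then mode-by-mode variation of constants — and this is essentially the route taken in the cited reference, so it is correct in substance. One small imprecision: the independence of $\chi$ from the increments of $\cM^\Psi$ is \emph{not} forced by the martingale property plus $\cM^\Psi_0=0$ alone (a local martingale started at zero can have increments correlated with $\cF_0$ through its bracket); it follows only after the L\'evy step, since an $(\cF_t)$-Brownian motion has increments independent of $\cF_0$, which is exactly where the deterministic quadratic variation is used. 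Also note that L\'evy's theorem applies directly to continuous local martingales with deterministic bracket, so the upgrade from local to true martingale you list as obstacle (i) is not actually needed before invoking it.
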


In view of the previous statement, the proof of Theorem~\ref{thm:Conv}
boils down to verify that any limit point $h$ of the sequence
$\{h^N\}_N$ is such that for all $\Psi\in\cD(\T^2)$, the
processes~\eqref{e:Mart1} and~\eqref{e:Mart2} are (local)
martingales. In the next section, we give an overview of the strategy we will follow and introduce the necessary notations.
The actual technical work starts in Section \ref{s:RecEst}.

\subsection{Theorem \ref{thm:Conv}: strategy of proof}
\label{s:strategy}

%Recall that our main goal is to show that for   any limit point $h$ of the sequence
%$\{h^N\}_N$, the
%processes~\eqref{e:Mart1} and~\eqref{e:Mart2} are (local)
%martingales. 
As argued in Section~\ref{sec:prelim}, 
we can fully recover $h^N$ from $u^N$ and~\eqref{eq:modozero} so that we can (and will) directly work with $u^N$, and we will come back to $h^N$ only in Section \ref{sec:ProofMain}.
 
Translated in terms of $u$,  Theorem \ref{thm:WellPosedMP} tells us that we need to show that, 
for any limit point $u=(u_t)_{t\ge0}$
of the sequence $\{u^N\}_N=\{(u^N_t)_{t\ge0}\}_N$ (known to exist by~\cite[Theorem 4.5]{CES}), 
and for every test function
$\psi\in H^1_0(\T^2)$, the stochastic processes
\begin{equ}[idea1]
M^\psi_t\eqdef u_t(\psi)-u_0(\psi)-\int_0^t \gensy^{\rm eff}u_s(\psi)\dd s \,,\qquad \gamma^\psi_t\eqdef (M^\psi_t)^2-t \nueff\|\psi\|_{H^1(\T^2)}^2
\end{equ}
are martingales. 
What we know for free, as $u^N$ is a Markov process with generator $\gen$, is that, 
for any reasonable functional $b^N=b^N(\eta)$, allowed to explicitly depend on $N$,
\begin{equs}
  \label{idea4}
M^{N}_t(b^N)\eqdef  b^N(u^N_t)-b^N(u^N_0)-\int_0^t \mathcal L^N b^N(u^N_s)\dd s
\end{equs}
and $(M^{N}_t(b^N))^2-\langle M^{N}(b^N)\rangle_t$ are martingales. Therefore, if we were able to 
show that for all $\psi$ there exists $b^N$ such that the difference 
\begin{equ}[e:idea5]
\Big(u^N_t(\psi)-u^N_0(\psi)-\int_0^t \gensy^{\rm eff}u^N_s(\psi)\dd s\Big)- \Big( b^N(u^N_t)-b^N(u^N_0)-\int_0^t \gen b^N(u^N_s)\dd s \Big) 
\end{equ}
is small for $N$ large in, say, mean square, then by standard martingale convergence theorems and 
tightness of $u^N$ we could conclude that the sequence $\{M^N(b^N)\}_N$ converges (at least along subsequences) 
and that the limit, $M^\psi$, is indeed a martingale (for a rigorous proof see Section~\ref{sec:ProofMain}). 
Proving that $\gamma^\psi_t$ is also a martingale additionally
requires the quadratic variation of
$M^{N}(b^N)$ to be asymptotically non-random:
this is a very challenging technical step, deferred to Sections~\ref{sec:Feynman} and \ref{sec:tourdeforce}. 

Hence, the goal is to determine $b^N$ in such a way that~\eqref{e:idea5} is small in mean square, 
which in turn requires $b^N$ to be such that 
\begin{equ}[e:point1]
\| \eta(\psi)-b^N(\eta)\|\quad\overset{N\to\infty}{\longrightarrow} \quad 0\,,
\end{equ}
thus ensuring that $u^N_\cdot(\psi)-b^N(u^N_\cdot)$ vanishes, and
\begin{equ}[e:point2]
\| (-\gensy)^{-\tfrac12}\Big(\gensy^{\rm eff}\eta(\psi)-\gen b^N(\eta)\Big)\|\quad\overset{N\to\infty}{\longrightarrow} \quad 0
\end{equ}
which, by the It\^o trick (see~\eqref{e:ItoTrick}), guarantees that the difference of integral terms in~\eqref{e:idea5} 
also goes to $0$.

As a first attempt (motivated by the observables needed in Definition~\ref{def:MPSHE}), 
one might be led to take $b=b^N$ independent of $N$ and 
of the form $b(\eta)\eqdef \eta(\psi)$, 
so that~\eqref{e:point1} is trivially satisfied. 
That said, as it can be readily checked by the definition of $\gen$ and $\cM^N$ 
in~\eqref{e:gens}-\eqref{e:gena} and~\eqref{e:nonlin} respectively, 
and the orthogonality of different Wiener chaoses, with this choice of $b$, the left hand side of~\eqref{e:point2} reduces to 
\begin{equs}
\|(-\gensy)^{-\tfrac12}&\Big((\nueff-1)\gensy\eta(\psi)- \const\cM^N[\eta]\Big)\|\\
&=(\nueff-1)\|(-\gensy)^{\tfrac12}\eta(\psi)\|+\const\|(-\gensy)^{-\tfrac12} \cM^N[\eta]\|
\end{equs}
and the right hand side, even though bounded, certainly {\it does not vanish as $N\to\infty$}! 
The problem is that functionals of the form $\ff(\eta)=\eta(\psi)$ live only in the first chaos 
and consequently are not able to capture the fluctuations coming from higher chaoses which are
induced by the nonlinearity. 

Ideally, one would like to take instead $b=b^N$ {\it explicitly depending on $N$} in such a way that 
the left hand side of~\eqref{e:point2} is $0$, i.e. so that 
\begin{equ}[idea2]
-\gen b^N(\eta)=
  \frac{\nueff}2\eta(-\Delta\psi)= -\gensy^{\rm eff}\eta(\psi)\,,
\end{equ}
and subsequently prove that such $b^N$ also satisfies~\eqref{e:point1}. 
As noted in Section~\ref{sec:Dbulk}, it is very difficult to analyse
the generator equation \eqref{idea2} directly since its solution will have non-trivial components in {\it every} chaos. 
To overcome this issue, we resort instead to introduce a truncation parameter $n$ 
and consider the truncated generator equation
where $\gen$ is replaced by $\gen_n$ (defined in Section \ref{sec:Dbulk}). 
More precisely,  for $N\in\N$ and $n\in\N$ we analyse 
 \begin{equ}[e:GenEq]
-\gen_n\,\tfbNn=-\gensy^{\rm eff}\eta(\psi)\,,
\end{equ}
and, to control the zero-mode of $h^N$, 
\begin{equ}[e:GenEq2]
  \qquad -\gen_n\,\tfhNn=\nf_0,
\end{equ}
where $\nf_0$ is defined in~\eqref{eq:explFock}. 
In view of the the results in Section~\ref{sec:Dbulk} 
we can (and will) even do better. As Proposition~\ref{p:Hj} hints at, 
we can approximate observables as $\tfbNn$ and $\tfhNn$ by (almost) explicit functionals, $\fbNn$ and $\fhNn$, 
whose definition involves 
{\it perturbations of the operator $\gensy$} (see $\ffNn_a$ in \eqref{e:Fjn} below, with   $a=1,2$) {\it which are converging to $\gensy^{\rm eff}$ in the large $n$ limit} (see the proof of Theorem~\ref{thm:Dbulk} in Section~\ref{s:TheDiff}). 
The observables $\fbNn$ are those for which we will establish ~\eqref{e:point1} and~\eqref{e:point2}, though
as a result of {\it two} limits, {\it first take the limit $N\to\infty$} and then let $n$ tend to infinity. 
\medskip

Let us describe more in detail the implementation of this program, rigorously introduce the 
observables mentioned above and state the main results of this section. 
To do so let us begin by looking at the solutions $\tfbNn$, $\tfhNn$ of \eqref{e:GenEq} and \eqref{e:GenEq2}. 
As can be seen by arguing as in the proof of Lemma~\ref{l:TrGenEq}, 
$\tfbNn$ and $\tfhNn$ have a similar structure and we will therefore treat them likewise. 
Let 
\begin{equ}[e:InputGenEq]
\fg_1 \eqdef -\gensy^{\rm eff}\eta(\psi)=\frac{\nueff}{2}\eta(-\Delta\psi)\in\fock_1\qquad\text{and}\qquad \fg_2=\nf_0\in\fock_2
\end{equ}
and $\tffNn_a=(\tffNn_{a,j})_{j=a,\dots,n}$ be the solution to \eqref{e:GenEq} for $a=1$ and 
to \eqref{e:GenEq2} for $a=2$ (in the latter case, Lemma~\ref{l:TrGenEq} ensures that the component of 
$\tffNn_a$ in the first Wiener chaos is $0$). 
Writing down \eqref{e:GenEq} in each chaos component analogously to what  was done in~\eqref{e:System} 
and recalling the definition of $\Op_j$ in~\eqref{def:OpH}, $\tffNn_a$ can be equivalently written as
\begin{equation}\label{e:GenEqSys}
\begin{cases}
\tffNn_{a,a}=(-\gensy+\Op_{n+2-a})^{-1}\fg_a \,, & % \text{for $i=1$ or $2$}
\\
\tffNn_{a,j}=(-\gensy+\Op_{n+2-j})^{-1}\genap\tffNn_{a,j-1}\,, & \text{for $j=a+1,\dots, n$\,}.
\end{cases}
\end{equation}
In light of Proposition~\ref{p:Hj}, we now define the observables we aim at analysing,
$\ffNn_a$, $a=1,2$, (in the above discussion $\ffNn_a=\fbNn$ for $a=1$ and $\fhNn$ for $a=2$) 
as in~\eqref{e:GenEqSys} but with the operators $\Op_j$ replaced
by diagonal (and therefore explicitly invertible) operators expressed as perturbations of $-\gensy$. 
More precisely, for $a=1,2$, we set 
$\ffNn_a=(\ffNn_{a,j})_{j=a,\dots,n}$ as 
\begin{equation}\label{e:Fjn}
\begin{cases}
\ffNn_{a,a}\eqdef (-\gensy[1+G_{n+2-a}(\Ll(-\gensy))] )^{-1} \fg_a \,, &\\
\ffNn_{a,j}\eqdef (-\gensy[1+G_{n+2-j}(\Ll(-\gensy))] )^{-1} \genap \ffNn_{a,j-1}\,, & \text{for $j=a+1,\dots, n$\,}
\end{cases}
\end{equation}
where the functions $G_j$'s are those in~\eqref{e:Gj}. 

We have now all the elements we need in order to state the main result of this section 
which rigorously establishes~\eqref{e:point1} and~\eqref{e:point2} for the observables $\fbNn$. 
We will also obtain analogous estimates for $\fhNn$, which, 
even though has vanishing norm (see~\eqref{e:hL2}),
will be useful in the description of the $0$-mode of $h^N$ (see Corollary~\ref{c:zeromode}). 

\begin{theorem}\label{thm:ControlGen}
For $\psi\in H^1_0(\T^2)$, $N,\,n\in\N$ and $a=1,2$, let $\ffNn_a$ be given in~\eqref{e:Fjn} and 
$\fbNn=\ffNn_1$, $\fhNn=\ffNn_2$. 
Then, we have 
\begin{equs}
&\lim_{n\to\infty}\lim_{N\to\infty} \|\fbNn(\eta)-\eta(\psi)\|=0\,,\label{e:bL2}\\
&\lim_{n\to\infty}\lim_{N\to\infty} \| (-\gensy)^{-\tfrac12}\Big(\gensy^{\rm eff}\eta(\psi)-\gen \fbNn(\eta)\Big)\|=0\,,\label{e:bH1}
\end{equs}
and 
\begin{equs}
&\lim_{n\to\infty}\lim_{N\to\infty} \|\fhNn(\eta)\|=0\,,\label{e:hL2}\\
&\lim_{n\to\infty}\lim_{N\to\infty} \| (-\gensy)^{-\tfrac12}\Big(\nf_0(\eta)-\gen \fhNn(\eta)\Big)\|=0\,.\label{e:hH1}
\end{equs}
\end{theorem}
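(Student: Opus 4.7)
The plan is to exploit the recursive definition \eqref{e:Fjn} chaos by chaos, together with the Replacement Lemma (Lemma~\ref{l:GenBound}). Decompose $\ffNn_a=\sum_{j=a}^n \ffNn_{a,j}$, and start with the following a priori size estimate: since $G_{n+2-j}\geq 0$ by Proposition~\ref{p:Hj}(i), one has the positive-operator comparison $(-\gensy[1+G_{n+2-j}(\Ll(-\gensy))])^{-1}\leq (-\gensy)^{-1}$, and hence by \eqref{b:LA+} of Lemma~\ref{l:A+A-},
\[
\|\ffNn_{a,j}\|\leq \|(-\gensy)^{-1}\genap \ffNn_{a,j-1}\|\lesssim \frac{j-1}{\sqrt{\log N}}\|\ffNn_{a,j-1}\|\qquad (j>a),
\]
so iteratively $\|\ffNn_{a,j}\|\lesssim_{n,j}(\log N)^{-(j-a)/2}\|\ffNn_{a,a}\|$. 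A parallel argument using \eqref{b:A+} in place of \eqref{b:LA+} shows that $\|(-\gensy)^{1/2}\ffNn_{a,j}\|$ is uniformly bounded in $N$ for each fixed $(n,j)$.

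For the $L^2$ convergences \eqref{e:bL2} and \eqref{e:hL2}, it only remains to analyze the bottom component $\ffNn_{a,a}$. When $a=2$, a direct Fourier-space computation exploiting $|\nonlin_{\ell,-\ell}|\lesssim 1$ gives $\|\ffNn_{2,2}\|\leq \|(-\gensy)^{-1}\nf_0\|\lesssim(\log N)^{-1/2}$, which vanishes as $N\to\infty$. When $a=1$, the explicit formula
\[
\widehat{\ffNn_{1,1}}(k)=\frac{\nueff\,\hat\psi(k)}{1+G_{n+1}(\Ll(|k|^2/2))}
\]
is pointwise dominated by $\nueff|\hat\psi(k)|$ and, since $\Ll(|k|^2/2)\to \fc$ as $N\to\infty$ for fixed $k$, converges pointwise to $\nueff\hat\psi(k)/(1+G_{n+1}(\fc))$. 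Dominated convergence promotes this to $L^2$-convergence, and then $G_{n+1}(\fc)\to \nueff-1$ as $n\to\infty$ by \eqref{e:GjConv} yields $\|\ffNn_{1,1}-\eta(\psi)\|\to 0$.

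For the generator bounds \eqref{e:bH1}--\eqref{e:hH1}, compute $\gen\ffNn_a$ chaos by chaos: using the recursion \eqref{e:Fjn} to substitute $\gensy\ffNn_{a,j}$, the $\wc_j$-projection telescopes to
\[
(\gen\ffNn_a)_j=-\fg_a\mathds{1}_{\{j=a\}}-\gensy G_{n+2-j}(\Ll(-\gensy))\ffNn_{a,j}+\genam \ffNn_{a,j+1}
\]
for $a\leq j\leq n$ (with the convention $\ffNn_{a,n+1}\equiv 0$), plus the top-chaos remainder $\genap\ffNn_{a,n}\in\wc_{n+1}$. At $j=n$ this is identically zero since $G_2\equiv 0$; for $a\leq j\leq n-1$, substituting the defining recursion for $\ffNn_{a,j+1}$ shows the chaos-$j$ error equals (up to sign) the operator
\[
E_j=\bigl(\gensy G_{n+2-j}(\Ll(-\gensy))-\genam(-\gensy[1+G_{n+1-j}(\Ll(-\gensy))])^{-1}\genap\bigr)\ffNn_{a,j},
\]
which is precisely the quantity controlled by Lemma~\ref{l:GenBound} with the choice $H=H^+=1+G_{n+1-j}$: one then has $\cJ^N=(-\gensy H)^{-1}$, and by the recursive identity \eqref{e:Gj}, $\tilde H=G_{n+2-j}$. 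The bilinear bound \eqref{e:JN} and duality therefore give
\[
\|(-\gensy)^{-1/2}E_j\|\leq C(n,j)\,\eps_N\,\|(-\gensy)^{1/2}\ffNn_{a,j}\|\xrightarrow{N\to\infty}0.
\]
The top-chaos remainder is handled by the Cauchy--Schwarz interpolation
\[
\|(-\gensy)^{-1/2}\genap \ffNn_{a,n}\|^2\leq \|\genap \ffNn_{a,n}\|\cdot \|(-\gensy)^{-1}\genap \ffNn_{a,n}\|,
\]
where the first factor is uniform in $N$ by the chaos-$n$ estimate $\|\genap\phi_n\|^2\lesssim n^2\hat\lambda^2\|(-\gensy)^{1/2}\phi_n\|^2$, while the second gains a factor $(\log N)^{-1/2}$ from \eqref{b:LA+} combined with the $L^2$-decay of $\|\ffNn_{a,n}\|$ from the first paragraph.

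The main obstacle I expect is the top-chaos remainder $\genap\ffNn_{a,n}$: the naive bound via \eqref{b:A+} alone is uniform in $N$ and would not vanish, so the interpolation between \eqref{b:A+} and \eqref{b:LA+} is essential to extract the requisite $(\log N)^{-1/2}$ gain. The constants $C(n,j)$ coming from the Replacement Lemma may grow with $n$, but the prescribed order of limits $\lim_n\lim_N$ means that for each fixed $n$ only finitely many chaos terms contribute, each of which can be driven to zero in $N$ before taking $n\to\infty$.
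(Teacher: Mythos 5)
Your chaos-by-chaos telescoping combined with a direct application of Lemma~\ref{l:GenBound} (with $H=H^+=1+G_{n+1-j}$, so that $\cJ^N=((-\gensy)H(\Ll(-\gensy)))^{-1}$ and $\tilde H=G_{n+2-j}$ by \eqref{e:Gj}) is a legitimate and arguably more direct route than the paper's, which instead compares $\ffNn_a$ with the solution $\tffNn_a$ of the truncated generator equation via Proposition~\ref{p:ApproxGenEq}. The $L^2$ part of your argument also matches the paper's (orthogonality of chaoses, Proposition~\ref{l:PrelimBounds} for $j>a$, dominated convergence plus \eqref{e:GjConv} for the bottom component).

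However, your treatment of the top-chaos remainder $\genap\ffNn_{a,n}$ contains a genuine gap. The claimed estimate $\|\genap\phi_n\|^2\lesssim n^2\hat\lambda^2\|(-\gensy)^{1/2}\phi_n\|^2$ is false: from \eqref{e:genap:fock} and \eqref{e:Diag} with $\sigma\equiv 1$, the diagonal contribution to $\|\genap\phi_n\|^2$ carries the inner sum $\sum_{\ell+m=k_1}(\nonlin_{\ell,m})^2$, which is of order $N^2$ (not $\log N$), since $|\nonlin_{\ell,m}|$ is merely bounded by an indicator and there are $O(N^2)$ admissible $\ell$. The logarithm in \eqref{daillog} comes precisely from the factor $(\mu-\gensy)^{-1}$, which is absent here. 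Consequently the first factor in your interpolation $\|(-\gensy)^{-1/2}\genap\ffNn_{a,n}\|^2\le\|\genap\ffNn_{a,n}\|\cdot\|(-\gensy)^{-1}\genap\ffNn_{a,n}\|$ grows like $N/\sqrt{\log N}$ and overwhelms the logarithmic gain of the second factor. The point you have missed is that this term is not supposed to vanish as $N\to\infty$ at all: the paper bounds it by $\|(-\gensy)^{1/2}\ffNn_{a,n}\|$ via \eqref{b:A+}, computes the exact $N\to\infty$ limit of this norm (Proposition~\ref{l:H1}), and finds it equal to $\tfrac12 G^{+,2}_{n-1}(\fc)$ up to constants, which by the simplex-volume bound \eqref{b:UnifG+} is $\lesssim\fc^{n-2}/(n-2)!$ and therefore vanishes only in the subsequent $n\to\infty$ limit. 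This factorial decay in $n$ is the reason the theorem is stated as an iterated limit, and producing it requires the machinery of the functions $G^{+,n+2-j}_i$ and the recursive estimate of Lemma~\ref{p:RecH1}, none of which appears in your proposal.
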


So far we have only briefly alluded to the process $\gamma^\psi$ in~\eqref{idea1}. 
As we already mentioned, proving that it is indeed a martingale boils down to show (see
Theorem~\ref{thm:Mart}) that the quadratic variation of $M^\psi$ in \eqref{idea1} is deterministic, and given by
$t\nueff\|\psi\|^2_{H^1(\T^2)}$.  The proof that the quadratic
variation is non-random is based on entirely different ideas than the
ones of the present section and will be given in
Section~\ref{sec:Feynman}. As far as the expectation of the quadratic
variation is concerned, it turns out (see Section \ref{sec:Mart}) to be proportional to $2t$ times the
$N,n\to\infty$ limit of $\|(-\gensy)^{1/2}\fbNn\|^2$.  
As the computation of this limit exploits the techniques introduced in this section, 
we give below the following proposition. 

\begin{proposition}\label{p:H1Norm}
For $\psi\in H^1_0(\T^2)$, $N,\,n\in\N$ and $a=1,2$, let $\ffNn_a$ be given in~\eqref{e:Fjn} and 
$\fbNn=\ffNn_1$, $\fhNn=\ffNn_2$. 
Then, we have 
\begin{equs}
&\lim_{n\to\infty}\lim_{N\to\infty} \|(-\gensy)^{1/2} \fbNn\|^2=\frac{\nueff}{2}\|\psi\|^2_{H^1(\T^2)}\,,\label{e:limnNb}\\
&\lim_{n\to\infty}\lim_{N\to\infty} \|(-\gensy)^{1/2} \fhNn\|^2=\frac{\nueff -1}{2} \,,\label{e:limnNh}\\
&\lim_{N\to\infty} \langle (-\gensy)^{1/2} \fbNn, (-\gensy)^{1/2} \fhNn\rangle=0\,.\label{e:limNbh}
\end{equs}
\end{proposition}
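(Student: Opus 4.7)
The plan is to derive a telescoping identity which reduces $\|(-\gensy)^{1/2}\ffNn_a\|^2$ to the explicit scalar product $\langle\fg_a,\ffNn_{a,a}\rangle$, and then to compute the latter in the iterated limit $\lim_{n\to\infty}\lim_{N\to\infty}$.

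By orthogonality of Wiener chaoses one writes $\|(-\gensy)^{1/2}\ffNn_a\|^2=\sum_{j=a}^{n}A_j$ with $A_j\eqdef\|(-\gensy)^{1/2}\ffNn_{a,j}\|^2$. The recursion \eqref{e:Fjn}, combined with the commutativity of $-\gensy$ and $G_{n+2-j}(\Ll(-\gensy))$ (both diagonal in the Fourier basis), reads $(-\gensy)[1+G_{n+2-j}(\Ll(-\gensy))]\ffNn_{a,j}=\genap\ffNn_{a,j-1}$ for $j>a$. Pairing with $\ffNn_{a,j}$ and using $\genap^*=-\genam$ gives
\begin{equation*}
A_j+B_j=-\langle\genam\ffNn_{a,j},\ffNn_{a,j-1}\rangle,\qquad B_j\eqdef\langle G_{n+2-j}(\Ll(-\gensy))(-\gensy)\ffNn_{a,j},\ffNn_{a,j}\rangle.
\end{equation*}
Substituting once more for $\ffNn_{a,j}$ and invoking the Replacement Lemma (Lemma \ref{l:GenBound}) with $\mu=0$ and $H=H^+=1+G_{n+2-j}$, so that $\cJ^N=(-\gensy[1+G_{n+2-j}(\Ll)])^{-1}$ and by \eqref{e:Gj} $\tilde H=G_{n+3-j}$, one obtains $-\langle\genam\ffNn_{a,j},\ffNn_{a,j-1}\rangle=B_{j-1}+r_{j,N}$, where the identification $n+3-j=n+2-(j-1)$ was used, with $|r_{j,N}|\lesssim_n \eps_N\|(-\gensy)^{1/2}\ffNn_{a,j-1}\|^2$. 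Since the norms $\|(-\gensy)^{1/2}\ffNn_{a,j}\|$ are bounded uniformly in $N$ at fixed $n$ (by iterating \eqref{b:A+} of Lemma \ref{l:A+A-} and using $1+G_{n+2-j}\ge 1$), the error $r_{j,N}$ vanishes as $N\to\infty$. Hence $A_j=B_{j-1}-B_j+r_{j,N}$ telescopes, and since $G_2\equiv 0$ forces $B_n=0$,
\begin{equation*}
\sum_{j=a}^{n}A_j=A_a+B_a+o(1)=\langle(-\gensy)[1+G_{n+2-a}(\Ll(-\gensy))]\ffNn_{a,a},\ffNn_{a,a}\rangle+o(1)=\langle\fg_a,(-\gensy[1+G_{n+2-a}(\Ll)])^{-1}\fg_a\rangle+o(1)
\end{equation*}
as $N\to\infty$ with $n$ fixed, the last equality by the first line of \eqref{e:Fjn}.

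It remains to evaluate the limit of $\langle\fg_a,(-\gensy[1+G_{n+2-a}(\Ll)])^{-1}\fg_a\rangle$. For $a=1$, with $\fg_1=\nueff(-\gensy)\eta(\psi)$, this scalar product equals $\nueff^2\sum_{k\neq 0}(\tfrac12|k|^2|\hat\psi(k)|^2)/[1+G_{n+1}(\Ll(\tfrac12|k|^2))]$. Since $\Ll(\tfrac12|k|^2)\to\fc$ pointwise in $k$ as $N\to\infty$ and each summand is dominated by $\tfrac12|k|^2|\hat\psi(k)|^2$ (summable as $\psi\in H^1_0(\T^2)$ and $G_{n+1}\ge 0$), dominated convergence delivers the $N$-limit $\nueff^2[1+G_{n+1}(\fc)]^{-1}\tfrac12\|\psi\|_{H^1(\T^2)}^2$. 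The subsequent $n$-limit then uses $G_{n+1}(\fc)\to\nueff-1$ from \eqref{e:GjConv}, producing \eqref{e:limnNb}. For $a=2$, the expression $\langle\nf_0,(-\gensy[1+G_n(\Ll)])^{-1}\nf_0\rangle$ is exactly the $\mu=0$ analog of the quantity evaluated in the proof of Proposition~\ref{e:Dbulkn}: the same Riemann sum/polar coordinates manipulation carried out there yields $\to\tfrac12 G_{n+1}(\fc)\to\tfrac12(\nueff-1)$, establishing \eqref{e:limnNh}.

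Finally, for the cross term \eqref{e:limNbh}, chaos orthogonality gives $\langle(-\gensy)^{1/2}\fbNn,(-\gensy)^{1/2}\fhNn\rangle=\sum_{j=2}^{n}\langle\ffNn_{1,j},(-\gensy)\ffNn_{2,j}\rangle$, and the same telescoping argument reduces this to $\langle\ffNn_{1,2},\nf_0\rangle$ up to an error vanishing as $N\to\infty$ at fixed $n$. The decisive observation is that $\ffNn_{1,2}=(-\gensy[1+G_n(\Ll)])^{-1}\genap\ffNn_{1,1}$ is obtained from $\genap\ffNn_{1,1}\in\fock_2$ by a Fourier multiplier, and by \eqref{e:genap:fock} the Fourier kernel of $\genap\ffNn_{1,1}$ carries a factor $|k_1+k_2|$, hence vanishes on the diagonal $\{k_1+k_2=0\}$; meanwhile, by \eqref{e:modozeroN} the kernel of $\nf_0$ is supported precisely on that diagonal. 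Therefore $\langle\ffNn_{1,2},\nf_0\rangle=0$ exactly and \eqref{e:limNbh} follows. The main technical obstacle is the control of the telescoping error: the constants appearing in Proposition \ref{p:Hj} can grow with $n$ and $j$, but this is tolerated because the Proposition orders the limits $N\to\infty$ before $n\to\infty$, so at each fixed $n$ the aggregate error is a finite $n$-dependent constant times $\eps_N\to 0$.
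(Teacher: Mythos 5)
Your proof is correct, but it follows a genuinely different route from the paper's. The paper first establishes chaos-by-chaos limits (its Proposition~\ref{l:H1}): for each fixed $j$ it unwinds the recursion \eqref{e:Fjn} all the way down to the first (or second) chaos via repeated applications of the Replacement Lemma with $H^+=G^{+,n+2-j}_i$, accumulating the iterated simplex-integral functions $G^{+,n+2-j}_i$ of \eqref{e:G+}; it then sums over $j$, introduces $S_n(x)=\sum_{j}G^{+,n+2-j}_{j-1}(x)$, and identifies $\lim_n S_n(\fc)=\sqrt{2\fc+1}=\nueff$ through a second Cauchy problem $S'=S/(1+G)^2$, $S(0)=1$. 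You instead telescope the sum $\sum_j\|(-\gensy)^{1/2}\ffNn_{a,j}\|^2$ directly: one application of Lemma~\ref{l:GenBound} per step, with the same choice $H=H^+=1+G_{n+2-j}$ already used in Proposition~\ref{p:Hj} (so that $\tilde H=G_{n+3-j}=G_{n+2-(j-1)}$ makes the boundary terms $B_j$ match exactly), collapses everything onto $\langle\fg_a,\ffNn_{a,a}\rangle=\langle\fg_a,(-\gensy[1+G_{n+2-a}(\Ll(-\gensy))])^{-1}\fg_a\rangle$, which is then evaluated by dominated convergence for $a=1$ and by the Riemann-sum computation of Proposition~\ref{e:Dbulkn} (at $\mu=0$) for $a=2$; the cross term dies at chaos two by the disjoint-Fourier-support argument, exactly as in the paper. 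Your version is shorter, avoids the auxiliary functions $G^+_i$ and $S_n$ altogether, and makes transparent that the total $H^1$-norm is the same resolvent-type quantity as the diffusivity functional. The price is that it only yields the aggregate limit: the paper's heavier chaos-resolved statement \eqref{e:H1bj}--\eqref{e:H1hj} is needed elsewhere (to show $\lim_N\|(-\gensy)^{1/2}\ffNn_{a,n}\|^2\lesssim \fc^{n-2}/(n-2)!\to0$ in the proof of \eqref{e:bH1} and \eqref{e:hH1}), so within the paper's architecture Proposition~\ref{l:H1} cannot be dispensed with; but as a proof of Proposition~\ref{p:H1Norm} itself, your argument is complete.
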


Let us briefly comment on the structure of the rest of the paper. 
In Section \ref{s:RecEst}, we derive suitable recursive estimates on the functions $\ffNn_{a,j}$ 
in~\eqref{e:Fjn}, which will be used to prove both that 
$\ffNn_a$ is indeed a good approximation
to $\tffNn_a$ (Proposition~\ref{p:ApproxGenEq}),
Theorems \ref{thm:ControlGen} and Proposition \ref{p:H1Norm} (Section~\ref{s:ProofThms}). 
In Section~\ref{s:theconvergence}, we will focus on the martingale $\gamma^\psi$ 
and show how to exploit the previous 
statements in order to conclude the proof of Theorem~\ref{thm:Conv}. 

\subsection{Recursive estimates}
\label{s:RecEst}

Given the recursive definition of $\ffNn_a$, $a=1,2$, in~\eqref{e:Fjn}, we first determine 
a general bound on functionals with a similar structure. 

\begin{lemma}\label{l:PrB}
Let $j,n\in\N$ with $n\ge j$ and $\fg\in\fock_j$. Define $\ft^N\in\fock_{j+1}$ according to 
\begin{equ}[e:RecGen]
\ft^N\eqdef (-\gensy[1+G_{n+2-j}(\Ll(-\gensy))] )^{-1}\genap\fg\,,
\end{equ}
where the functions $G_j$'s are given as in~\eqref{e:Gj}. 
Then, there exists a constant $C=C(j)>0$ such that the following estimates hold
\begin{equs}
\|\ft^N\|&\leq C\eps_N\|\fg\|\label{b:PrBL2}\\
\|(-\gensy)^{1/2}\ft^N\|&\leq C\|(-\gensy)^{1/2}\fg\|\,\label{b:PrBH1}
\end{equs}
and $\eps_N\to 0$ as $N\to\infty$ uniformly over $j$ and $n$. 
\end{lemma}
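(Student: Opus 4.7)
The plan is to regard $A^N := -\gensy[1+G_{n+2-j}(\Ll(-\gensy))]$ as a function of the positive self-adjoint operator $-\gensy$, so that $A^N$ is itself self-adjoint and commutes with $-\gensy$. Since $G_{n+2-j}\ge 0$ by Proposition~\ref{p:Hj}(i), the multiplier $1+G_{n+2-j}(\Ll(\cdot))$ is bounded below by $1$, whence on the sector where $-\gensy$ is strictly positive one has the operator inequality $A^N\ge-\gensy>0$. By functional calculus for commuting positive operators, this gives $(A^N)^{-p}\le(-\gensy)^{-p}$ for every $p>0$, and this is essentially the only structural input I will need beyond Lemma~\ref{l:A+A-}.

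For the $H^1$-type bound~\eqref{b:PrBH1}, I would start from the identity
\begin{equ}
\|(-\gensy)^{1/2}\ft^N\|^2 = \langle (-\gensy)(A^N)^{-2}\genap\fg,\genap\fg\rangle
\end{equ}
and chain the operator comparisons $(-\gensy)(A^N)^{-2}\le A^N(A^N)^{-2}=(A^N)^{-1}\le(-\gensy)^{-1}$ to bound the right hand side by $\|(-\gensy)^{-1/2}\genap\fg\|^2$. Estimate~\eqref{b:A+} of Lemma~\ref{l:A+A-} with $\mu=0$ then yields $\|(-\gensy)^{-1/2}\genap\fg\|\lesssim\|\cN(-\gensy)^{1/2}\fg\|=j\|(-\gensy)^{1/2}\fg\|$, the last equality holding because $\fg\in\fock_j$ is an eigenvector of $\cN$ with eigenvalue $j$; this gives \eqref{b:PrBH1} with $C=C(j)$.

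For~\eqref{b:PrBL2} I would use $(A^N)^{-2}\le(-\gensy)^{-2}$ directly to write
\begin{equ}
\|\ft^N\|^2 = \langle (A^N)^{-2}\genap\fg,\genap\fg\rangle \le \|(-\gensy)^{-1}\genap\fg\|^2,
\end{equ}
and then invoke the sharper estimate~\eqref{b:LA+} of Lemma~\ref{l:A+A-}, again with $\mu=0$, which supplies the crucial additional factor $1/\sqrt{\log N}$ and yields $\|(-\gensy)^{-1}\genap\fg\|\lesssim(j/\sqrt{\log N})\|\fg\|$. Setting $\eps_N:=1/\sqrt{\log N}$ then produces the claimed decay, uniform in both $j$ and $n$.

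I do not anticipate any serious obstacle: the whole scheme hinges on the positivity $G_k\ge 0$ furnished by Proposition~\ref{p:Hj}(i) together with the two estimates of Lemma~\ref{l:A+A-} specialised to $\mu=0$. The only care to exercise is to make sure all inverses are well defined on the sector where $\genap\fg$ lives, but this is immediate since $\genap\fg$ has no zero Fourier mode and the spectrum of $-\gensy$ is bounded away from $0$ there.
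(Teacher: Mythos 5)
Your proposal is correct and follows essentially the same route as the paper: the paper likewise uses the non-negativity of $G_{n+2-j}$ to compare $(-\gensy[1+G_{n+2-j}(\Ll(-\gensy))])^{-1}$ with $(-\gensy)^{-1}$ (this is exactly the chain of inequalities invoked via~\eqref{e:Positivity}) and then concludes with~\eqref{b:LA+} for the $L^2$ bound and~\eqref{b:A+} at $\mu=0$ for the $H^1$ bound. Your explicit identification $\|\cN(-\gensy)^{1/2}\fg\|=j\|(-\gensy)^{1/2}\fg\|$ on $\fock_j$ and the choice $\eps_N=1/\sqrt{\log N}$ match the intended constants exactly.
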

\begin{proof}
By Proposition~\ref{p:Hj}, the functions $G_j$'s are non-negative, therefore~\eqref{b:PrBL2} is a 
direct consequence of~\eqref{b:LA+}. For~\eqref{b:PrBH1}, we exploit the argument in~\eqref{e:Positivity}, 
which gives
\begin{equs}
\|(-\gensy)^{1/2}\ft^N\|&=\|(-\gensy)^{1/2}(-\gensy[1+G_{n+2-j}(\Ll(-\gensy)] )^{-1} \genap \fg\|\\
&\leq \|(-\gensy)^{-1/2}\genap \fg\|\lesssim\|(-\gensy)^{1/2}\fg\|
\end{equs}
where the last bound follows by Lemma~\ref{l:A+A-} with $\mu=0$. 
\end{proof}

As a consequence, we deduce the main estimates on the different components of 
$\ffNn_a=(\ffNn_a)_{j=a,\dots,n}$, which will be crucial in the proof of~\eqref{e:bL2} and~\eqref{e:hL2}. 

\begin{proposition}\label{l:PrelimBounds}
Let $n\in\N$. There exists a constant $C=C(n)>0$ such that  
\begin{equ}[e:PrelimBounds]
\begin{matrix}
&\|\fbNn_j\|\leq C \eps_N^{j-1} \|\psi\|_{L^2(\T^2)}\,,& \|(-\gensy)^{1/2}\fbNn_j\|\leq C \|\psi\|_{H^1(\T^2)}\,,& j=1,\dots,n \\
&\|\fhNn_j\|\leq C \eps_N^{j-1}\,, & \|(-\gensy)^{1/2}\fhNn_j\|\leq C\,, & j=2,\dots,n
\end{matrix}
\end{equ}
%\begin{equs}[e:PrelimBounds]
%\|\fbNn_j\|&\leq C \eps_N^{j-1} \|\psi\|_{L^2(\T^2)}\qquad& \|\fhNn_j\|&\leq C \eps_N^{j-1} \\
%\|(-\gensy)^{1/2}\fbNn_j\|&\leq C \|\psi\|_{H^1(\T^2)}\qquad& \|(-\gensy)^{1/2}\fhNn_j\|&\leq C 
%\end{equs}
where $\eps_N$ goes to $0$ as $N\to\infty$ uniformly over $n$. 
\end{proposition}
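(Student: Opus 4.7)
The proof proceeds by induction on $j$. For the inductive step we rely on Lemma~\ref{l:PrB}: since the functions $G_m$ are non-negative on $[0,\infty)$ by Proposition~\ref{p:Hj}(i), the argument behind~\eqref{b:PrBL2}--\eqref{b:PrBH1} applies verbatim to the specific operator appearing in~\eqref{e:Fjn} (only non-negativity of the $G_m$'s is used). Concretely, taking $\fg=\ffNn_{a,j}$ yields
\[
\|\ffNn_{a,j+1}\|\le C(j)\,\eps_N\,\|\ffNn_{a,j}\|,\qquad \|(-\gensy)^{1/2}\ffNn_{a,j+1}\|\le C(j)\,\|(-\gensy)^{1/2}\ffNn_{a,j}\|,
\]
so iterating picks up one extra factor of $\eps_N$ in the $L^2$-bound per step while leaving the $H^1$-type bound essentially unchanged, at the cost of a multiplicative constant $\prod_{k\le n}C(k)$ depending only on $n$.

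Hence it remains to verify the base case $j=a$. Since all the operators in~\eqref{e:Fjn} are diagonal in Fourier space, $\ffNn_{a,a}$ has an explicit Fourier multiplier. For $a=1$, $\hat\fg_1(k)=\tfrac{\nueff}{2}|k|^2\hat\psi(k)$, so
\[
\hat\fbNn_1(k)=\frac{\nueff\,\hat\psi(k)}{1+G_{n+1}(\Ll(\tfrac12|k|^2))},
\]
and using $G_{n+1}\ge 0$ one obtains $\|\fbNn_1\|\le \nueff\,\|\psi\|_{L^2(\T^2)}$ and $\|(-\gensy)^{1/2}\fbNn_1\|\lesssim\|\psi\|_{H^1(\T^2)}$, which match the claim with $\eps_N^{0}=1$. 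For $a=2$, $\nf_0$ is supported on $\{k_1+k_2=0\}$, so $|k_{1:2}|^2=2|k_1|^2$; in view of~\eqref{e:nonlinCoefficient}--\eqref{e:Jlm} and $|\ell_1^2-\ell_2^2|\le|\ell|^2$, we have $|\hat\nf_0(k_1,-k_1)|\lesssim\const\,\mathds{1}_{0<|k_1|\le N}$. Using once more that $G_n\ge 0$ to bound $[1+G_n]\ge 1$, one gets
\[
\|\fhNn_2\|^2\lesssim\const^2\sum_{0<|k_1|\le N}\frac{1}{|k_1|^4}\lesssim\frac{1}{\log N},\qquad\|(-\gensy)^{1/2}\fhNn_2\|^2\lesssim\const^2\sum_{0<|k_1|\le N}\frac{1}{|k_1|^2}\lesssim 1,
\]
which, since $\eps_N\sim 1/\sqrt{\log N}$, are exactly the stated bounds for $j=a=2$.

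The only delicate point is the power of $\eps_N$ in the $L^2$-bound on $\fhNn_2$: note that $\|\nf_0\|$ itself is not small in $N$, so a naive estimate is much too crude, and to extract a factor of $\eps_N\sim 1/\sqrt{\log N}$ one must combine the prefactor $\const^2=\hat\lambda^2/\log N$ inherent in $\nf_0$ with the two-dimensional summability of $|k_1|^{-4}$ in the Fourier sum. Once the base case is in place, the induction on $j$ via Lemma~\ref{l:PrB} is routine and gives the claimed bounds with a constant $C=C(n)$.
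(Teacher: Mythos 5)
Your proposal is correct and follows essentially the same route as the paper: the recursive application of Lemma~\ref{l:PrB} (using only the non-negativity of the $G_m$'s) to reduce to the base cases $j=a$, followed by the explicit Fourier computations for $\fbNn_1$ and $\fhNn_2$, where the support of $\nf_0$ on $\{k_1+k_2=0\}$ and the prefactor $\const^2\sim 1/\log N$ yield exactly the bounds $\|\fhNn_2\|\lesssim(\log N)^{-1/2}$ and $\|(-\gensy)^{1/2}\fhNn_2\|\lesssim 1$. Your closing remark on the delicate extraction of the factor $\eps_N$ for $\fhNn_2$ correctly identifies the same mechanism the paper uses.
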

\begin{proof}
Notice first that, by definition of $\fbNn_j$ and $\fhNn_j$, 
we can recursively apply estimates~\eqref{b:PrBL2} and~\eqref{b:PrBH1} and obtain 
\begin{equ}
\begin{matrix}
&\|\fbNn_j\|\lesssim \eps_N^{j-1} \|\fbNn_1\|\,,& \|(-\gensy)^{1/2}\fbNn_j\|\lesssim  \|(-\gensy)^{1/2}\fbNn_1\|\,, \\
&\|\fhNn_j\|\lesssim \eps_N^{j-2}\|\fhNn_2\|\,, & \|(-\gensy)^{1/2}\fhNn_j\|\lesssim \|(-\gensy)^{1/2}\fhNn_2\|.
\end{matrix}
\end{equ}
%\begin{equs}
%\|\fbNn_j\|&\lesssim \eps_N^{j-1} \|\fbNn_1\| \qquad& \|\fhNn_j\|&\lesssim \eps_N^{j-2}\|\fhNn_2\| \\
%\|(-\gensy)^{1/2}\fbNn_j\|&\lesssim  \|(-\gensy)^{1/2}\fbNn_1\|\qquad& \|(-\gensy)^{1/2}\fhNn_j\|&\lesssim \|(-\gensy)^{1/2}\fhNn_2\|\,.
%\end{equs}
Hence, we are left to estimate the right hand sides of the above for which we will 
repeatedly use the fact that, for all $j$, $G_j\geq 0$. 

Let us begin with $\|\fbNn_1\|$ and $\|(-\gensy)^{1/2}\fbNn_1\|$, for which we have
\begin{equs}
\|\fbNn_1\|^2\lesssim \|(-\gensy)^{-1}\eta(-\Delta\psi)\|^2=2\|\psi\|_{L^2(\T^2)}^2
\end{equs}
where the last equality follows by Lemma~\ref{lem:generator}. 
Arguing as in~\eqref{e:Positivity}, it is not hard to see that
\begin{equs}
\|(-\gensy)^{1/2}\fbNn_1\|^2&\lesssim \|(-\gensy[1+G_{n+1}(\Ll(-\gensy)) ])^{-1/2} \eta(-\Delta)\|^2\\
&\leq \|(-\gensy)^{-1/2} \eta(-\Delta)\|^2=\langle (-\gensy)^{-1}\eta(-\Delta\psi),\eta(-\Delta\psi)\rangle\\
&=2\langle \eta(\psi),\eta(-\Delta\psi)\rangle=2\|\psi\|_{H^1(\T^2)}^2\,.
\end{equs}
For $\|\fhNn_2\|$ and $\|(-\gensy)^{1/2}\fhNn_2\|$, we proceed similarly 
and, for $\alpha\in\{0,1/2\}$, we get
\begin{equ}
\|(-\gensy)^{\alpha}\fhNn_2\|^2\leq\|(-\gensy)^{-1+\alpha}\nf_0\|^2\lesssim\frac{1}{\log N} \sum_{0<|\ell|\leq N}\frac{1}{|\ell|^{4-4\alpha}}\lesssim \frac{1}{(\log N)^{1-2\alpha}}
\end{equ}
from which the required estimates follow at once. 
\end{proof}

In order to prove~\eqref{e:bH1},~\eqref{e:hH1} and especially Proposition~\ref{p:H1Norm}, we need first 
to introduce suitable functions. 
Let $n\in\N$ and, for $\lambda>0$, $\fc$ be given as in~\eqref{e:nueff}. 
For $j=1,\dots,n$ and $i=0,\dots,j-1$, let $G^{+,n+2-j}_i$ be the function on $[0,\infty)$ defined as
\begin{equation}\label{e:G+}
G^{+,n+2-j}_i(x)\eqdef
\begin{cases}
1\,,& \text{if $i=0$}\\
\int_{\Delta^{i-1}_x} \prod_{\ell=0}^{i-1} \frac{1}{[1+G_{n+2-j+\ell}(x_\ell)]^2}\dd x_{0:i-1}\,,& \text{if $i=1,\dots,j-1$\,,}
\end{cases}
\end{equation}
where the functions $G_j$ are those in~\eqref{e:Gj} and $\Delta^{i-1}_x$ is the $i$-th dimensional simplex, i.e. 
$\Delta^{i-1}_x\eqdef \{x_{0:i-1}\eqdef (x_0,\dots,x_{i-1})\in [0,x]^{i}\,:\,0\leq x_0\leq\dots\leq x_{i-1}\leq x\}$. 
In the following lemma, we derive the main properties of the functions $G^{+,n+2-j}_i$.

\begin{lemma}\label{l:G+}
Let $n\in\N$ and, for $\hat\lambda>0$, $\fc$ be given as in~\eqref{e:nueff}. 
For $j=1,\dots,n$ and $i=0,\dots,j-1$, let $G^{+,n+2-j}_i$ be the functions defined in~\eqref{e:G+}. 

Then, for all $n,\,j,\,i$ as above and $x\ge0$, 
\begin{enumerate}[noitemsep, label=(\roman*)]
\item\label{i:Pos} $G^{+,n+2-j}_{i}(x)\geq0$ and $G^{+,n+2-j}_{i}(0)=\mathds{1}_{i=0}$,
\item\label{i:Rec} for $i\neq 0$, the following identity holds
\begin{equ}[e:DerG+]
G^{+,n+2-j}_{i}(x)'=\frac{G^{+,n+2-j}_{i-1}(x)}{[1+G_{n+1-j+i}(x)]^2}
\end{equ}
\item\label{i:Bound} we have the bounds
\begin{equs}
G^{+,n+2-j}_{i}(x)&\leq \frac{x^{i-1}}{(i-1)!}  \qquad&\text{for $i\geq 1$}\label{b:UnifG+}\\
|G^{+,n+2-j}_{i}(x)'|&\leq \frac{x^{i-2}}{(i-2)!} \qquad&\text{for $i\geq 2$}\label{b:UnifDerG+}\\\
|G^{+,n+2-j}_{i}(x)''|&\leq 2\frac{x^{i-2}}{(i-2)!} + \frac{x^{i-3}}{(i-3)!} \qquad&\text{for $i\geq 3$}\label{b:UnifDer2G+}\
\end{equs}
and the first and second derivative of $G^{+,n+2-j}_{1}$ and the second derivative of $G^{+,n+2-j}_{2}$ 
 are uniformly bounded by $1$. 
%\item the function $S_n(x)\eqdef \sum_{j=1}^{n} G^{+,n+2-j}_{j-1}$ converges as  
\end{enumerate}
\end{lemma}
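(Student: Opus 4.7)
The plan is to prove the three items in turn, exploiting the integral representation of $G^{+,n+2-j}_i$ together with the pointwise properties of the $G_k$'s established in Proposition~\ref{p:Hj}(i)--(ii).

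For item \ref{i:Pos}, positivity is immediate because the integrand $\prod_\ell [1+G_{n+2-j+\ell}(x_\ell)]^{-2}$ is pointwise non-negative on the simplex: the denominators are strictly positive since each $G_k \geq 0$ by Proposition~\ref{p:Hj}(i). The boundary value $G^{+,n+2-j}_i(0)$ vanishes for $i \geq 1$ because $\Delta^{i-1}_0$ collapses to a single point and hence carries zero Lebesgue measure, while $G^{+,n+2-j}_0(0)=1$ holds by definition.

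For item \ref{i:Rec}, the key step is to isolate the largest integration variable. Writing $\Delta^{i-1}_x$ as the set of ordered tuples and integrating $x_{i-1}\in[0,x]$ on the outside with $(x_0,\dots,x_{i-2})\in\Delta^{i-2}_{x_{i-1}}$ on the inside, the inner integral is recognized as exactly $G^{+,n+2-j}_{i-1}(x_{i-1})$, because the product structure decouples the last variable from the rest. This yields the integral representation
\begin{equation*}
G^{+,n+2-j}_{i}(x) = \int_0^x \frac{G^{+,n+2-j}_{i-1}(y)}{[1+G_{n+1-j+i}(y)]^2}\, dy,
\end{equation*}
where the index shift $n+2-j+(i-1)=n+1-j+i$ matches the statement of \eqref{e:DerG+}, and the desired identity then follows by the fundamental theorem of calculus.

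For item \ref{i:Bound}, the strategy is induction on $i$ starting from the representation in \ref{i:Rec}. The base cases follow directly from the definition. For the inductive step, the factor $[1+G_{n+1-j+i}(y)]^{-2}\leq 1$, so $|(G^{+,n+2-j}_{i})'(x)| \leq G^{+,n+2-j}_{i-1}(x)$; combined with the inductive hypothesis and integration in $x$, this gives the bounds \eqref{b:UnifG+} and \eqref{b:UnifDerG+}. For the second derivative one differentiates \eqref{e:DerG+} once more via the product rule, producing one term proportional to $(G^{+,n+2-j}_{i-1})'(x)/[1+G_{n+1-j+i}(x)]^2$ and one of the form $G^{+,n+2-j}_{i-1}(x)\cdot G'_{n+1-j+i}(x)/[1+G_{n+1-j+i}(x)]^3$; using $|G'_k|\leq 1$ from Proposition~\ref{p:Hj}(ii) and the induction hypothesis on both $G^{+,n+2-j}_{i-1}$ and its derivative yields the two-term bound \eqref{b:UnifDer2G+}. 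The main point to keep in mind, though not a serious obstacle, is that all constants must be uniform in $n$ and $j$; this is automatic because the only inputs from Proposition~\ref{p:Hj} used in the estimates are $G_k\geq 0$ and $|G'_k|\leq 1$, both valid uniformly in the index $k$.
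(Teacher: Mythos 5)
Your proof is correct and follows essentially the same route as the paper: items (i) and (ii) are read off the definition (the paper leaves the peeling-off of the largest simplex variable implicit, which you spell out), and item (iii) is the integrand-bounded-by-one estimate plus Leibniz, which you phrase as an induction. One caveat, shared with the paper's own proof: with $\Delta^{i-1}_x\subset[0,x]^{i}$ as defined in \eqref{e:G+}, your induction (base case $G^{+,n+2-j}_0\equiv1$, hence $G^{+,n+2-j}_1(x)\le x$) actually yields $G^{+,n+2-j}_i(x)\le x^i/i!$ and $|(G^{+,n+2-j}_i)'(x)|\le x^{i-1}/(i-1)!$, i.e.\ exponents shifted by one relative to \eqref{b:UnifG+}--\eqref{b:UnifDer2G+}; as literally stated, the $i=1$ case of \eqref{b:UnifG+} can fail (take $j=n$, so that $G_2\equiv0$ and $G^{+,2}_1(x)=x>1$ for $x>1$), so your claim that ``the base cases follow directly from the definition'' glosses over this. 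The slip is an off-by-one already present in the paper's indexing and is harmless for every later use of the lemma (only super-factorial decay at the fixed point $x=\fc$ is needed), but it is worth stating the bounds you actually prove.
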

\begin{proof}
Parts~\ref{i:Pos} and~\ref{i:Rec} are direct consequences of the definition of $G^{+,n+2-j}_{i}$ in~\eqref{e:G+} 
and the non-negativity of the functions $G_j$'s determined in Proposition~\ref{p:Hj}. 
For~\ref{i:Bound}, we exploit once more the latter facts to see that 
\begin{equ}
G^{+,n+2-j}_{i}(x)=\int_{\Delta^{i-1}_x} \prod_{\ell=0}^{i-1} \frac{1}{[1+G_{n+2-j+\ell}(x_\ell)]^2}\dd x_{1:i-1}\leq \int_{\Delta^{i-1}_x} \dd x_{1:i-1}\leq \frac{x^{i-1}}{(i-1)!}\,.
\end{equ}
The estimate~\eqref{b:UnifDerG+} can be easily derived by~\eqref{e:DerG+} and~\eqref{b:UnifG+} 
while~\eqref{b:UnifDer2G+} can be easily seen to hold thanks to Leibniz differentiation rule,~\eqref{b:UnifG+} and~\eqref{b:UnifDerG+}. 
\end{proof}

The next lemma establishes the main recursive bound needed in the identification of the $N\to\infty$ limit of the 
norms $\|(-\gensy)^{1/2}\ffNn_{a,j}\|$, $a=1,2$ and fixed $j=a,\dots,n$, 
whose proof is given in the subsequent Proposition~\ref{l:H1}. 

\begin{lemma}\label{p:RecH1}
Let  $n,\,j\in\N$, $1\le j\leq n$ and $G_j$'s be the functions given as in~\eqref{e:Gj}. 
For $\fg_1,\,\fg_2\in\fock_j$, let $\ft^N_1,\ft^N_2\in\fock_{j+1}$ be defined as in~\eqref{e:RecGen} with $G_{n+2-j}$ 
replaced by $G_{n+i+2-j}$ and $\fg$ replaced by $\fg_1$ and $\fg_2$ respectively.  
Then, there exists a constant $C=C(n,j)>0$ such that for all $i=0,\dots, j-1$ 
\begin{align}
\Big|\langle(-G_i^{+,n+2-j}(\Ll(-\gensy))\gensy)&\ft_1^N,\ft_2^N\rangle-\langle(-G_{i+1}^{+,n+2-j}(\Ll(-\gensy))\gensy)\fg_1,\fg_2\rangle\Big|\notag\\
&\leq C \eps_N  \|(-\gensy)^{1/2}\fg_1\|\|(-\gensy)^{1/2}\fg_2\|\label{e:Ind}
\end{align}
where $G_i^{+,n+2-j}$ is defined in~\eqref{e:G+} and 
$\eps_N$ goes to $0$ as $N\to\infty$ uniformly over $n,j,i$. 
\end{lemma}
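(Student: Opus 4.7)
The plan is to recognise the inequality~\eqref{e:Ind} as a direct instance of the Replacement Lemma (Lemma~\ref{l:GenBound}) once the definition~\eqref{e:RecGen} of $\ft^N_a$ has been unrolled. Three steps are required: identify the correct operator $\cJ^N$, verify the hypotheses of Lemma~\ref{l:GenBound}, and recognise the resulting function $\tilde H$ from~\eqref{e:Htilde} as $G_{i+1}^{+,n+2-j}$.

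For the first step, since $\gensy$, $G_\ell(\Ll(-\gensy))$ and $G_i^{+,n+2-j}(\Ll(-\gensy))$ are self-adjoint and pairwise commute (being diagonal in the chaos decomposition and in Fourier space), and since $\genap^{*}=-\genam$ by Lemma~\ref{lem:generator}, a short functional-calculus computation (using $(-H\gensy)^{-2}=(-\gensy)^{-2}/H^2$ with $H=1+G_{n+i+2-j}(\Ll(-\gensy))$) yields
\[
\langle(-G_i^{+,n+2-j}(\Ll(-\gensy))\gensy)\ft_1^N,\ft_2^N\rangle \;=\; -\langle\genam\cJ^N\genap\fg_1,\fg_2\rangle,
\]
where
\[
\cJ^N \eqdef (-\gensy)^{-1}\,\frac{G_i^{+,n+2-j}(\Ll(-\gensy))}{[1+G_{n+i+2-j}(\Ll(-\gensy))]^2}.
\]
A direct comparison then shows that $\cJ^N$ is precisely the operator defined in~\eqref{def:JN} with $\mu=0$ and the choice $H(x)\eqdef 1+G_{n+i+2-j}(x)$, $H^+(x)\eqdef G_i^{+,n+2-j}(x)$.

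For the second step, $H\ge 1$ and $H^+\ge 0$ are immediate from Proposition~\ref{p:Hj}(i) and Lemma~\ref{l:G+}\ref{i:Pos}, while the uniform-on-compacts bounds~\eqref{e:KAPPA} on $H,H^+$ and their first derivatives follow from Proposition~\ref{p:Hj}(ii) and from Lemma~\ref{l:G+}\ref{i:Bound}. Since the image of $\Ll$ on the non-zero spectrum of $-\gensy$ is contained in $[0,2\fc]$ for $N$ large, and since the bounds in Lemma~\ref{l:G+}\ref{i:Bound} depend polynomially on $i\le j-1$, the constant $K$ in~\eqref{e:KAPPA}, and hence the error constant, can be chosen to depend only on $n$ and $j$. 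Applying Lemma~\ref{l:GenBound} then gives the estimate~\eqref{e:Ind} but with $\tilde H(x)=\int_0^x H^+(y)/H(y)^2\dd y$ in place of $G_{i+1}^{+,n+2-j}$ on the right-hand side.

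The third step is essentially tautological: the recursion~\eqref{e:DerG+} in Lemma~\ref{l:G+}\ref{i:Rec} reads exactly $(G_{i+1}^{+,n+2-j})'(y) = G_i^{+,n+2-j}(y)/[1+G_{n+i+2-j}(y)]^2 = \tilde H'(y)$, which, combined with $G_{i+1}^{+,n+2-j}(0)=0$ from Lemma~\ref{l:G+}\ref{i:Pos}, yields $\tilde H=G_{i+1}^{+,n+2-j}$ and closes the argument. The main (and rather minor) obstacle in this plan is the uniform-in-$N$ control of the constants entering Lemma~\ref{l:GenBound}, handled by the compactness of the image of $\Ll$ on the spectrum of $-\gensy$ noted above.
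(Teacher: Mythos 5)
Your proposal is correct and follows essentially the same route as the paper's own proof: both set $H\equiv 1+G_{n+2-j+i}$, $H^+\equiv G_i^{+,n+2-j}$, unroll the definition of $\ft^N_a$ to write the first scalar product as $-\langle\genam\cJ^N\genap\fg_1,\fg_2\rangle$ with $\cJ^N$ as in~\eqref{def:JN} at $\mu=0$, invoke the Replacement Lemma~\ref{l:GenBound}, and identify $\tilde H=G_{i+1}^{+,n+2-j}$ via the derivative identity~\eqref{e:DerG+}. Your extra remarks on the uniformity of the constants (via the range of $\Ll$ and the bounds of Lemma~\ref{l:G+}) are consistent with what the paper leaves implicit.
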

\begin{proof}
To prove the statement, set $H^+\equiv G_i^{+,n+2-j}$ and 
$H\equiv 1+ G_{n+2-j+i}$ and define $\cJ^N$ as in~\eqref{def:JN} with $\mu=0$.  
By Proposition~\ref{p:Hj} and Lemma~\ref{l:G+}, both $H$ and $H^+$ satisfy the assumptions of Lemma~\ref{l:GenBound}, 
Now, by the definition of $\ft^N_a$, $a=1,2$, we have
\begin{equs}
\langle(-H^+(\Ll(-\gensy))\gensy)\ft_1^N,\ft_2^N\rangle%\langle (-H^+(\Ll(\mu-\gensy))\gensy)\ff^N, \ff^N\rangle\\
%=\langle (-H^+(\Ll(-\gensy))\gensy)\ft_1^N, \ft_2^N\rangle +\mu \langle\ft^N_1,\ft^N_2\rangle\\
=\langle \cJ^N\genap \fg_1, \genap \fg_2\rangle % +\mu \langle\ft^N_1,\ft^N_2\rangle
=\langle -\genam\cJ^N\genap \fg_1, \fg_2\rangle%  +\mu \langle\ft^N_1,\ft^N_2\rangle\\
% &\leq \langle -\genam\cJ^N\genap \fg_1, \fg_2\rangle +\mu \|\ft^N_1\|\|\ft^N_2\|
\,.
\end{equs}
By~\eqref{e:Htilde}, $\tilde H$ is given by 
\begin{equs}
  \label{e:intHtilde}
\tilde H(x)&=\int_0^x \frac{H^+(x_i)}{H(x_i)^2}\dd x_i=\int_0^x\frac{G^{+,n+2-j}_i(x_i)}{[1+G_{n+2-j+i}(x_i)]^2} \dd x_i\\
&=\int_0^x G^{+,n+2-j}_{i+1}(x_i)'\dd x_i= G^{+,n+2-j}_{i+1}(x)
\end{equs}
where the passage from the first to the second line follows by~\eqref{e:DerG+}. 
Hence, Lemma~\ref{l:GenBound} implies that~\eqref{e:Ind} can be bounded above by 
\begin{equs}
|\langle (-\genam\cJ^N\genap - G_{i+1}^{+,n+2-j}(\Ll(-\gensy))\gensy) \fg_1, \fg_2\rangle|
\lesssim \eps_N \|(-\gensy)^{1/2} \fg_1\|\|(-\gensy)^{1/2} \fg_2\|\,.% +\eps_N \|\fg_1\|\|\fg_2\|\lesssim \eps_N \|(-\gensy)^{1/2} \fg_1\|\|(-\gensy)^{1/2} \fg_2\|
\end{equs}
% for the second, while the last estimate is a consequence of $\|\cdot\|\leq \|(-\gensy)^{1/2}\cdot\|$ 
% since $-\gensy\geq 1$ as we are on a torus.
\end{proof}

\begin{proposition}\label{l:H1}
For any $n\in\N$, we have 
\begin{equs}
&\lim_{N\to\infty} \|(-\gensy)^{1/2} \fbNn_j\|^2= \frac{G^{+,n+2-j}_{j-1}(\fc)}{(1+G_{n+1}(\fc))^2}\frac{\nueff^2}{2}\|\psi\|^2_{H^1(\T^2)}\,,\label{e:H1bj}\\
&\lim_{N\to\infty} \|(-\gensy)^{1/2} \fhNn_j\|^2=\frac12 G^{+,n+2-j}_{j-1}(\fc)\label{e:H1hj}\\
&\lim_{N\to\infty} \langle (-\gensy)^{1/2} \fbNn_j, (-\gensy)^{1/2} \fhNn_j\rangle=0\label{e:H1hbj}
\end{equs}
where~\eqref{e:H1bj} and~\eqref{e:H1hbj} hold for all $j=1,\dots,n$ and~\eqref{e:H1hj} for $j=2,\dots,n$; the functions $G$'s and $G^{+}$'s are respectively defined 
in~\eqref{e:Gj} and~\eqref{e:G+}. 
\end{proposition}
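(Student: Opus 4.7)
The plan is to reduce $\|(-\gensy)^{1/2}\ffNn_{a,j}\|^2$ to a scalar product on the base chaos level $\fock_a$ by iterating the Replacement Lemma~\ref{l:GenBound} (via Lemma~\ref{p:RecH1}) exactly $j-a$ times — once per step of the recursion~\eqref{e:Fjn} — and then to evaluate the resulting explicit expression as $N\to\infty$ by a combination of dominated convergence (for the $\fbNn$ case) and a Riemann-sum/polar-coordinates calculation (for the $\fhNn$ case).

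\emph{Cross term~\eqref{e:H1hbj}.} I would argue that this quantity vanishes identically for every $N$. By~\eqref{e:modozeroN}--\eqref{e:nonlinCoefficient}, $\hat\nf_0$ is supported on $\{k_1+k_2=0\}$; the $\hat\phi_n(k_1+k_2,k_{3:n+1})$ factor in~\eqref{e:genap:fock} together with any diagonal operator preserves the constraint $\sum_ik_i=0$, so $\hat\fhNn_j$ lives in $\{\sum_ik_i=0\}$ for all $j\ge 2$. On the other hand, the prefactor $|k_1+k_2|$ in~\eqref{e:genap:fock} kills anything with $k_1+k_2=0$, and starting from $\hat\fbNn_1$ (supported on $\{k\ne 0\}$) one shows inductively that $\hat\fbNn_j$ is supported on $\{\sum_ik_i\ne 0\}$. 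The two supports are disjoint, and since $-\gensy$ is diagonal the inner product is zero.

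\emph{Iterative reduction.} Using $G^{+,n+2-j}_0\equiv 1$, rewrite $\|(-\gensy)^{1/2}\ffNn_{a,j}\|^2=\langle -G^{+,n+2-j}_0(\Ll(-\gensy))\gensy\,\ffNn_{a,j},\ffNn_{a,j}\rangle$. Substituting $\ffNn_{a,k}=(-\gensy[1+G_{n+2-k}(\Ll(-\gensy))])^{-1}\genap\ffNn_{a,k-1}$ and exploiting the diagonality of all resolvents, the scalar product takes the form $\langle\cJ^N\genap\ffNn_{a,k-1},\genap\ffNn_{a,k-1}\rangle$ with $\cJ^N=G^{+,n+2-j}_{j-k}(\Ll(-\gensy))(-\gensy)^{-1}[1+G_{n+2-k}(\Ll(-\gensy))]^{-2}$. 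This matches the setup of Lemma~\ref{l:GenBound} with $H^+=G^{+,n+2-j}_{j-k}$ and $H=1+G_{n+2-k}$; applying~\eqref{e:DerG+} with $i=j-k+1$ identifies $\tilde H=G^{+,n+2-j}_{j-k+1}$. Hence each step lowers the chaos level by one and raises the lower index of $G^+$ by one, at the cost of an error bounded by $\eps_N\|(-\gensy)^{1/2}\ffNn_{a,k-1}\|^2$, which is uniformly controlled via Proposition~\ref{l:PrelimBounds}. After $j-a$ iterations,
\[
\|(-\gensy)^{1/2}\ffNn_{a,j}\|^2=\bigl\langle G^{+,n+2-j}_{j-a}(\Ll(-\gensy))(-\gensy)^{-1}[1+G_{n+2-a}(\Ll(-\gensy))]^{-2}\fg_a,\fg_a\bigr\rangle+o_N(1).
\]

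\emph{Base-level computation and main obstacle.} For $a=1$ (with $\fg_1=\tfrac{\nueff}{2}\eta(-\Delta\psi)\in\fock_1$) the right-hand side is an absolutely convergent sum over $k\in\Z^2\setminus\{0\}$ in which, for each fixed $k$, $\Ll(|k|^2/2)\to\fc$; Lemma~\ref{l:G+}(iii) provides the dominating function, and dominated convergence yields~\eqref{e:H1bj}. For $a=2$ (with $\fg_2=\nf_0\in\fock_2$) the sum reduces to $\ell\in\Z^2$, $1\le|\ell|\le N$, with $|\hat\nf_0(\ell,-\ell)|^2=\const^2|\nonlin_{\ell,-\ell}|^2$; passing to polar coordinates (the angular integral of $\cos^2(2\theta)$ gives $\pi$) and substituting $y=\Ll(|\ell|^2)$, the differential $d|\ell|^2/|\ell|^2$ becomes $(\log N^2/\fc)\,dy$ up to lower-order corrections, which exactly cancels the $\const^2=\hat\lambda^2/\log N$ prefactor and produces $\tfrac12\int_0^\fc G^{+,n+2-j}_{j-2}(y)/[1+G_n(y)]^2\,dy$; by~\eqref{e:DerG+} at $i=j-1$ this equals $\tfrac12 G^{+,n+2-j}_{j-1}(\fc)$, proving~\eqref{e:H1hj}. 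The main technical obstacle is the index bookkeeping: the denominators $1+G_{n+2-k}$ appearing in the $\ffNn$ recursion must be compatible with the lower indices of the $G^+$ emerging from iterated applications of Lemma~\ref{l:GenBound}, and this matching is precisely what~\eqref{e:DerG+} encodes. A secondary point is to verify that the hypotheses $H\ge 1$, $H^+\ge 0$ of Lemma~\ref{l:GenBound} hold at every iteration, which follows from Proposition~\ref{p:Hj}(i) and Lemma~\ref{l:G+}(i).
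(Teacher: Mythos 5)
Your proposal is correct and follows essentially the same route as the paper: iterate the replacement estimate of Lemma~\ref{p:RecH1} (with the index matching $H^+=G^{+,n+2-j}_{i}$, $H=1+G_{n+2-j+i}$ and $\tilde H=G^{+,n+2-j}_{i+1}$ via~\eqref{e:DerG+}) down to the base chaos level, then evaluate the resulting diagonal expression by dominated convergence for $\fbNn_1$ and by a Riemann-sum/polar-coordinate computation for $\fhNn_2$. The only (harmless) deviation is that you show the cross term vanishes identically at level $j$ for every $N$ by disjointness of the Fourier supports, whereas the paper first reduces to chaos level $2$ and then invokes the same support argument there.
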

\begin{proof}
To prove~\eqref{e:H1bj}~\eqref{e:H1hj} and~\eqref{e:H1hbj}, 
we apply Lemma~\ref{p:RecH1} inductively on $i$ starting from $i=0$. 
Recall that in this latter case, 
by definition~\eqref{e:G+}, $G^{+,n+2-j}_0\equiv 1$ so that $\|(-\gensy)^{1/2} \fbNn_j\|^2=\langle -G^{+,n+2-j}_0(\Ll(-\gensy))\gensy \fbNn_{j},  \fbNn_j\rangle$ and the same holds for $\|(-\gensy)^{1/2} \fhNn_j\|^2$. Therefore, for $j\geq 2$, one gets
\begin{equs}
\|(-\gensy)^{1/2} \fbNn_j\|^2&=\|(-G_{1}^{+,n+2-j}(\Ll(-\gensy))\gensy)^{1/2}\fbNn_{j-1}\|^2 +o(1)\\
\|(-\gensy)^{1/2} \fhNn_j\|^2&=\|(-G_{1}^{+,n+2-j}(\Ll(-\gensy))\gensy)^{1/2}\fhNn_{j-1}\|^2 +o(1)\\
\langle (-\gensy) \fbNn_j, \fhNn_j\rangle&=\langle(-G_{1}^{+,n+2-j}(\Ll(-\gensy))\gensy)\fbNn_{j-1},\fhNn_{j-1}\rangle +o(1)
\end{equs}
where $o(1)$ goes to $0$ as $N\to\infty$ (the error terms comes from the right hand side of \eqref{e:Ind}, 
with the norms being bounded via Lemma \ref{l:PrelimBounds}).
Applying again Lemma~\ref{p:RecH1} for $i=1,\dots,j-1$ and $i=1,\ldots, j-2$ respectively, one arrives at
\begin{equs}
\|(-\gensy)^{1/2} \fbNn_j\|^2&=\|(-G_{j-1}^{+,n+2-j}(\Ll(-\gensy))\gensy)^{1/2}\fbNn_{1}\|^2 +o(1)\label{e:H1bj1}\\
\|(-\gensy)^{1/2} \fhNn_j\|^2&=\|(-G_{j-2}^{+,n+2-j}(\Ll(-\gensy))\gensy)^{1/2}\fhNn_{2}\|^2 +o(1)\label{e:H1hj1}\\
\langle (-\gensy) \fbNn_j, \fhNn_j\rangle&=\langle(-G_{j-2}^{+,n+2-j}(\Ll(-\gensy))\gensy)\fbNn_2,\fhNn_{2}\rangle +o(1)\label{e:H1hbj1}
\end{equs}
where, as above, $o(1)$ goes to $0$ as $N\to\infty$ (the error terms accumulate along the recursion, but the number of steps does not grow with $N$). Here,~\eqref{e:H1bj1} is also valid for $j=1$ since in that case $G_{j-1}^{+,n+2-j}\equiv 1$.

Since $\fbNn_1$ is defined by the first line of \eqref{e:Fjn} with $a=1$, the first summand at the right hand side of~\eqref{e:H1bj1} equals
\begin{equs}
\|(-G_{j-1}^{+,n+2-j}&(\Ll(-\gensy))\gensy)^{1/2}\fbNn_{1}\|^2\\
&=\frac{\nueff^2}{2}\sum_{k\in\Z^2\setminus\{0\}}|k|^2 |\hat\psi(k)|^2\frac{  G_{j-1}^{+,n+2-j}(\Ll(\tfrac12|k|^2))}{[1+G_{n+1}(\Ll(\tfrac12|k|^2))]^2}\,
\end{equs} 
and, since the quotient is uniformly bounded in $N$ and $\psi\in H_0^1(\T^2)$, we can apply the dominated convergence 
theorem to pass the limit inside the sum. Both $G_{j-1}^{+,n+2-j}$ and $G_{n+1}$ are continuous and, for fixed $k$, 
$\Ll(\tfrac12|k|^2)$ converges to $\fc$ as $N\to\infty$, so that~\eqref{e:H1bj} follows at once. 

For the right hand side of~\eqref{e:H1hj1} instead we observe that $\fhNn_{2}$ is defined by the first line of \eqref{e:Fjn} with $a=2$ so that we have 
\begin{equs}
\|(-&G_{j-2}^{+,n+2-j}(\Ll(-\gensy))\gensy)^{1/2}\fhNn_{2}\|^2 \\
&= \frac{2\hat\lambda^2}{\log N}\sum_{\ell\neq 0} (\nonlin_{\ell,-\ell})^2\frac{|\ell|^2G_{j-2}^{+,n+2-j}(\Ll(|\ell|^2))}{|\ell|^4[1+G_n(\Ll(|\ell|^2))]^2}\\
&= \frac{2\hat\lambda^2}{\log N}\int_{\mathbb R^2}(\nonlin_{xN,-xN})^2\frac{G_{j-2}^{+,n+2-j}(\Ll(N^2|x|^2))}{|x|^2[1+G_n(\Ll(N^2|x|^2))]^2}\dd x+o(1)\\
&=\frac12 \int_{\Ll(N^2)}^{\Ll(1)}\frac{G_{j-2}^{+,n+2-j}(y)}{[1+G_n(y)]^2}\dd y +o(1)=\frac12 G_{j-1}^{+,n+2-j}(\Ll(1))+o(1)
\end{equs}
where in the steps from the second to the fourth line we adopted the very same strategy as in the proof of 
Theorem~\ref{thm:Dbulk} to determine~\eqref{e:ApproxDbulk}, and, as therein, $o(1)$ goes to $0$ as $N\to\infty$. In the last equality we used also the identity \eqref{e:intHtilde}.
Therefore~\eqref{e:H1hj} follows by recalling the definition of $\Ll$ and taking the limit as $N\to\infty$. 

At last, we have 
\begin{equ}
\langle(-G_{j-2}^{+,n+2-j}(\Ll(-\gensy))\gensy)\fbNn_2,\fhNn_{2}\rangle=0
\end{equ}
as the Fourier support of $\fbNn_2$ has empty intersection with that of $\fhNn_{2}$. 
Indeed, while by Lemma~\ref{lem:generator} $\CF(\fbNn_2)(\ell,m)=0$ for all $\ell\,,m$ such that $\ell+m=0$, 
$\CF(\fhNn_2)(\ell,m)\neq 0$ only for $\ell\,,m$ such that $\ell+m=0$. 
Given that the operator $G_{j-2}^{+,n+2-j}(\Ll(-\gensy))\gensy$ is non-negative 
and diagonal in Fourier space, and therefore does not alter the support of the functions to which it is applied, 
the conclusion follows at once. 
\end{proof}

We now turn our attention to the difference between $\ffNn_a$ and $\tffNn_a$. 
First, in the next lemma we show that the cost at which we replace 
$\Op_j$ with the operator $(-\gensy)G_{j}(\Ll(-\gensy))$, vanishes in the large $N$ limit. 

\begin{lemma}\label{p:RecRepl}
Let  $n,\,j\in\N$ such that $1\le j\leq n$.  
Let $\Op_j$ be the operators defined in~\eqref{def:OpH} and $G_j$'s be the functions given as in~\eqref{e:Gj}. 
For $\fs,\,\tilde\fs\in\fock_j$, let $\fp^N$ and $\tilde\fp^N$ be respectively defined as 
\begin{equs}[e:RecGen2]
\fp^N&\eqdef (-\gensy[1+G_{n+2-j}(\Ll(-\gensy))] )^{-1}\fs\\
\tilde\fp^N&\eqdef (-\gensy+\Op_{n+2-j})^{-1}\tilde\fs\,.
\end{equs}
Then, there exists a constant $C=C(n,j)>0$ such that 
\begin{equ}[e:ApproxGenG]
\|(-\gensy)^{1/2}(\tilde\fp^N-\fp^N)\|\leq C\Big(\|(-\gensy)^{-1/2}(\tilde\fs-\fs)\|+\eps_N\|(-\gensy)^{-1/2}\fs\|\Big)
\end{equ}
where $\eps_N\to 0$ as $N\to\infty$ uniformly in $n$ and $j$. Further, if $\fs=\genap\fg$ and $\tilde\fs=\genap\tilde\fg$ 
for some $\fg,\,\tilde\fg\in\fock_{j-1}$, then, under the same conditions as above,
\begin{equ}[e:Approx+GenG]
\|(-\gensy)^{1/2}(\tilde\fp^N-\fp^N)\|\leq C\Big(\|(-\gensy)^{1/2}(\tilde\fg-\fg)\|+\eps_N\|(-\gensy)^{1/2}\fg\|\Big)\,.
\end{equ} 
\end{lemma}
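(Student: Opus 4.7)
The approach is to apply the resolvent identity. Setting $A \eqdef -\gensy + \Op_{n+2-j}$ and $B \eqdef -\gensy[1+G_{n+2-j}(\Ll(-\gensy))]$, one has the algebraic identity
\begin{equation*}
\tilde\fp^N - \fp^N = A^{-1}(\tilde\fs - \fs) + A^{-1}(B-A)B^{-1}\fs,
\end{equation*}
and I would bound the two summands separately after applying $(-\gensy)^{1/2}$. Two observations make the whole argument go through with minimal extra work. First, $\Op_{n+2-j} \ge 0$ (Lemma~\ref{l:TrGenEq}) and $G_{n+2-j} \ge 0$ (Proposition~\ref{p:Hj}), so $A,\,B$ are self-adjoint with $A,B \ge -\gensy$. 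Second, the discrepancy $-(B-A) = \Op_{n+2-j} + G_{n+2-j}(\Ll(-\gensy))\gensy$ is precisely the combination estimated by Proposition~\ref{p:Hj} at $\mu=0$.

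For the first summand I would reproduce the positivity computation of~\eqref{e:Positivity}: from $A \ge -\gensy$ and $\Op_{n+2-j}\ge 0$ one obtains $\|(-\gensy)^{1/2}A^{-1}\psi\|^2 \le \langle\psi,A^{-1}\psi\rangle \le \|(-\gensy)^{-1/2}\psi\|^2$ for any $\psi\in\fock_j$, and the same bound with $B$ in place of $A$. Specialising to $\psi = \tilde\fs-\fs$ immediately yields $\|(-\gensy)^{1/2}A^{-1}(\tilde\fs-\fs)\| \le \|(-\gensy)^{-1/2}(\tilde\fs-\fs)\|$. For the second summand I would test against an arbitrary unit vector $\phi$, transfer $(-\gensy)^{1/2}A^{-1}$ to the other side of the pairing by self-adjointness, and apply Proposition~\ref{p:Hj} to the pairing
\begin{equation*}
\langle B^{-1}\fs,\, [\Op_{n+2-j} + G_{n+2-j}(\Ll(-\gensy))\gensy]\, A^{-1}(-\gensy)^{1/2}\phi\rangle.
\end{equation*}
This produces a factor $C\eps_N \|(-\gensy)^{1/2}B^{-1}\fs\|\,\|(-\gensy)^{1/2}A^{-1}(-\gensy)^{1/2}\phi\|$; the first norm is bounded by $\|(-\gensy)^{-1/2}\fs\|$ by the positivity argument just recalled, and the second by $\|\phi\|\le 1$ after writing $\chi \eqdef (-\gensy)^{1/2}\phi$ and reusing the $A$-estimate. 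Combining the two bounds proves~\eqref{e:ApproxGenG}.

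For the refined estimate~\eqref{e:Approx+GenG}, under $\fs = \genap\fg$ and $\tilde\fs = \genap\tilde\fg$, I would simply apply Lemma~\ref{l:A+A-} at $\mu = 0$ to upgrade $\|(-\gensy)^{-1/2}\genap(\tilde\fg-\fg)\|$ and $\|(-\gensy)^{-1/2}\genap\fg\|$ appearing in~\eqref{e:ApproxGenG} into $\|(-\gensy)^{1/2}(\tilde\fg-\fg)\|$ and $\|(-\gensy)^{1/2}\fg\|$ respectively, the number-operator factor producing only $n$-dependent constants since the relevant chaos is fixed. The only delicate point of the whole argument is the correct bookkeeping of the $(-\gensy)^{1/2}$ weights; all the serious analytic content — the quantitative closeness of $\Op_{n+2-j}$ to $-G_{n+2-j}(\Ll(-\gensy))\gensy$ — is already packaged in Proposition~\ref{p:Hj}, so no new harmonic analysis or chaos decomposition is required.
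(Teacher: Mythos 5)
Your proof is correct and follows essentially the same route as the paper: the same decomposition $\tilde\fp^N-\fp^N=A^{-1}(\tilde\fs-\fs)+A^{-1}(B-A)B^{-1}\fs$, the same positivity estimate as in~\eqref{e:Positivity}, Proposition~\ref{p:Hj} for the discrepancy $B-A$, and Lemma~\ref{l:A+A-} for the upgrade to~\eqref{e:Approx+GenG}. The only (harmless) difference is that you convert the bilinear bound of Proposition~\ref{p:Hj} into a norm bound by duality against a unit vector, whereas the paper squares the norm and absorbs via Young's inequality, which costs it a $\sqrt{\eps_N}$ in place of your $\eps_N$ — immaterial for the statement.
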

\begin{proof}
Notice first that~\eqref{e:Approx+GenG} follows immediately by~\eqref{e:ApproxGenG} 
and~\eqref{b:A+} in Lemma~\ref{l:A+A-}, so that we only need to focus on~\eqref{e:ApproxGenG}. 

Similarly to the proof of Proposition~\ref{p:Hj}, 
let $A_j=-\gensy+\Op_{n+2-j}$ and $B_j=-\gensy-G_{n+2-j}(\Ll( -\gensy))\gensy$. 
We add and subtract $(-\gensy)^{-1/2}A_j^{-1}\fs$ to the left hand side of~\eqref{e:ApproxGenG}, 
use the definition of $\fp^N$ and $\tilde\fp^N$ in~\eqref{e:RecGen2} and apply triangular inequality, 
thus obtaining
\begin{equs}
\|(-\gensy)^{1/2} (\tilde \fp^N- \fp^N)\|\leq&\|(-\gensy)^{1/2} [A_j^{-1}-B_j^{-1}] \fs\|+\|(-\gensy)^{1/2} A_j^{-1} (\tilde\fs- \fs)\|\\
=&\one+\two\,.
\end{equs}
For $\one$, we exploit~\eqref{e:OpId} and get
\begin{equs}
\one&=\|(-\gensy)^{1/2} A_j^{-1}[B_j-A_j]B_j^{-1}\fs\|
%&=\|(-\gensy)^{1/2} A_j^{-1}[\Op_{n+2-j}+G_{n+2-j}(\Ll(\mu -\gensy))\gensy]B_j^{-1}\genap \ffNn_{j-1}\|\\
\leq \|(-\gensy)^{-1/2}[B_j-A_j]B_j^{-1}\fs\|
\end{equs}
where we used also the non-negativity of $\Op_{n+2-j}$. 
Now, by Proposition~\ref{p:Hj} and 
in particular~\eqref{e:Hj}, we have 
\begin{equs}
\|&(-\gensy)^{-1/2}[B_j-A_j]B_j^{-1}\fs\|^2\\
&=\langle [-\Op_{n+2-j}-G_{n+2-j}(\Ll( -\gensy))\gensy]B_j^{-1}\fs, (-\gensy)^{-1}[B_j-A_j]B_j^{-1}\fs\rangle\\
&\lesssim \eps_N \|(-\gensy)^{1/2}B_j^{-1}\fs\|\|(-\gensy)^{-1/2}[B_j-A_j]B_j^{-1}\fs\|\\
&\leq \frac{\eps_N}2 \|(-\gensy)^{-1/2}[B_j-A_j]B_j^{-1}\fs\|^2 +\frac{\eps_N}{2}\|(-\gensy)^{1/2}B_j^{-1}\fs\|^2.
\end{equs}
%which holds for any $\delta>0$. Upon choosing $\delta$ sufficiently small, w
For $N$ large enough, we can bring the first summand 
to the left hand side and conclude that
\begin{equs}
\|(-\gensy)^{-1/2}[B_j-A_j]B_j^{-1}\fs\|&\lesssim \sqrt{\eps_N}\|(-\gensy)^{1/2}B_j^{-1}\fs\|% \\
% &\leq \eps_N\|B_j^{-1/2}\fs\|
\leq \sqrt{\eps_N}\|(-\gensy)^{-1/2}\fs\|
\end{equs}
where the last  inequality follows from the fact that  $-G_{n+2-j}(\Ll(-\gensy))\gensy$ is a 
non-negative operator. 

For $\two$, similar arguments provide the following bound:
\begin{equs}
\two=\|(-\gensy)^{1/2} (-\gensy+\Op_{n+2-j})^{-1} (\tilde\fs- \fs)\|\leq \|(-\gensy)^{-1/2}(\tilde\fs- \fs)\|
\end{equs}
so that~\eqref{e:ApproxGenG} follows by putting these two bounds together. 
\end{proof}

We conclude this subsection by showing that, for $n$ fixed and large $N$, 
$\fbNn\equiv \ffNn_1$ and $\fhNn\equiv \ffNn_2$ are indeed good 
approximations of $\tfbNn\equiv \tffNn_1$ and $\tfhNn\equiv \tffNn_2$, respectively. 

\begin{proposition}\label{p:ApproxGenEq}
Let $n\in\N$ be fixed. 
Then, for $a=1,2$, the following limit holds 
\begin{equ}[e:ApproxGenEq]
\lim_{N\to\infty}\|(-\gensy)^{1/2} (\tffNn_a- \ffNn_a)\|=0\,.
\end{equ}
\end{proposition}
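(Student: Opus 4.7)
The plan is to prove \eqref{e:ApproxGenEq} by induction on the chaos level $j$, using Lemma~\ref{p:RecRepl} as the workhorse at every step. Since $\tffNn_a$ and $\ffNn_a$ are decomposed as sums of orthogonal components living in distinct chaoses $\fock_j$, $j=a,\dots,n$, and since $n$ is fixed, it suffices to show
\begin{equ}
\lim_{N\to\infty} \|(-\gensy)^{1/2}(\tffNn_{a,j}-\ffNn_{a,j})\|=0 \qquad \text{for each }j=a,\dots,n,
\end{equ}
and then sum the (finitely many) pieces via the Pythagorean identity.

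For the base case $j=a$ I would apply \eqref{e:ApproxGenG} of Lemma~\ref{p:RecRepl} with $\fs=\tilde\fs=\fg_a$, so that the first term on the right-hand side vanishes and we are left with
\begin{equ}
\|(-\gensy)^{1/2}(\tffNn_{a,a}-\ffNn_{a,a})\|\leq C\,\eps_N\,\|(-\gensy)^{-1/2}\fg_a\|.
\end{equ}
For $a=1$, $\fg_1=\tfrac{\nueff}{2}\eta(-\Delta\psi)$, so $\|(-\gensy)^{-1/2}\fg_1\|\lesssim \|\psi\|_{H^1(\T^2)}$ by Lemma~\ref{lem:generator}; for $a=2$, $\fg_2=\nf_0$, and the bound $\|(-\gensy)^{-1/2}\nf_0\|\lesssim 1$ is exactly what was used in the proof of Proposition~\ref{e:Dbulkn}. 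Either way the base case vanishes as $N\to\infty$.

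For the inductive step, assume the bound at level $j-1$ and apply the second part of Lemma~\ref{p:RecRepl}, namely \eqref{e:Approx+GenG}, with $\fg=\ffNn_{a,j-1}$ and $\tilde\fg=\tffNn_{a,j-1}$. This yields
\begin{equ}
\|(-\gensy)^{1/2}(\tffNn_{a,j}-\ffNn_{a,j})\|\leq C\Big(\|(-\gensy)^{1/2}(\tffNn_{a,j-1}-\ffNn_{a,j-1})\|+\eps_N\|(-\gensy)^{1/2}\ffNn_{a,j-1}\|\Big).
\end{equ}
The first summand vanishes by the inductive hypothesis, while the second is bounded by $C\eps_N$ because Proposition~\ref{l:PrelimBounds} gives the uniform-in-$N$ control $\|(-\gensy)^{1/2}\ffNn_{a,j-1}\|\lesssim_n \|\psi\|_{H^1(\T^2)}$ (resp.\ $\lesssim_n 1$) for $a=1$ (resp.\ $a=2$). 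Iterating this at most $n-a$ times produces a bound of the form $C(n,j)\,\eps_N$, which tends to $0$.

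Conceptually there is no real obstacle here: all the delicate analytical work (the replacement of $\Op_j$ by the diagonal operator $-G_{n+2-j}(\Ll(-\gensy))\gensy$ at a cost $\eps_N$) has been encapsulated in Lemma~\ref{p:RecRepl}, and the a priori bounds on $\ffNn_{a,j-1}$ are already in Proposition~\ref{l:PrelimBounds}. The only points that require a minimum of care are (i) choosing the correct variant of Lemma~\ref{p:RecRepl} for the base case versus the recursion (the $\|(-\gensy)^{-1/2}\cdot\|$-version for the former and the $\genap$-version for the latter), and (ii) ensuring that the constants $C(n,j)$ that accumulate along the recursion do not depend on $N$, which is guaranteed because $n$ is fixed and the number of recursive steps is bounded by $n$.
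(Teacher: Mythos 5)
Your proof is correct and follows essentially the same route as the paper's: the paper likewise reduces to each chaos component, iterates \eqref{e:Approx+GenG} down to level $1$ (your inductive step), closes with \eqref{e:ApproxGenG} applied to $\fs=\tilde\fs=\fg_a$ (your base case), and controls the accumulated error terms via Proposition~\ref{l:PrelimBounds}. The only cosmetic difference is that you phrase the iteration as a formal induction while the paper unrolls it into a single chain of inequalities.
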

\begin{proof}
Notice that it suffices to show that for all $j=a,\dots,n$
\begin{equ}[e:ApproxGenEqj]
\lim_{N\to\infty}\|(-\gensy)^{1/2} (\tffNn_{a,j}- \ffNn_{a,j})\|=0\,.
\end{equ}
We prove the statement for $a=1$ (i.e. $\ffNn_a\equiv\fbNn$), the other case being analogous.
By iteratively applying~\eqref{e:Approx+GenG} in Lemma~\ref{p:RecRepl} we obtain
\begin{equs}
\|(-\gensy)^{1/2} (\tfbNn_j- \fbNn_j)\|\lesssim \|(-\gensy)^{1/2}(\tfbNn_1- \fbNn_1)\|+\eps_N\sum_{r=1}^{j-1}\|(-\gensy)^{1/2}\fbNn_r\|\,.
%\|(-\gensy)^{1/2} (\tfhNn_j- \fhNn_j)\|&\lesssim \|(-\gensy)^{1/2}(\tfhNn_2- \fhNn_2)\|+\eps_N\|(-\gensy)^{1/2}\fhNn_2\|
\end{equs}
At this point, recall that $\tfbNn_1$ and $ \fbNn_1$ are 
defined by the first lines of \eqref{e:GenEqSys} and \eqref{e:Fjn} respectively, with $a=1$. 
We can therefore apply \eqref{e:ApproxGenG} with $\fp= \fbNn_1, \tilde\fp= \tfbNn_1$ and $\fs=\tilde\fs=\fg_1$ 
given in~\eqref{e:InputGenEq} and altogether we get
\begin{eqnarray}
  \label{eq:gnaafacciopiu}
  \|(-\gensy)^{1/2} (\tfbNn_j- \fbNn_j)\|\lesssim\eps_N\left(\|(-\gensy)^{-1/2}\fg_1 \|+\sum_{r=1}^{j-1}\|(-\gensy)^{1/2}\fbNn_r\|\right)\,.
\end{eqnarray}
% By~\eqref{e:ApproxGenG}, we get 
% \begin{equs}
% \|(-\gensy)^{1/2} (\tfbNn_j- \fbNn_j)\|&\lesssim\eps_N\|(-\gensy)^{1/2}\fbNn_i\|\\
% \end{equs}
% and since
By Proposition~\ref{l:PrelimBounds}, $\|(-\gensy)^{1/2}\fbNn_r\|\lesssim 1$ (the constant hidden 
into ``$\lesssim$'' depends on $\|\psi\|_{H^1(\T^2)}$ but this is irrelevant for our purposes) while
$\|(-\gensy)^{-1/2}\fg_1 \|^2=2\nueff\|\psi\|^2_{H^1(\mathbb T^2)}$. Altogether~\eqref{e:ApproxGenEqj} and 
consequently the statement follow. 
\end{proof}

\subsection{Proof of Theorem~\ref{thm:ControlGen} and Proposition~\ref{p:H1Norm}}\label{s:ProofThms}

The goal of this section is to prove Theorem~\ref{thm:ControlGen} and Proposition~\ref{p:H1Norm}. 
We begin with the former. 

\begin{proof}[of Theorem~\ref{thm:ControlGen}]
We focus first on~\eqref{e:bL2} and~\ref{e:hL2}. 
Notice that, since each of the $\fbNn_j$'s lives in the $j$-th homogeneous chaos, 
$\eta(\psi)$ lives in the first and the different chaoses are orthogonal, we have
\begin{equ}
\|\fbNn(\eta)-\eta(\psi)\|^2=\|\fbNn_1(\eta)-\eta(\psi)\|^2+\sum_{j=2}^n\|\fbNn_j\|^2\,.
\end{equ}
Thanks to Proposition~\ref{l:PrelimBounds}, the sum goes to $0$ as $N\to\infty$ (and therefore the same 
holds if we afterwards take the limit in $n$) so that we 
only need to focus on the first term. By definition, 
\begin{equs}
\|\fbNn_1(\eta)-\eta(\psi)&\|^2= % \frac{2\fc+1}{4}
\sum_{k\in\Z^2\setminus\{0\}} |\hat\psi(k)|^2\left|\frac{\nueff}{1+G_{n+1}(\Ll(\tfrac12|k|^2))}-1\right|^2 
\end{equs}
and we want to take first the limit as $N\to\infty$.  Since the term in the
second line is bounded uniformly in $N$ and moreover we have that
$\psi\in H^1(\T^2)$ hence $\psi\in L^2(\T^2)$, we can apply the dominated convergence theorem to
pass the limit inside the sum. For fixed $k$,
$\Ll(\tfrac12|k|^2)$ converges to $\fc$ as $N\to\infty$ and by the continuity of
$G_{n+1}$, we get 
\begin{equ}
\lim_{N\to\infty}\|\fbNn_1(\eta)-\eta(\psi)\|^2=\|\psi\|^2_{L^2(\T^2)}\left|\frac{\nueff}{1+G_{n+1}(\fc)}-1\right|^2\,.
\end{equ}
Hence,~\eqref{e:bL2} follows by taking the limit $n\to\infty$ and applying~\eqref{e:GjConv}. 
On the other hand, by Proposition~\ref{l:PrelimBounds}, for every fixed $n$, $\lim_{N\to\infty}\|\fhNn\|=0$ 
so that~\eqref{e:hL2} clearly holds.
\medskip

We now turn to~\eqref{e:bH1} and~\ref{e:hH1}. Let $\ffNn_a$ be as in~\eqref{e:Fjn}, i.e. $\ffNn_1=\fbNn$ and 
$\ffNn_2=\fhNn$. By construction, $\ffNn_a$ lives in the $n$-th inhomogeneous Wiener-chaos so that 
\begin{equs}
\gen \ffNn_a&= \gen_n\ffNn_a +\genap \ffNn_{a,n} = \gen_n\tffNn_a +\gen_n[\ffNn_a-\tffNn_a]+\genap \ffNn_{a,n}\\
&= -\fg_a+\gen_n[\ffNn_a-\tffNn_a]+\genap \ffNn_{a,n}\,.
\end{equs}
where $\tffNn_a$ is the solution of~\eqref{e:GenEq} for $a=1$ and \eqref{e:GenEq2} for $a=2$. 
Hence, by the above and the triangular inequality we have 
\begin{equ}
\|(-\gensy)^{-1/2}\Big(\gen \ffNn_a+\fg_a\Big)\|\leq \|(-\gensy)^{-1/2}\gen_n[\ffNn_a-\tffNn_a]\|+\|(-\gensy)^{-1/2}\genap \ffNn_{a,n}\|\,.
\end{equ}
We will separately deal with the two summands above, starting from the first. 
Now, by the very definition of $\gen_n$ and the properties of $\gensy$, $\genap$ and $\genam$ 
determined in Lemma~ \ref{lem:generator}, the following equality holds
\begin{equ}
\gen_n[\ffNn_a-\tffNn_a]=\gensy[\ffNn_a-\tffNn_a]+\sum_{j=1}^{n-1}\genap [\ffNn_{a,j}-\tffNn_{a,j}] +\sum_{j=2}^{n}\genam [\ffNn_{a,j}-\tffNn_{a,j}]
\end{equ}
where we used that $\genam\phi=0$ if $\phi$ is in the first chaos. 
The previous, together with~\eqref{b:A+} and~\eqref{b:A-} in Lemma~\ref{l:A+A-}, implies that 
\begin{equs}
\|(-\gensy)^{-1/2}&\gen_n[\ffNn_a-\tffNn_a]\|^2\lesssim \|(-\gensy)^{1/2}[\ffNn_a-\tffNn_a]\|^2\\
&+\sum_{j=1}^{n-1}\|(-\gensy)^{-1/2}\genap [\ffNn_{a,j}-\tffNn_{a,j}]\|^2 +\sum_{j=2}^{n}\|(-\gensy)^{-1/2}\genam [\ffNn_{a,j}-\tffNn_{a,j}]\|^2\\
\lesssim_n&  \|(-\gensy)^{1/2}[\ffNn_a-\tffNn_a]\|^2
\end{equs}
and the right hand side goes to $0$ as $N\to\infty$ by Proposition~\ref{p:ApproxGenEq}. 

We are left with the second summand. Applying once more~\eqref{b:A+}, we have 
\begin{equs}
\|(-\gensy)^{-1/2}\genap\ffNn_{a,n}\|\lesssim_{T,n} \|(-\gensy)^{1/2}\ffNn_{a,n}\|\,.
\end{equs}
By Proposition~\ref{l:H1} and the bound~\eqref{b:UnifG+} in Lemma~\ref{l:G+} we have 
\begin{equs}
&\lim_{N\to\infty} \|(-\gensy)^{1/2}\fbNn_n\|^2=\frac{G^{+,2}_{n-1}(\fc)}{(1+G_{n+1}(\fc))^2}\frac{\nueff^2}{2}\|\psi\|^2_{H^1(\T^2)}\lesssim \frac{\fc^{n-2}}{(n-2)!} \|\psi\|^2_{H^1(\T^2)}\,,\\
&\lim_{N\to\infty} \|(-\gensy)^{1/2}\fhNn_n\|^2=\frac12 G^{+,2}_{n-1}(\fc)\lesssim \frac{\fc^{n-2}}{(n-2)!} \,.
\end{equs}
Since the right hand sides go to $0$ as $n\to\infty$, both~\eqref{e:bH1} and~\eqref{e:hH1}
follow at once. 
\end{proof}

We now turn to the proof of Proposition~\ref{p:H1Norm}. 

\begin{proof}[of Proposition~\ref{p:H1Norm}]
As an immediate consequence of Proposition~\ref{l:H1} and orthogonality of different chaoses, we have the limits 
\begin{equs}
&\lim_{N\to\infty} \|(-\gensy)^{1/2} \fbNn\|^2=\frac{\nueff^2}{2}\|\psi\|^2_{H^1(\T^2)}\frac{\sum_{j=1}^n G^{+,n+2-j}_{j-1}(\fc)}{{(1+G_{n+1}(\fc))^2}}\,,\\%\label{e:limNb}\\
&\lim_{N\to\infty} \|(-\gensy)^{1/2} \fhNn\|^2=\frac12 \sum_{j=2}^n G^{+,n+2-j}_{j-1}(\fc)\,,\\%\label{e:limNh}\\
&\lim_{N\to\infty} \langle (-\gensy)^{1/2} \fbNn, (-\gensy)^{1/2} \fhNn\rangle=0\,,%\label{e:limNbh}
\end{equs}
the last of which directly implies~\eqref{e:limNbh}. 
Hence, it remains to argue the validity of~\eqref{e:limnNb} and~\eqref{e:limnNh}, for which we proceed as follows. 
For $n\in\N$, consider the function $S_n(x)\eqdef \sum_{j=1}^n G^{+,n+2-j}_{j-1}(x)$. 
Thanks to the definition of  $G^{+,n+2-j}_i$ in~\eqref{e:G+} and 
Lemma~\ref{l:G+}~\ref{i:Pos}~\ref{i:Rec} and~\ref{i:Bound}, for every $n$ and $x\ge0$ 
\begin{enumerate}[noitemsep, label=(\roman*)]
\item $S_n(0)= 1$ and $S_n(x)\geq 0$
\item we have the bounds 
\begin{equs}
S_n(x)\leq 1+\sum_{j=2}^n \frac{x^{j-2}}{(j-2)!} \lesssim e^x\quad\text{and}\quad |S_n(x)'|\vee |S_n(x)''|\lesssim e^x\,,
\end{equs}
\item the following relation holds
\begin{equs}
S_{n+1}(x)'&= \sum_{j=2}^{n+1} \frac{G^{+,n+3-j}_{j-2}(x)}{[1+G_{n+1}(x)]^2}= \frac{\sum_{j=1}^{n}G^{+,n+2-j}_{j-1}(x)}{[1+G_{n+1}(x)]^2}=\frac{S_n(x)}{[1+G_{n+1}(x)]^2}
\end{equs}
where we used that $G^{+,n+3-j}_{0}\equiv1$.
\end{enumerate}
Arguing as in the proof of Theorem~\ref{thm:Dbulk} and using the fact, showed therein, 
that $G_j$ converges uniformly to $G$ given in~\eqref{e:LimG}, we see that $S_n$ and $S_n'$
converge uniformly on every compact set to $S$ and $S'$, for $S$ the unique solution to the Cauchy-value problem
\begin{equ}
{S}'=\frac{S}{[1+G]^2}\,,\qquad S(0)=1\,.
\end{equ}
$S$ is explicit and given by
\begin{equs}
S(x)&=\exp\Big[\int_0^x \frac{\dd y}{[1+G(y)]^2}\Big]=\exp\Big[\int_0^x \frac{\dd y}{2y+1}\Big]%% \\
%% &=\exp\Big[\tfrac12\log(2x+1)\Big]
=\sqrt{2x+1}\,.
\end{equs}
Therefore, 
\begin{equ}
\lim_{n\to\infty} S_n(\fc)=\lim_{n\to\infty} \sum_{j=1}^n G^{+,n+2-j}_{j-1}(\fc)= \sqrt{2\fc+1}=\nueff\,.
\end{equ}
Now, since $G(\fc)=\sqrt{2\fc+1}-1$, we immediately deduce that 
\begin{equ}
\lim_{n\to\infty}\lim_{N\to\infty} \|(-\gensy)^{1/2} \fbNn\|^2=\frac{\nueff^2}{2}\|\psi\|^2_{H^1(\T^2)}\lim_{n\to\infty}\frac{S_n(\fc)}{{(1+G_{n+1}(\fc))^2}}=\frac{\nueff}{2}
\end{equ}
so that~\eqref{e:limnNb} holds, and 
\begin{equ}
\lim_{n\to\infty}\lim_{N\to\infty} \|(-\gensy)^{1/2} \fhNn\|^2=\lim_{n\to\infty}\frac{ S_n(\fc)-1}{2}=\frac{\nueff-1}{2}
\end{equ}
which concludes the proof. 
\end{proof}

\section{The convergence}
\label{s:theconvergence}

The goal of this section is to prove the main result of the present paper, namely Theorem~\ref{thm:Conv}, 
but first we need to recall some preliminary facts. 

For $N\in\N$, let $h^N$ be defined as in~\eqref{e:AKPZ}. Recall that $u^N\eqdef (-\Delta)^{1/2}h^N$ 
and $\hat h^N(0)$ respectively solve~\eqref{e:AKPZ:u} and~\eqref{eq:modozero}.  
%Let $n\in\N$. %  $\fbNn$ and $\fhNn$ be given by~\eqref{e:Fjn} with $a=1$ and $a=2$ respectively, 
% and let $\ffNn$ be either of them. 
% Since $\ffNn$ is a cylinder function (both $\fbNn$ and $\fhNn$ are polynomials), 
 It\^o's formula ensures that for $T>0$ and all $t\in [0,T]$ the following equality holds for any smooth function $\ff$, say with a finite chaos expansion,
\begin{equs}[e:Ito]
\ff(u^N_t)-\ff(\eta)-\int_0^t \gen \ff(u^N_s)\dd s=\CM_t(\ff)
\end{equs}
where  $\CM_t(\ff)$ is the martingale given by 
\begin{equ}[e:MartN]
\CM_t(\ff)= \int_0^t \sum_{k\in\Z^2\setminus\{0\}} |k| \, D_k\ff(u^N_s)\,\dd B_s(k)
\end{equ}
whose quadratic variation is 
\begin{equ}[e:QMartN]
\langle\CM_\cdot(\ff)\rangle_t=  \int_0^t \sum_{k\in\Z^2\setminus\{0\}} |k|^2 \, \big| D_k\ff(u^N_s)\big|^2\dd s\,.
\end{equ}
We will apply these formulas especially in the case $\ff=\ffNn_a$, $a=1,2$, 
where the latter is defined according to~\eqref{e:Fjn}. 
We start by studying the behaviour of the martingale in~\eqref{e:MartN} and its quadratic variation in~\eqref{e:QMartN} 
in these cases. 

\subsection{The martingales}\label{sec:Mart}

In this section, we focus on the martingale~\eqref{e:MartN}. 
Our goal is to prove the following theorem which on the one hand 
shows that, as $N\to\infty$, its quadratic variation in~\eqref{e:QMartN}
converges in mean square to a deterministic function of time, while on the other determines 
the limit as first $N$ and then $n\to\infty$ of such a function. 

\begin{theorem}\label{thm:Mart}
Let $\Psi\in\CC^\infty(\T^2)$ and define $\psi_0$ and $\psi$ according to~\eqref{e:Psi}.
Let $\ffNn\eqdef\fbNn+\psi_0\fhNn=\ffNn_1+\psi_0\ffNn_2 $ and
$\CM_t(\ffNn)$ be the martingale defined according to~\eqref{e:MartN} 
with quadratic variation $\langle \CM_\cdot(\ffNn)\rangle_t$ given in~\eqref{e:QMartN}. 
Then, for all $t\geq 0$ the following holds
\begin{equ}[e:Var]
\lim_{N\to\infty} {\rm Var}(\langle \CM_\cdot(\ffNn)\rangle_t)=0\,
\end{equ}
and
\begin{equs}[e:ExpQMart]
  \lim_{n\to\infty}\lim_{N\to\infty} \Exp[\langle \CM_\cdot(\ffNn)\rangle_t]=t\times
  \Big(\nueff\|\Psi\|_{L^2(\T^2)}^2-\psi_0^2\Big).
% \begin{cases}
% \nueff\|\psi\|_{H^1(\T^2)}^2\,, & \text{if $\ffNn=\fbNn$\,,}\\
% \nueff-1\,, & \text{if $\ffNn=\fhNn$\,,}\\
% \nueff\|\Psi\|_{L^2(\T^2)}^2-\psi_0^2\,, & \text{if $\ffNn=\fbNn+\psi_0\fhNn$\,.}
% \end{cases}
\end{equs}
\end{theorem}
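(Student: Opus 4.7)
The plan is to handle~\eqref{e:ExpQMart} and~\eqref{e:Var} separately, both relying on stationarity of $u^N$ under $\eta$ (Lemma~\ref{lem:gen}) and the Gaussian analysis tools of Section~\ref{S:Malliavin}. For the expectation~\eqref{e:ExpQMart}, I would apply Fubini together with stationarity to~\eqref{e:QMartN} to write
\[
\Exp[\langle \CM_\cdot(\ffNn)\rangle_t] = t\,\E[\energy(\ffNn)],
\]
where $\energy$ is the energy functional~\eqref{e:Energy}, and then invoke the Gaussian integration-by-parts computation carried out in~\eqref{e:EnergyMart} to obtain $\E[\energy(\ffNn)] = 2\|(-\gensy)^{1/2}\ffNn\|^2$. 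Expanding $\ffNn = \fbNn + \psi_0\fhNn$ as a sum of three pieces and taking $\lim_{n\to\infty}\lim_{N\to\infty}$, Proposition~\ref{p:H1Norm} gives that the $\fbNn$-square contribution tends to $\tfrac{\nueff}{2}\|\psi\|^2_{H^1(\T^2)}$, the $\fhNn$-square contribution multiplied by $\psi_0^2$ tends to $\psi_0^2\tfrac{\nueff-1}{2}$, and the cross term already vanishes in the $N\to\infty$ limit for each fixed $n$. Substituting $\|\psi\|^2_{H^1(\T^2)} = \|\Psi\|^2_{L^2(\T^2)} - \psi_0^2$ from~\eqref{e:L2H1} and rearranging produces exactly $\nueff\|\Psi\|^2_{L^2(\T^2)} - \psi_0^2$, yielding~\eqref{e:ExpQMart}.

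For the variance~\eqref{e:Var}, the natural strategy is to introduce the centered observable
\[
F^{N,n}(\eta)\eqdef \energy(\ffNn)(\eta) - \E[\energy(\ffNn)],
\]
which has mean zero under $\P$ by the previous step. By Fubini and stationarity of $u^N$,
\[
\Var(\langle \CM_\cdot(\ffNn)\rangle_t) = \Exp\!\left[\Big(\int_0^t F^{N,n}(u^N_s)\dd s\Big)^{\!2}\right],
\]
to which I would apply the It\^o trick (Lemma~\ref{l:ItoTrick}) with $p=2$. This bounds the right-hand side by $t\,\|(-\gensy)^{-1/2}F^{N,n}\|^2$, so the problem reduces to showing
\[
\|(-\gensy)^{-1/2}F^{N,n}\| \;\xrightarrow[N\to\infty]{}\; 0
\]
for every fixed $n$.

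The hard part will be precisely this negative-Sobolev estimate on the centered energy functional. Since $\ffNn$ is supported in chaoses $1,\dots,n$, its Malliavin derivatives $D_k\ffNn$ sit in chaoses $0,\dots,n-1$, so $|D_k\ffNn|^2$ decomposes into even chaoses up to $2(n-1)$; centering removes the zeroth component and leaves a non-trivial expression supported in chaoses $2,4,\dots,2(n-1)$. Controlling $\|(-\gensy)^{-1/2}F^{N,n}\|^2$ then requires unfolding the kernels produced by the iterated applications of $\genap$ appearing in the recursive definition~\eqref{e:Fjn} of $\ffNn_a$ and carefully estimating the resulting nested convolution sums in Fourier space, with particular care to the logarithmic cancellations coming from the weak-coupling normalisation $\const^2\sim 1/\log N$. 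This is exactly the technical step flagged in Section~\ref{s:strategy}, which will be tackled in the subsequent sections through a diagrammatic analysis of the Feynman-type graphs associated to those products; once the bound $\|(-\gensy)^{-1/2}F^{N,n}\|\to 0$ is established,~\eqref{e:Var} follows at once.
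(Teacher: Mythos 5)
Your treatment of \eqref{e:ExpQMart} is correct and coincides with the paper's: stationarity plus the identity $\E[\energy(\ffNn)]=2\|(-\gensy)^{1/2}\ffNn\|^2$ from \eqref{e:EnergyMart}, followed by the three limits of Proposition~\ref{p:H1Norm} and the Pythagorean identity \eqref{e:L2H1}. Likewise, your reduction of \eqref{e:Var} to the statement $\|(-\gensy)^{-1/2}\fFNn\|\to 0$ at fixed $n$, via the centered energy functional and the It\^o trick, is exactly the paper's starting point \eqref{e:Var1bound}.

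However, for \eqref{e:Var} the reduction is where the proof \emph{begins}, not where it ends: the claim $\|(-\gensy)^{-1/2}\fFNn\|\to 0$ is the entire content of the theorem's hard half, and you do not establish it — you only describe in general terms that one must ``unfold the kernels'' and track ``logarithmic cancellations.'' This is a genuine gap, and moreover your heuristic for why the estimate should hold understates the difficulty. Simply counting powers of $\log N$ from $\const^2\sim 1/\log N$ does \emph{not} suffice: after expanding $|D_k\ffNn|^2$ into Wick contractions one obtains Feynman diagrams in which, for the worst class (those the paper labels $\kappa=4$, where all four ``root'' momenta $p_{(u,0)}$ sit inside the convolution rectangles), the sums over the special-edge momenta produce a factor $(\log N)^2$ that exactly cancels the prefactor $(\log N)^{-2}$. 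The estimate is then rescued only by two additional inputs that your sketch does not identify: (i) the extra decaying factor $\bigl(\sum_{v\in V[G]}|p_v|^2\bigr)^{-1}$ contributed by the operator $(-\gensy)^{-1/2}$, which is usable precisely because the centering forces $V[G]\neq\emptyset$ (at least one uncontracted pairing between the two copies of the diagram); and (ii) the auxiliary diagonal operator $\cS$ of \eqref{e:S} together with the vanishing $\|\cS^{1/2}\ffNn_{a,j}\|\to0$ of Proposition~\ref{prop:Spiccolo}. Without these ingredients — and the associated colored-edge summation scheme of Section~\ref{sec:tourdeforce} that organizes the Cauchy--Schwarz steps so that each row contributes exactly one factor $|p_{(u,0)}|^{-1}$ and one factor $|p_{(u,i_u^1)}|^2$ — the bound one obtains for the dominant diagrams is merely $O(1)$ rather than $o(1)$, and \eqref{e:Var} does not follow.
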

\begin{remark}
  Note that, because of $\nueff\ge1$, the r.h.s. of \eqref{e:ExpQMart} is indeed positive.
  The reason why the quadratic variation has the extra factor $-t\psi_0^2$ with respect to
  that of   $\cM^\Psi_t$  (see \eqref{e:Mart2}) is, roughly speaking, that the latter
  also contains a term $\psi_0B_t(0)$, which is the independent standard Brownian motion appearing in the equation \eqref{eq:modozero} of the zero mode of $h^N$. This issue is further expanded in Section \ref{sec:ProofMain} below.
\end{remark}

We begin by proving~\eqref{e:ExpQMart}, which is direct consequence of Proposition~\ref{p:H1Norm}.
The proof of~\eqref{e:Var} is much more involved and is deferred to Section \ref{sec:Feynman}.
\begin{proof}[of~\eqref{e:ExpQMart}]
%We first prove~\eqref{e:ExpQMart}. 
Notice that by~\eqref{e:QMartN} and the definition~\eqref{e:Energy} of the energy $\energy$, we have 
\begin{equs}
 \Exp[\langle \CM_\cdot(\ffNn)\rangle_t]&%=\Exp\Big[\int_0^t \sum_{k\in\Z^2} |k|^2 \, \big| D_k\ffNn(u^N_s)\big|^2\dd s\Big]
 =\int_0^t \Exp\Big[\sum_{k\in\Z^2} |k|^2 \, \big| D_k\ffNn(u^N_s)\big|^2\Big]\dd s=t\, \E[\energy(\ffNn)]\\
 &=2 t \|(-\gensy)^{1/2}\ffNn\|^2
\end{equs}
where in the last step we applied~\eqref{e:EnergyMart}. 
% Now, if $\ffNn=\fbNn$ or $\fhNn$ the result follows respectively by~\eqref{e:limnNb} and~\eqref{e:limnNh} 
% in Proposition~\ref{p:H1}. Hence, we are left to consider the case of $\ffNn=\fbNn+\psi_0\fhNn$. 
Notice that
\begin{equs}
\|(-\gensy)^{1/2}(\fbNn+\psi_0\fhNn)\|^2=&\|(-\gensy)^{1/2}\fbNn\|^2+\psi_0^2\|(-\gensy)^{1/2}\fhNn\|^2\\
&+2\psi_0\langle (-\gensy)^{1/2}\fbNn, (-\gensy)^{1/2}\fhNn\rangle\,.
\end{equs}
The double limit of each of the summands at the right hand side was determined in~\eqref{e:limnNb},~\eqref{e:limnNh} 
and~\eqref{e:limNbh} in Proposition~\ref{p:H1Norm} and gives 
exactly $\half(\nueff\|\Psi\|^2_{L^2(\T^2)}-\psi_0^2)$ (see ~\eqref{e:L2H1}), so that~\eqref{e:ExpQMart} follows at once.
\end{proof}
%
%\fabioText{I don't find this remark very useful. I shortened, but I'd even remove}
%\begin{remark}
%The proof of the previous proposition %% reveals more about the nature of the martingale
%%% $\CM_t(\ffNn)$, for $\ffNn=\fbNn+\psi_0\fhNn$. 
%%% Indeed, by~\eqref{e:Var},
%shows that
%the quadratic variation of $\CM_t(\ffNn)$ is converging (in probability) 
%in the limit $N\to\infty$ to $a_n t$, where the value of $a_n$ can be deduced
%%% as in the proof of Proposition~\ref{e:ExpQMart}, 
%from the right hand sides of~\eqref{e:limNb},~\eqref{e:limNh} and~\eqref{e:limNbh}. 
%Hence, by~\cite[Theorem 7.1.4]{EK}, {\it for each fixed $n\in\N$}
%$\CM_t(\ffNn)$ converges in law to a Brownian motion with an explicit quadratic variation 
%so that we recover the full space-time white noise term of~\eqref{e:SHEakpz}  as the $n\to\infty$ limit 
%of a sequence of Gaussian noises. 
%\end{remark}
%
%
%

\subsection{Proof of Theorem~\ref{thm:Conv}}\label{sec:ProofMain}

Before turning to the proof of Theorem~\ref{thm:Conv}, 
let us give two statements which are immediate consequences of Theorem~\ref{thm:ControlGen} 
and the It\^o trick, Lemma~\ref{l:ItoTrick}. 

\begin{proposition}\label{p:Rewriting}
For $N\in\N$, let $u^N$ be the unique stationary solution to~\eqref{e:AKPZ:u}. Let $n\in\N$ and 
$\ffNn_a,a=1,2$  be defined according to~\eqref{e:Fjn}. 
Let $\fg\in\{\fg_1,\,\fg_2\}$, where the $\fg_a$'s, $a=1,2$, be those in~\eqref{e:InputGenEq}. 
Then, the process $\CR_t^{N,n}(u^N)$ given by 
\begin{equs}[e:Rewriting]
\CR_t^{\ffNn_a}(u^N)\eqdef \int_0^t \gen \ffNn_a(u^N_s)\dd s+\int_0^t \fg_a(u^N_s)\dd s 
\end{equs}
is such that for all $p\geq 2$ and all $T>0$
%given by
%\begin{equ}[e:Rem]
%\CR_t^{N,n}(u^N)\eqdef \int_0^t \gen_n[\ffNn-\tffNn](u^N_s)\dd s+\int_0^t \genap \ffNn_n(u^N_s)\dd s
%\end{equ}
%and is such that for all $p\geq 2$
\begin{equ}[e:RemLim]
\lim_{n\to\infty} \limsup_{N\to\infty} \Exp\Big[\sup_{t\in[0,T]} \Big|\CR_t^{\ffNn_a}(u^N)\Big|^p\Big]^{\frac{1}{p}}=0\,.
\end{equ}
%In particular, the zero-th Fourier mode of $h^N$ is given by
%\begin{equ}[e:zeromode]
%h^N_t(e_0)-\chi(e_0)=\,.
%\end{equ}
\end{proposition}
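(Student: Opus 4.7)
The plan is to apply the It\^o trick (Lemma~\ref{l:ItoTrick}) directly to the time-integrand and then invoke the operator-norm estimates from Theorem~\ref{thm:ControlGen}. First I would rewrite
\begin{equ}
\CR_t^{\ffNn_a}(u^N)=\int_0^t F_a^{N,n}(u^N_s)\dd s,\qquad F_a^{N,n}\eqdef \gen\ffNn_a+\fg_a,
\end{equ}
so that, for every $p\ge 2$ and $T>0$, Lemma~\ref{l:ItoTrick} yields
\begin{equ}
\Exp\Big[\sup_{t\in[0,T]}\big|\CR_t^{\ffNn_a}(u^N)\big|^p\Big]^{1/p}\lesssim_p T^{1/2}\,\big\|(-\gensy)^{-1/2}F_a^{N,n}\big\|.
\end{equ}
It then suffices to show that the right-hand side tends to $0$ as first $N\to\infty$ and then $n\to\infty$.

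Applicability of the It\^o trick requires $F_a^{N,n}$ to have a finite chaos expansion. This follows from the construction: by~\eqref{e:Fjn}, $\ffNn_a=\sum_{j=a}^n \ffNn_{a,j}$ with $\ffNn_{a,j}\in\wc_j$, and by Lemma~\ref{lem:generator} the decomposition $\gen=\gensy+\genap+\genam$ sends $\wc_j$ into $\wc_{j-1}\oplus\wc_j\oplus\wc_{j+1}$. Hence $\gen\ffNn_a$ is supported in chaoses of order between $a-1$ and $n+1$, and $\fg_a\in\wc_a$, so $F_a^{N,n}$ is a finite sum of homogeneous chaos components as required.

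The vanishing of $\|(-\gensy)^{-1/2}F_a^{N,n}\|$ in the prescribed double limit is then precisely what Theorem~\ref{thm:ControlGen} provides. For $a=1$, recalling $\fg_1=-\gensy^{\rm eff}\eta(\psi)$, the quantity to be bounded is (up to an overall sign) exactly the one controlled by~\eqref{e:bH1}; for $a=2$, recalling $\fg_2=\nf_0$, it is controlled by~\eqref{e:hH1}. In both cases the order of limits is $\lim_{n\to\infty}\lim_{N\to\infty}$, matching the statement of~\eqref{e:RemLim}.

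There is in fact no genuine obstacle at this step: the technical difficulty has been front-loaded into the careful construction of the observables $\ffNn_a$ in~\eqref{e:Fjn} and into the proof of Theorem~\ref{thm:ControlGen}, particularly in showing that $\ffNn_a$ is a close enough proxy for the solution $\tffNn_a$ of the truncated generator equation. Proposition~\ref{p:Rewriting} simply repackages these static $L^2(\eta)$ estimates into a dynamic, uniform-in-$t$ and $L^p$-in-$\omega$ replacement statement: along any subsequence limit, one may substitute $\int_0^t\gen\ffNn_a(u^N_s)\dd s$ by $-\int_0^t\fg_a(u^N_s)\dd s$ inside the identity~\eqref{e:Ito}, which is the form that will be exploited in Section~\ref{sec:ProofMain} to identify the limiting drift of the martingale problem for $\gensy^{\rm eff}$.
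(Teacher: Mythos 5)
Your proposal is correct and follows essentially the same route as the paper: apply the It\^o trick (Lemma~\ref{l:ItoTrick}) to $F_a^{N,n}=\gen\ffNn_a+\fg_a$ and then invoke~\eqref{e:bH1} and~\eqref{e:hH1} from Theorem~\ref{thm:ControlGen} to send the right-hand side to zero in the double limit. Your explicit check that $F_a^{N,n}$ has a finite chaos expansion is a point the paper leaves implicit, but otherwise the arguments coincide.
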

\begin{proof}
Notice that by the It\^o trick, in particular~\eqref{e:ItoTrick}, we have 
\begin{equ}
\Exp\Big[\sup_{t\in[0,T]} \Big|\CR_t^{\ffNn_a}(u^N)\Big|^p\Big]^{\frac{1}{p}}\lesssim \|(-\gensy)^{-1/2}\Big(\gen \ffNn_a+\fg_a\Big)\|
\end{equ}
which, by~\eqref{e:bH1} and~\eqref{e:hH1} in Theorem~\ref{thm:ControlGen}, converges to $0$ in the limit 
$N\to\infty$ first and $n\to\infty$ after. 
\end{proof}

The following immediate corollary shows 
how to rewrite the $0$-th Fourier mode of $h^N$ in terms of $u^N$ and the observable $\fhNn$. 

\begin{corollary}\label{c:zeromode}
For $N,\,n\in\N$, let $\fhNn$ be defined according to~\eqref{e:Fjn} with $\fg_2=\nf_0$. 
Then, with  $\CR^{\fhNn}$ given by~\eqref{e:Rewriting} and $\CM_t(\fhNn)$ the martingale defined in \eqref{e:Ito},
\begin{equ}[e:Newzeromode]
\hat h^N_t(0)-\hat\chi(0)=\fhNn(\eta)-\fhNn(u^N_t)+\CR^{\fhNn}_t(u^N)+\CM_t(\fhNn)+B_t(0)\,.
\end{equ}
\end{corollary}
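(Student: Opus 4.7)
The plan is to combine It\^o's formula~\eqref{e:Ito} applied to the observable $\fhNn$ with the SDE~\eqref{eq:modozero} governing the zero-mode of $h^N$. The crucial algebraic observation is that the driving term $\fg_2$ in the generator equation defining $\fhNn$ was chosen precisely to be $\nf_0$, and by~\eqref{eq:explFock} together with the identification of Remark~\ref{rem:Ident}, the random variable $\nf_0(u^N_s)$ is exactly $\const\cN^N_0[u^N_s]$. Thus~\eqref{eq:modozero} rewrites as
\begin{equ}[e:zerorew]
\hat h^N_t(0)-\hat\chi(0)=\int_0^t \nf_0(u^N_s)\,\dd s + B_t(0)\,.
\end{equ}

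Next, I would apply~\eqref{e:Ito} to $\ff=\fhNn$; this is legitimate since $\fhNn$ has finite chaos expansion by construction (it is built by finitely many applications of $\genap$ and of the diagonal operators $(-\gensy[1+G_{n+2-j}(\Ll(-\gensy))])^{-1}$ starting from $\nf_0\in\fock_2$). One obtains
\begin{equ}
\int_0^t \gen\fhNn(u^N_s)\,\dd s=\fhNn(u^N_t)-\fhNn(\eta)-\CM_t(\fhNn)\,.
\end{equ}
Then the definition~\eqref{e:Rewriting} of $\CR^{\fhNn}_t(u^N)$, with $\fg_a=\fg_2=\nf_0$, gives
\begin{equ}
\int_0^t \nf_0(u^N_s)\,\dd s=\CR^{\fhNn}_t(u^N)-\int_0^t \gen\fhNn(u^N_s)\,\dd s=\CR^{\fhNn}_t(u^N)+\fhNn(\eta)-\fhNn(u^N_t)+\CM_t(\fhNn)\,.
\end{equ}
Substituting the last display into~\eqref{e:zerorew} yields~\eqref{e:Newzeromode}.

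There is no real obstacle here; the corollary is a pure algebraic rearrangement of almost-sure identities, with no analytic estimates required. Its purpose is structural: the decomposition~\eqref{e:Newzeromode} isolates the noise contribution to the zero mode into the sum of two martingales, $\CM_t(\fhNn)$ and the independent Brownian motion $B_t(0)$, plus boundary terms $\fhNn(\eta)-\fhNn(u^N_t)$ controlled in $L^2$ by~\eqref{e:hL2} of Theorem~\ref{thm:ControlGen}, plus the remainder $\CR^{\fhNn}_t(u^N)$ which vanishes in the double limit by Proposition~\ref{p:Rewriting}. This is exactly the form needed to feed the zero-mode into the martingale-problem argument for Theorem~\ref{thm:Conv}.
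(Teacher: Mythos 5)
Your proof is correct and coincides with the paper's own argument: both rewrite \eqref{eq:modozero} via the identification $\const\cN^N_0[u^N_s]=\nf_0(u^N_s)$ from \eqref{eq:explFock}, apply It\^o's formula \eqref{e:Ito} to $\fhNn$, and rearrange using the definition \eqref{e:Rewriting} of $\CR^{\fhNn}$. The additional remarks on the purpose of the decomposition are accurate but not part of the proof itself.
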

\begin{proof}
By~\eqref{eq:modozero}, we have 
\begin{equ}
\hat h^N_t(0)-\hat\chi(0)=\int_0^t\CN^N_0[u_s^N]\dd s +B_t(0)=\int_0^t\nf_0(u_s^N)\dd s +B_t(0)\,.
\end{equ}
Applying~\eqref{e:Ito} to $\ffNn_2=\fhNn$ and exploiting~\eqref{e:Rewriting} we obtain
\begin{equ}
\int_0^t\nf_0(u_s^N)\dd s = \fhNn(\eta)-\fhNn(u^N_t)+\CR^{\fhNn}_t(u^N)+\CM_t(\fhNn)
\end{equ}
from which the statement follows at once. 
\end{proof}

We are now ready for the proof of Theorem~\ref{thm:Conv}. 

\begin{proof}[of Theorem~\ref{thm:Conv}]
Let $T>0$ and $h$ a limit point for $h^N$ in $C([0,T],\cD'(\T^2))$, which exists in view of Theorem~\ref{thm:Tightness}. 
To establish the theorem, we will prove that $h$ satisfies the Martingale Problem in Definition~\ref{def:MPSHE} 
for which we need to verify that the processes $\cM^\Psi$ and $\Gamma^\Psi$ 
defined in~\eqref{e:Mart1} and~\eqref{e:Mart2} respectively, are (local) martingales. 
The latter in turn follows, once we show that for all $0\leq s\leq t\leq T$, all $\Psi\in\CC^\infty(\T^2)$
and all bounded continuous function $G\colon C([0,s],\cD'(\T^2))\to\R$ we have 
\begin{equs}[e:Goal]
\Exp\Big[\Big(\delta_{s,t}\cM^\Psi_\cdot\Big) G(h\restr_{[0,s]})\Big]=0=\Exp\Big[\Big(\delta_{s,t}\Gamma^\Psi_\cdot\Big) G(h\restr_{[0,s]})\Big]\,,
\end{equs}
where from here onwards, to compactify the notations, 
we will denote the time increment of a generic function $f$ by $\delta_{s,t}f_{\cdot}\eqdef f_t-f_s$ 
and its restriction to an interval $[0,s]$ by $f\restr_{[0,s]}$. 
Let us begin with the first equality, which holds if 
\begin{equs}[e:Claim1]
\Exp&\Big[\Big(\delta_{s,t}\cM^\Psi_\cdot\Big) G(h\restr_{[0,s]})\Big]\\%\label{e:Claim1}\\
&=\lim_{n\to\infty}\lim_{N\to\infty} \Exp\Big[\Big(\delta_{s,t}\CM_\cdot(\fbNn)+\psi_0\delta_{s,t}\CM_\cdot(\fhNn)+\psi_0\delta_{s,t}B_\cdot(0)\Big) G(h^N\restr_{[0,s]})\Big]\,,
\end{equs}
where $\fbNn$, $\fhNn$ are the observables defined according to~\eqref{e:Fjn} with $a=1$ and $a=2$ respectively.
Indeed, since $B(0)$ is a Brownian motion and both $\CM(\fbNn)$ and $\CM(\fhNn)$ are martingales, 
the expectation at the right hand side equals $0$ for all $n,\,N\in\N$. 
\medskip

Set as usual $u\eqdef (-\Delta)^{1/2}h$ and, with the notations introduced in~\eqref{e:Psi},~\eqref{e:hu} gives
\begin{equs}
\delta_{s,t}\cM^\Psi_\cdot&=\delta_{s,t}h_\cdot(\Psi)-\frac{\nueff}{2}\int_s^t h_r(\Delta\Psi)\dd r=\delta_{s,t}u_\cdot(\psi)-\int_s^t \gensy^{\rm eff}u_r(\psi)\dd r +\psi_0\, \delta_{s,t}\hat h_\cdot(0)\,.
\end{equs}
%Trivially, the left hand side of~\eqref{e:Goal} equals
%\begin{equs}[e:Subshhn]
%%\Exp&\Big[\Big(\delta_{s,t}\cM^\Psi_\cdot\Big) G(h\restr_{[0,s]})\Big]=\\
%&\Exp\Big[\Big(\delta_{s,t}u_\cdot(\psi)-\int_s^t \gensy^{\rm eff}u_r(\psi)\dd r+\psi_0\delta_{s,t}\hat h_\cdot(0)\Big) G(h\restr_{[0,s]})\Big]\\
%&=\lim_{n\to\infty}\Exp\Big[\Big(\delta_{s,t}u_\cdot(\psi)-\int_s^t \gensy^{\rm eff}u_r(\psi)\dd r+\psi_0\delta_{s,t}\hat h_\cdot(0)\Big) G(h\restr_{[0,s]})\Big]\,.
%\end{equs}
We now want to replace $u$ and $h$ in the expectation with $u^N$ and $h^N$. 
To do so we proceed as in the proof of~\cite[Theorem 4.6]{GPGen}. 
Note that if $F_1,\,F_2\in L^2(\eta)$ and $F_2$ is a bounded cylinder function, then
\begin{equs}
\limsup_{N\to\infty}&\Big|\Exp\big[ F_1(u_r) G(h\restr_{[0,s]})\big]-\Exp\big[ F_1(u^N_r) G(h^N\restr_{[0,s]})\big]\Big|\\
\lesssim&\, \Exp\big[ |F_1(u_r)-F_2(u_r)|\big]+ \limsup_{N\to\infty}\Exp\big[ |F_1(u^N_r)-F_2(u^N_r)|\big]\\
&\qquad+\limsup_{N\to\infty}\Big|\Exp\big[ F_2(u_r) G(h\restr_{[0,s]})\big]-\Exp\big[ F_2(u^N_r) G(h^N\restr_{[0,s]})\big]\Big|\label{e:SubshhN}\\
\lesssim&\, \Exp\big[ |F_1(u_r)-F_2(u_r)|^2\big]^{\frac12}+ \limsup_{N\to\infty}\Exp\big[ |F_1(u^N_r)-F_2(u^N_r)|^2\big]^{\frac12}=2\|F_1-F_2\|
\end{equs}
where $r$ is either $s$ or $t$, in the first step we used that $G$ is bounded and, 
in the passage from the second to the third inequality, that $h^N$ converges in law to $h$ to conclude that 
the third summand converges to $0$. We also employed the fact that the law of $u^N_r$ for fixed $r$ is independent of $r,N$.
Since we can approximate arbitrarily well general elements in $L^2(\eta)$ with bounded cylinder functions, 
it follows that the right hand side of~\eqref{e:SubshhN} is arbitrarily small and therefore the left hand side is zero. 
Hence, the left hand side of~\eqref{e:Claim1} equals
\begin{equ}[e:Claim1new]
\lim_{N\to\infty}\Exp\Big[\Big(\delta_{s,t}u^N_\cdot(\psi)-%&
\int_s^t \gensy^{\rm eff}u^N_r(\psi)\dd r\Big) G(h^N\restr_{[0,s]})\Big]+\psi_0\lim_{N\to\infty}\Exp\Big[\delta_{s,t}\hat h^N_\cdot(0) G(h^N\restr_{[0,s]})\Big]%\\
%&+\psi_0 \lim_{n\to\infty}\lim_{N\to\infty}\Exp\big[\big(\delta_{s,t}\hat h^N_\cdot(0)\big) G(h^N\restr_{[0,s]})\big]
\end{equ}
and we will separately consider the two summands. 
For the first, we need to substitute $u^N_\cdot(\psi)$ with $\fbNn$ and $\gensy^{\rm eff}u^N_r(\psi)$ with $\gen \fbNn(u^N)$. 
To do so, Cauchy-Schwarz inequality and boundedness of $G$ give 
\begin{equ}[e:CS]
\left|\Exp\Big[\big(u^N_r(\psi)-\fbNn(u^N_r)\big)G(h^N\restr_{[0,s]})\Big]\right|\lesssim \|\eta(\psi)-\fbNn(\eta)\|\,, \qquad r\in\{s,t\}\,.
\end{equ}
and by~\eqref{e:bL2} in Theorem~\ref{thm:ControlGen}, the right hand side converges to $0$ as $N\to\infty$ first 
and $n\to\infty$ after. 
On the other hand,~\eqref{e:Rewriting} applied to $\fbNn$ implies
\begin{equ}
\int_s^t \gensy^{\rm eff}u^N_r(\psi)\dd r=-\int_s^t \fg_1(u^N_r)\dd r =\int_s^t \gen\fbNn(u^N_r)\dd r-\delta_{s,t}\CR^{\fbNn}_\cdot(u^N)
\end{equ}
where $\fg_1$ is defined in~\eqref{e:InputGenEq} and, arguing as in~\eqref{e:CS}, 
we have 
\begin{equ}[e:R]
\left|\Exp\Big[\CR^{\fbNn}_r(u^N)\,G(h^N\restr_{[0,s]})\Big]\right|\lesssim  \Exp\Big[\sup_{t\in[0,T]} \Big|\CR_t^{\fbNn}(u^N)\Big|^2\Big]^{\frac{1}{2}}\,,\qquad \text{for all $r\in[0,t]$}
\end{equ}
and, by~\eqref{e:RemLim}, the right hand side converges to $0$ if we let $N\to\infty$ and then $n\to\infty$.
Hence, the first summand of~\eqref{e:Claim1new} equals 
\begin{equ}
\lim_{N\to\infty}\Exp\Big[\Big(\delta_{s,t}\fbNn(u^N_\cdot)-\int_s^t \gen\fbNn(u^N_r)\dd r\Big) G(h^N\restr_{[0,s]})\Big]
\end{equ}
up to an error term which is controlled by the right hand sides of~\eqref{e:CS} and~\eqref{e:R}. 
Since further the first summand of~\eqref{e:Claim1new} is independent of $n$, 
we can take the limit for $n\to\infty$ and, by It\^o's formula~\eqref{e:Ito} applied to $\fbNn$, we obtain
\begin{equs}
\lim_{N\to\infty}\Exp\Big[\Big(\delta_{s,t}u^N_\cdot(\psi)-&\int_s^t \gensy^{\rm eff}u^N_r(\psi)\dd r\Big) G(h^N\restr_{[0,s]})\Big]\\
&=\lim_{n\to\infty}\lim_{N\to\infty}\Exp\Big[\Big(\delta_{s,t}\cM_\cdot(\fbNn)\Big) G(h^N\restr_{[0,s]})\Big]=0\,.
\end{equs}

It remains to treat the second summand in~\eqref{e:Claim1new}. 
Setting $\psi_0$ aside, thanks to Corollary~\ref{c:zeromode}, it equals
\begin{equs}
\lim_{n\to\infty}\lim_{N\to\infty}\Exp\Big[\Big(-\delta_{s,t}\fhNn(u^N_\cdot)+\delta_{s,t}\CR^{\fhNn}_\cdot(u^N)+\delta_{s,t}\CM_t(\fhNn)+\delta_{s,t}B_\cdot(0)\Big) G(h^N\restr_{[0,s]})\Big]\,.
\end{equs}
Now, the term containing the increment of $\CR^{\fhNn}_\cdot(u^N)$ can be treated as in~\eqref{e:R} and therefore 
does not contribute. The same holds for the first summand, since we can argue as in~\eqref{e:CS} 
but using~\eqref{e:hL2} in Theorem~\ref{thm:ControlGen}. 
Hence,~\eqref{e:Claim1} follows and so does the first equality in~\eqref{e:Goal}. 
\medskip

We now turn to $\Gamma^\Psi$, for which, using arguments similar to those exploited above, we obtain 
\begin{equs}[e:LimQua]
\Exp&\Big[\Big(\delta_{s,t}\Gamma^\Psi_\cdot\Big) G(h\restr_{[0,s]})\Big]\\
&=\lim_{n\to\infty}\lim_{N\to\infty}\Exp\Big[\Big(\delta_{s,t}(\CM^{N,n}_\cdot)^2 -\nueff(t-s)\|\Psi\|_{L^2(\T^2)}^2\Big) G(h^N\restr_{[0,s]})\Big]\\
&=\lim_{n\to\infty}\lim_{N\to\infty}\Exp\Big[\Big(\delta_{s,t}\langle\CM^{N,n}_\cdot\rangle_\cdot -\nueff(t-s)\|\Psi\|_{L^2(\T^2)}^2\Big) G(h^N\restr_{[0,s]})\Big]
\end{equs}
where we have set 
\begin{equ}
\CM^{N,n}_t\eqdef \CM_t(\fbNn)+\psi_0(\CM_t(\fhNn)+B_t(0))= \CM_t(\fbNn+\psi_0 \fhNn)+\psi_0 B_t(0)
\end{equ}
and, in the last step, used that, by definition of quadratic variation, 
$(\CM^{N,n}_\cdot)^2- \langle\CM^{N,n}_\cdot\rangle_\cdot$ is a martingale. 
Notice that by construction both $\{u_\cdot^N(k)\}_{k\neq 0}$ and $\{B(k)\}_{k\neq 0}$ are independent of 
$B_\cdot(0)$ so that the quadratic variation of $\CM^{N,n}$ is given by 
\begin{equ}[e:FinalQV]
\langle\CM^{N,n}_\cdot\rangle_t=\langle \CM_\cdot(\fbNn+\psi_0 \fhNn)\rangle_t + \psi_0^2\, t\,,\qquad \text{for all $t\geq 0$.}
\end{equ}
By~\eqref{e:Var} in Theorem~\ref{thm:Mart}, we can replace the quadratic variation 
by its expectation, so that the limit in~\eqref{e:LimQua} equals
\begin{equ}
\Exp\Big[G(h\restr_{[0,s]})\Big]\lim_{n\to\infty}\lim_{N\to\infty}\Big(\Exp\Big[\langle\CM^{N,n}_\cdot\rangle_t-\langle\CM^{N,n}_\cdot\rangle_s\Big] -\nueff(t-s)\|\Psi\|_{L^2(\T^2)}^2\Big)=0
\end{equ}
and the last equality follows by~\eqref{e:ExpQMart} and~\eqref{e:FinalQV}. 
Hence, both equalities in~\eqref{e:Goal} hold and the proof in concluded. 
\end{proof}

\subsection{The variance of the quadratic variation}
\label{sec:Feynman}

 Notice at first that, using \eqref{e:QMartN}, we 
can write the variance of the quadratic variation as 
\begin{equs}[e:Var1bound]
{\rm Var}(\langle \CM_\cdot(\ffNn)\rangle_t)=\Exp\left[\Big(\int_0^t \fFNn(u^N_s)\dd s\Big)^2\right]^{1/2}\lesssim t^{1/2} \|(-\gensy)^{-1/2}\fFNn\|
\end{equs}
where the last inequality follows by~\eqref{e:ItoTrick}, $\fFNn$ is the cylinder function given by
\begin{equ}[e:GenericF]
\fFNn(\eta)\eqdef \sum_{k\in\Z^2}|k|^2\left(|D_k\ffNn(\eta)|^2-\E\left[|D_k\ffNn(\eta)|^2\right]\right) %= \sum_{j_1,j_2=1}^n\fFNn_{j_1,j_2}(\eta)\,.
\end{equ}
and the Malliavin derivative $D_k$ was defined in \eqref{e:Malliavin}.
Since $\ffNn=\sum_{j=1}^{n} \ffNn_j$, $\fFNn(\eta)$ can be written as 
\begin{equs}
  \label{e:cac}
\fFNn(\eta)&=\sum_{j_1,j_2=1}^n\fFNn_{j_1,j_2}(\eta)\eqdef\sum_{j_1,j_2=1}^n\sum_{k\in\mathbb Z^2}|k|^2\fFNn_{j_1,j_2;k}(\eta)\\&\eqdef \sum_{j_1,j_2=1}^n\sum_{k\in\Z^2}|k|^2 \left(D_k\ffNn_{j_1}(\eta)D_{-k}\ffNn_{j_2}(\eta)-\E\left[D_k\ffNn_{j_1}(\eta)D_{-k}\ffNn_{j_2}(\eta)\right]\right)
%&=\sum_{j_1,j_2=1}^n\fFNn_{j_1,j_2}(\eta)\,,
\end{equs}
where we used the fact that $(D_k f(\eta))^*=D_{-k}f(\eta)$, if $f$ is real-valued.
In view of~\eqref{e:Var1bound},~\eqref{e:Var} follows once we prove that 
$ \lim_{N\to\infty}\|(-\gensy)^{-1/2}\fFNn\|=0$ at fixed $n$. Hence, it is sufficient to show
\begin{equ}
  \lim_{N\to\infty}\|(-\gensy)^{-1/2}\fFNn_{j_1,j_2}\|=0
\label{eq:dd}
\end{equ}
separately for every $j_1,j_2\le n$.

Up to this point, we have not used the definition of $\ffNn_{j}$ and it is time to do so.
Recall that $\ffNn=\ffNn_1+\psi_0\ffNn_2$ with $\ffNn_a$  defined as in \eqref{e:Fjn}, with $a=1,2$. As a consequence, to prove \eqref{eq:dd} it is enough to show
\begin{equ}
  \lim_{N\to\infty}\|(-\gensy)^{-1/2}\fFNn_{a_1,j_1;a_2,j_2}\|=0, \qquad \fFNn_{a_1,j_1;a_2,j_2}=\sum_k |k|^2\fFNn_{a_1,j_1;a_2,j_2;k}
\label{eq:ddaa}
\end{equ}
for every $a_1,a_2\in\{1,2\}$, where $\fFNn_{a_1,j_1;a_2,j_2}$ and $\fFNn_{a_1,j_1;a_2,j_2;k}$ 
are defined similarly to $\fFNn_{j_1,j_2},\fFNn_{j_1,j_2;k}$ in~\eqref{e:cac}, 
with the only difference that $\ffNn_{j_1},\ffNn_{j_2}$ are replaced by $\ffNn_{a_1,j_1},\ffNn_{a_2,j_2}$. 
Recall from \eqref{e:Fjn} that $\ffNn_{a,j}$ is zero unless $a\le j\le n$, so that
\eqref{eq:ddaa} has to be checked only for $a_1\le j_1\le n, a_2\le j_2\le n$.
\medskip

Before proceeding, we need to recall a couple of useful properties 
about Wick polynomials $:\hat\eta(p_1)\dots\hat \eta(p_n):$ 
in the specific case where $\eta$ is a white noise, 
so that \eqref{e:NoiseFourier} holds (see \cite[Chap.  3]{janson1997gaussian}). 
Let us begin with the following definition. 

\begin{definition}
  \label{def:combinatoria}
  For $n,\,m\in\N$, we will denote by $\mathcal G[n,m]$ the set of collections $G$ of
      disjoint edges connecting a point in $V^n_1\eqdef\{(1,i)\colon i=1,\dots,n\}$ to one in
      $V^m_2\eqdef\{(2,j)\colon j=1,\dots,m\}$ and by $V[G]$ the subset of vertices in $V^{n,m}_{1,2}\eqdef V_1\cup V_2$ 
      which are not endpoints of any edge in $G$.
\end{definition}

If a random variable $f\in \SH_n,n\ge 1$ is written as
\begin{equ}
  f(\eta)=\sum_{p_{1:n}}\hat f_n(p_{1:n}):\hat\eta(p_1)\dots\hat \eta(p_n):
\end{equ}
with a symmetric kernel $\hat f$ and $D_k$ is the Malliavin derivative \eqref{e:Malliavin}, then 
\begin{equ}[e:Mallia]
  D_k f(\eta)=n\sum_{p_{1:n-1}}\hat f_n(p_{1:n-1},k) :\hat\eta(p_1)\dots\hat \eta(p_{n-1}):.
\end{equ}
Further, with the conventions of Definition \ref{def:combinatoria}, the product of two Wick's polynomials 
is given by \cite[Th. 3.15]{janson1997gaussian}
\begin{equ}[e:Wick]
 \Wick{\prod_{v\in V^n_1}\hat\eta(p_v)\,} \Wick{\prod_{v\in V^m_2}\hat\eta(p_{v})\,}
	=\sum_{G\in \mathcal G[n,m]}\prod_{(a,b)\in G}\mathds{1}_{p_{a}=-p_{b}}\Wick{\prod_{v\in V[G]}\hat\eta(p_{v})}\,.
\end{equ}
Since Wick products are centered,  the expectation of
the left hand side of \eqref{e:Wick} consists of the sum of the terms
such that $V[G]=\emptyset$, which is possible only if $n=m$.
\medskip

We now get back to 
\begin{equ}
\fFN_{k}(\eta)\eqdef \fFN_{a_1,j_1;a_2,j_2;k}
\end{equ}
in~\eqref{eq:ddaa} where we dropped the indices $n,j_1,j_2,a_1,a_2$ of $\fF_{a_1,j_1;a_2,j_2;k}^{N,n}$
since these arguments are fixed, and we will keep only the dependence on $k$, which is summed over, 
and $N$ over which we take the limit.
\medskip

Using the properties of Wick products introduced above, one can write 
\begin{equs}[e:FkG]
  \fFN_{k}(\eta)=j_1 j_2\sum_{\substack{G\in \mathcal G[j_1-1,j_2-1]:\\ V[G]\ne\emptyset}}\fFN_{k;G}(\eta)\eqdef& j_1 j_2\sum_{\substack{G\in \mathcal G[j_1-1,j_2-1]:\\ V[G]\ne\emptyset}}\sum_{\substack{\ell_v\in\Z^2\setminus\{0\}\\ v\in V_{1,2}}}\prod_{(a,b)\in G}\mathds{1}_{\ell_a=-\ell_b} \\
  &\quad\times {\hffNn}_{a_1,j_1}(\ell|_{V_1},k)\hffNn_{a_2,j_2}(\ell|_{ V_2},-k)
   \Wick{\prod_{v\in V[G]}\hat\eta(\ell_v)}\,,
 \end{equs}
where we adopted the notation $\ell|_{ V_1}\eqdef (\ell_v)_{v\in V_1}$ and 
we set $V_1\equiv V_1^{j_1-1}, V_2\equiv V_2^{j_2-1}, V_{1,2}\equiv V_{1,2}^{j_1-1,j_2-1}$. 
Since the cardinality of $\mathcal G[j_1-1,j_2-1]$ does not depend on  $N$, to prove
 \eqref{eq:dd} it is enough to prove
\begin{equ}
  \lim_{N\to\infty}\|(-\gensy)^{-1/2}\fFN_{G}\|=0\,,\qquad   \fFN_{G}\eqdef \sum_{k\in \mathbb Z^2}|k|^2\fFN_{k;G}
  \label{eq:dd2}
\end{equ}
for every $G\in \mathcal G[j_1-1,j_2-1],V[G]\ne\emptyset$. 

In order to see that~\eqref{eq:dd2} holds true, we need to exploit the specific form of the $\hffNn$ and 
are therefore forced to distinguish two cases: 
either both $j_x,x=1,2$ are strictly larger than $a_x$, or at least one of them equals $a_x$. 
We will  start with the former, which requires the most work.

\subsubsection{The case $j_1>a_1,j_2> a_2$}
\label{sec:case>}
Fix $a=1,2$ and $a+1\le j\le n$. Then, with $S_j$ the set of permutations of $j$ elements,
\begin{equs}[e:dnm]
\ffNn_{a,j}&= (-\gensy[1+G_{n+2-j}(\Ll(-\gensy))])^{-1}\genap \ffNn_{a,j-1}\\
&=\frac{\hat\lambda}{\sqrt{\log N}}\sum_{\ell_{1:j}\in\mathbb Z^{2j}\setminus\{0\}}\frac{2}{|\ell_{1:j}|^2[1+G_{n+2-j}(\Ll(|\ell_{1:j}|^2/2))]}\\
&\quad\times\frac{1}{j!}\sum_{\sigma\in S_j}\nonlin_{\ell_{\sigma(1)},\ell_{\sigma(2)}}|\ell_{\sigma(1)}+\ell_{\sigma(2)}|
\hat{\ff}^{N,n}_{a,j-1}(\ell_{\sigma(1)}+\ell_{\sigma(2)},\ell_{\sigma(3):\sigma(j)})\Wick{\prod_{i=1}^{j}\hat\eta({\ell_i})}	\end{equs}
and the coefficient of $\Wick{\prod_{i=1}^{j}\hat\eta({\ell_i})}$ defines ${\hffNn}_{a,j}(\ell_{1:j})$, i.e. 
\begin{equs}\label{eq:milleindici}
  {\hffNn}_{a,j}&(\ell_{1:j})=\sum_{\sigma\in S_j}{\hffNn}_{a,j,\sigma}(\ell_{1:j})\\
  &\eqdef \sum_{\sigma\in S_j} \frac{1}{j!}
  \frac{2\hat\lambda}{\sqrt{\log N}}\frac{\nonlin_{\ell_{\sigma(1)},\ell_{\sigma(2)}}|\ell_{\sigma(1)}+\ell_{\sigma(2)}|}{|\ell_{1:j}|^2[1+G_{n+2-j}(\Ll(|\ell_{1:j}|^2/2))]}
\hat{\ff}^{N,n}_{a,j-1}(\ell_{\sigma(1)}+\ell_{\sigma(2)},\ell_{\sigma(3):\sigma(j)}).
\end{equs}
This whole expression should be plugged into the definition of $\fFN_{G}$, both for
 ${\hffNn}_{a_1,j_1}$ and for $\hffNn_{a_2,j_2}$. 
 Since the cardinality of $S_j$ does not depend on $N$, to obtain \eqref{eq:dd2} it is sufficient to show that, for every $\sigma^{(1)}\in S_{j_1},\sigma^{(2)}\in S_{j_2}$ one has
\begin{equ}
  \lim_{N\to\infty}\|(-\gensy)^{-1/2}\fFN_{G,\sigma^{(1)},\sigma^{(2)}}\|=0
  \label{eq:dd3}
\end{equ}
where
\begin{equs}
\fFN_{G,\sigma^{(1)},\sigma^{(2)}}\eqdef\sum_k |k|^2&\sum_{\substack{\ell_v\in\Z^2\setminus\{0\}\\ v\in V_{1,2}}} \prod_{(a,b)\in G}\mathds{1}_{\ell_a=-\ell_b} \\
  &\times {\hffNn}_{a_1,j_1,\sigma^{(1)}}(\ell|_{V_1},k)\hffNn_{a_2,j_2,\sigma^{(2)}}(\ell|_{ V_2},-k)
   \Wick{\prod_{v\in V[G]}\hat\eta(\ell_v)}\,.
\end{equs}
 To continue, we observe that
\begin{equs}
(-\gensy)^{-1} \Wick{\hat \eta(q_1)\dots\hat\eta(q_n)}
= \frac{2}{{\sum_{i\le n}|q_i|^2}} \Wick{\hat \eta(q_1)\dots\hat\eta(q_n)}\,.
\end{equs}
Altogether, this implies that
\begin{equs}\label{e:drd}
  \|&(-\gensy)^{-1/2}\fFN_{G,\sigma^{(1)},\sigma^{(2)}}\|^2\\
  &= 2\sum_{k,k'}|k|^2\, |k'|^2 \sum_{\substack{\ell_v,\,\ell_v'\\ v\in V_{1,2}}} \Big(\prod_{(a,b)\in G}\mathds{1}_{\substack{\ell_a=-\ell_b\\ \ell'_a=-\ell'_b}}\Big) \mathbb E \Bigl[ \Wick{\prod_{v\in V[G]}\hat\eta(\ell_v)}\Wick{\prod_{v\in V[G]}\hat\eta(\ell_v')}\Bigr]\\
  &\quad\quad\times \frac{{\hffNn}_{a_1,j_1,\sigma^{(1)}}(\ell|_{ V_1},k){\hffNn}_{a_2,j_2,\sigma^{(2)}}(\ell|_{V_2},-k){\hffNn}_{a_1,j_1,\sigma^{(1)}}(\ell|_{ V_1}',k'){\hffNn}_{a_2,j_2,\sigma^{(2)}}(\ell|_{ V_2}',-k')}{\sum_{v\in V[G]}|\ell_v|^2}\,.
\end{equs}
In order to express the expectation above, let $\mathcal P[G]$ be the set of bijections of $V[G]$ to itself 
which we think of as a set of edges whose first endpoint is in $V[G]$ and the second in an identical copy of $V[G]$. 
Then, by the observation just after \eqref{e:Wick}, we have
\begin{equs}
\mathbb E \Bigl[ \Wick{\prod_{v\in V[G]}\hat\eta(\ell_v)}\Wick{\prod_{v\in V[G]}\hat\eta(\ell_v')}\Bigr]=  \sum_{P\in \mathcal P[G]}\prod_{(v,w)\in P}\mathds{1}_{\ell_v=-\ell'_w}.
\end{equs}
Once more, note that the cardinality of $\mathcal P[G]$ is bounded independently of $N$  
so that it is sufficient to show that, for every $P\in \mathcal P[G]$,
\begin{equs}
  \label{e:tremendissima}
&\sum_{k,k'}|k|^2\, |k'|^2 \sum_{\substack{\ell_v,\,\ell_v'\\ v\in V_{1,2}}} \prod_{(a,b)\in G}\mathds{1}_{\substack{\ell_a=-\ell_b\\ \ell'_a=-\ell'_b}}  \prod_{(v,w)\in P}\mathds{1}_{\ell_v=-\ell'_w}\\
  &\quad\times \frac{{\hffNn}_{a_1,j_1,\sigma^{(1)}}(\ell|_{V_1},k){\hffNn}_{a_2,j_2,\sigma^{(2)}}(\ell|_{ V_2},-k){\hffNn}_{a_1,j_1,\sigma^{(1)}}(\ell|_{V_1}',k'){\hffNn}_{a_2,j_2,\sigma^{(2)}}(\ell|_{V_2}',-k')}{\sum_{v\in V[G]}|\ell_v|^2}\end{equs}
tends to zero as $N\to\infty$.

Since in \eqref{e:tremendissima} we will take absolute values and we
want to take the limit $N\to\infty$ for the $j_i$'s fixed, in the
definition \eqref{eq:milleindici} of ${\hffNn}_{a,j,\sigma}$ we can
drop the factor $2\hat\lambda/j!$, we can bound $\nonlin$ with the
indicator function of its arguments being smaller than $N$ and neglect
the functions $G_{n+2-j}$'s, as they are positive.  Therefore, we need
to show that for every $j_1>a_1,j_2>a_2$, $G\in \mathcal
G[j_1-1,j_2-2]$ with $V[G]\ne\emptyset$, $P\in\mathcal P[G]$ and permutations
$\sigma^{(1)},\,\sigma^{(2)}$, the quantity
\begin{equs}\label{e:trem2}
&\frac1{(\log N)^2}\sum_{k,k'}\sum_{\substack{\ell_v,\,\ell_v'\\ v\in V_{1,2}}}\frac{|k|^2\, |k'|^2}{\sum_{v\in V[G]}|\ell_v|^2}  \prod_{(a,b)\in G}\mathds{1}_{\substack{\ell_a=-\ell_b\\ \ell'_a=-\ell'_b}}  \prod_{(v,w)\in P}\mathds{1}_{\ell_v=-\ell'_w}\\
  &\quad\times \frac{{\ft}_{a_1,j_1,\sigma^{(1)}}(\ell|_{V_1}, k)\,{\ft}_{a_2,j_2,\sigma^{(2)}}(\ell|_{V_2}, -k)
      \,{\ft}_{a_1,j_1,\sigma^{(1)}}(\ell|_{V_1}',k')\,{\ft}_{a_2,j_2,\sigma^{(2)}}(\ell|_{V_2}',-k')}{(|\ell|_{V_1}|^2+ |k|^2)(|\ell|_{V_2}|^2+ |k|^2)(|\ell|_{V_1}'|^2+ |k'|^2)(|\ell|_{V_2}'|^2+ |k'|^2)}
\end{equs}
tends to zero as $N\to\infty$, where
\begin{equ}[eq:deft]
  {\ft}_{a,j,\sigma}(q_{1:j})\eqdef |q_{\sigma(1)}+q_{\sigma(2)}|\, |\hffNn_{a,j-1}(q_{\sigma(1)}+q_{\sigma(2)},q_{\sigma(3):\sigma(j)})|\mathds{1}_{|q_{\sigma(1)}|,|q_{\sigma(2)}|\le N}.
\end{equ}

To make formula \eqref{e:trem2} more readable, let us introduce the following conventions. 
Let $\tilde G$ be the graph given by an identical copy of $G$ whose vertex set is $V_{3,4}\eqdef V_3\cup V_4$ where 
$V_3\eqdef \{(3,i)\,: i=1,\dots,j_1-1\}$ and $V_4\eqdef \{(4,i)\,: i=1,\dots,j_2-1\}$, 
and whose edges are such that $((3,a),(4,b))\in \tilde G$ iff $((1,a),(2,b))\in G$. Accordingly, we view $P$ as 
a bijection from $V[G]$ to $V[\tilde G]$, i.e. as the set of edges connecting 
a variable $p_{(u,i)}$ for $u\in \{1,2\}$ with another variable $p_{(v,j)}$ with $v\in \{3,4\}$.
Then, we set 
\begin{equ}[e:Feynman]
\gamma\eqdef G\cup \tilde G\cup P\cup\{((1,0),(2,0)),\, ((3,0),(4,0))\},
\end{equ}
which is the edge set of a graph whose vertices are those in $V\eqdef
V_{1,2}\cup V_{3,4}\cup\{(u,0)\colon u=1,\dots,4\}$, $|V|=2(j_1+j_2)$
and which has $j_1+j_2$ edges.  We redefine the momentum variables
$\ell,\ell'$ by introducing $p_{(u,i)},\,u=1,\dots,4$ where
$p_{(1,i)}\eqdef\ell_{v}$, $p_{(3,i)}\eqdef\ell_{v}'$ for $v=(1,i)$
and $i\le j_1-1$, and $p_{(2,i)}\eqdef \ell_v$, $p_{(4,i)}\eqdef
\ell_v'$ for $v=(2,i)$ and $i\le j_2-1$, and
$p_{(1,0)}=-p_{(2,0)}\eqdef k$ and $p_{(3,0)}=-p_{(4,0)}\eqdef
k'$. Then~\eqref{e:trem2} reads 
\begin{equ}[e:trem3]
\frac1{(\log N)^2}\sum_{\substack{p_{(u,i)}\\ (u,i)\in V}} \frac{1}{\sum_{(u,a)\in V[G]}|p_{(u,a)}|^2}\prod_{((u,i),(v,j))\in \gamma}\mathds{1}_{p_{(u,i)}=-p_{(v,j)}} \prod_{u=1}^4\frac{ |p_{(u,0)}| |{\ft}^{u}(p_{(u)})|}{|p_{(u)}|^2},
\end{equ}
and in Figure \ref{fig:Feyn} we provide 
graphical representation to which we will refer throughout. 
Here, for each $(u,i)\in V$, $p_{(u,i)}$ takes values in
$\Z^2\setminus\{0\}$ with the additional constraint that
$|p_{(u,i)}|\leq N$ if $p_{(u,i)}$ is one of the eight variables
inside one of the blue rectangles (this comes from the indicator in \eqref{eq:deft}). Also, we introduced the
shorthand notations $p_{(u)}\eqdef (p_{(u,0)},\dots,p_{(u,j^u-1)})$
and
\begin{equ}
  \label{e:ti}
{\ft}^{u}\eqdef {\ft}_{a^u,j^u,\sigma^{(u)}}
\end{equ}
where
\begin{equ}
  \label{e:ju}
j^u=j_1 \text{ for } u\in \{1,3\} \text{ and } j^u=j_2 \text{ for } u\in\{2,4\}
\end{equ}
and an analogous definition holds for $\sigma^{(u)},a^u$.

We call $\Gamma[j_1,j_2]$ the set of all allowed Feynman diagrams 
(this includes the constraint that $P$ is non-empty), whose graph $\gamma$ is necessarily of the form~\eqref{e:Feynman}. 
%If we establish that the edges of the diagram are the edges in $G,P$ plus the two edges joining $p_{(1,0)}$ to $p_{(2,0)}$ and $p_{(3,0)}$ to $p_{(4,0)}$ respectively, then the
%product of indicator functions in
%\eqref{e:trem3} is the product over edges of $\gamma$ of the indicator that the momenta at the endpoints of the edges are opposite.
\begin{figure}[t]
\begin{center}
\includegraphics[width=9cm]{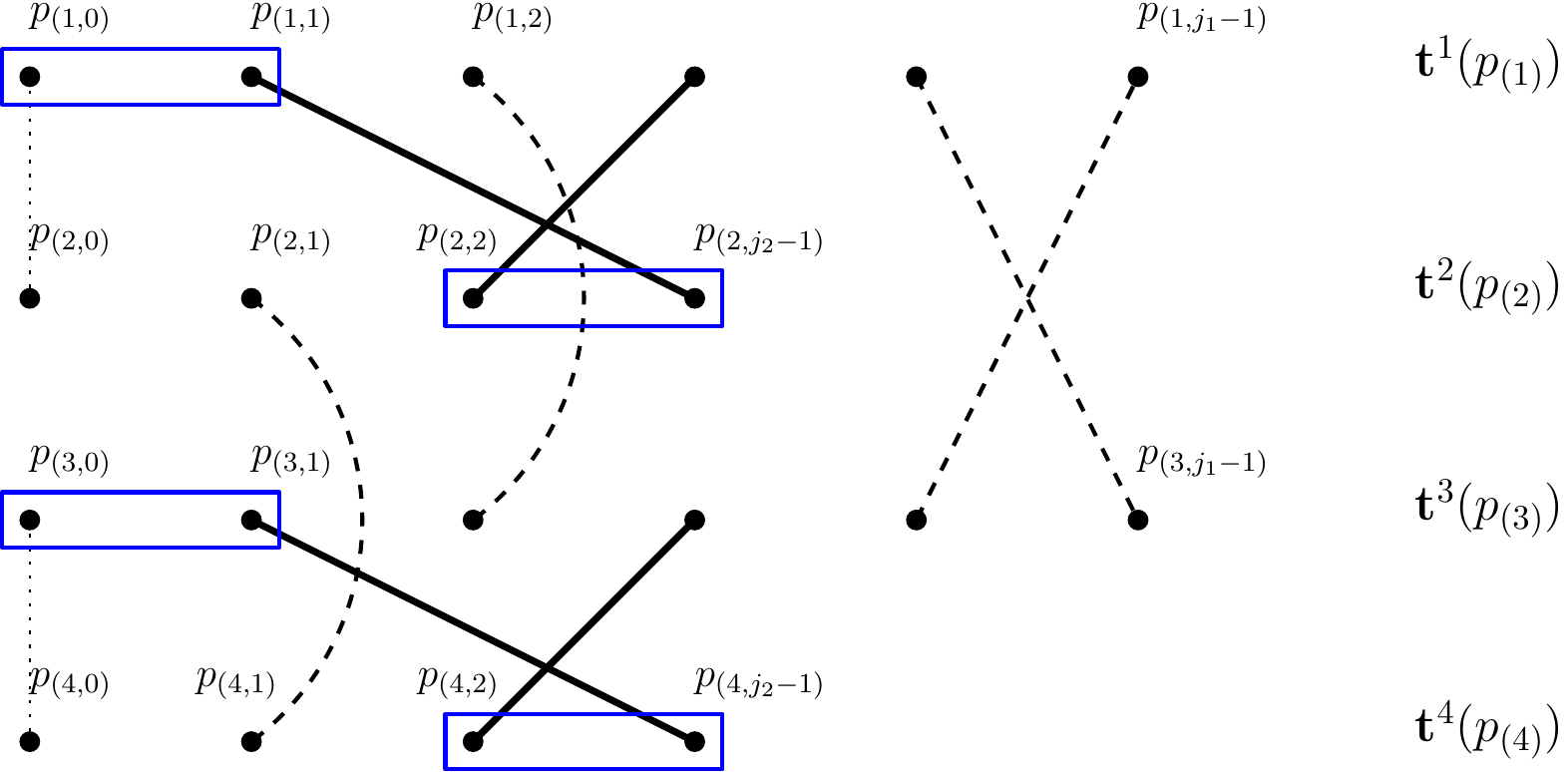}
\caption{A graphical representation of formula \eqref{e:trem3} for
  $j_1=6,j_2=4$. Each of the four horizontal strings of dots represent the vertices in $V_1,\dots,V_4$ and to each line we associate the corresponding factor
  ${\ft}^{u}$. %% he
  %% first dot in each line stands for the variable $\pm k$ or $\pm k'$
  %% as indicated, while the other dots stand for $p_{(a,i)},
  %% a=1,\dots,4$.
  An edge connecting two dots means that the variables labelled by the 
   two endpoints coincide modulo a sign change. Thick edges correspond to edges
  in $G,\tilde G$ (note that they are repeated identically in the first and second pair of rows) while dashed edges correspond to
  edges in $P$. There is necessarily at least one dashed line, as $V[G]$ is non-empty. 
  In fact, the collection of endpoints of dashed lines in the first two rows is exactly $V[G]$. 
  The two dotted edges are present in every Feynman diagram $\gamma$ and encode the conditions $p_{(1,0)}= -p_{(2,0)}$ and $p_{(3,0)}=-p_{(4,0)}$. Finally, the positions of the rectangles depend on the permutation $\sigma$ and correspond to the variables which are added up in $\ft^{(i)}$ (see~\eqref{eq:deft}) - 
  for instance, in the first line they represent $p_{(1,0)}+p_{(1,1)}$ in 
  $\ft^{(1)}(p_{(1)})={\ft}_{a_1,j_1,\sigma^{(1)}}(p_{(1)})$. 
  Rectangles can either contain one of the variables $p_{(u,0)},\, u=1,\dots,4$ (as in row 1) or not (as in row 2). Note that the two vertices contained in a rectangle are in the same row but need not be adjacent, although for clarity of the pictures we will always draw cases where they are.}
\label{fig:Feyn}
\end{center}
\end{figure}
Altogether, up to now we have proved the following proposition. 

\begin{proposition} 
  If \eqref{e:trem3} converges to zero as $N\to\infty$ for every $\gamma\in\Gamma[j_1,j_2]$, then \eqref{eq:dd2} (and hence
  \eqref{eq:ddaa}) holds with $j_1>a_1,j_2>a_2$.
\label{prop:bonn}
\end{proposition}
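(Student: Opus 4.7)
The plan is to retrace the chain of reductions sketched in the discussion preceding the proposition and to verify that at each step the number of terms introduced is bounded uniformly in $N$. The triangle inequality then reduces convergence of $\|(-\gensy)^{-1/2}\fFN_{a_1,j_1;a_2,j_2}\|$ to convergence of each individual summand, which by construction is of the form \eqref{e:trem3} for some Feynman diagram $\gamma\in\Gamma[j_1,j_2]$. In this sense the statement is really a bookkeeping lemma: all the analytic difficulty is deferred to proving that each individual \eqref{e:trem3} vanishes, which is the subject of the subsequent sections.

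Concretely, I would first apply \eqref{e:Mallia} to express each $D_k\ffNn_{a,j}$ as a Wick polynomial in the Fourier modes of $\eta$, and then use the product formula \eqref{e:Wick} on $D_k\ffNn_{a_1,j_1}\cdot D_{-k}\ffNn_{a_2,j_2}$ to expand it as a sum over graphs $G\in\mathcal G[j_1-1,j_2-1]$. The centering by the expectation in \eqref{e:cac} removes precisely the pure-pairing contribution corresponding to $V[G]=\emptyset$, yielding \eqref{e:FkG}. Substituting the permutation decomposition \eqref{eq:milleindici} of $\hffNn_{a,j}$ in terms of $\hffNn_{a,j,\sigma}$ further reduces matters to estimating $\|(-\gensy)^{-1/2}\fFN_{G,\sigma^{(1)},\sigma^{(2)}}\|$ for each fixed triple $G,\sigma^{(1)},\sigma^{(2)}$. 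To compute the latter norm, one uses that $(-\gensy)^{-1}$ multiplies a Wick monomial with momenta $q_{1:n}$ by the scalar $2/|q_{1:n}|^2$, and that the second-moment $\E\bigl[\Wick{\prod_{V[G]}\hat\eta(\ell_v)}\Wick{\prod_{V[G]}\hat\eta(\ell_v')}\bigr]$ expands via \eqref{e:Wick} as a sum over bijections $P\in\mathcal P[V[G]]$ of products of indicators $\mathds{1}_{\ell_v=-\ell_w'}$; this produces \eqref{e:drd} and, after isolating each $P$, \eqref{e:tremendissima}.

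Finally, I would pass to absolute values throughout, invoke $|\nonlin_{\ell,m}|\le\mathds{1}_{|\ell|,|m|\le N}$ from \eqref{e:nonlinCoefficient}--\eqref{e:Jlm}, and use $G_{n+2-j}\ge 0$ from Proposition \ref{p:Hj}(i) to bound $[1+G_{n+2-j}(\Ll(\cdot))]^{-1}\le 1$. This replaces the kernels $\hffNn_{a,j,\sigma}$ by the non-negative surrogates $\ft_{a,j,\sigma}$ of \eqref{eq:deft}, modulo an $N$-independent combinatorial prefactor (the $2\hat\lambda/j!$ coming from \eqref{eq:milleindici}). Reindexing the momenta via $p_{(u,i)}$ and reorganising the sum according to the graph $\gamma$ of \eqref{e:Feynman}, with $p_{(1,0)}=-p_{(2,0)}=k$ and $p_{(3,0)}=-p_{(4,0)}=k'$, then produces precisely the expression \eqref{e:trem3}. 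Since the cardinalities $|\mathcal G[j_1-1,j_2-1]|$, $|\mathcal P[V[G]]|$, $|S_{j_1}|$, $|S_{j_2}|$ are all independent of $N$, vanishing of \eqref{e:trem3} for every $\gamma\in\Gamma[j_1,j_2]$ delivers \eqref{eq:dd3}, hence \eqref{eq:dd2}. The main subtlety is keeping track of the constraint $V[G]\ne\emptyset$ throughout, since it is exactly what guarantees that $\gamma$ contains at least one edge from the pairing $P$ and thereby nontrivially couples the two copies of momentum variables.
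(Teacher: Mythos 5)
Your proposal is correct and follows essentially the same chain of reductions as the paper: Wick expansion of the product of Malliavin derivatives into graphs $G$ with $V[G]\ne\emptyset$, permutation decomposition of the kernels, computation of the $(-\gensy)^{-1/2}$-norm via pairings $P$, and the crude bounds on $\nonlin$ and $1+G_{n+2-j}$ yielding \eqref{e:trem3}, with the $N$-independence of all combinatorial cardinalities justifying the term-by-term reduction. No gaps.
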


It is important for what follows to note that, when applying the
definition \eqref{eq:deft} of $ {\ft}_{a,j,\sigma}(q_{1:j})$ for
instance to ${\ft}_{a_1,j_1,\sigma^{(1)}}(p_{(1)})$, it may happen that
the two summed variables $q_{\sigma(1)}+q_{\sigma(2)}$ (i.e. the
variables in the rectangle of row $1$) are two of the variables
$p_{(1,i)},i\ge1$, or that it is the sum of $p_{(1,0)}$ plus one of
the variables $p_{(1,i)},i\ge1$. This depends on the chosen
permutation $\sigma^{(1)}$.  In fact, we are going to estimate the
Feynman diagrams differently according to the number $\kappa\in\{0,\dots,4\}$ of indices $u=1,\dots,4$ 
such that the variable $p_{(u,0)}$ is contained in the rectangle of
row $u$. We denote by $\Gamma_\kappa[j_1,j_2]\subset \Gamma[j_1,j_2]$ the set of Feynman diagrams such that the quantity of $u\in\{1,2,3,4\}$  with the property that $p_{(u,0)}$ is contained in a rectangle equals $\kappa$. Actually, since the positions of the rectangles in rows $1,2$ is the same as in rows $3,4$, 
only the values $\kappa=0,2,4$ can occur.
The crucial estimate is the content of the following proposition. 

  \begin{proposition} \label{prop:K0} Let $j_1>a_1,j_2>a_2$.
    For every diagram $\gamma\in \Gamma_\kappa[j_1,j_2]$, $\kappa=0,2$, \eqref{e:trem3} is upper bounded by an $N$-independent constant times
    \begin{equ}
      \label{kappa02}
      \frac1{(\log N)^{(4-\kappa)/2}}\|(-\gensy)^{1/2}\ffNn_{a_1,j_1-1}\|^2\|(-\gensy)^{1/2}\ffNn_{a_2,j_2-1}\|^2.
    \end{equ}
    If instead $\kappa=4$, then  \eqref{e:trem3} is upper bounded by the supremum of 
    % (according to the geometry of the graph) either by 
    $1/(\log N)^2$ times 
    \begin{eqnarray}
      \label{risultatok4}
      \|(-\gensy)^{1/2}\ffNn_{a_1,j_1-1}\|^2\|(-\gensy)^{1/2}\ffNn_{a_2,j_2-1}\|^2
    \end{eqnarray}
    and the product \eqref{risultatok4} where, in at least  one of the four norms $\|(-\gensy)^{1/2}\ffNn_{a_i,j_i-1}\|$ 
    the operator $(-\gensy)^\half$ is replaced by $\cS$, which in turn is the diagonal operator whose Fourier
  multipliers $\sigma=(\sigma_n)_n$ act as
\begin{equ}[e:S]
\sigma_n(k_{1:n})\eqdef \sum_{\pi\in S_n}\frac{|k_{\pi(1)}|^2}{|k_{\pi(2)}|},\,\qquad k_{1:n}\in\Z^{2n}\setminus\{0\}, \qquad \text{for all} \quad n\ge 2
\end{equ}
 while $\sigma_n\equiv 0, n\le 1$.
\end{proposition}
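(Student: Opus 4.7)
The plan is to bound \eqref{e:trem3} by iterated Cauchy--Schwarz, exploiting the mirror symmetry between rows $1,2$ and rows $3,4$ encoded by the edges of $P$. After taking absolute values, I would first split the middle propagator $1/D$, with $D=\sum_{v\in V[G]}|p_v|^2$, by picking a pair $(v^\ast,w^\ast)\in P$ (which exists since $V[G]\ne\emptyset$) and bounding
\begin{equ}
\frac{1}{D}\le \frac{1}{|p_{v^\ast}|^2}=\frac{1}{|p_{v^\ast}|\,|p_{w^\ast}|},
\end{equ}
where the last equality uses $|p_{v^\ast}|=|p_{w^\ast}|$ from the $P$-edge constraint. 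Distributing the two factors between the top (rows $1,2$) and bottom (rows $3,4$) halves of the diagram makes the sum factorize along the $P$-edges: top and bottom are then coupled only through the Kronecker deltas on the $P$-linking variables.

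Next, applying Cauchy--Schwarz in the $P$-linking variables decouples the two halves. Each half can be bounded by a sum, over the internal $G$-edge (resp. $\tilde G$-edge) variables, of the square of $|\hffNn_{a_i,j_i-1}|$, and the internal sums collapse via the standard convolution estimate
\begin{equ}
\sum_{\substack{p+q=r\\ 0<|p|,|q|\le N}}\frac{1}{|p|^2|q|^2}\lesssim \frac{\log(|r|\vee 1)}{|r|^2},
\end{equ}
applied recursively on the pairs of momenta inside the rectangles (together with the $|p+q|$-factor from \eqref{eq:deft}). When $\kappa=0$ the variables $k,k'$ enter only as passenger arguments of the $\hffNn_{a_i,j_i-1}$ kernels outside all rectangles, so the sums $\sum_k|k|^2(\cdot)$ reconstruct exactly $\|(-\gensy)^{1/2}\ffNn_{a_i,j_i-1}\|^2$ and the overall prefactor $1/(\log N)^2$ gives the claimed bound. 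When $\kappa=2$ (say $k$ inside the rectangles of rows $1$ and $3$ and $k'$ not in rectangles of rows $2,4$, or a symmetric configuration), the sum over $k$ now contains an extra convolution against one internal variable in each of rows $1$ and $3$, producing an additional $\log N$ via the convolution estimate above; absorbing it yields the bound \eqref{kappa02} with the factor $1/(\log N)^{3/2}$.

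For $\kappa=4$, both $k$ and $k'$ sit inside rectangles in all four rows, so they enter the kernels $\hffNn_{a,j-1}$ through $p_{(u,0)}+p_{(u,\sigma^{(u)}(2))}$ and cannot be isolated. After Cauchy--Schwarz the four factors can still be controlled by norms of $\ffNn_{a_i,j_i-1}$, but the residual $k$-summation now permits two inequivalent bounds: either the factor $|p_{(u,0)}+p_{(u,\sigma^{(u)}(2))}|$ is estimated by $|k|$ and absorbed into the $(-\gensy)^{1/2}$ weight, giving product A in the statement, or it is kept and yields, after the next convolution collapse, the multiplier $\sigma_n(k_{1:n})=\sum_\pi|k_{\pi(1)}|^2/|k_{\pi(2)}|$ of the operator $\cS$ defined in \eqref{e:S}, giving product B. Taking the supremum over these two choices accounts for the form of the bound in the proposition.

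The main technical obstacle is combinatorial: different diagrams $\gamma\in\Gamma_\kappa[j_1,j_2]$ have different numbers of $G$-edges and different positions of the rectangles relative to $G$ and $P$, so both the Cauchy--Schwarz scheme and the choice of the pair $(v^\ast,w^\ast)\in P$ must be adapted to each diagram. A subtle but crucial point is that, once the internal sums are carried out, the reassembled expression must identify precisely with $\|(-\gensy)^{1/2}\ffNn_{a_i,j_i-1}\|^2$ rather than with a larger quantity; this relies on the symmetrization over permutations $\sigma\in S_{j_i}$ built into the definition \eqref{eq:milleindici} of $\hffNn_{a_i,j_i}$, which makes the bound invariant under the arbitrary choice of $\sigma^{(u)}$.
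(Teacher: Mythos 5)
Your high-level skeleton (iterated Cauchy--Schwarz over the diagram, the mirror symmetry between rows $1,2$ and $3,4$ through the $P$-edges, the case distinction in $\kappa$, and the appearance of $\cS$ only when $\kappa=4$) matches the paper's strategy, but the plan has concrete gaps that would prevent it from closing. First, the step where "the internal sums collapse via the standard convolution estimate" is not executable: the kernels $\hffNn_{a_i,j_i-1}$ are not power laws but general elements of $\fock_{j_i-1}$ known only through their $L^2$/$H^1$ norms, so every internal sum must be handled by Cauchy--Schwarz against the kernel itself. The real difficulty --- deciding, edge by edge, which of the numerator factors $|p_{(u,i_u^1)}+p_{(u,i_u^2)}|$ and which of the denominators $|x_{(u)}|^{-2}$ to pair with which kernel, so that each row ends up with exactly one $|\cdot|^2$ weight on a kernel argument (yielding $\|(-\gensy)^{1/2}\cdot\|$) and exactly one factor $|p_{(u,0)}|^{-1}$ (cancelling the numerator $|k|^2|k'|^2$) --- is precisely what this shortcut hides; the paper resolves it with an orientation and colouring of the directed edges and a separate Cauchy--Schwarz estimate per colour, and your plan contains no substitute for that bookkeeping. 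Second, your $\kappa=2$ accounting is inconsistent: losing "an additional $\log N$" from the prefactor $(\log N)^{-2}$ gives $(\log N)^{-1}$, which is what \eqref{kappa02} asserts, not $(\log N)^{-3/2}$; moreover the loss does not come from a convolution against internal variables, but from the two special-edge sums $\sum_{|p_{(1,0)}|\le N}|p_{(1,0)}|^{-2}$ and $\sum_{|p_{(3,0)}|\le N}|p_{(3,0)}|^{-2}$, each contributing $(\log N)^{1/2}$ after Cauchy--Schwarz.

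Third, for $\kappa=4$ the mechanism you give for $\cS$ is not the right one. The factor $1/|k_{\pi(2)}|$ in \eqref{e:S} originates from the denominator $\sum_{v\in V[G]}|p_v|^2$, bounded below by the modulus of a single $P$-edge variable; this is the one place where that denominator is indispensable, because after the $\prod_u|p_{(u,0)}|$ numerator is cancelled the special-edge sums produce exactly $(\log N)^2$, which eats the prefactor, so without extracting extra decay from $V[G]$ the $\kappa=4$ contribution would be $O(1)$ rather than $o(1)$. Keeping or discarding the rectangle factor $|p_{(u,0)}+p_{(u,\sigma(2))}|$, as you propose, cannot produce the $1/|k_{\pi(2)}|$ weight. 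Relatedly, the dichotomy in the statement (pure $H^1$ norms versus at least one $\cS$-norm) is governed by the connectivity structure of the directed and special edges and by whether $P$ meets the rectangles, not by a choice of how to estimate the rectangle factor; your plan does not identify these subcases, which is where the actual argument branches.
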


%\giuseppeText{alternativa a~\eqref{e:K=4}. 
%\begin{equs}
%\|&(-\gensy)^{1/2}\ffNn_{j_1-1}\|\|(-\gensy)^{1/2}\ffNn_{j_2-1}\|\\
%&\times \Big(\frac{1}{\log N} \|(-\gensy)^{1/2}\ffNn_{j_1-1}\|\|(-\gensy)^{1/2}\ffNn_{j_2-1}\| + \|\cS)^{1/2}\ffNn_{j_1-1}\|\|(-\gensy)^{1/2}\ffNn_{j_2-1}\| + \|(-\gensy)^{1/2}\ffNn_{j_1-1}\|\|(\cS)^{1/2}\ffNn_{j_2-1}\|
%\end{equs}}

The proof of these estimates is a based on a very elaborate and iterative procedure, and it is deferred to Section \ref{sec:tourdeforce}.

\begin{remark}
  The most challenging case is $\kappa=4$. 
  We will see that in this case, the denominator $\sum_{(u,a)\in V[G]}|p_{(u,a)}|^2$ - due to the existence of at least one dashed line - 
  is crucial in the estimate.
  Recall also that this denominator  originates from the operator $(-\gensy)^{-1}$ in \eqref{e:Var1bound}.
\end{remark}

We need the following estimate concerning the action of the operator $\cS$, whose proof 
is deferred to Appendix \ref{app:S}.  
\begin{proposition}\label{prop:Spiccolo}
  For any  $a=1,2$, $n\in\mathbb N,a\le j\le n$ one has
  \begin{equ}
\lim_{N\to\infty}    \|\cS^\half\ffNn_{a,j}\|=0.
  \end{equ}
\end{proposition}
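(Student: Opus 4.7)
The plan is to estimate $\|\cS^{1/2}\ffNn_{a,j}\|^2=\langle \cS\ffNn_{a,j},\ffNn_{a,j}\rangle$ by exploiting that, compared with the multiplier $|k_{1:j}|^2$ of $-\gensy$, the multiplier $\sigma_j(k_{1:j})$ of $\cS$ carries an extra asymmetric factor that, heuristically, removes one power of $|k|$ from the momentum sums. This extra decay is precisely what spares one logarithmic divergence in the key inner integrals of the form~\eqref{daillog}; combined with the prefactor $\const^2=\hat\lambda^2/\log N$, it produces an overall gain of $1/\log N$ which is what makes the whole expression vanish.

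Concretely, I begin with the base case $j=a$. For $a=1$ there is nothing to prove, since $\sigma_1\equiv 0$ by definition and hence $\cS^{1/2}\ffNn_{1,1}\equiv 0$. For $a=2$, the Fourier kernel $\hffNn_{2,2}(k_1,k_2)$ is supported on $\{k_1+k_2=0\}$ (as is $\nf_0$), so writing $k_1=k,\,k_2=-k$ and noting that $\sigma_2(k,-k)=2|k|$, a direct computation analogous to the one leading to~\eqref{e:H1hj} in Proposition~\ref{l:H1} gives
\[
\|\cS^{1/2}\ffNn_{2,2}\|^2\lesssim \const^2\sum_{k\neq 0}\frac{|k|\,\nonlin_{k,-k}^2}{|k|^4}\lesssim \frac{\hat\lambda^2}{\log N}\sum_{1\le|k|\le N}\frac{1}{|k|^3}\lesssim \frac{1}{\log N},
\]
which tends to zero as $N\to\infty$.

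For $j\ge a+1$, I use the recursion~\eqref{e:Fjn} together with the mutual commutation of the diagonal operators $\cS$, $(-\gensy)$ and $G_{n+2-j}(\Ll(-\gensy))$ to write $\|\cS^{1/2}\ffNn_{a,j}\|^2=\langle \cT^N\genap\ffNn_{a,j-1},\genap\ffNn_{a,j-1}\rangle$, where $\cT^N\eqdef \cS\,[1+G_{n+2-j}(\Ll(-\gensy))]^{-2}(-\gensy)^{-2}$ is diagonal with Fourier multiplier (on the $j$-th chaos) bounded by $\sigma_j(k_{1:j})/|k_{1:j}|^4$. Applying the diagonal/off-diagonal decomposition of Lemma~\ref{l:DiagOffDiag} with $\cS\equiv \cT^N$, the diagonal contribution reduces, up to combinatorial constants and the harmless $G$-factors, to the estimation of
\[
\const^2\sum_{k_{1:j-1}}|k_1|^2|\hffNn_{a,j-1}(k_{1:j-1})|^2\sum_{\ell+m=k_1}\frac{\sigma_j(\ell,m,k_{2:j-1})\,\nonlin_{\ell,m}^2}{(|\ell|^2+|m|^2+|k_{2:j-1}|^2)^2}.
\]
Compared with the analogous computation for $\|(-\gensy)^{1/2}\ffNn_{a,j}\|^2$ performed in Proposition~\ref{l:H1}, the integrand above has an extra factor $\sigma_j/|k_{1:j}|^2\sim 1/|k|$, which, as in the base case, downgrades the inner sum over $\ell,m$ from $O(\log N)$ to $O(1)$. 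Together with $\const^2=\hat\lambda^2/\log N$ and the uniform bound $\|(-\gensy)^{1/2}\ffNn_{a,j-1}\|\lesssim 1$ from Proposition~\ref{l:PrelimBounds}, this yields $\mathrm{Diag}\lesssim 1/\log N$; the off-diagonal terms~\eqref{e:OffDiag1}--\eqref{e:OffDiag2} are handled in the same spirit as in the proof of Lemma~\ref{l:GenBound}, again producing an extra $1/\log N$ factor.

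The main technical obstacle is the systematic bookkeeping when expanding $\sigma_j(\ell,m,k_{2:j-1})=\sum_{\pi\in S_j}|p_{\pi(1)}|^2/|p_{\pi(2)}|$: the bound on each inner sum must be carried out separately according to whether the two variables $p_{\pi(1)},p_{\pi(2)}$ lie in the integrated pair $\{\ell,m\}$ or in the outer set $\{k_2,\dots,k_{j-1}\}$. When $p_{\pi(2)}\in\{\ell,m\}$ the missing factor $1/|k|$ enters the inner sum and makes it finite, as in~\eqref{daillog} without the logarithm; when instead $p_{\pi(2)}\in\{k_2,\dots,k_{j-1}\}$, the factor $1/|p_{\pi(2)}|$ factors out of the inner sum and has to be absorbed into the outer one by symmetrisation of $\hffNn_{a,j-1}$ and Cauchy--Schwarz. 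Since the statement is required only for fixed $a,j,n$, no uniformity in these parameters is needed, and all combinatorial prefactors are absorbed in the implicit constant.
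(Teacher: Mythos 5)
Your strategy coincides with the paper's: the paper isolates the recursive step as a separate statement (Lemma~\ref{l:OpS}), proved by exactly the diagonal/off-diagonal decomposition and the case analysis on the position of the index $\pi(2)$ that you describe, and then iterates it down to the explicit base case $j=2$; your base case computations for $\ffNn_{2,2}$ (and the triviality of $a=j=1$) match the paper's.

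There is, however, one assertion that is false as stated and that obscures why the induction you set up is actually needed. You claim that the extra factor $\sigma_j/|k_{1:j}|^2$ ``downgrades the inner sum over $\ell,m$ from $O(\log N)$ to $O(1)$'' and conclude $\mathrm{Diag}\lesssim 1/\log N$ in a single step. This is only true for the terms of $\sigma_j=\sum_\pi |p_{\pi(1)}|^2/|p_{\pi(2)}|$ with $p_{\pi(2)}\in\{\ell,m\}$. When $p_{\pi(2)}$ lies among the outer variables $k_2,\dots,k_{j-1}$ --- the case you defer to your last paragraph --- the factor $1/|p_{\pi(2)}|$ does not interact with the inner sum at all: that sum is still of order $\log N$, it exactly cancels the $1/\log N$ coming from $\const^2$, and after symmetrisation the surviving quantity is $\|\cS^{1/2}\ffNn_{a,j-1}\|^2$, which is \emph{not} a priori $O(1/\log N)$; it is precisely the quantity being estimated one level down. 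The correct one-step bound is therefore
\[
\|\cS^{1/2}\ffNn_{a,j}\|^2\lesssim \|\cS^{1/2}\ffNn_{a,j-1}\|^2\,\mathds{1}_{j>2}+\frac{1}{\log N}\,\|(-\gensy)^{1/2}\ffNn_{a,j-1}\|^2
\]
(this is Lemma~\ref{l:OpS}), and the conclusion follows only by iterating it down to the base level $j=2$, using the uniform bound on $\|(-\gensy)^{1/2}\ffNn_{a,j-1}\|$ from Proposition~\ref{l:PrelimBounds} at each step. Since you have all the ingredients --- the base case, the dichotomy on where $\pi(2)$ lands, and the boundedness of the $(-\gensy)^{1/2}$-norms --- this is a repairable slip rather than a missing idea, but as written the blanket claim $\mathrm{Diag}\lesssim 1/\log N$ contradicts your own subsequent case analysis and would make the base case superfluous.
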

In Section \ref{sec:subsub2} we will show how the previous estimates imply \eqref{eq:dd2}. Before doing that, we need to consider the case where $j_1=a_1$ and/or $j_2=a_2$, that we have left aside for the moment.

\subsubsection{The case where $j_1=a_1$ and/or $j_2=a_2$}
\label{sec:case=}  In this section we consider the case where either $j_1=a_1$ or $j_2=a_2$, 
or both. Recall that we need to prove \eqref{eq:ddaa}. Note that 
the case $j_1=a_1=1=a_2=j_2$ is trivial: since $\ffNn_{1,1}$ belongs to the first Wiener chaos, 
$D_k \ffNn_{1,1}$ is a constant and one sees immediately from its definition that $\fFNn_{1,1; 1,1}$ 
vanishes identically.
Note also that if $a_1=j_1$ then the function $\ffNn_{a_1,j_1}$ that appears
in the definition of $\fFNn_{a_1,j_1; j_2,a_2}$ is given by the first line in \eqref{e:Fjn}  
so that (recall the definition of $\fg_1,\fg_2$) its kernels are
\begin{equ}[e:kernh1]
  \hffNn_{1,1}(k)=\nueff\frac{\hat\psi(k)}{1+G_{n+1}(\Ll(|k|^2/2))}\,
\end{equ}
and 
\begin{equation}
  \label{e:kernh2}
  \hffNn_{2,2}(k_1,k_2)=\lambda_N\mathds{1}_{k_1=-k_2}\frac{\nonlin_{k_1,-k_1}}{|k_1|^2(1+G_n(\Ll(|k_1|^2)))}\,,
\end{equation}
respectively. One can first of all deal with the case where both $a_1=j_1$ and $a_2=j_2$.

\begin{proposition} \label{prop:gialla}
One has $\fFNn_{2,2;1,1}\equiv 0$, 
\begin{equ}
   \|(-\gensy)^{-1/2}\fFNn_{2,2;1,1}\|^2\lesssim \frac1{\log N} \|\psi\|_{H^1(\mathbb T^2)}^2\,
\end{equ}
and
\begin{equ}
   \|(-\gensy)^{-1/2}\fFNn_{2,2;2,2}\|^2\lesssim \frac1{(\log N)^2} .
\end{equ}
\end{proposition}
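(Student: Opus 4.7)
The plan is to compute each of the three quantities by direct manipulation of the explicit kernels \eqref{e:kernh1}--\eqref{e:kernh2}, exploiting the fact that when $a=j$ the function $\ffNn_{a,a}$ lives in a single Wiener chaos (the first for $a=1$, the second for $a=2$), so that the Malliavin derivative $D_k$ takes a very simple form: \eqref{e:Mallia} applied to $\ffNn_{1,1}\in \SH_1$ gives that $D_k\ffNn_{1,1}=\hffNn_{1,1}(k)$ is a deterministic constant, whereas applied to $\ffNn_{2,2}\in \SH_2$, whose kernel is supported on the antidiagonal $\{p_1+p_2=0\}$, it collapses to $D_k\ffNn_{2,2}=2\hffNn_{2,2}(-k,k)\hat\eta(-k)$. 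The identical vanishing of $\fFNn_{1,1;1,1}$ (as anticipated in the paragraph preceding the proposition) is then immediate, since $D_k\ffNn_{1,1}$ being deterministic forces $|D_k\ffNn_{1,1}|^2$ to coincide with its own expectation.

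For the estimate on $\fFNn_{2,2;1,1}$, the product $D_k\ffNn_{2,2}\cdot D_{-k}\ffNn_{1,1}=2\hffNn_{2,2}(-k,k)\hffNn_{1,1}(-k)\hat\eta(-k)$ is already a mean-zero element of $\SH_1$, so the centring in the definition of $\fFNn$ is trivial and $\fFNn_{2,2;1,1}$ is itself a first-chaos random variable. Combining the elementary identity $\|(-\gensy)^{-1/2}g\|^2=2\sum_k|c_k|^2/|k|^2$ valid for $g=\sum_k c_k\hat\eta(-k)\in\SH_1$ with the kernel bounds $|\hffNn_{2,2}(-k,k)|\lesssim\const/|k|^2$ and $|\hffNn_{1,1}(k)|\lesssim|\hat\psi(k)|$ (which follow from $|\nonlin_{k,-k}|\leq 1$ and $1+G_j\geq 1$), and recalling $\const^2=\hat\lambda^2/\log N$, I would obtain
\begin{equs}
\|(-\gensy)^{-1/2}\fFNn_{2,2;1,1}\|^2\lesssim\const^2\sum_{0<|k|\leq N}\frac{|\hat\psi(k)|^2}{|k|^2}\lesssim\frac{\|\psi\|_{H^1(\T^2)}^2}{\log N}\,.
\end{equs}

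For $\fFNn_{2,2;2,2}$, the Wick product rule \eqref{e:Wick} applied to two one-vertex sets yields $D_k\ffNn_{2,2}\cdot D_{-k}\ffNn_{2,2}-\Exp[\,\cdot\,]=4|\hffNn_{2,2}(-k,k)|^2\Wick{\hat\eta(-k)\hat\eta(k)}$, the constant piece $4|\hffNn_{2,2}(-k,k)|^2$ produced by Wick reordering being precisely cancelled by the expectation subtraction. Hence $\fFNn_{2,2;2,2}$ is a second-chaos random variable with kernel supported on $\{k_1+k_2=0\}$, and a direct computation of its $(-\gensy)^{-1/2}$-norm using $|k_{1:2}|^2=2|k|^2$ on that support, together with $|\hffNn_{2,2}(-k,k)|\lesssim\const/|k|^2$, gives
\begin{equs}
\|(-\gensy)^{-1/2}\fFNn_{2,2;2,2}\|^2\lesssim\sum_{0<|k|\leq N}\frac{|k|^4|\hffNn_{2,2}(-k,k)|^4}{|k|^2}\lesssim\const^4\sum_{0<|k|\leq N}\frac{1}{|k|^6}\lesssim\frac{1}{(\log N)^2}\,.
\end{equs}
No serious obstacle is expected beyond careful book-keeping of the Wick reordering in the last display, as none of the iterative diagrammatic machinery developed for the off-diagonal case $j_1>a_1,\ j_2>a_2$ in Section~\ref{sec:case>} is required in these purely diagonal cases.
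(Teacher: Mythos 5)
Your proof is correct and follows exactly the route the paper intends: the paper states that these bounds "follow from a direct computation, using $G_n\ge 0$ and the explicit expressions of the kernels \eqref{e:kernh1} and \eqref{e:kernh2}" and omits the details, which your computation supplies (including the correct reading of the first claim as $\fFNn_{1,1;1,1}\equiv 0$, consistent with the paragraph preceding the proposition). The chaos bookkeeping, the Wick cancellation in the $(2,2;2,2)$ case, and the final power counting in $\const$ all check out.
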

These bounds follow  from a direct computation, using $G_n\ge 0$ and the explicit expressions of the kernels \eqref{e:kernh1} and  \eqref{e:kernh2}; we omit the details.

The last two cases that can occur are
\begin{itemize}[noitemsep]
\item $j_1=a_1=1$ and $j_2>a_2$ (or $j_2=a_2=1$, $j_1>a_1$);
  
\item $j_1=a_1=2$ and $j_2>a_2$ (or $j_2=a_2=2$ and $j_1>a_1$).
\end{itemize}
We start with the former. In this case, the function $\ffNn_{a_1,j_1}$ that appears
in the definition of $\fFNn_{1,1; j_2,a_2}$ is given by \eqref{e:kernh1},
while $\ffNn_{a_2,j_2}$ is given as in \eqref{e:dnm}.  Proceeding as
in Section \ref{sec:case>}, one can derive a bound for
$\|(-\gensy)^{-\half}\fFNn_{1,1;a_2,j_2}\|^2$ as a sum over diagrams
$\gamma\in\Gamma[1,j_2]$, whose terms are similar to~\eqref{e:trem3}. The 
 diagrams $\gamma\in\Gamma[1,j_2]$ (see Figure \ref{fig:Feyn6} left for an example) have a simpler geometry than those in $\Gamma[j_1,j_2]$ with $j_1>a_1,j_2>a_2$, introduced in the previous section.
\begin{figure}[t]
\begin{center}
  \includegraphics[width=9cm]{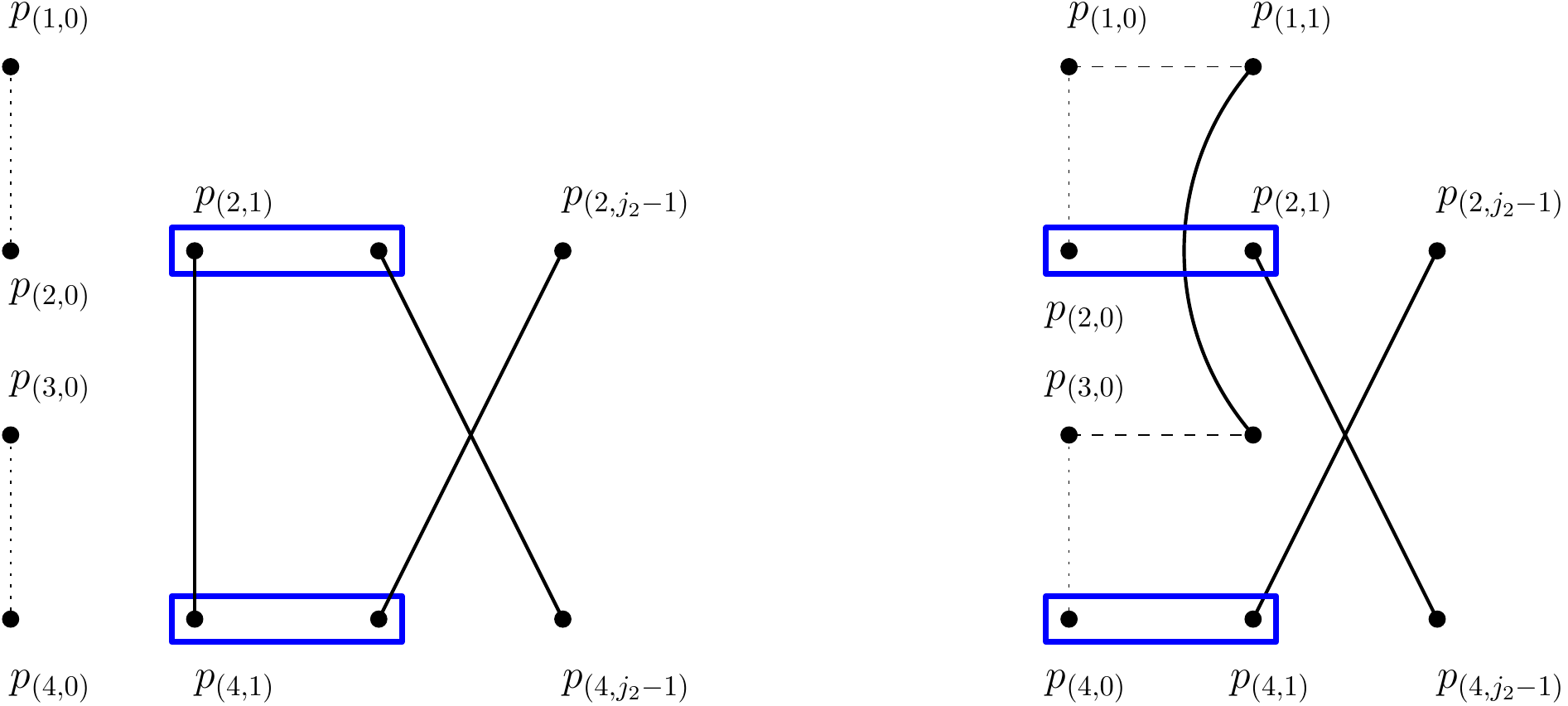}
\label{eq:h2kernel}
\caption{Left: A Feynman diagram in the case $j_1=a_1=1$ and $j_2=4$. In this example, the rectangles do not contain the vertices $(2,0),(4,0)$, that is, $\kappa=0$. 
%When performing the bounds, edges connected to rectangles need to be colored and directed as in 
%Section \ref{sec:case>} but this is not displayed here.
Right: a diagram in the case $j_1=a_1=2$ and $j_2=3$. The horizontal dashed lines in the first/third rows indicate that, because of the form of the kernel of $\ffNn_{2,2}$ (see \eqref{e:kernh2}), the two variables at the endpoints take opposite values.
}
\label{fig:Feyn6}
\end{center}
\end{figure}
Note first that the graph $G\in \mathcal G[j_1-1,j_2-1]$ (see \eqref{e:FkG}) in each $\gamma\in \Gamma[1,j_2]$
is empty, because $j_1-1=0$. As a
consequence, in the left drawing of Figure \ref{fig:Feyn6}, there are no thick lines between the
first and second (or between third and fourth) rows. This implies that
$V[G]=V_2=\{(2,i), i=1,\dots,j_2-1\}$. For the same
reason, the first and third row contain a single vertex, which are
$p_{(1,0)}$ and $p_{(3,0)}$, and there are rectangles only in
the second and fourth rows.  The analog of \eqref{e:trem3} is then the following upper bound on the contribution from $\gamma\in \Gamma[1,j_2]$ to $\|(-\gensy)^{-\half}\fFNn_{1,1;a_2,j_2}\|^2$
\begin{equs}
  \label{e:tremj1}
  &\frac1{\log N}\sum_{\substack{p_{(u,i)}\\ (u,i)\in V}} \frac{1}{\sum_{(u,a)\in V_2}|p_{(u,a)}|^2}\\
  &\qquad\qquad\times\prod_{((u,i),(v,j))\in \gamma}\mathds{1}_{p_{(u,i)}=-p_{(v,j)}} \prod_{u=2,4}\frac{ |p_{(u,0)}| |{\ft}^{u}(p_{(u)})|}{|p_{(u)}|^2}\prod_{u=1,3}|\ffNn_{1,1}(p_{(u,0)})|\,.
\end{equs}
Note that the  prefactor $(\log N)^{-2}$ of \eqref{e:trem3} is replaced here by $(\log N)^{-1}$,
because the functions $\ffNn_{1,1}$ have no logarithmic factor in
front.  
The direct analog of Proposition \ref{prop:bonn} is the following. 

\begin{proposition} \label{prop:bonn2} If \eqref{e:tremj1} converges to zero as $N\to\infty$ for every $\gamma\in \Gamma[1,j_2]$ with $j_2>a_2$, then \eqref{eq:dd2} (and hence \eqref{eq:ddaa}) holds with $j_1=1,j_2>a_2$.
\end{proposition}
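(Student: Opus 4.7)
The plan is to mirror the derivation of Proposition~\ref{prop:bonn}, exploiting the fact that when $j_1=a_1=1$ the combinatorics of the Wick expansion simplify dramatically.

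First I would observe that, since $\ffNn_{1,1}$ lives in the first Wiener chaos, \eqref{e:Mallia} gives $D_k\ffNn_{1,1}(\eta)=\hffNn_{1,1}(k)$, a deterministic constant. Moreover $j_2\ge 2$ (as $j_2>a_2\ge 1$), so that $D_{-k}\ffNn_{a_2,j_2}(\eta)$ lies in the $(j_2-1)$-th homogeneous chaos and hence has zero mean. The subtraction in \eqref{e:cac} is therefore automatic and
\begin{equation*}
\fFNn_{1,1;a_2,j_2}\;=\;\sum_k|k|^2\,\hffNn_{1,1}(k)\,D_{-k}\ffNn_{a_2,j_2}(\eta).
\end{equation*}

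Next I would compute $\|(-\gensy)^{-1/2}\fFNn_{1,1;a_2,j_2}\|^2$ via the Wiener chaos isometry, together with the identity $(-\gensy)^{-1}\Wick{\hat\eta(q_1)\cdots\hat\eta(q_m)}=\tfrac{2}{\sum_i|q_i|^2}\Wick{\hat\eta(q_1)\cdots\hat\eta(q_m)}$ which generates the energy denominator. Expanding the two resulting copies of $D_{-k}\ffNn_{a_2,j_2}$ through \eqref{eq:milleindici} and contracting the two associated Wick products by means of \eqref{e:Wick}, one arrives at a sum indexed by a pair of permutations $\sigma^{(2)},\sigma^{(4)}\in S_{j_2}$ (encoding the positions of the rectangles in rows $2$ and $4$) and by a pairing $P$ of the $j_2-1$ free vertices $V_2$ of the first copy with the $j_2-1$ free vertices $V_4$ of the second. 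Since $V_1=\emptyset$, the graph $G\in\mathcal G[0,j_2-1]$ is necessarily empty, $V[G]=V_2$ is automatically nonempty, and these data are precisely the Feynman diagrams in $\Gamma[1,j_2]$ depicted in the left panel of Figure~\ref{fig:Feyn6}.

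Extracting from each of the two copies of $\hffNn_{a_2,j_2}$ only the outermost factor $2\hat\lambda/\sqrt{\log N}$ of the recursion \eqref{eq:milleindici} (leaving everything else inside a $\ft_{a_2,j_2,\sigma}$ as in \eqref{eq:deft}) yields the overall prefactor $(\log N)^{-1}$ of \eqref{e:tremj1}, in contrast to the $(\log N)^{-2}$ of \eqref{e:trem3} which would arise from four such extractions. Bounding further $|\nonlin_{\ell,m}|\le\mathds{1}_{|\ell|,|m|\le N}$, discarding the positive $(1+G_{n+2-j})$ factors in the denominators of \eqref{eq:milleindici}, and using $G_{n+1}\ge 0$ in \eqref{e:kernh1} to control $|\hffNn_{1,1}(k)|\lesssim|\hat\psi(k)|$, each term in the finite, $N$-independent sum is dominated by an absolute constant times the quantity \eqref{e:tremj1} for the corresponding $\gamma\in\Gamma[1,j_2]$. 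Invoking the hypothesis that every such \eqref{e:tremj1} vanishes as $N\to\infty$ then delivers \eqref{eq:dd2}, and hence \eqref{eq:ddaa}, in the regime $j_1=a_1=1$, $j_2>a_2$. The main issue to watch for is the combinatorial bookkeeping, that is, matching the permutations $\sigma^{(2)},\sigma^{(4)}$ and the pairing $P$ with the diagrams of $\Gamma[1,j_2]$ and tracking the $1/\sqrt{\log N}$ factors; this is essentially trivial here because rows $1$ and $3$ contain a single vertex and admit no rectangle. The genuinely analytic part, namely the diagram-wise estimate \eqref{e:tremj1}, is independent of the present statement and is the object of the subsequent sections.
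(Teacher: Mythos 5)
Your argument is correct and follows essentially the same route as the paper, which obtains Proposition~\ref{prop:bonn2} as the direct analogue of Proposition~\ref{prop:bonn}: with $j_1=a_1=1$ the graph $G\in\mathcal G[0,j_2-1]$ is empty, $V[G]=V_2\neq\emptyset$, rows $1$ and $3$ reduce to the single vertices $p_{(1,0)},p_{(3,0)}$ with no rectangles, and only the two copies of $\hffNn_{a_2,j_2}$ contribute factors of $\hat\lambda/\sqrt{\log N}$, which is exactly the origin of the $(\log N)^{-1}$ prefactor in \eqref{e:tremj1}. The only cosmetic deviation is your final bound $|\hffNn_{1,1}(k)|\lesssim|\hat\psi(k)|$, whereas \eqref{e:tremj1} retains the kernel $|\ffNn_{1,1}(p_{(u,0)})|$ itself; this is harmless and does not affect the reduction.
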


In order to bound the sum in~\eqref{e:tremj1}, again one has to
distinguish diagrams $\gamma$ according to the number $\kappa\in\{0,2\} $ of
rows with index $u\in \{2,4\}$ such that the rectangle includes the
vertex $(u,0)$. As a result, we have the next proposition. %In turn, the sum in \eqref{e:tremj1} is bounded as follows:

\begin{proposition}
  \label{prop:jpiccolo1}
  If $j_1=a_1=1 $ and $j_2>a_2$, for diagrams $\gamma$ with  $\kappa=0$, \eqref{e:tremj1} is upper bounded by 
  \begin{equ}
    \label{eq:ev}
    \frac1{\log N}\|(-\gensy)^{1/2}\ffNn_{a_2,j_2-1}\|^2 \|\psi\|_{H^1(\mathbb T^2)}^2.
  \end{equ}
  If instead $\kappa=2$, then~\eqref{e:tremj1} is bounded by 
  % the value of the diagram is bounded  (according to the geometry of the graph) either by \eqref{eq:ev} or by 
  \begin{equ}
   \|\psi\|_{H^1(\mathbb T^2)}^2\times \max\left[\frac1{\log N}\|(-\gensy)^{1/2}\ffNn_{a_2,j_2-1}\|^2 ,\|\cS^{1/2}\ffNn_{a_2,j_2-1}\|^2\right].
  \end{equ}
\end{proposition}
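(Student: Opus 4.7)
The plan is as follows. Feynman diagrams in $\Gamma[1,j_2]$ have a simple geometry: since $j_1-1=0$, the graph $G$ is empty so $V[G]=V_2$, $V[\tilde G]=V_4$, rows $1$ and $3$ contain a single vertex each with no rectangle, and the bijection $\pi\in\mathcal P[G]$ identifies $p_{(2,i)}$ with $-p_{(4,\pi(i))}$ for $i\ge 1$. After bounding $|\hffNn_{1,1}(p)|\le \nueff|\hat\psi(p)|$ (using $G_{n+1}\ge 0$), discarding the positive factors $[1+G_n(\Ll(\cdot))]^{-1}$ from the $\ft^{(u)}$ and replacing $\nonlin$ by the indicator $\mathds{1}_{|\cdot|\le N}$, the free variables in \eqref{e:tremj1} are $p_{(1,0)}, p_{(3,0)}$ and $q_i\eqdef p_{(2,i)}$, $i=1,\dots,j_2-1$, with $p_{(2,0)}=-p_{(1,0)}$ and $p_{(4,0)}=-p_{(3,0)}$. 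Since $\ft^{(2)}$ depends only on $p_{(1,0)}$ (through $p_{(2,0)}$) and the $q_i$'s, one Fubinises in $p_{(1,0)},p_{(3,0)}$ and applies Cauchy-Schwarz in each of them using the factor $|\hat\psi(p_{(u,0)})||p_{(u,0)}|$ to pull out $\|\psi\|_{H^1(\mathbb T^2)}^2$, then Cauchy-Schwarz in the $q$'s decouples rows $2$ and $4$; everything reduces to estimating
\begin{equ}
R^{(u)}\eqdef \sum_{p_{(u,0)},\,q_1,\dots,q_{j_2-1}}\frac{|\ft^{(u)}(p_{(u)})|^2}{|p_{(u)}|^4\sum_{i=1}^{j_2-1}|q_i|^2},\qquad u\in\{2,4\}.
\end{equ}

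For $\kappa=0$, the rectangle in row $u$ contains two variables $q_\alpha,q_\beta$ with $\alpha,\beta\ge 1$, so $\ft^{(2)}(p_{(2)})=|q_\alpha+q_\beta||\hffNn_{a_2,j_2-1}(q_\alpha+q_\beta,p_{(1,0)},q_{\text{other}})|\mathds{1}_{|q_\alpha|,|q_\beta|\le N}$. Change variables to $k_1=q_\alpha+q_\beta$ (keeping $k_2=p_{(1,0)}$ and $k_{3:j_2-1}=q_{\text{other}}$ as the remaining arguments of $\hffNn_{a_2,j_2-1}$), use $|p_{(2)}|^4\ge \tfrac12|p_{(2)}|^2|k_{1:j_2-1}|^2$, and sum out $q_\alpha$ via the standard polar bound
\begin{equ}
\sum_{q_\alpha:\,|q_\alpha|,|k_1-q_\alpha|\le N}\frac{1}{(|q_\alpha|^2+|k_1-q_\alpha|^2+C)^2}\lesssim \frac{1}{|k_1|^2+C}.
\end{equ}
This yields $R^{(u)}\lesssim \|\ffNn_{a_2,j_2-1}\|^2\lesssim \|(-\gensy)^{1/2}\ffNn_{a_2,j_2-1}\|^2$, and combining with the Cauchy-Schwarz step above gives \eqref{eq:ev}.

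For $\kappa=2$ the rectangle in row $2$ contains $p_{(2,0)}=-p_{(1,0)}$ together with one $q_\gamma$, so the substitution $k_1=q_\gamma-p_{(1,0)}$ entangles $p_{(1,0)}$ with the internal $k_1$ through the constraint $|q_\gamma|=|k_1+p_{(1,0)}|\le N$; after this change $\ft^{(2)}=|k_1||\hffNn_{a_2,j_2-1}(k_1,q_{\text{other}})|$. One splits $R^{(u)}$ into the regime $|k_1|\le|q_\gamma|$ and its complement. In the first, $|p_{(1,0)}|\le N$ together with $|p_{(2)}|^4\ge|p_{(2)}|^2|k_{1:j_2-1}|^2/2$ allows one to sum out $p_{(1,0)}$ against $1/(|k_1+p_{(1,0)}|^2+C)^2$ producing a logarithm $\log N$ that cancels against the prefactor $1/\log N$, and gives a contribution bounded by $(\log N)^{-1}\|(-\gensy)^{1/2}\ffNn_{a_2,j_2-1}\|^2\|\psi\|_{H^1(\mathbb T^2)}^2$. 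In the complementary regime one bounds $1/|k_1+p_{(1,0)}|\lesssim 1/|q_\gamma|$ and, after summing out $p_{(1,0)}$ against $|\hat\psi(p_{(1,0)})|^2|p_{(1,0)}|^2$ and $q_\gamma$, recognises the resulting Fourier multiplier acting on $|\hffNn_{a_2,j_2-1}|^2$ as being of the form $|k_1|^2/|k_r|$ for some index $r$, that is exactly the multiplier $\sigma_{j_2-1}$ defining $\cS$ in \eqref{e:S}; this produces $\|\cS^{1/2}\ffNn_{a_2,j_2-1}\|^2\|\psi\|_{H^1(\mathbb T^2)}^2$. Taking the maximum of the two regimes yields the claim.

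The main obstacle is the $\kappa=2$ case, because here the $|\hat\psi(p_{(1,0)})|$ weight cannot be uniformly absorbed into an $H^1$-norm without losing a factor of $\log N$, and the appearance of $\cS$ is precisely the trace of the entanglement between the external variable $p_{(1,0)}$ and the internal cutoff $|q_\gamma|\le N$. The strategy parallels, but is significantly simpler than, the one behind Proposition \ref{prop:K0} developed in Section \ref{sec:tourdeforce}, because the emptiness of $G$ means that no thick edges link rows $2$ and $4$, and only the dashed edges of $P$ contribute non-trivial contractions.
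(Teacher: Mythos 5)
Your global strategy (bound $|\hffNn_{1,1}(p)|$ by $\nueff|\hat\psi(p)|$, Cauchy--Schwarz in the $P$-contracted variables to decouple rows $2$ and $4$, then Cauchy--Schwarz in $p_{(1,0)},p_{(3,0)}$ against the weight $|p_{(u,0)}||\hat\psi(p_{(u,0)})|$ to extract $\|\psi\|_{H^1(\T^2)}^2$ and reduce everything to the quantities $R^{(u)}$) is sound, and it is genuinely different from the intended route, which runs the coloured/directed-edge machinery of Section~\ref{sec:tourdeforce} on the simpler diagrams of $\Gamma[1,j_2]$. Your $\kappa=0$ estimate of $R^{(u)}$ is correct: the displayed convolution bound with $C=|p_{(2,0)}|^2+\sum_{i\neq\alpha,\beta}|q_i|^2$ gives $R^{(u)}\lesssim\sum_k|k_1|^2|\hffNn_{a_2,j_2-1}(k)|^2/|k_{1:j_2-1}|^2\lesssim\|(-\gensy)^{1/2}\ffNn_{a_2,j_2-1}\|^2$, hence \eqref{eq:ev}. (The intermediate inequality $|p_{(2)}|^4\ge\tfrac12|p_{(2)}|^2|k_{1:j_2-1}|^2$ is redundant once you use the squared convolution bound; had you really spent one power of $|p_{(2)}|^2$ that way, the $q_\alpha$-sum against the remaining single power would produce a spurious $\log N$.)

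The $\kappa=2$ part, as written, does not work. First, the claim that the $p_{(1,0)}$-sum "produces a logarithm $\log N$ that cancels against the prefactor $1/\log N$" while the contribution is still "bounded by $(\log N)^{-1}\|(-\gensy)^{1/2}\ffNn_{a_2,j_2-1}\|^2\|\psi\|_{H^1(\T^2)}^2$" is self-contradictory: if the logarithm is produced, the prefactor is consumed and no $(\log N)^{-1}$ survives — and since $\|(-\gensy)^{1/2}\ffNn_{a_2,j_2-1}\|$ does not vanish, such a term would be useless downstream. Second, the emergence of the $\cS$-multiplier in the complementary regime is asserted rather than derived: after your change of variables, $q_\gamma$ is \emph{not} one of the arguments $k_{1:j_2-1}$ of $\hffNn_{a_2,j_2-1}$, so it is not explained how the factor $1/|q_\gamma|$ is transferred onto an argument $k_r$ as required by \eqref{e:S} (in the paper this transfer comes from a $P$-edge endpoint lying \emph{outside} the rectangles, cf.\ \eqref{e:wye}--\eqref{e:wdye}, a mechanism you do not reproduce). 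The good news is that neither the regime split nor $\cS$ is needed within your own reduction: for $\kappa=2$ the rectangle pairs $p_{(2,0)}=-p_{(1,0)}$ with $q_\gamma$, and the same squared convolution bound, now applied to the sum over $q_\gamma$ (equivalently $p_{(1,0)}$) at fixed $k_1=p_{(2,0)}+q_\gamma$,
\begin{equ}
\sum_{q_\gamma}\frac{1}{\big(|q_\gamma-k_1|^2+|q_\gamma|^2+D\big)^2}\lesssim\frac{1}{|k_1|^2+D}\,,\qquad D=\sum_{i\neq\gamma}|q_i|^2\,,
\end{equ}
again yields $R^{(u)}\lesssim\|(-\gensy)^{1/2}\ffNn_{a_2,j_2-1}\|^2$ and hence the bound $\tfrac{1}{\log N}\|(-\gensy)^{1/2}\ffNn_{a_2,j_2-1}\|^2\|\psi\|_{H^1(\T^2)}^2$, which is the first entry of the maximum in the statement and therefore suffices. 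You should replace the two-regime discussion by this direct estimate, or else actually derive the $\cS$-term along the lines of Section~\ref{sec:tourdeforce}.
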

% \fabioText{the first case corresponds to 1 conn component, and the second to 2, but at this stage we have not yet defined connected components and also in the previous proposition we have not specified that}

Finally, we consider the case where $j_1=a_1=2$ and  $j_2> a_2$.
In this case, the function $\ffNn_{2,2}$ that appears in the definition of 
$\fFNn_{2,2; j_2,a_2}$ is given \eqref{e:kernh2} and  $\ffNn_{a_2,j_2}$ is as in \eqref{e:dnm}. 
Once more, we bound $\|(-\gensy)^{-\half}\fFNn_{2,2;a_2,j_2}\|^2$ as a sum over diagrams
$\gamma\in\Gamma[2,j_2]$, an example of which can be seen in the right drawing of  Fig. \ref{fig:Feyn6}. 
The first and third lines contain just two vertices, $p_{(1,0)},p_{(1,1)}$ and $p_{(3,0)},p_{(3,1)}$ respectively. 
Because of the indicator function in \eqref{e:kernh2}, one has  $p_{(1,0)}=-p_{(1,1)}$ and $p_{(3,0)}=-p_{(3,1)}$.
The analogue of \eqref{e:trem3} and \eqref{e:tremj1} is then the following 
%upper bound on the contribution from $\gamma\in \Gamma[2,j_2]$ to $\|(-\gensy)^{-\half}\fFNn_{2,2;a_2,j_2}\|^2$:
\begin{equs}
  \label{e:tremj2}
  &\frac1{\log N}\sum_{\substack{p_{(u,i)}\\ (u,i)\in V}} \frac{1}{\sum_{(u,a)\in V_2}|p_{(u,a)}|^2}\\
  &\qquad\qquad\times\prod_{((u,i),(v,j))\in \gamma}\mathds{1}_{p_{(u,i)}=-p_{(v,j)}} \prod_{u=2,4}\frac{ |p_{(u,0)}| |{\ft}^{u}(p_{(u)})|}{|p_{(u)}|^2}\prod_{u=1,3}|\ffNn_{2,2}(p_{(u,0)},p_{(u,1)})|\,
\end{equs}
and the direct analog of Proposition \ref{prop:bonn}  and \ref{prop:bonn2} holds. 

\begin{proposition} \label{prop:bonn3} If \eqref{e:tremj2} converges to zero as $N\to\infty$ for every $\gamma\in \Gamma[2,j_2]$ with $j_2>a_2$, then \eqref{eq:dd2} (and hence \eqref{eq:ddaa}) holds with $j_1=2,j_2>a_2$.
\end{proposition}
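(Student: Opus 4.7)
The plan is to mirror the derivations that justified Propositions \ref{prop:bonn} and \ref{prop:bonn2}, adapted to the case $j_1=a_1=2$ where the kernel $\hffNn_{2,2}$ takes the explicit form \eqref{e:kernh2}, supported on the diagonal $k_1+k_2=0$. The whole argument is a reduction from the norm estimate $\|(-\gensy)^{-1/2}\fFNn_{2,2;a_2,j_2}\|$ to a finite, $N$-independent sum over Feynman diagrams $\gamma\in \Gamma[2,j_2]$.

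First I would apply the Malliavin derivative formula \eqref{e:Mallia} to $\ffNn_{2,2}$: owing to the indicator $\mathds{1}_{k_1=-k_2}$ in \eqref{e:kernh2}, the derivative $D_k\ffNn_{2,2}(\eta)$ is a first-chaos element concentrated on the single Wiener variable $\hat\eta(-k)$, multiplied by a fully explicit deterministic factor depending on $k,N$ and $G_n$. The partner factor $D_{-k}\ffNn_{a_2,j_2}(\eta)$ is then expanded as in \eqref{e:dnm}. Multiplying the two derivatives via the Wick product formula \eqref{e:Wick} and subtracting the expectation, as in \eqref{e:cac}, produces a decomposition of $\fFN_{2,2;a_2,j_2;k}$ indexed by graphs $G\in \mathcal G[1,j_2-1]$ with $V[G]\ne\emptyset$ (the centering removes precisely the unique $V[G]=\emptyset$ term). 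The cardinality of this index set is bounded independently of $N$.

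Second, I would compute $\|(-\gensy)^{-1/2}\fFN\|^2$ by pairing the Wick monomials chaos-by-chaos, which introduces primed momentum variables and a bijection $P\in\mathcal P[G]$, exactly as in \eqref{e:drd}--\eqref{e:tremendissima}. Taking absolute values, bounding $|\nonlin_{k,-k}|\le \mathds{1}_{|k|\le N}$ and dropping the positive factors $1+G_{n+2-j}$, gives an upper bound that is a finite, $N$-independent sum over $\gamma\in\Gamma[2,j_2]$, each summand being of the form \eqref{e:tremj2}. The two additional constraints $p_{(1,0)}+p_{(1,1)}=0$ and $p_{(3,0)}+p_{(3,1)}=0$ coming from the diagonal indicator in \eqref{e:kernh2} translate precisely into the horizontal dashed edges in the first and third rows of the diagrams drawn in Figure~\ref{fig:Feyn6} (right), so the rows headed by $u=1,3$ carry the factor $|\ffNn_{2,2}(p_{(u,0)},p_{(u,1)})|$ appearing in \eqref{e:tremj2} (rather than $|\ffNn_{1,1}|$ as in \eqref{e:tremj1}).

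The actual work is essentially bookkeeping, and the only delicate point is to keep track of how the extra diagonal constraint from \eqref{e:kernh2} interacts with the combinatorial structure of $G$, $\tilde G$ and $P$ so that the final diagrammatic sum is indeed indexed by $\Gamma[2,j_2]$. Once this is verified, the conclusion follows immediately: the hypothesis that \eqref{e:tremj2} tends to $0$ for each fixed $\gamma$, together with the uniform bound on $|\Gamma[2,j_2]|$, yields $\|(-\gensy)^{-1/2}\fFN_{G}\|\to 0$ and hence \eqref{eq:dd2} in the regime $j_1=2,\,j_2>a_2$.
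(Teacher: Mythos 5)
Your proposal is correct and follows essentially the same route as the paper, which states Proposition~\ref{prop:bonn3} as the "direct analog" of Propositions~\ref{prop:bonn} and~\ref{prop:bonn2} and relies on the reduction carried out in detail in Section~\ref{sec:case>} (Wick expansion of $D_k\ffNn_{2,2}\,D_{-k}\ffNn_{a_2,j_2}$ over graphs $G\in\mathcal G[1,j_2-1]$ with $V[G]\ne\emptyset$, pairing via $P\in\mathcal P[G]$, and crude bounds on $\nonlin$ and the positive factors $1+G_{n+2-j}$). Your handling of the diagonal constraint from \eqref{e:kernh2} as the horizontal dashed edges of Figure~\ref{fig:Feyn6} matches the paper's description, so nothing further is needed.
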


Finally, the sum in \eqref{e:tremj2} admits the following bounds.

\begin{proposition}
  \label{prop:jpiccolo2}
  For eacy $\gamma
  \in \Gamma[2,j_2], j_2>a_2$ the sum in \eqref{e:tremj2} is bounded by 
  \begin{equ}
    \frac{1}{\log N}\|(-\gensy)^{1/2}\ffNn_{a_2,j_2-1}\|^2\,.
  \end{equ}
\end{proposition}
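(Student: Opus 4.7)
The strategy is to exploit the explicit form \eqref{e:kernh2} of the kernel of $\ffNn_{2,2}$, which is supported on the diagonal $k_2 = -k_1$ and satisfies
\begin{equs}
|\hffNn_{2,2}(k, -k)|\lesssim \frac{\lambda_N}{|k|^2}\mathds{1}_{|k|\le N}\,.
\end{equs}
This constraint collapses the two row-$1$ variables $p_{(1,0)}, p_{(1,1)}$ of any $\gamma \in \Gamma[2, j_2]$ into a single independent momentum, and similarly for row $3$. After combining these identifications with the dotted-edge constraints $p_{(2,0)} = -p_{(1,0)}$ and $p_{(4,0)} = -p_{(3,0)}$, one eliminates all variables of rows $1$ and $3$ in favor of $p_{(2,0)}$ and $p_{(4,0)}$, while the $|\ffNn_{2,2}|$-factors introduce an extra weight $\lambda_N^2/(|p_{(2,0)}|^2|p_{(4,0)}|^2)$, together with indicators $\mathds{1}_{|p_{(2,0)}|, |p_{(4,0)}|\le N}$.

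Once the reduction is performed, \eqref{e:tremj2} becomes a sum over the row-$2$ and row-$4$ momenta only, whose structure mimics the diagrammatic bound from the proof of Proposition~\ref{prop:K0} at $\kappa = 0$: the two rows are paired via the edges of $G$, $\tilde G$, and $P$, and each carries a factor $|\ft^u(p_{(u)})| = |\ft_{a_2, j_2, \sigma^{(u)}}(p_{(u)})|$ involving the kernel of $\hffNn_{a_2, j_2-1}$, together with the denominators $|p_{(u)}|^2$. The plan is to apply the same Cauchy--Schwarz machinery developed for \eqref{e:trem3} to these two surviving rows: this will produce a bound by a product of two factors of $\|(-\gensy)^{1/2}\ffNn_{a_2, j_2-1}\|$, each squared on its own copy of the diagram.

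The extra factors originating from rows $1$ and $3$ contribute, after summing out $p_{(2,0)}$ (equivalently $p_{(1,0)}$) and $p_{(4,0)}$ using the bound
\begin{equs}
\sum_{1\le |p|\le N} \frac{1}{|p|^4} \lesssim 1\,,
\end{equs}
an overall prefactor of order $\lambda_N^2 = \hat\lambda^2/\log N$. Combined with the $1/\log N$ already present in \eqref{e:tremj2}, this produces a bound of order $(\log N)^{-2}\|(-\gensy)^{1/2}\ffNn_{a_2, j_2 - 1}\|^2$, which is strictly stronger than (and in particular implies) the stated estimate $(\log N)^{-1}\|(-\gensy)^{1/2}\ffNn_{a_2, j_2-1}\|^2$. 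Since the number of diagrams $\gamma \in \Gamma[2, j_2]$ is bounded independently of $N$, summing over them preserves the bound.

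The main technical point is the correct ordering of the Cauchy--Schwarz splittings in the presence of $P$-edges that couple row $1$ to row $3$ (via the edge $((1,1), (3,1))$) or row $1$ to row $4$: such edges tie the surviving variable $p_{(1,0)}$ to a momentum in the second half of the diagram, preventing a trivial factorization. However, because rows $1$ and $3$ have collapsed to a single variable each, these cross-couplings reduce to simple identifications of the form $p_{(2,0)} = \pm p_{(4,0)}$ or $p_{(4, j)} = \pm p_{(2,0)}$, which can be handled by summing $p_{(2,0)}$ against the $\lambda_N^2/|p_{(2,0)}|^4$ weight \emph{before} applying Cauchy--Schwarz on the remaining row-$2$ and row-$4$ variables; this is simpler than the analogous step in the $\kappa > 0$ analysis of Proposition~\ref{prop:K0} and does not require the auxiliary operator $\cS$ of \eqref{e:S}.
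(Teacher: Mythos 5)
Your overall strategy --- exploit the diagonal, pointwise-decaying kernel \eqref{e:kernh2} to collapse rows $1$ and $3$ onto the special momenta, pick up the weight $\lambda_N^2/(|p_{(2,0)}|^2|p_{(4,0)}|^2)$, and then run the Cauchy--Schwarz machinery of Proposition~\ref{prop:K0} on the two surviving rows --- is exactly what the paper intends; it gives no separate proof, asserting only that the argument is ``very similar but simpler'' than that of Proposition~\ref{prop:K0}. However, two steps of your sketch are not justified as written. First, you invoke only the $\kappa=0$ machinery for rows $2$ and $4$, but the rectangles of those rows may contain the vertices $(2,0)$ and $(4,0)$ (the analogue of $\kappa=2$, and the second case of Proposition~\ref{prop:jpiccolo1}); this happens for instance whenever $j_2=2$, where the rectangle necessarily contains $(2,0)$. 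In that configuration $p_{(2,0)}$ enters $\ft^2$ only through the sum $p_{(2,0)}+p_{(2,i_2^1)}$, the blue-edge estimate \eqref{e:yde} strips the kernel of its dependence on $p_{(2,0)}$, and one is left with a bare sum over $p_{(2,0)}$ with no kernel to pair against --- precisely the point where Proposition~\ref{prop:jpiccolo1} must either lose a logarithm via \eqref{e:losslog} or resort to $\cS$. This case must be treated separately; it is rescued here only by the extra decay $|p_{(2,0)}|^{-2}|p_{(4,0)}|^{-2}$ from the two copies of \eqref{e:kernh2}, not by the $\kappa=0$ estimate.

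Second, the claimed improvement to $(\log N)^{-2}$ does not follow from ``$\sum_{|p|\le N}|p|^{-4}\lesssim 1$'': the actual weight on the special momenta is $\lambda_N^2\,|p_{(2,0)}||p_{(4,0)}|/(|p_{(2,0)}|^2|p_{(4,0)}|^2)$ times whatever residual powers of $|p_{(u,0)}|^{-1}$ the directed-edge estimates produce, and in the $\kappa=0$-type configuration the surviving factors $F_u$ still depend on $p_{(u,0)}$, so one must apply Cauchy--Schwarz in these variables rather than sum a fixed weight. The \emph{stated} bound $(\log N)^{-1}\|(-\gensy)^{1/2}\ffNn_{a_2,j_2-1}\|^2$ is robust precisely because the prefactor $(\log N)^{-1}$ of \eqref{e:tremj2} combined with $\lambda_N^2\sim(\log N)^{-1}$ leaves room to lose one logarithm in the momentum sums (e.g.\ a factor $\sum_{|p|\le N}|p|^{-2}$ in the $\kappa=2$-type case); the sharper $(\log N)^{-2}$ would require verifying that no such logarithm is lost in any diagram, which your sketch does not do. Neither issue invalidates the conclusion, but both need to be filled in for the argument to stand.
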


The proof of Propositions \ref{prop:jpiccolo1} and \ref{prop:jpiccolo2} is very similar 
(but simpler, because the Feynman graphs involved are simpler) 
to that of Proposition \ref{prop:K0}, 
so that in Section \ref{sec:tourdeforce} we will only focus on the latter. 

\subsubsection{Proof of \eqref{e:Var}}
\label{sec:subsub2}

As argued at the beginning of Section \ref{sec:Feynman}, it is enough to prove
\eqref{eq:ddaa} for every $a_1,a_2\in\{1,2\}$ and $a_1\le j_1\le n,\,a_2\le j _2\le n$. 
The cases $j_1=a_1$ and $j_2=a_2$ are covered by Proposition \ref{prop:gialla} 
since $\psi\in H^1(\T^2)$. 
When instead $j_1>a_1, j_2>a_2$, by Proposition \ref{prop:bonn}, it suffices to show that
\eqref{e:trem3} tends to zero as $N\to\infty$ for every $\gamma\in\Gamma[j_1,j_2]$, 
which in turn is a consequence of the estimates in Proposition \ref{prop:K0}. 
In fact, if $\gamma\in\Gamma_\kappa[j_1,j_2],\kappa=0,2$ we note that the two
norms at the right hand side of~\eqref{kappa02} are bounded uniformly in $N$ in
view of Proposition \ref{l:PrelimBounds} (recall that $\ffNn_a$ is
either $\fbNn$ or $\fhNn$, according to the value of $a$) while the
prefactor tends to zero.  As for the case $\kappa=4$, we use the
additional information provided by Proposition \ref{prop:Spiccolo}.

At last, for $j_1=a_1=1$ (resp. $j_1=a_1=2$) and $j_2>a_2$, by Proposition \ref{prop:bonn2} 
(resp. Proposition \ref{prop:bonn3}), one has to prove that
\eqref{e:tremj1} (resp. \eqref{e:tremj2}) tends to zero as $N\to\infty$
for every $\gamma\in \Gamma[1,j_2] $ (resp. $\gamma\in \Gamma[2,j_2]$). 
This time we apply the bounds in Proposition \ref{prop:jpiccolo1} 
(resp. Proposition \ref{prop:jpiccolo2}) and argue as above. 
%again using the fact that  $\|(-\gensy)^{1/2}\ffNn_{a_2,j_2-1}\|=O(1)$,
%$ \|\cS^{1/2}\ffNn_{a_2,j_2-1}\|=o(1)$ and the function $\psi$ is
%$N$-independent and has finite $H^1(\mathbb T^2)$ norm.

This
concludes the proof of \eqref{e:Var}, assuming Proposition 
\ref{prop:K0}, Proposition~\ref{prop:jpiccolo1} and Proposition~\ref{prop:jpiccolo2}, 
the first of which is proven in the next section.

\subsection{Proof of Proposition \ref{prop:K0}}
% ********************
\label{sec:tourdeforce}

The main source of difficulty in bounding~\eqref{e:trem3} is represented by the factors $|p_{(u,0)}|$, $u=1,\dots, 4$, 
and the summed variables in the expression of $\ft^u$ in~\eqref{eq:deft}, 
which need to be counterbalanced by the denominators in~\eqref{e:trem3}. 
For  $u=1,\dots,4$ let $(u,i_u^1),\,(u,i_u^2)$ be the vertices in the rectangle of row $u$ and 
define the triplet of variables $x_{(u)}$ as 
\begin{equ}[e:xu]
x_{(u)}\eqdef (p_{(u,0)},p_{(u,i_u^1)},p_{(u,i_u^2)})\,.
\end{equ}
At first, we bound $|p_{(u)}|^2\geq |x_{(u)}|^2$ so that~\eqref{e:trem3} is bounded from above by
\begin{equs}[e:trem4]
      \frac1{(\log N)^2}&\sum_{\substack{p_{\bv}\\ \bv\in V}}\frac{1}{\sum_{\bv\in V[G]}|p_{\bv}|^2}
      \prod_{((u,i),(v,j))\in
        \gamma}\mathds{1}_{p_{(u,i)}=-p_{(v,j)}}\\
        &\times
      \prod_{u=1}^4\frac{ |p_{(u,0)}|
        |p_{(u,i^1_u)}+p_{(u,i^2_u)}||{\fs}^{u}(p_{(u,i^1_u)}+p_{(u,i^2_u)},p_{(u)\setminus\mathcal
            V_u})|}{|x_{(u)}|^2}\mathds{1}_{|p_{(u,i^1_u)}|,|p_{(u,i^2_u)}|\le
          N}
\end{equs}
where (see \eqref{eq:deft} and \eqref{e:ti})
$\fs^{u}\eqdef\ffNn_{a^u,j^u-1}$ and $\cV_u\eqdef \{(u,i^1_u),\,(u,i^2_u)\}$.

At this point we look at the edges of $\gamma$ (see \eqref{e:Feynman}) and split them into three disjoint families:
\begin{itemize}
\item the ``undirected edges'' $U$ with vertices $V_U$,  whose endpoints \emph{do not contain} 
variables from any of the triplets $x_{(u)},\,u=1,\dots,4$,
\item the two ``special edges'' $S$ with vertices $V_S$ that connect variables $p_{(u,0)}$ and $p_{(v,0)}$, $u\ne v$,
\item the ``directed edges'' $D$ with vertices $V_D$, that are all the others. 
\end{itemize}
In order to control~\eqref{e:trem4}, we will bound the sum over each of the variables $p_{(u,i)}$, 
first those that are endpoints in $U$, then $D$ and finally $S$. 
We will separately treat the cases $\kappa=0,2,4$ and we proceed in increasing order of difficulty, starting with $\kappa=0$. We refer to Appendix~\ref{sec:example} for an illustrative example of the estimates to come.
\medskip

\noindent{\bf The case $\kappa=0$}
\medskip 

\noindent In this case, the ratio $({\sum_{\bv\in V[G]}|p_{\bv}|^2})^{-1}$ and the indicator functions play no role: 
since all the variables involved are different from $0$, we simply bound them by $1$, thus reducing~\eqref{e:trem4} to
\begin{equs}[e:trem5]
      \frac1{(\log N)^2}&\sum_{\substack{p_\bv\\ \bv\in V}}
      \prod_{((u,i),(v,j))\in
        \gamma}\mathds{1}_{p_{(u,i)}=-p_{(v,j)}}\\
        &\times
      \prod_{u=1}^4\frac{ |p_{(u,0)}|
        |p_{(u,a^1_u)}+p_{(u,a^2_u)}||{\fs}^{u}(p_{(u,i^1_u)}+p_{(u,i^2_u)},p_{(u)\setminus\mathcal
            V_u})|}{|x_{(u)}|^2}.
\end{equs}
Now, the variables $p_{(u,i)}$ and
$p_{(v,j)}$, $u\neq v$, whose indices are vertices of an edge in $U$, appear only in $\fs^u,\fs^v$ so that 
we can bound the corresponding sum simply using Cauchy Schwarz, i.e. 
\begin{equs}
  \label{e:semplice}
\sum_{\substack{p_{\bv}\\\bv\in V_U}}\prod_{((u,i),(v,j))\in U}\mathds{1}_{p_{(u,i)}=-p_{(v,j)}}&\prod_{u=1}^{4} |{\fs}^{u}(p_{(u,i^1_u)}+p_{(u,i^2_u)},p_{(u)\setminus\mathcal
            V_u})|\\
&\leq \prod_{u=1}^4 \Big( \sum_{\substack{p_{(u,i)}\\(u,i)\in V_U}} |{\fs}^{u}(p_{(u,i^1_u)}+p_{(u,i^2_u)},p_{(u)\setminus\mathcal
            V_u})|^2\Big)^{\half}\,.
\end{equs}
Notice that the functions at the right hand side still depend on all the variables 
whose index is not in $V_U$ and not anymore on those whose index is in $V_U$, and the number 
of remaining variables is bounded uniformly in $j_1,\,j_2$.  
It is important to keep in mind that, in order to control what is left, 
we will perform the sums over the remaining indices {\it one by one} 
and at each step we will lose the dependence on a single variable. The dependence on the remaining ones
will be clear from the context and we will leave it implicit hereafter. 

We now continue with the variables whose index is in $V_D$. 
Let us first endow the edges in $D$ with both a {\it direction} and 
a {\it colour} - both of which will determine the {\it order} of summation, i.e. which sum is treated first,
and the type of estimate we apply. 
We will regard two edges in $D$ as ``connected'' if they intersect the same rectangle. 
In this way, the edges in $D$ can be split into (at most four) connected components, each of which is 
either a closed loop or an open path. 
For an open path, choose arbitrarily one of the two endpoints and orient the path from there to the other endpoint.
We assign to the first edge along the path the colour blue, to the last the colour green and the colour 
red to all the others (see Figure~\ref{fig:Feyn2}, right graph). 
For closed loops instead, choose (arbitrarily) one of the two possible orientations; as for the colours, 
choose arbitrarily two connected edges, 
painting the first purple, the second orange and the others red (see Figure~\ref{fig:Feyn2}, left graph). 
At last, relabel the vertices of the graph (and relabel the variables in~\eqref{e:trem5}) 
in such a way that, for $u=1,\dots 4$, $(u, i_u^1)$ is always the tip of an edge, while $(u, i_u^2)$ the base point.
\begin{figure}\begin{center}
\includegraphics[width=5cm]{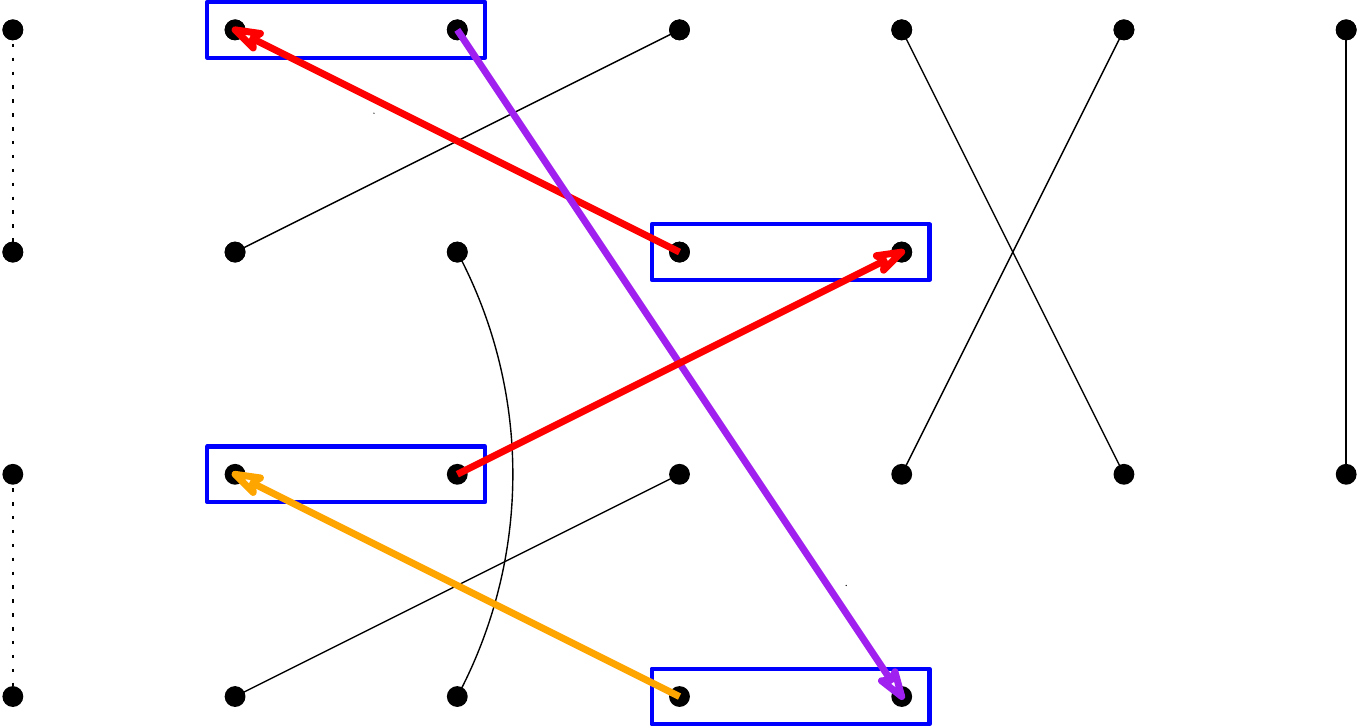}\qquad\qquad\qquad\includegraphics[width=5cm]{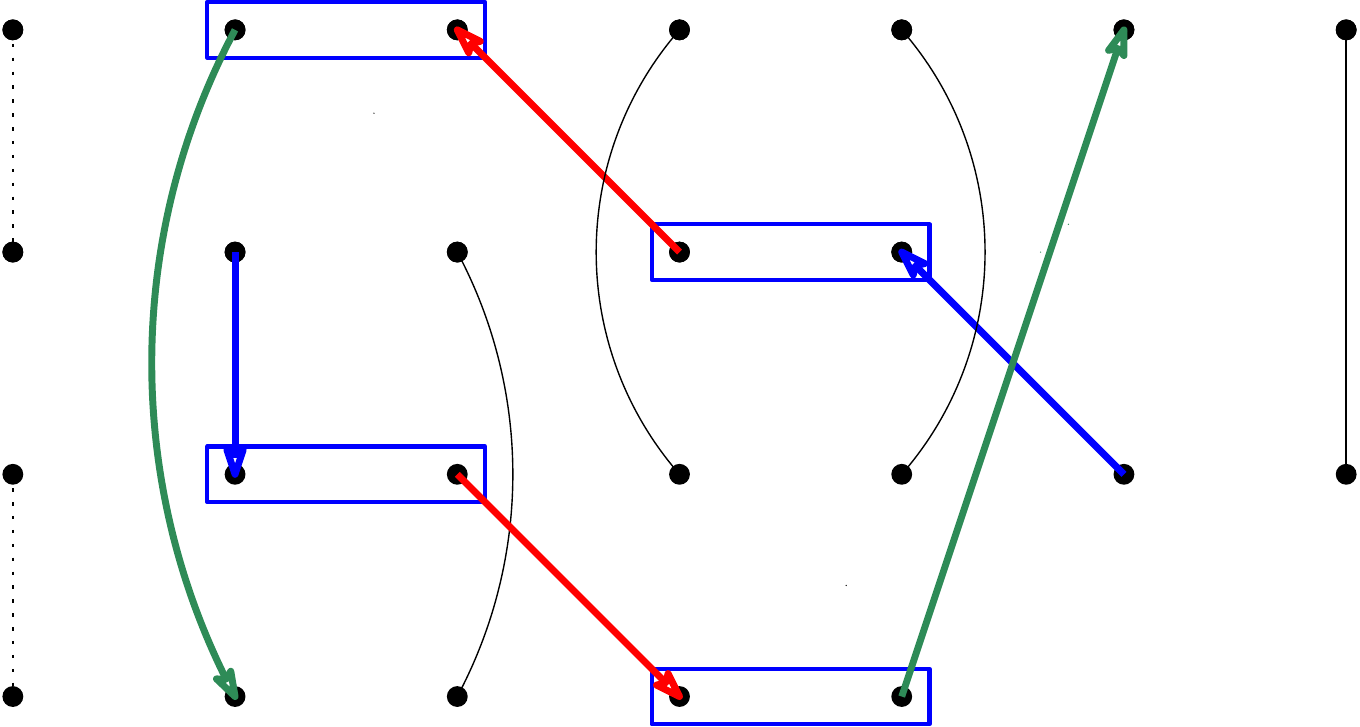}
\caption{Two examples of graphs in $\Gamma_0[j_1,j_2], j_1=7,j_2=5$: the two special edges (dotted) have no endpoints in the rectangles. Non-special edges not interesecting the rectangles are  undirected edges $U$, while those intersecting the rectangles are the directed edges $D$. Connected components of directed edges can be either closed loops (left drawing) or open paths (right drawing). For a graph $\gamma$, it can happen that both loops and open paths exist.  % For an open path, choose arbitrarily one of the two endpoints and orient the path from there to the other endpoint. The first edge along the path is painted blue, the last one green and oll other red. For closed loops instead, orient the edges anticlockwise and choose arbitrarily two edges along the path, painted orange and purple respectively; the other are painted red.
}
\label{fig:Feyn2}
\end{center}
\end{figure}

Now, let us see how to control the sums over the variables whose indices are in an open path. 
First, we upper bound~\eqref{e:trem5} by removing from the denominator all 
the variables $p_{(u,i_u^1)}$ and those whose index is the base point of the blue edge or the tip of the green one. 
Then, we first bound the sum over the variables whose indices are those of the blue edge and then the sum 
over the variables whose indices are those of the edge connected to that (which could be either red or green) and so
on. The last sum we perform is that on the variables whose indices are those of the green edge. 
We write $f_u$ and $f_v$ to denote two functions, which at each step represent 
what is left of the kernel ${\fs}^u$ and ${\fs}^v$ after performing 
the bounds over all the variables considered before, so that they do not depend on any variable which 
was already summed but might depend on all the variables whose sum was not yet considered. In the sequel when we write something like $f_u(p)$, then it does not necessarily mean that $f_u$ only depends on $p$ but might depend on other variables (those that were not summed over yet). For ease of readability however we suppress these additional variables from the notation.
Then, the estimates we exploit for the generic edge $((u,i),(v,j))$, oriented in such a way that 
$(u,i)$ is the base and $(v,j)$ is the tip, depend on the colour as follows
\begin{itemize}[noitemsep, leftmargin=*, label=-]
\item {\bf blue directed edge}: we apply Cauchy-Schwarz on  
	\begin{equs}[e:yde]
	\sum_{p_{(u,i)},\,p_{(v,i_v^1)}}& \mathds{1}_{p_{(u,i)}=-p_{(v,i_v^1)}}\,f_{u}(p_{(u,i)})|p_{(v,i_v^1)}+p_{(v,i_v^2)}| f_{v}(p_{(v,i_v^1)}+p_{(v,i_v^2)}) \\
	&\leq \Big(\sum_{p_{(u,i)}}f_{u}(p_{(u,i)})^2\Big)^{\half}\Big(\sum_{p_{(v,i_v^1)}} |p_{(v,i_v^1)}|^2f_{v}(p_{(v,i_v^1)})^2\Big)^{\half}\,.
	\end{equs} 
Note that the right hand side {\it does not depend} on the other variable in the original sum, i.e. $p_{(v,i_v^2)}$, anymore. 

\item {\bf red directed edge}: we apply Cauchy-Schwarz on 
	\begin{equs}[e:rde]
	\sum_{p_{(u,i_u^2)},\,p_{(v,i_v^1)}} &\mathds{1}_{p_{(u,i_u^2)}=-p_{(v,i_v^1)}}\,\frac{|p_{(v,i_v^1)}+p_{(v,i_v^2)}| f_{v}(p_{(v,i_v^1)}+p_{(v,i_v^2)})}{|p_{(u,i_v^2)}|^2 +|p_{(u,0)}|^2}\\
	&\leq \Big(\sum_{p_{(u,i_u^2)}}\frac{1}{(|p_{(u,i_u^2)}|^2 +|p_{(u,0)}|^2)^2}\Big)^{\half}\Big(\sum_{p_{(v,i_v^1)}} |p_{(v,i_v^1)}|^2 f_{v}(p_{(v,i_v^1)})^2\Big)^{\half}\\
	&\lesssim \frac{1}{|p_{(u,0)}|}\Big(\sum_{p_{(v,j)}} |p_{(v,j)}|^2f_{v}(p_{(v,j)})^2\Big)^{\half}
	\end{equs} 
and once again note that the right hand side does {\it does not depend} on $p_{(v,i_v^2)}$ anymore. 

\item {\bf green directed edge}: we apply Cauchy-Schwarz but on
\begin{equs}\label{e:gde}
	&\sum_{p_{(u,i_v^2)},\,p_{(v,j)}} \mathds{1}_{p_{(u,i_v^2)}=-p_{(v,j)}}\,\,\frac{f_{v}(p_{(v,j)})}{|p_{(u,i_v^2)}|^2 +|p_{(u,0)}|^2}\\
	&\quad\leq \Big(\sum_{p_{(u,i_v^2)}}\frac{1}{(|p_{(u,i_v^2)}|^2 +|p_{(u,0)}|^2)^2}\Big)^{\half}\Big(\sum_{p_{(v,j)}} f_{v}(p_{(v,j)})^2\Big)^{\half}\lesssim \frac{1}{|p_{(u,0)}|}\Big(\sum_{p_{(v,j)}} f_{v}(p_{(v,j)})^2\Big)^{\half},
	\end{equs} 
\end{itemize}
where, in both~\eqref{e:rde} and~\eqref{e:gde}, we exploited the following estimate: 
\begin{equ}[e:gaink]
\sum_{\ell}\frac{1}{(|\ell|^2 +|p_{(u,0)}|^2)^2}\lesssim \int_0^\infty\frac{r \dd r}{(r^2+|p_{(u,0)}|^2)^2}\lesssim\int_{|p_{(u,0)}|^2}^\infty\frac{\dd r}{r^2}=\frac{1}{|p_{(u,0)}|}\,.
\end{equ}

The reason why the first term at the left hand sides of~\eqref{e:rde} and~\eqref{e:gde} do not display $f_u$ 
is that both red and green edges always comes after either a blue or a red edge, whose tip 
$(u,i_u^1)$ is the unique other vertex in the same rectangle as the base $(u,i_u^2)$.
This means that the dependence on $p_{(u,i_u^2)}$ of the 
only function which would have it as a variable (in a sum) 
has already been removed by either the bound~\eqref{e:yde} or~\eqref{e:rde}. Also note that the denominator in~\eqref{e:rde} and~\eqref{e:gde} come from the fact that a red and green edge are always preceded by a red or blue edge.

Next, we sum over variables corresponding to oriented closed loops (if there are any). 
First, we upper bound~\eqref{e:trem5} by removing from
the denominator all the variables whose index is either a vertex on
the orange edge or at  the tip of a red edge,  {\it not} those whose index is a vertex of the purple edge. 
We proceed by  summing over the variables whose indices are those of the orange edge, and then 
with the next edge anti-clockwise, until the last of the loop, which is purple. 
As before, the generic edge $((u,i),(v,j))$ is oriented in such a way that 
$(u,i)$ is the base and $(v,j)$ is the tip. 
The type of bound we use will depend on the colour of the edge as follows 
(we do not specify what to do with red edges, as this has already been explained above)
\begin{itemize}[noitemsep,  leftmargin=*, label=-]
\item {\bf orange directed edge}: we apply Cauchy-Schwarz on
	\begin{equs}
	\sum_{p_{(u,i^2_u)},\,p_{(v,i^1_v)}} &\mathds{1}_{p_{(u,i_u^2)}=-p_{(v,i_v^1)}}\,|p_{(u,i_u^1)}+p_{(u,i_u^2)}|f_u(p_{(u,i_u^1)}+p_{(u,i_u^2)})\,\\
	&\qquad\qquad\qquad\times|p_{(v,i_v^1)}+p_{(v, i_v^2)}|f_{v}(p_{(v,i_v^1)}+p_{(v, i_v^2)})\\
	&\leq \Big(\sum_{p_{(u,i_u^2)}} |p_{(u,i_u^2)}|^2f_{u}(p_{(u,i_u^2)})^2\Big)^{\half}\Big(\sum_{p_{(v,i_v^1)}} |p_{(v,i_v^1)}|^2f_{v}(p_{(v,i_v^1)})^2\Big)^{\half}\label{e:ode}
	\end{equs} 
and the right hand side  depends neither on $p_{(u,i_u^1)}$ nor on $p_{(v,i_v^2)}$. 
\item {\bf purple directed edge}: we apply Cauchy-Schwarz as follows:
	\begin{equs}\label{e:pde}
	&\sum_{p_{(u,i_u^2)},\,p_{(v,i_v^1)}} \mathds{1}_{p_{(u,i_u^2)}=-p_{(v,i_v^1)}}\,\frac1{|p_{(u,i_u^2)}|^2+ |p_{(u,0)}|^2}\,\frac1{|p_{(v,i_v^1)}|+| p_{(v,0)}|^2}\\
	&\quad\leq \Big(\sum_{p_{(u,i_u^2)}} \frac1{(|p_{(u,i_u^2)}|^2+ |p_{(u,0)}|^2)^2}\Big)^{\half}\Big(\sum_{p_{(v,i_v^1)}} \frac1{(|p_{(v,i_v^1)}|+| p_{(v,0)}|^2)^2}\Big)^{\half}\lesssim \frac{1}{|p_{(u,0)}||p_{(v,0)}|}
	\end{equs} 
	where in the last step we used~\eqref{e:gaink}. 
\end{itemize}

At this point we have summed over all the variables whose labels belong to directed edges in $D$ 
and it is not hard to check that we obtained the following
\begin{itemize}[leftmargin=*]
      \item the factors $|p_{(u,0)}|^{-1}$ provided by the bounds on red, green and purple edges 
      altogether give a contribution
        \begin{equ}
          \label{e:prodp}
          \frac1{\prod_{u=1}^4 |p_{(u,0)}|}.  
        \end{equ} 
        This is because~\eqref{e:rde} and~\eqref{e:gde} give such factor for the sum over variables 
        whose index is the base of a red and green edge respectively, and~\eqref{e:pde} give it for
        the sum over variables whose index is both the base and the tip of a purple edge. On the
        other hand, in each Feynman diagram, for each row there is exactly either
        the base of a green/red/purple edge, or the tip of a purple
        edge.  
    \item the factors depending on $\fs^u$ altogether take the form
      \begin{equs}
        \prod_{u=1}^4 \Big( \sum_{\substack{p_{(u,i)}\\ (u,i)\in V_U\cup V_D\setminus\{(u,i_u^2)\}}} |p_{(u,i_u^1)}|^2|{\fs}^{u}(p_{(u)\setminus\{(u,i_u^2)\}}))|^2\Big)^{\half}\,
      \end{equs}
and each of the factors {\it only depends} on $p_{(u,0)}$.
To see this, recall that
$\fs^u(p_{(u,i^1_u)}+p_{(u,i^2_u)},p_{(u)\setminus\mathcal V_u})$ in
\eqref{e:semplice} depends symmetrically on its $j^u-1$ arguments (where $j^u$ is defined in \eqref{e:ju}) and one
of the components of $p_{(u)\setminus\mathcal V_u}$ is $p_{(u,0)}$, whose sum has {\it not} been performed yet. 
The appearance of $|p_{(u,i_u^1)}|^2$ is due to the outcome of the bounds on blue, red and
orange edges. Indeed, one such factor appears on rows corresponding to tips of 
red/blue/orange edges or to the base of orange edges. 
On the other hand, in every Feynman diagram there is exactly one such vertex per row.
\end{itemize}

It remains to sum \eqref{e:trem5} over $p_{(u,0)},u=1,\dots,4$, that are the vertices of special edges. The factor \eqref{e:prodp} cancels exactly the same
        factor in the numerator of \eqref{e:trem5}.
        Then, \eqref{e:trem5} is upper bounded by a constant times
        \begin{equs}\label{e:kappa0almostfinal}
          \frac1{(\log N)^2}\sum_{p_{(u,0)},u=1,\dots,4} & \mathds{1}_{p_{(1,0)}=-p_{(2,0)},p_{(3,0)}=-p_{(4,0)}}\\
          &\times \prod_{u=1}^4 \Big( \sum_{\substack{p_{(u,i)}\\ (u,i)\in V_U\cup V_D\setminus\{(u,i_u^2)\}}} |p_{(u,i_u^1)}|^2|{\fs}^{u}(p_{(u)\setminus\{(u,i_u^2)\}}))|^2\Big)^{\half}\,.
        \end{equs}
Recall that $\fs^u=\ffNn_{a^u,j^u-1}$ and relabel the variables $p_{(u)\setminus\{(u,i_u^2)\}}=(p_{(u,0)}, \ell_{1:j^u-1})$. Now, we apply Cauchy-Schwarz once more 
to the sums with respect to  $p_{(u,0)},u=1,\dots,4$, so that the previous expression is upper bounded by 
\begin{equs}\label{e:kappa0final}          
\frac1{(\log N)^2}\sum_{p_{(1,0)},\ell_{1:j_1-2}} |\ell_1|^2|\ffNn_{a_1,j_1-1}(p_{(1,0)},\ell_{1;j_1-2})|^2
          \sum_{p_{(2,0)},\ell_{1:j_2-2}} |\ell_1|^2|\ffNn_{a_2,j_2-1}(p_{(2,0)},\ell_{1;j_2-2})|^2\\
          \le 
      \frac1{(\log N)^{2}}\|(-\gensy)^{1/2}\ffNn_{a_1,j_1-1}\|^2\|(-\gensy)^{1/2}\ffNn_{a_2,j_2-1}\|^2,
        \end{equs}
where we used the symmetry of $\ffNn$ in the last step. Since the right hand side corresponds to~\eqref{kappa02} 
for $\kappa=0$, this case is concluded. 
\medskip

\noindent{\bf The case $\kappa=2$}
\medskip 

\noindent Like for $\kappa=0$, also for $\kappa=2$ the ratio $({\sum_{\bv\in V[G]}|p_{\bv}|^2})^{-1}$ plays no 
role so that we will bound it by $1$. In this case though, we will need to keep track of the indicator function.  
More precisely, we need to control
\begin{equs}[e:trem6]
      \frac1{(\log N)^2}\sum_{\substack{p_\bv\\ \bv\in V}}&\1_{|p_{(1,0)}|,\,|p_{(3,0)}|\leq N}
      \prod_{((u,i),(v,j))\in
        \gamma}\mathds{1}_{p_{(u,i)}=-p_{(v,j)}}\\
        &\times
      \prod_{u=1}^4\frac{ |p_{(u,0)}|
        |p_{(u,i^1_u)}+p_{(u,i^2_u)}||{\fs}^{u}(p_{(u,i^1_u)}+p_{(u,i^2_u)},p_{(u)\setminus\mathcal
            V_u})|}{|x_{(u)}|^2} 
\end{equs}
where $(u,i^2_u)$, $u=1,\dots, 4$, is such that 
$i^2_u=0$, either for $u=1,\,3$ or $u=2,\,4$ and, without loss of generality, 
we assume that the first holds, i.e.
\begin{equ}[e:ass]
i^2_1=i^2_3=0,\,\qquad i^2_2=i^2_4\neq 0\,.
\end{equ} 

Getting back to~\eqref{e:trem6}, we first bound all the sums over variables whose indices 
are vertices in $V_U$ by removing  such variables from the denominator and applying 
Cauchy-Schwarz as in~\eqref{e:semplice}. Next, we turn to the sums 
whose indices are vertices of edges in $D$ and $S$.  

According to the conventions introduced for the previous case, we note that 
the edges in $S$ are all connected to those in $D$. Therefore, $D\cup S$ can be split into 
(at most four) connected components and each of the two edges in $S$ 
must be contained in exactly one of them (which  could be the same). 
First we bound the sums over the variables whose index is a vertex of an edge in
the connected components fully contained in $D$ (i.e. 
which do not contain any special edge), which are at most $2$ (and there could be none). 
We endow the edges with the same colours and directions 
defined above and apply the bounds 
in~\eqref{e:yde},~\eqref{e:rde},\eqref{e:gde},~\eqref{e:ode} and~\eqref{e:pde} accordingly.

Therefore, we are left with one or two connected components, each containing at least one 
special edge. In both cases, we will first provide direction and colour to those edges in the connected components 
which also lie in $D$, then we will bound the sums over the variables whose indices are vertices of these edges 
and at last we will estimate the sums over the variables indexed by vertices of edges in $S$. 

Assume the edges $s_1,\,s_2\in S$ lie in different connected components and denote them by 
$C_1,\,C_2$ respectively. 
The first edge $e\in D\cap C_i$ is the farthest from $s_1$ and it is assigned the colour blue. 
Those in between $e$ and $s_i$ (if any) are directed in successive order starting at $e$ and are 
all assigned the colour red. All edges in $C_1$ are directed towards $s_1$ (see Figure~\ref{fig:Feyn3}, left graph). 
If instead $s_1,\,s_2$ belong to the same connected component $C$, then 
we start from the edge $e\in D\cap C$ connected to, say, $s_1$. We impose its base point lies in the same 
rectangle to which a vertex of $s_1$ belongs (and its tip to a different rectangle) 
and assign to $e$ the colour orange. Once again, all the other edges in $D\cap C$ are ordered 
starting from $e$ directed towards $s_2$ and painted in red (see Figure~\ref{fig:Feyn3}, right graph).

\begin{figure}\begin{center}
\includegraphics[width=4cm]{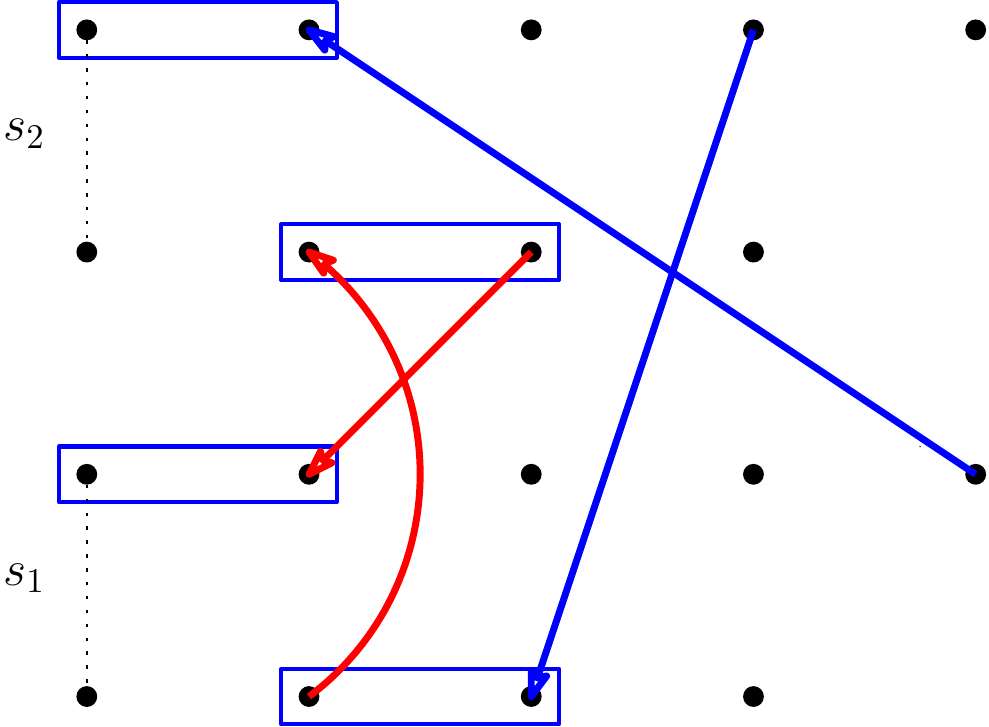}\qquad\qquad\qquad\includegraphics[width=4cm]{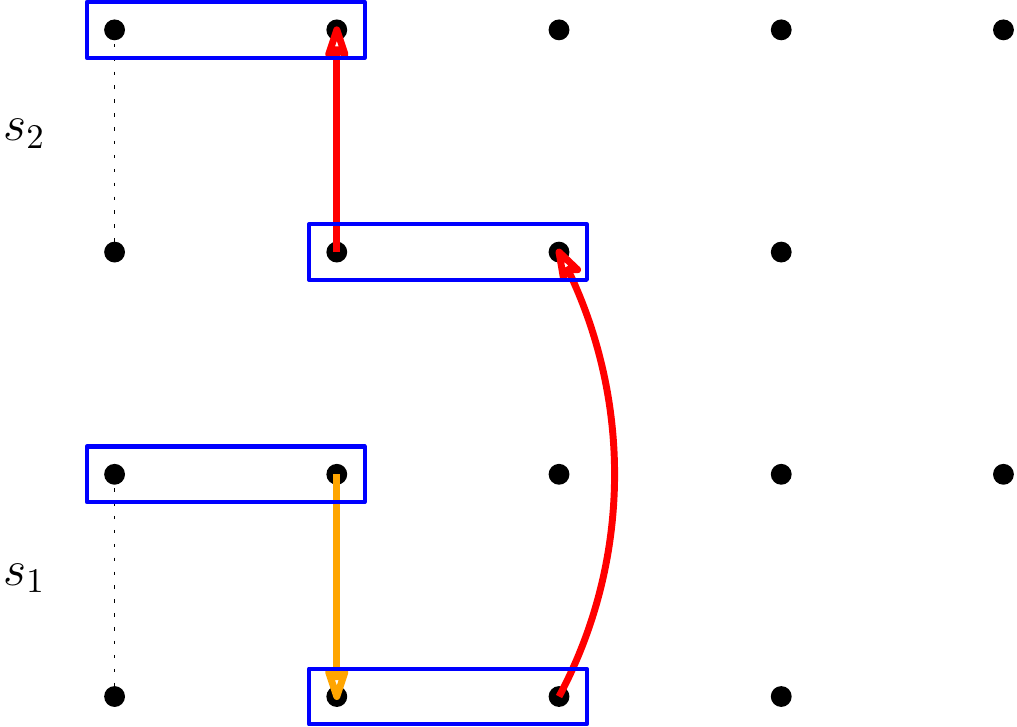}
\caption{The two possible types of connectivities of special edges in Feynman diagrams occurring when $\kappa=2$, as described in the text. For clarity, only directed and special edges $(s_1,s_2)$ are  drawn. In the left figure there are two connected components $C_1,C_2$ that we orient towards $s_1,s_2$. In the right drawing there is a single component, that we orient from $s_1$ to $s_2$. Special edges are not oriented. In some Feynman diagrams, also closed loops of oriented edges (and therefore purple/orange edges) can be present.
}
\label{fig:Feyn3}
\end{center}
\end{figure}

We now upper bound~\eqref{e:trem6} by removing from
the denominator all the variables whose index is either a vertex on
an orange or a blue edge, or at the tip of a red edge. 
Let us stress that in this way we get a prefactor of the form 
\begin{equ}[e:prefactor]
 \frac{1}{|p_{(1,0)}|^2|p_{(3,0)}|^2}
\end{equ}
since the rectangles in which $(1,0)$ and $(3,0)$ lie can only contain the tip of a blue or red edge, 
or the base point of an orange one. This prefactor cancels exactly the factor $\prod_{u=1}^4|p_{(u,0)}|$ in \eqref{e:trem6}, because $|p_{(1,0)}|=|p_{(2,0)}|,|p_{(3,0)}|=|p_{(4,0)}|$.
Then, we perform the sums over the variables indexed by the vertices of directed edges in $C_1,C_2$ (or just $C$ in the second case), 
according to the order  just introduced. We bound 
the sum of variables indexed by vertices of a blue edge via~\eqref{e:yde}, those 
indexed by vertices of a red edge via~\eqref{e:rde} and those 
indexed by vertices of an orange edge via~\eqref{e:ode}. 
\medskip

At this point we have estimated the sums over all the variables whose labels belong to directed edges in $D$, 
it can be directly checked that we obtained the following

\begin{itemize}[leftmargin=*]
\item the factors $|p_{(u,0)}|^{-1}$ provided by the bounds on red edges in $C$ (or $C_1,C_2$) and on 
the red, green and purple edges on the open or closed connected components of $D$ disconnected from the special edges, altogether 
give a contribution of the form
\begin{equ}[e:prodp2]
\frac{1}{|p_{(2,0)}||p_{(4,0)}|}=\frac{1}{|p_{(1,0)}||p_{(3,0)}|}\,.
\end{equ}
This is a consequence of~\eqref{e:ass}, which imposes $i^2_1=i^2_3=0$,
and the bounds~\eqref{e:rde},~\eqref{e:gde} and~\eqref{e:pde}. Indeed,
as noted above,~\eqref{e:pde} gives  a factor $|p_{(u,0)}|^{-1}$  both for $u$  the row containing base and the row containing the tip of a purple edge
but, if in our graph there is a purple edge then this necessarily
belongs to a closed  connected component of $D$, disconnected from the special edges, 
and by~\eqref{e:ass} it will involve vertices on the rows $2$ and $ 4$.
Instead,~\eqref{e:rde} and~\eqref{e:gde} give a factor
$|p_{(u,0)}|^{-1}$ for $u$ whose row contains  the {\it
  base} of a red/green edge and, by construction, in the rectangles containing
the vertices of special edges there is either a tip of a (red or
blue) edge or the base of an orange edge.  On the other hand, in
each such Feynman diagram, each of the rows $2$ and $4$ will have
precisely one base point of a green/red edge, or one the base point
and one the tip of a purple edge.
\item the factors depending on $\fs^u$ altogether take the form
      \begin{equs}
       &\frac{\1_{|p_{(1,0)}|,\,|p_{(3,0)}|\leq N}}{|p_{(1,0)}||p_{(3,0)}|} \prod_{u=2,4} \Big( \sum_{\substack{p_{(u,i)}\\ (u,i)\in V_U\cup V_D\setminus\{(u,a_u^2)\}}} |p_{(u,i_u^1)}|^2|{\fs}^{u}(p_{(u)\setminus\{(u,i_u^2)\}}))|^2\Big)^{\half}\\
       &\qquad\qquad\qquad\times  \sum_{p_{(1,0)},\ell_{1:j_1-2}} |\ell_1|^2|\ffNn_{a_1,j_1-1}(p_{(1,0)},\ell_{1;j_1-2})|^2\\
       &\leq\frac{\1_{|p_{(1,0)}|,\,|p_{(3,0)}|\leq N}}{|p_{(1,0)}||p_{(3,0)}|} \prod_{u=2,4} \Big( \sum_{\substack{p_{(u,i)}\\ (u,i)\in V_U\cup V_D\setminus\{(u,i_u^2)\}}} |p_{(u,i_u^1)}|^2|{\fs}^{u}(p_{(u)\setminus\{(u,a_u^2)\}}))|^2\Big)^{\half}\\
       &\qquad\qquad\qquad\qquad\qquad\qquad\times  \|(-\gensy)^{1/2}\ffNn_{a_1,j_1-1}\|^2
     \end{equs}
Note that the product in the first line (corresponding to rows $2$ and $4$ of the graph) 
has still to be summed over $p_{(2,0)}$ and $p_{(4,0)}$. 
On the other hand, in rows $1,3$ all variables have already been summed over 
and no dependency on $p_{(1,0)}$ and $p_{(3,0)}$ is left. 
This is a consequence of the fact that the bounds \eqref{e:yde}, \eqref{e:rde}, \eqref{e:ode} 
obtained when summing over red/blue/orange edges do not depend on the second variable contained in the same rectangle whose index is the vertex at the tip of the blue/red edge, 
or at the base of the orange edge. Such variables are exactly $p_{(1,0)},\,p_{(3,0)}$.
% \fabioText{togliere la frase seguenteThe first factor is due to the fact that the term in~\eqref{e:prefactor} cancels out $|p_{(1,0)}||p_{(3,0)}|$ 
% at the numerator of~\eqref{e:trem6}. 
\end{itemize}

We now perform the sums over $p_{(u,0)}$, $u=1,\dots, 4$ and conclude the proof of the case $\kappa=2$. 
We have that~\eqref{e:trem6} is bounded by 
\begin{equs}
  & \frac{1}{(\log N)^2}\sum_{p_{(u,0)}}  \frac{\1_{|p_{(1,0)}|,\,|p_{(3,0)}|\leq N}}{|p_{(1,0)}||p_{(3,0)}|}\mathds{1}_{p_{(1,0)}=-p_{(2,0)},p_{(3,0)}=-p_{(4,0)}}\\
  &\qquad\times \prod_{u=2,4} \Big( \sum_{\substack{p_{(u,i)}\\ (u,i)\in V_U\cup V_D\setminus\{(u,a_u^2)\}}} |p_{(u,i_u^1)}|^2|{\fs}^{u}(p_{(u)\setminus\{(u,i_u^2)\}}))|^2\Big)^{\half}\|(-\gensy)^{1/2}\ffNn_{a_1,j_1-1}\|^2\\
  &\leq \frac1{(\log N)^2} \Big(\sum_{|p_{(1,0)}|\leq N}\frac{1}{|p_{(1,0)}|^2}\Big)^{\half}\Big(\sum_{|p_{(3,0)}|\leq N}\frac{1}{|p_{(3,0)}|^2}\Big)^{\half}\prod_{i=1}^2\|(-\gensy)^{1/2}\ffNn_{a_1,j_i-1}\|^2\\
  &\lesssim\frac{1}{\log
    N}\prod_{i=1}^2\|(-\gensy)^{1/2}\ffNn_{a_i,j_i-1}\|^2
\end{equs}
where we used that 
\begin{equ}[e:losslog]
\sum_{|\ell|\leq N}\frac{1}{|\ell|^2}\lesssim\int_1^N\frac{\dd r}{r}=\log N\,.
\end{equ}
Consequently,~\eqref{kappa02} for $\kappa=2$ follows at once. 
\medskip

\noindent{\bf The case $\kappa=4$}
\medskip 

\noindent When $\kappa=4$, all the vertices $(u,0)$, $u=1,\dots,4$ belong to a rectangle 
so that 
\begin{equ}[e:ass4]
i_u^2=0\,,\qquad \text{for all $u=1,\dots 4$.}
\end{equ} 
Before performing any bound let us look at the edges in $D\cup S$. Adopting the same conventions as above, 
there are two scenarios we need to 
consider - either $D\cup S$ has only one connected component 
or two. Let us begin with the first.  

Note that the unique connected component of $D\cup S$ can only be an open path or a cycle. 
If it is an open path then $D$ has exactly two edges with one vertex in a rectangle 
and one outside (and one edge connecting two rectangles). 
We endow the former two  with a direction by imposing their tip lies in the rectangle 
and paint them blue. 
If instead the connected component of $D\cup S$ is a cycle then $D$ has cardinality $2$. We pick one of 
its two edges, assign it a direction (no matter which one) and paint it orange. 
In both cases, we have left aside an edge $e=((\bar u,i_{\bar u}^1),(\bar v,i_{\bar v}^1))\in D$ 
(the dashed line in Figure \ref{fig:Feyn4})
which is necessarily such that either $\bar u\in\{1,2\}$ 
and $\bar v\in\{3,4\}$, or $\bar v\in\{1,2,\}$ and $\bar u\in\{3,4\}$. Without loss of generality we assume that
\begin{equ}[e:ass41]
\bar u\in\{1,2\}\qquad\text{ and }\qquad\bar v\in\{3,4\}\,.
\end{equ} 

\begin{figure}\begin{center}
\includegraphics[width=4cm]{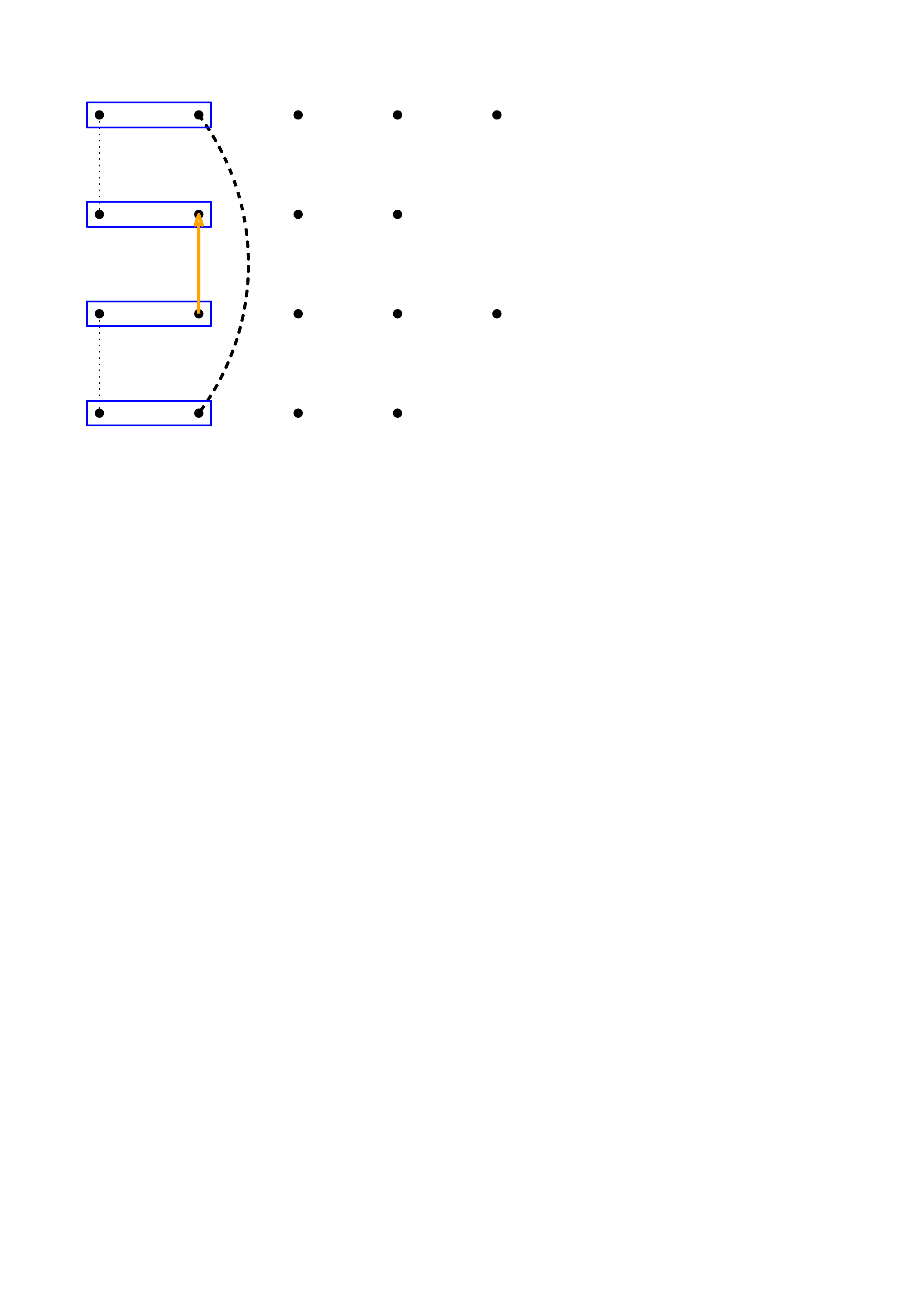}\qquad\qquad\qquad\includegraphics[width=4cm]{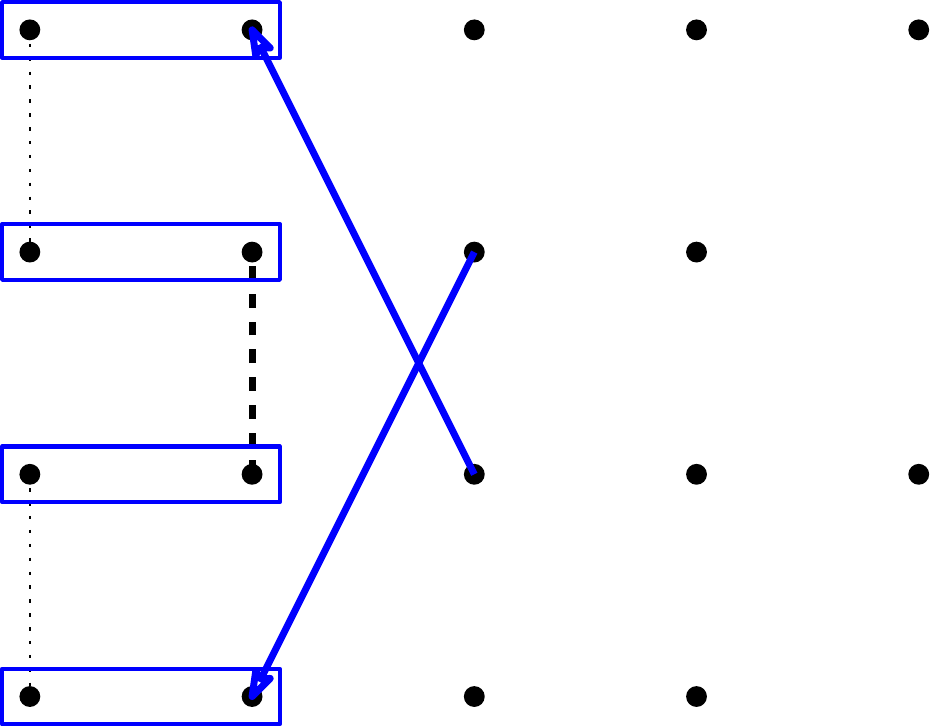}
\caption{The two different possible situations in the case $\kappa=4$ and $D\cup S$ has a single connected components. The thick dashed edge is the edge $e=((\bar u,i^1_{\bar u}),(\bar v,i^1_{\bar v}))$ that we treat separately. We don't need to assign it a direction and a color.
}
\label{fig:Feyn4}
\end{center}
\end{figure}

We now get back to the sums in~\eqref{e:trem4}. 
Thanks to~\eqref{e:ass41}, $(\bar u,i_{\bar u}^1)\in V[G]$, hence we can estimate  
\begin{equ}[e:ration]
\frac{1}{\sum_{\bv\in V[G]}|p_{\bv}|^2}\leq \frac{1}{|p_{(\bar u,i_{\bar u}^1)}|^2}\,.
\end{equ}
We only keep the indicator function imposing
$|p_{(\bar u,i_{\bar u}^1)}|,\,|p_{(\bar v,i_{\bar v}^1)}|\leq N$ and
bound the sums over all variables whose indices are vertices of edges
in $U$ as in~\eqref{e:semplice}.  For the sums over the variables
whose indices are vertices of the two blue or the orange edge, we
remove these variables from the denominator and exploit~\eqref{e:yde}
or~\eqref{e:ode}, respectively.

Before summing over $p_{(\bar u,i_{\bar u}^1)}$ and $p_{(\bar v,i_{\bar v}^1)}$, we focus on 
the sum over the variables whose indices are vertices of the special edges. 
Let $\bar x$ be either $\bar u$ or $\bar v$ and $\{x\}$ be $\{1,2\}\setminus\{\bar u\}$ if $\bar x=\bar u$ and
$\{3,4\}\setminus\{\bar v\}$ if $\bar x=\bar v$. Then, we need to control 
\begin{equ}[e:k=41]
\sum_{p_{(x, 0)},\,p_{(\bar x,0)}} \1_{p_{(x, 0)}=-p_{(\bar x,0)}}\frac{|p_{(x, 0)}||p_{(\bar x, 0)}||p_{(\bar x, 0)}+p_{(\bar x, i_{\bar x}^1)}| \check{\fs}^{\bar x}(p_{(\bar x, 0)}+p_{(\bar x, i_{\bar x}^1)})}{|p_{(x,0)}|^2(|p_{(\bar x,0)}|^2+|p_{(\bar x,i_{\bar x}^1)}|^2)}
\end{equ}
where 
\begin{equ}[e:scheck]
  \check{\fs}^{\bar x}(p_{(\bar x, 0)}+p_{(\bar x, i_{\bar
      x}^1)})\eqdef \Big( \sum_{\substack{p_{(\bar x,i)}\\(\bar
      x,i)\in V_U}} |{\fs}^{\bar x}(p_{(\bar x, 0)}+p_{(\bar
    x,i^1_{\bar x})},p_{(\bar x)\setminus\mathcal V_{\bar
      x}})|^2\Big)^{\half}\,.
\end{equ}
To see how~\eqref{e:k=41} arises, it suffices to note that the sums we have bounded up to now involving 
variables whose index is a vertex in the $\bar x$-th row never considered $V_D$ or $V_S$-indexed variables, 
while the dependence of the kernel $\fs^x$ on $p_{(x,0)}$ was lost because of~\eqref{e:yde} or~\eqref{e:ode}. 

Now,~\eqref{e:k=41} equals
\begin{equs}[e:imp]
\sum_{\ell} &\frac{|\ell+p_{(\bar x, i_{\bar x}^1)}| \check{\fs}^{\bar x}(\ell+p_{(\bar x, i_{\bar x}^1)})}{|\ell|^2+|p_{(\bar x,i_{\bar x}^1)}|^2}\\
&\leq \Big(\sum_{\ell} \frac{1}{(|\ell|^2+|p_{(\bar x,i_{\bar x}^1)}|^2)^2}\Big)^{\half}\Big(\sum_{\ell}|\ell|^2\check{\fs}^{\bar x}(\ell+p_{(\bar x, i_{\bar x}^1)})^2\Big)^{\half}\lesssim \frac{1}{|p_{(\bar x,i_{\bar x}^1)}|} \|(-\gensy)^{\half}\fs^{\bar x}\|
\end{equs}
where in the last step we used~\eqref{e:gaink}, the definition of $\check{\fs}$ in~\eqref{e:scheck} and the 
symmetry of $\fs$. 

By collecting what we obtained so far, we see that~\eqref{e:trem4} is upper bounded by 
\begin{equs}
\|(-\gensy)^{1/2}&\ffNn_{a_1,j_1-1}\|^2\|(-\gensy)^{1/2}\ffNn_{a_2,j_2-1}\|^2\frac{1}{(\log N)^2}\sum_{0<|p_{(\bar u,i_{\bar u}^1)}|,\,|p_{(\bar v,i_{\bar v}^1)}|\leq N}\frac{\1_{p_{(\bar u,i_{\bar u}^1)}=-p_{(\bar v,i_{\bar v}^1)}}}{|p_{(\bar u,i_{\bar u}^1)}|^3|p_{(\bar v,i_{\bar v}^1)}|}%% \\
%% &=\|(-\gensy)^{1/2}\ffNn_{a_1,j_1-1}\|^2\|(-\gensy)^{1/2}\ffNn_{a_2,j_2-1}\|^2\frac{1}{(\log N)^2}\sum_{0<|\ell|\leq N}\frac{1}{|\ell|^4}
\\
&\lesssim \frac{1}{(\log N)^2}\|(-\gensy)^{1/2}\ffNn_{a_1,j_1-1}\|^2\|(-\gensy)^{1/2}\ffNn_{a_2,j_2-1}\|^2
\label{e:esaurito}
\end{equs}
the last inequality being a consequence of the fact that $\sum_{\ell\ne0} |\ell|^{-4}<\infty$. 
\medskip

We now turn to the scenario in which $D\cup S$ has two connected components, for which 
the factor $({\sum_{\bv\in V[G]}|p_{\bv}|^2})^{-1}$ will be crucial in controlling~\eqref{e:trem4}. 
Recall that this factor contains the sum over vertices of edges in $P\neq \emptyset$ which connect  
$V[G]$ to $V[\tilde G]$. 

We first consider the case $P\cap D=\emptyset$, which implies that $P\subset U$ as, 
clearly, $P\cap S=\emptyset$. 
Pick arbitrarily some $e=((\bar u,i),(\bar v,j))\in P$ so that $(\bar u,i)\in V[G]$ and brutally bound 
\begin{equ}[e:L0]
\frac{1}{\sum_{\bv\in V[G]}|p_{\bv}|^2}\leq \frac{1}{|p_{(\bar u,i)}|}\,.
\end{equ}
Then, we estimate the sums over all variables whose indices are vertices of edges in $U\setminus\{e\}$ 
as in~\eqref{e:semplice} while for the sum over $p_{(\bar u,i)},\,p_{(\bar v,j)}$ we use the bound 
\begin{equs}[e:wye]
\sum_{p_{(\bar u,i)}, p_{(\bar v,j)}} \frac{\1_{p_{(\bar u,i)}=-p_{(\bar v,j)}}}{|p_{(\bar u,i)}|} f_{\bar u}(p_{(\bar u,i)})&f_{\bar v}(p_{(\bar v,j)})=\sum_{p_{(\bar u,i)}, p_{(\bar v,j)}} \frac{\1_{p_{(\bar u,i)}=-p_{(\bar v,j)}}}{\sqrt{|p_{(\bar u,i)}||p_{(\bar v,j)}|}} f_{\bar u}(p_{(\bar u,i)})f_{\bar v}(p_{(\bar v,j)})\\
&\leq \Big(\sum_{p_{(\bar u,i)}} \frac{ f_{\bar u}(p_{(\bar u,i)})^2}{|p_{(\bar u,i)}|}\Big)^{\half} \Big(\sum_{p_{(\bar v,j)}} \frac{ f_{\bar v}(p_{(\bar v,j)})^2}{|p_{(\bar v,j)}|}\Big)^{\half}\,.
\end{equs}
\begin{remark}
  \label{rem:j21}
Note that necessarily $j^{\bar u},j^{\bar v}>2$, otherwise the edge $e\in P$ could not connect rows $\bar u$, $\bar v$, since by definition it is has no endpoint in the rectangles of those rows.  
\end{remark}
We now turn to the sums over the variables whose indices are vertices
of edges in $D$.  Adopting the same conventions as above, we endow
each of these edges with a direction and a colour.  Given the
structure of the Feynman graphs $\gamma$ in~\eqref{e:Feynman}, either
$|D|=4$ and therefore all of its edges have one vertex in a rectangle
and one outside, or $|D|=2$ so that both edges have both vertices
lying in two different rectangles.  In the first case, we direct the
edges in such a way that the tip is in the rectangle (and the base
outside) and assign them the colour blue, in the other we arbitrarily
choose a direction and paint it orange.  See Figure \ref{fig:Feyn5}.
\begin{figure}
  \begin{center}\includegraphics[width=4cm]{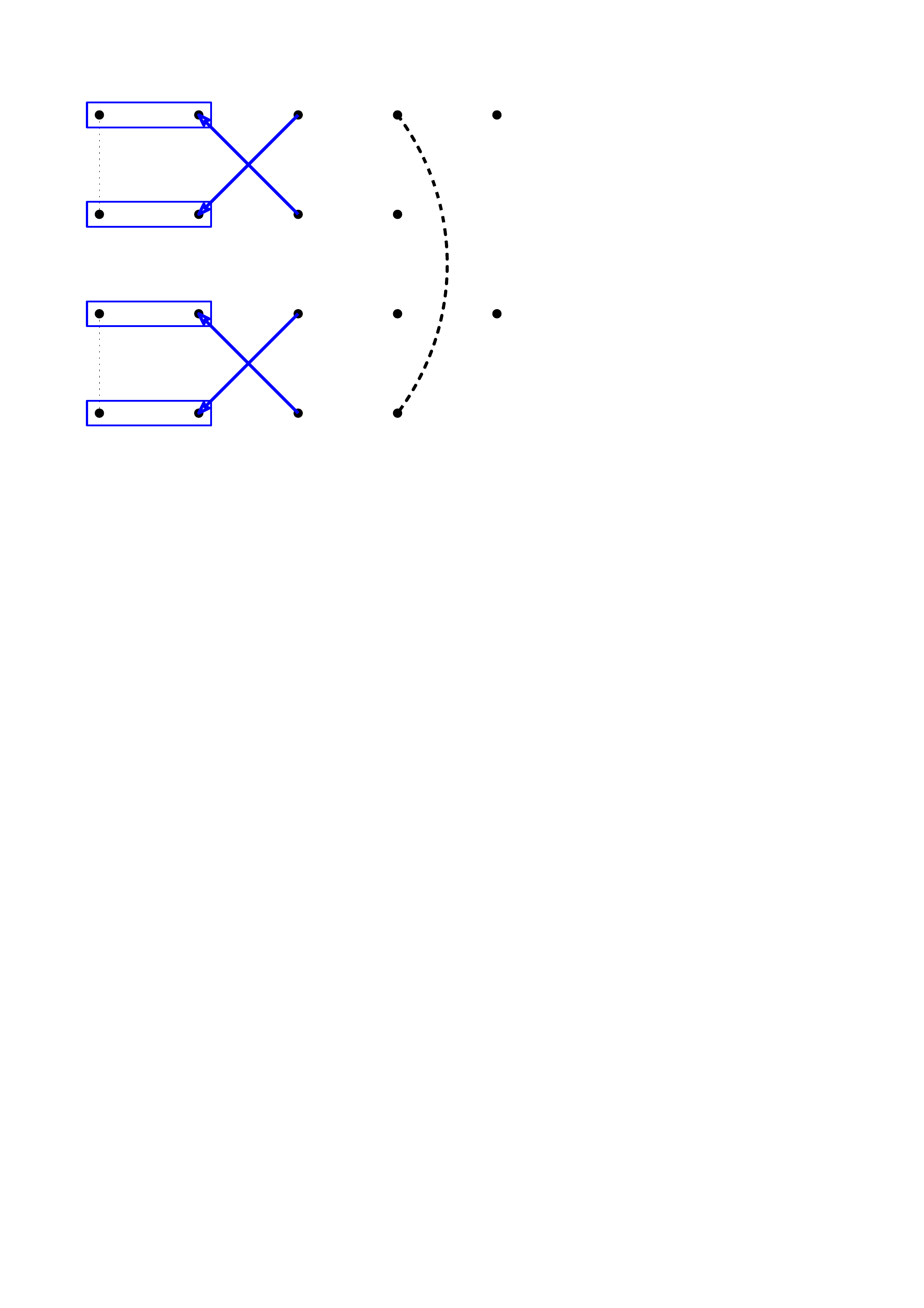}\qquad\qquad\qquad\includegraphics[width=4cm]{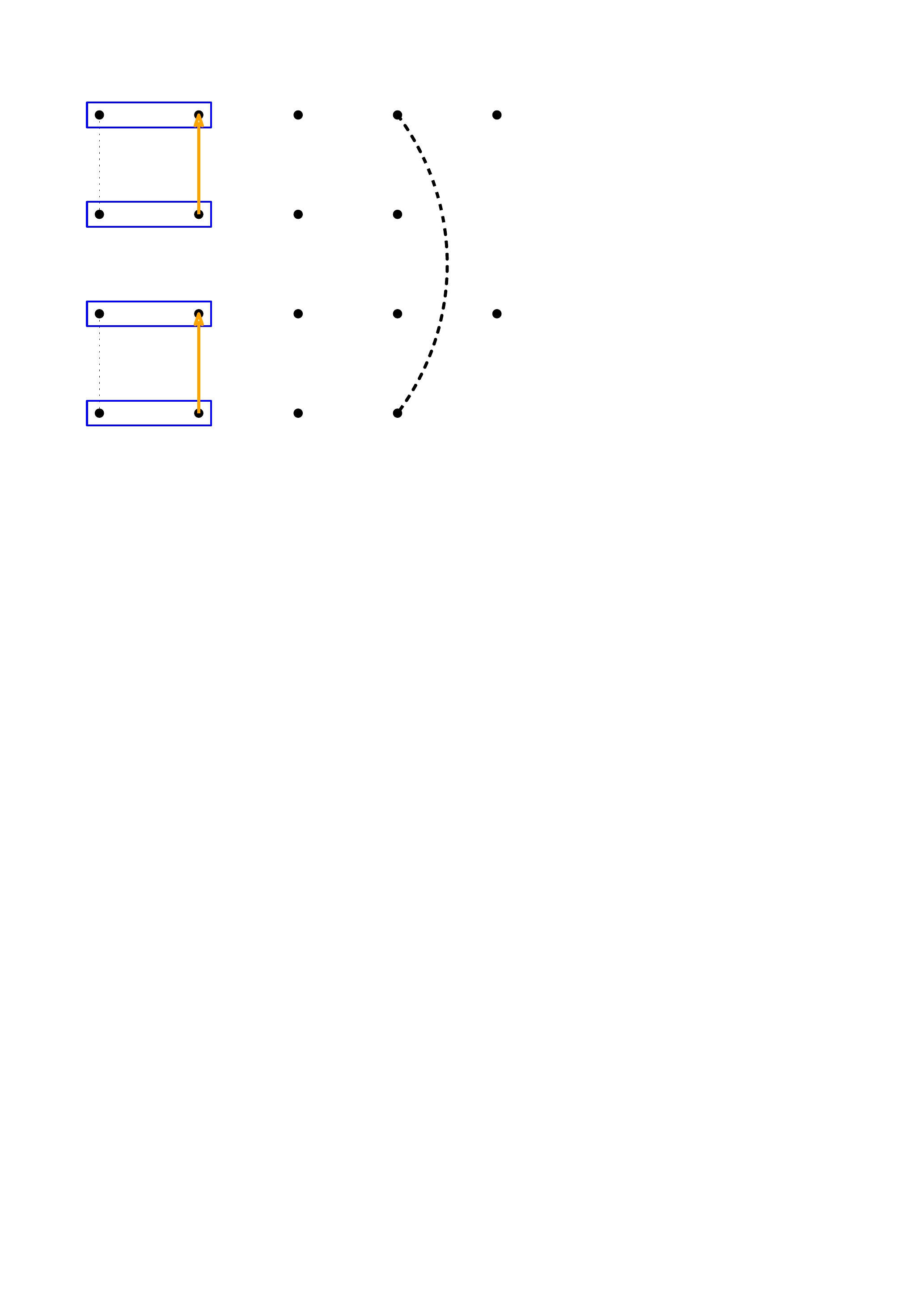}
    \caption{The case $\kappa=4$, when $D\cup S$ has two components  and $P\cap D=\emptyset$. The thick dashed edge is the selected edge in $P$. In the left (resp. right) drawing the case where $|D|=4$ (resp. $|D|=2$).}
\label{fig:Feyn5}
\end{center}
\end{figure}

As usual, we upper bound~\eqref{e:trem4} by removing from
the denominator all the variables whose index is either a vertex of
an orange or blue edge. As a consequence, the denominators become
\begin{equ}[e:prefactor2]
\prod_{u=1}^4 \frac{1}{|p_{(u,0)}|^2}
\end{equ}
Then, we estimate the sums over variables whose indices are vertices of blue edges by~\eqref{e:yde}, 
while those over variables whose indices are vertices of orange edges by~\eqref{e:ode}. 
\medskip 

Before summing over the remaining variables (that are those with
labels on the two special edges) we consider the case in which
$P\cap D\neq \emptyset$.  As was done in all other cases apart from the
previous one, we upper bound~\eqref{e:trem4} by removing from the
denominator all the variables indexed by vertices in $V_U$ and
applying~\eqref{e:semplice}.  Next, let us pick one edge
$e=((\bar u,i), (v,i_v^1))\in P\cap D$ which is necessarily such that
one between $\bar u$ and $v$ belongs to $\{1,2\}$ and the other is in
$\{3,4\}$, and without loss of generality we assume that
$\bar u\in \{1,2\}$.
\begin{remark}
  \label{rem:j22}
  Note that necessarily $j^{\bar u}>2$, otherwise $e$ would have both endpoints in a rectangle and $D\cup S$ would have a single connected component.
\end{remark}
We bound the sum over $p_{(\bar u,i)}$ and $p_{(v,i_v^1)}$, by 
removing the second variable from the denominators and brutally estimating 
\begin{equ}
\frac{1}{\sum_{\bv\in V[G]}|p_{\bv}|^2}\leq \frac{1}{\sqrt{|p_{(\bar u,i)}|}}\
\end{equ}
so that, recalling~\eqref{e:scheck},  we are left to control
\begin{equs}[e:wdye]
\sum_{p_{(\bar u,i)},\,p_{(v,i_v^1)}}\frac{\1_{p_{(\bar u,i)}=-p_{(v,i_v^1)}}}{\sqrt{|p_{(\bar u,i)}|}}& f_{\bar u}(p_{(\bar u,i)})|p_{(v,0)} + p_{(v,i_v^1)}|\check{\fs}^v(p_{(v,0)} + p_{(v,i_v^1)})\\
&\leq \Big(\sum_{p_{(\bar u,i)}} \frac{ f_{\bar u}(p_{(\bar u,i)})^2}{|p_{(\bar u,i)}|}\Big)^{\half}\Big(\sum_{p_{(v,a_v^1)}} |p_{(v,i_v^1)}|^2\check{\fs}^v(p_{(v,i_v^1)})^2\Big)^{\half}
\end{equs}
where $f_{\bar u}$ is given by the $\bar u$-th term at the right hand side of~\eqref{e:semplice} 
and above, we only highlighted the dependence on the summed variable $p_{(\bar u,i)}$. 

We now turn to the sums over the variables whose indices are vertices of edges in $D\setminus\{e\}$. 
Adopting the same conventions as above, 
we endow each of these edges with a direction and a colour. 
If an edge has only one vertex in a rectangle then we orient it in such a way that its tip 
belongs to the rectangle and assign it the colour blue. If instead both of its vertices lie in 
different rectangles then we choose one of the two possible directions and paint it orange. 
We then bound the remaining sums by removing from the denominators all 
variables whose indices are vertices of a blue/orange edge (thus obtaining the same factor 
as in~\eqref{e:prefactor2}) and exploiting~\eqref{e:yde} and~\eqref{e:ode}. 

In conclusion, in the case $\kappa=4,D\cup S$ having two components and $P\cap D$ being empty or not, we have summed over all variables except to the ones ($p_{(u,0)},u\le 4$) corresponding to special edges and  it is easily checked that:
\begin{itemize}[leftmargin=*]
\item the factors containing the variables $p_{(u,0)}$, $u=1,\dots, 4$ overall give a contribution of the form
\begin{equ}
\prod_{i=1}^4 \frac{1}{|p_{(u,0)}|}\,,
\end{equ}
which is the combined effect of 
% To see this, we can argue as in the case $\kappa=2$, using though~
\eqref{e:prefactor2} and of the factor $\prod_{u=1}^4|p_{(u,0)}|$ in \eqref{e:trem4}.
\item the expression~\eqref{e:trem4} is upper bounded by
  \begin{equs}
    \label{e:k4emptyset}
    \frac1{(\log N)^2} \|\cS^\half \ffNn_{a^{\bar u},j^{\bar u}-1}\|
    \|\cS^\half \ffNn_{a^{\bar v},j^{\bar v}-1}\|\Big(
    \prod_{u\in\{1,\dots,4\}\setminus\{\bar u,\bar v\}} \|
    (-\gensy)^{1/2}\ffNn_{a^u,j^{u}-1}\|\Big)\\\times
    \sum_{\substack{|p_{(u,0)}|\leq N\\u=1,\dots 4}}
    \1_{\substack{p_{(1,0)}=-p_{(2,0)}\\p_{(3,0)}=-p_{(4,0)}}}\prod_{u=1}^4\frac{1}{|p_{(u,0)}|}
  \end{equs}
if $D\cap P=\emptyset$  (recall that in this case $\bar u,\bar v$ are the rows containing the endpoints of the dashed line in Figure \ref{fig:Feyn4}) and  by 
  \begin{equs}
    \label{e:k4notemptyset}
  \frac1{(\log N)^2}   \|\cS^\half \ffNn_{a^{\bar u},j^{\bar u}-1}\| \Big( \prod_{u\in\{1,\dots,4\}\setminus\{\bar u\}} \| (-\gensy)^{1/2}\ffNn_{a^u,j^{u}-1}\|\Big)\\\times \sum_{\substack{|p_{(u,0)}|\leq N\\u=1,\dots 4}} \1_{\substack{p_{(1,0)}=-p_{(2,0)}\\p_{(3,0)}=-p_{(4,0)}}}\prod_{u=1}^4\frac{1}{|p_{(u,0)}|}
  \end{equs}
  if instead $D\cap P\neq\emptyset$ (in this case, $\bar u$ is the
  same index that appears in \eqref{e:wdye}).
  Here, $\cS$ is the operator defined in \eqref{e:S}. Its origin  is in the denominators $|p_{(\bar u,i)}|,|p_{(\bar v,j)}|$ in \eqref{e:wye}, \eqref{e:wdye} and the blue/orange edge estimate that is applied afterwards. 
  Note also, recalling Remarks \ref{rem:j21}, \ref{rem:j22} that $j^{\bar u}-1,j^{\bar v}-1$ in 
  \eqref{e:k4emptyset} and $j^{\bar u}-1$ in
  \eqref{e:k4notemptyset} are all strictly larger than $1$, so indeed we can 
use definition \eqref{e:S}.\end{itemize}
It remains  to estimate the sum over $p_{(u,0)},u=1,\dots,4$ in \eqref{e:k4emptyset}, which equals
\begin{equ}
\Big(\sum_{0<|\ell|\leq N}\frac{1}{|\ell|^2}\Big)^2\lesssim (\log N)^2.
\end{equ}
Altogether, the bounds \eqref{e:esaurito}, \eqref{e:k4emptyset} and \eqref{e:k4notemptyset}, which apply to the three subcases into which we have split the case $\kappa=4$, immediately imply the statement of Proposition \ref{prop:K0} in the case $\kappa=4$.

\begin{appendix}

\section{Some technical results on the replacement argument}\label{a:Approx}

The present appendix is devoted to the proof of the following
proposition that allows us to replace the complicated sums appearing
in the expression for the diagonal terms of the operators we consider
with simpler integrals.

 In the whole section, $\fc$ is given by~\eqref{e:nueff} and $H$ and $H^+$ 
 satisfy the assumptions of Lemma~\ref{l:GenBound}. Since the argument of $H,H^+$ is always $\Ll(x)$ for some $x\ge 1/2$ and $\Ll(x)\in [0,2\fc]$ for $x\ge 1/2$ (see \eqref{e:L}),
 we can assume that $H,H^+$ are defined just on $[0,2\fc]$ and that \eqref{e:KAPPA} holds for some constant $K$ depending only on $\fc$.

\begin{proposition}\label{p:Approx}
 For $N\in\N$, $\mu\geq 0$, $n\in\N$ and $k_{1:n}\in\Z^{2n}\setminus\{0\}$, define  $P^N$ as 
\begin{equs}[e:J]
P^N&(\mu,k_{1:n})\eqdef\frac{4\hat{\lambda}^2}{\log N} \sum_{\ell + m=k_1}(\nonlin_{\ell,m})^2\times\\
& \times \frac{\mu+\tfrac12(|\ell|^2+|m|^2+|k_{2:n}|^2)H^+(\Ll(\mu+\tfrac12(|\ell|^2+|m|^2+|k_{2:n}|^2)))}{[\mu +\tfrac12(|\ell|^2+|m|^2+|k_{2:n}|^2)H(\Ll(\mu+\tfrac12(|\ell|^2+|m|^2+|k_{2:n}|^2)))]^2}
\end{equs}
where $\nonlin$ and  $\Ll$ are given as in~\eqref{e:nonlinCoefficient} and~\eqref{e:L}, respectively.
Then, there exists a constant $C=C(n,\fc)$ such that 
\begin{equ}[e:Approx]
\sup_{\substack{k_{1:n}\in\Z^{2n}\setminus\{0\}\\ \mu\geq 0}}\Big|P^N(\mu,k_{1:n})-\int_0^{\Ll\left(\mu+\tfrac12|k_{1:n}|^2\right)}\frac{H^+(y)}{H(y)^2}\dd y  \Big| \leq C\eps_N
\end{equ}
where $\eps_N$ goes to $0$ as $N\to\infty$ uniformly over $n$, $\fc$ and $K$ as in Lemma~\ref{l:GenBound}.
\end{proposition}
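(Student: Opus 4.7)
The plan is to prove~\eqref{e:Approx} through three reductions, each introducing an error bounded by $C(n,\fc)\eps_N$ uniformly in $(k_{1:n},\mu)$: a Riemann-sum approximation replacing the lattice sum by a continuous integral, a reduction to polar coordinates with an explicit angular integration, and a change of variable that inverts $\Ll$. Writing the summand in $P^N(\mu,k_{1:n})$ as $(\cK^N_{\ell,k_1-\ell})^2\,F_\mu(B(\ell))$, with $B(\ell)=\tfrac12(|\ell|^2+|k_1-\ell|^2+|k_{2:n}|^2)$ and $F_\mu(B)=\bigl(\mu+BH^+(\Ll(\mu+B))\bigr)/\bigl[\mu+BH(\Ll(\mu+B))\bigr]^2$, I first replace $\sum_{\ell\in\Z^2}$ by $\int_{\R^2}d\ell$. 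The summand is smooth on scale $1$ away from the singular points $\ell=0,k_1$ and the boundaries $|\ell|=N,|k_1-\ell|=N$; using the decay $(\cK^N_{\ell,m})^2=O(1/(|\ell|^2|m|^2))$ on its support and the pointwise bound $F_\mu(B)\lesssim 1/B$, a standard Euler--Maclaurin-type estimate bounds the discretization error by $C/\log N$ uniformly in $(k_{1:n},\mu)$.

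For the second step, I shift $\tilde\ell=\ell-k_1/2$ so that $B=|\tilde\ell|^2+C$ with $C=|k_1|^2/4+|k_{2:n}|^2/2$, and pass to polar coordinates $\tilde\ell=\rho(\cos\theta,\sin\theta)$. A direct computation shows $c(\ell,k_1-\ell)^2/(|\ell|^2|k_1-\ell|^2)=\cos^2(2\theta)+R(\tilde\ell,k_1)$ with $|R|\lesssim|k_1|^2/|\tilde\ell|^2$ in the bulk $|\tilde\ell|\gg|k_1|$; integrated against the rest, both this remainder and the contribution from the crossover region $|\tilde\ell|\lesssim|k_1|$ (whose Lebesgue measure is $O(|k_1|^2)$) yield errors of order $1/\log N$. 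Combining the leading angular integral $\int_0^{2\pi}\cos^2(2\theta)\,d\theta=\pi$ with $\hat\lambda^2/\pi=\fc$ and the substitution $t=\rho^2$ gives
\[
P^N(\mu,k_{1:n})=\frac{\fc}{\log N^2}\int_{t_-}^{t_+}F_\mu(t+C)\,dt+o(1),
\]
with $t_-=O(1)$ and $t_+$ close to $N^2$ uniformly in the parameters.

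Finally, I apply the change of variable $y=\Ll(\mu+t+C)$. Using $\Ll'(s)=-\fc N^2/[\log N^2\cdot s(s+N^2)]$, for $s\ll N^2$ one has $ds/s\approx-(\log N^2/\fc)\,dy$, which cancels the prefactor $\fc/\log N^2$. Together with the expansion $F_\mu(s-\mu)=(H^+(\Ll(s))/[sH(\Ll(s))^2])(1+O(\mu/s))$ (valid whenever $s\gg\mu$, which is the only regime in which the integrand is non-negligible), this transforms the $t$-integral into $\int_{\Ll(\mu+N^2+C)}^{\Ll(\mu+C)}H^+(y)/H(y)^2\,dy$ up to admissible errors. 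Since $\Ll(\mu+N^2+C)=O(1/\log N)$ and $|\Ll(\mu+C)-\Ll(\mu+|k_{1:n}|^2/2)|=O(1/\log N)$ by the mean value theorem (the two arguments differ by $|k_1|^2/4$ and $|\Ll'(s)|\lesssim 1/(s\log N)$ for $s\lesssim N^2$), one recovers~\eqref{e:Approx}.

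The main obstacle is the uniformity in $(k_{1:n},\mu)$, especially when $\mu+|k_{1:n}|^2/2$ becomes comparable to $N^2$: in that regime both sides of~\eqref{e:Approx} are individually of order $1/\log N$, so each error has to be shown to be $o(1)$---and not merely bounded by the quantities themselves---which requires carefully tracking the dependence on $\mu$ and $|k_1|$ in every estimate, as well as handling separately the crossover region where the bulk approximations used in Step 2 degrade.
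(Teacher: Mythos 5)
Your proposal is correct and follows essentially the same route as the paper's proof (Riemann-sum approximation, reduction to a radially symmetric integrand, the angular integral $\int_0^{2\pi}\cos^2(2\theta)\,\dd\theta$, and a radial change of variables inverting $\Ll$); the paper merely organizes the intermediate replacements into a separate lemma, each contributing a uniformly bounded error that becomes $O(1/\log N)$ after the prefactor, exactly as in your argument. One small inaccuracy: the remainder from replacing $c(\ell,k_1-\ell)^2/(|\ell|^2|k_1-\ell|^2)$ by $\cos^2(2\theta)$ is of relative size $|k_1|/|\tilde\ell|$ rather than $|k_1|^2/|\tilde\ell|^2$ (the correction comes from the denominator), but this still integrates to a uniform constant, so the conclusion is unaffected.
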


In order to lighten the exposition, let us introduce some notations. 
Set
\begin{equ}[e:conv]
\mu_N\eqdef \mu/N^2\,,\qquad \beta_N\eqdef |k_{2:n}/N|^2\,,\qquad \alpha_N\eqdef \mu_N+\tfrac12 |k_{1:n}/N|^2
\end{equ}
and, for $x\in\R^2$ such that $|x|\leq 1$, 
\begin{equ}[e:Gammas]
\Gamma(x)\eqdef \tfrac12 (|x|^2+|k_1/N-x|^2+\beta_N)\,,\quad\Gamma_1(x)\eqdef \mu_N+\Gamma(x)\,,\quad \Gamma_2(x)\eqdef |x|^2 +\alpha_N\,.
\end{equ}
For future reference, we point out that $\Gamma_1$ and $\Gamma_2$ are comparable in that, by the triangular inequality, 
we have
\begin{equ}[e:lm]
\Gamma_2(x)\lesssim \Gamma_1(x)\lesssim \Gamma_2(x)\,
\end{equ}
for all $x\in\R^2$ and $k_{1:n}\in\Z^{2n}$.  The bulk of the proof of
Proposition~\ref{p:Approx} is contained in the following technical
lemma, which allows us to substitute the Riemann-sum
approximation of $P^N$ with a more tractable integral.

\begin{lemma}\label{l:MainRepl}
Under the assumptions of Proposition~\ref{p:Approx}, the quantity
\begin{equs}[e:MainRepl]
\Big|\int_{\R^2}  \dd x\,\,&(\nonlin_{xN,k_1-xN})^2 \frac{\mu_N+\Gamma(x)H^+(\Ll(N^2\Gamma_1(x)))}{[\mu_N +\Gamma(x)H(\Ll(N^2\Gamma_1(x)))]^2}\\
&-\int_{\R^2}  \dd x\,\,(\nonlin_{Nx,-Nx})^2 \frac{H^+(\Ll(N^2\Gamma_1(x)))}{\Gamma_2(x)(\Gamma_2(x)+1)H(\Ll(N^2\Gamma_2(x)))^2}\Big|
\end{equs} 
is bounded uniformly over $N$, $\mu$ and $k_{1:n}\in\Z^{2n}\setminus\{0\}$. 
\end{lemma}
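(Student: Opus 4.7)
The plan is to prove Lemma~\ref{l:MainRepl} by passing from the first integrand to the second through a short chain of substitutions, each producing a uniformly bounded error. The two integrands differ on three fronts: the kernel factor ($\nonlin_{xN,k_1-xN}$ vs.\ $\nonlin_{xN,-xN}$), the rational denominator in the nonlinear term ($(\mu_N+\Gamma(x)H(L_1))^2$ vs.\ $\Gamma_2(x)(\Gamma_2(x)+1)H(L_2)^2$, where $L_1\eqdef \Ll(N^2\Gamma_1(x))$ and $L_2\eqdef \Ll(N^2\Gamma_2(x))$), and the argument of $H$ in that denominator (using $L_1$ on the first side and $L_2$ on the second). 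I would dispose of these one at a time.

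The central algebraic decomposition is
\[
  \frac{\mu_N+\Gamma(x) H^+(L_1)}{(\mu_N+\Gamma(x) H(L_1))^2}
  \;=\;\frac{H^+(L_1)/H(L_1)}{\mu_N+\Gamma(x) H(L_1)}\;+\;\frac{\mu_N\big(H(L_1)-H^+(L_1)\big)/H(L_1)}{(\mu_N+\Gamma(x) H(L_1))^2}.
\]
Since $H\ge 1$ and $|H-H^+|\le K$, the rightmost summand is dominated by $K\mu_N/(\mu_N+\Gamma(x))^2$, whose integral against $(\nonlin_{xN,k_1-xN})^2$ is uniformly $O(1)$ by polar integration in $x$ on the support $\{|x|\le 1\}$ of $\nonlin$. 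Next, I would transform the remaining principal piece step by step: first replace $(\nonlin_{xN,k_1-xN})^2$ by $(\nonlin_{xN,-xN})^2$, using the identity $c(xN,k_1-xN)-c(xN,-xN)=N(x_2 k_{1,2}-x_1 k_{1,1})$ to see that the relative discrepancy is $O(|k_1/N|/|x|)$, which integrates to $O(1)$ on $\{1/N\le|x|\le 1\}$; second, use the identity $\mu_N+\Gamma(x)=\Gamma_2(x)-x\cdot k_1/N$ and drop the term $x\cdot k_1/N$, which is uniformly bounded by $1$ and contributes another $O(1)$ error after dividing through by $\Gamma_2(x)H(L_1)$; third, apply the partial fraction identity
\[
  \frac1{\Gamma_2(x)}\;=\;\frac1{\Gamma_2(x)(\Gamma_2(x)+1)}\;+\;\frac1{\Gamma_2(x)+1},
\]
and note that $\int(\nonlin_{xN,-xN})^2/(\Gamma_2(x)+1)\dd x$ is trivially bounded since $(\nonlin)^2$ is supported on $|x|\le 1$; finally, replace $L_1$ by $L_2$ in the argument of $H$ in the denominator by the mean-value theorem, together with $|\Ll'(t)|\lesssim (\log N)^{-1}t^{-1}(t/N^2+1)^{-1}$ and $N^2|\Gamma_1(x)-\Gamma_2(x)|=N^2|x\cdot k_1/N|\le N|x||k_1|$, again yielding an $O(1)$ error.

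The main obstacle is the bookkeeping in the middle steps: each of the individual rational quantities $1/\Gamma$, $1/\Gamma_2$ and $1/(\Gamma_2(\Gamma_2+1))$ produces an integral that grows like $\log N$ (this is exactly why the prefactor $(\log N)^{-1}$ appears in the definition of $P^N$), so it is the \emph{differences} between consecutive substitutions that must remain uniformly bounded. Verifying this calls for careful use of the indicator in $\nonlin$, which simultaneously enforces the short-distance cutoff $|x|\ge 1/N$ (needed to control $|x|^{-\beta}$ singularities near the origin) and the large-distance cutoff $|x|\le 1$ (needed to prevent logarithmic growth at infinity), together with the uniform bounds $1\le H\le K$, $0\le H^+\le K$ which propagate through the telescoping chain uniformly in $\mu\ge 0$ and $k_{1:n}\in\Z^{2n}\setminus\{0\}$, including the delicate regime $\alpha_N\to 0$.
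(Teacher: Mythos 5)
Your argument follows essentially the same route as the paper's proof: a telescoping chain of substitutions (isolating the $\mu_N$-correction, $\Gamma\to\Gamma_1\to\Gamma_2$, insertion of $(\Gamma_2+1)^{-1}$ by partial fractions, change of the argument of $H$ via the mean value theorem, and the kernel replacement via bilinearity of $c$), each controlled by the same uniform integral estimates. One justification, as literally stated, does not close: when you write $\mu_N+\Gamma(x)=\Gamma_2(x)-x\cdot k_1/N$ and ``drop the term $x\cdot k_1/N$, which is uniformly bounded by $1$'', the crude bound $|x\cdot k_1/N|\le 1$ only yields an error of order $\int_{|x|\le1}\Gamma_2(x)^{-2}\,\dd x\sim\alpha_N^{-1}$, which is \emph{not} uniformly bounded (take $\mu=0$ and $|k_{1:n}|=1$, so that $\alpha_N\sim N^{-2}$). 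The step is nevertheless correct: you must retain the factor $|x\cdot k_1/N|\le |k_1/N|\,|x|$ and use $\int_{\R^2}|x|\,\Gamma_2(x)^{-2}\dd x\lesssim\alpha_N^{-1/2}$ together with $|k_1/N|\le\sqrt{2\alpha_N}$ --- exactly the estimate you already invoke (correctly) for the kernel replacement, and the one the paper uses at the analogous point. A second, smaller elision of the same type: passing from the denominator $\mu_N+\Gamma(x)H$ to $\Gamma_1(x)H$ produces a correction $\mu_N(H-1)$, controlled by $\mu_N\Gamma_1(x)^{-2}$, identical in nature to the error term you bound in your opening algebraic decomposition. With these two points spelled out, the proof is complete and matches the paper's.
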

\begin{proof}
In order to prove the lemma, we will add and subtract a number of terms and 
show that the resulting integrals are bounded uniformly over 
$N$, $\mu$ and $k_{1:n}\in\Z^{2n}\setminus\{0\}$. 
More precisely, we will subsequently replace each factor in the first integral of~\eqref{e:MainRepl}, 
with the corresponding term of the second and we will do so in the following order. 
We begin by turning every $\Gamma$ into a $\Gamma_1$
\begin{align}
\begin{split}\label{e:Repl1}
&\int_{\R^2}  \dd x\,\,(\nonlin_{xN,k_1-xN})^2 \Big|\frac{\mu_N+\Gamma(x)H^+(\Ll(N^2\Gamma_1(x)))}{[\mu_N +\Gamma(x)H(\Ll(N^2\Gamma_1(x)))]^2} -\frac{H^+(\Ll(N^2\Gamma_1(x)))}{\Gamma_1(x)H(\Ll(N^2\Gamma_1(x)))^2}\Big|\,,
\end{split}
\intertext{then every $\Gamma_1$ into $\Gamma_2$}
\begin{split}\label{e:Repl2}
&\int_{\R^2}  \dd x\,\,(\nonlin_{xN,k_1-xN})^2 \Big|\frac{H^+(\Ll(N^2\Gamma_1(x)))}{\Gamma_1(x)H(\Ll(N^2\Gamma_1(x)))^2} -\frac{H^+(\Ll(N^2\Gamma_2(x)))}{\Gamma_2(x)H(\Ll(N^2\Gamma_2(x)))^2}\Big|
\end{split}
\intertext{we insert a factor which will be needed to apply a change of variables (see~\eqref{e:CoV}) }
\begin{split}\label{e:Repl3}
&\int_{\R^2}  \dd x\,\,(\nonlin_{xN,k_1-xN})^2 \frac{H^+(\Ll(N^2\Gamma_2(x)))}{\Gamma_2(x)H(\Ll(N^2\Gamma_2(x)))^2}\Big|1-\frac{1}{\Gamma_2(x)+1}\Big|
\end{split}
\intertext{and at last we modify the arguments of $\nonlin$}
\begin{split}\label{e:Repl4}
&\int_{\R^2}  \dd x\,\,\Big|(\nonlin_{xN,k_1-xN})^2-(\nonlin_{xN,-xN})^2\Big| \frac{H^+(\Ll(N^2\Gamma_2(x)))}{\Gamma_2(x)(\Gamma_2(x)+1)H(\Ll(N^2\Gamma_2(x)))^2}\,.
\end{split}
\end{align}
The proof consists of showing that each of~\eqref{e:Repl1}-\eqref{e:Repl4} are bounded uniformly over
$N$, $\mu$ and $k_{1:n}\in\Z^{2n}\setminus\{0\}$. 
Throughout, in order to lighten the notation we will omit the arguments of 
the functions involved unless needed. Moreover, for~\eqref{e:Repl1},~\eqref{e:Repl2} and~\eqref{e:Repl3}, 
$\nonlin$ won't play any role, therefore we directly bound it by $1$. 
\medskip

Let us begin with~\eqref{e:Repl1}. 
We add and subtract a term from the integrand so that we obtain
\begin{equs}[e:Repl1+]
&\int_{\R^2} \dd x\,\, \Big|\frac{\mu_N+\Gamma H^+}{[\mu_N +\Gamma H]^2}-\frac{\mu_N+\Gamma H^+}{[\Gamma_1 H]^2}+\frac{\mu_N+\Gamma H^+}{[\Gamma_1 H]^2} -\frac{H^+}{\Gamma_1 H^2}\Big|\\
&\leq \int_{\R^2} \dd x\,\, (\mu_N+\Gamma H^+) \frac{|[\Gamma_1 H]^2-[\mu_N +\Gamma H]^2|}{[\mu_N +\Gamma H]^2[\Gamma_1 H]^2}+\frac{1}{[\Gamma_1 H]^2}|\mu_N+\Gamma H^+ -\Gamma_1 H^+|\,.
\end{equs}
Notice that, for the first summand, since $\Gamma_1=\mu_N+\Gamma$ and $H$ is bounded, the numerator 
can be controlled as
\[
[\Gamma_1 H]^2-[\mu_N +\Gamma H]^2=\mu_N[H-1][\mu_N(H+1)+2\Gamma H]\lesssim \mu_N \Gamma_1
\]
for the second instead, we use that also $H^+$ is bounded, so that 
\[
|\mu_N+\Gamma H^+ -\Gamma_1 H^+|=\mu_N|H^+-1|\lesssim \mu_N
\]
As further $H\geq 1$,~\eqref{e:Repl1+} can be bounded above by 
\begin{equ}[e:Basic]
\mu_N\int_{\R^2}\frac{\Gamma_1(x)^2}{\Gamma_1(x)^4} +\frac{1}{\Gamma_1(x)^2}\lesssim \mu_N \int \frac{\dd x}{(\mu_N +|x|^2)^2}\lesssim \mu_N\int_0^\infty\frac{\dd r}{(r+\mu_N)^2}\lesssim 1\,,
\end{equ}
which completes the argument for~\eqref{e:Repl1}. 
\medskip

We turn to~\eqref{e:Repl2}. Similarly to~\eqref{e:Repl1+}, we add and subtract a term whose numerator 
is that of the first summand while the denominator is that of the second, so that, by the triangular inequality we 
need to bound
\begin{equs}[e:Repl2.2]
\int_{\R^2}  \dd x\,\, &H^+(\Ll(N^2\Gamma_1(x)))\frac{|\Gamma_2(x)H(\Ll(N^2\Gamma_2(x)))^2-\Gamma_1(x)H(\Ll(N^2\Gamma_1(x)))^2|}{\Gamma_1(x)H(\Ll(N^2\Gamma_1(x)))^2\Gamma_2(x)H(\Ll(N^2\Gamma_2(x)))^2}
\\
&+\frac{1}{\Gamma_2(x)H(\Ll(N^2\Gamma_2(x)))^2}|H^+(\Ll(N^2\Gamma_1(x)))- H^+(\Ll(N^2\Gamma_2(x)))|
\end{equs}
For the first summand, we introduce the function $G(z)=z H(\Ll(N^2 z))^2$ and note that its derivative is 
such that
\begin{equ}
|G'(z)|=\left|H(\Ll(N^2 z))^2 -\frac{\fc}{\log N^2} \frac{2H(\Ll(N^2 z))H'(\Ll(N^2 z))}{z+1}\right|\lesssim 1\,,\qquad z\in\R_+
\end{equ}
thanks to the definition of $\Ll$ in~\eqref{e:L} and the assumptions on $H$. 
Now, in terms of $G$, the numerator equals
\begin{equ}
|G(\Gamma_2(x))-G(\Gamma_2(x))|\lesssim |\Gamma_2(x)-\Gamma_1(x)|
\end{equ}
and the bound follows by mean value theorem. 
For the second summand in~\eqref{e:Repl2.2}, we note that 
\begin{equ}
|H^+(\Ll(N^2 z))'|=\frac{\fc}{\log N} \left|\frac{(H^+)'(\Ll(N^2 z))}{z(z+1)}\right|\lesssim \frac{1}{z}\,.
\end{equ}
which, arguing as above, implies
\begin{equs}
|H^+(\Ll(N^2\Gamma_1))-H^+(&\Ll(N^2\Gamma_2))|\leq \Big(\sup_{z}\frac{1}{z}\Big) |\Gamma_1-\Gamma_2|=\frac{|\Gamma_1-\Gamma_2| }{\Gamma_1\wedge\Gamma_2}
\end{equs}
where the supremum is over the interval $[\Gamma_1\wedge\Gamma_2,\Gamma_1\vee\Gamma_2 ]$. 
Now, by~\eqref{e:Gammas} we have 
\begin{equs}[e:Diff]
|\Gamma_2(x)-\Gamma_1(x)|&=||x|^2 +\tfrac12 |k_1/N|^2-\tfrac12 |x|^2 -\tfrac12 |k_1/N-x|^2|\\
&=|\langle k_1/N, x\rangle|\leq |k_1/N||x|\,.
\end{equs}
Since further $H^+$ is bounded, $H\geq 1$ and $\Gamma_1$ and $\Gamma_2$ are 
comparable by~\eqref{e:lm},~\eqref{e:Repl2.2} is bounded above by 
\begin{equs}
\int_{\R^2}\frac{|\Gamma_2(x)-\Gamma_1(x)|}{\Gamma_2(x)^2}&\lesssim |k_1/N|\int\frac{|x| \dd x}{(|x|^2 +\alpha_N)^2}\\
&\lesssim |k_1/N|\int_0^{\infty} \frac{r^2\dd r}{(r^2+\alpha_N)^2} =\frac{\pi}{4}\frac{|k_1/N|}{\sqrt{\alpha_N}}\leq 1\,,
\end{equs}
which concludes the proof of~\eqref{e:Repl2}. 
\medskip

The analysis of~\eqref{e:Repl3} is immediate and therefore we omit it. Hence we are left with~\eqref{e:Repl4}. 
At first, we upper bound the ratio  $H^+/H^2$ by $K$. 
To treat what is left, we split the integral over two regions corresponding to 
$|k_1/N-x|\leq \tfrac12|k_1/N|$ and $|k_1/N-x|> \tfrac12|k_1/N|$ and 
begin by the former. 
Using the definition of $\nonlin$ in~\eqref{e:nonlinCoefficient}, we  bound the difference by $1$, 
exploit the fact that $\Gamma_1(x)\gtrsim |x|^2$ and 
note that since $|k_1/N-x|\leq \tfrac12|k_1/N|$, $|x|\in[\tfrac12|k_1/N|, \tfrac32|k_1/N|]$. 
Then, the integral is bounded above by 
\begin{equ}
\int_{|x|\in[\tfrac12|k_1/N|, \tfrac32|k_1/N|]}\frac{\dd x}{|x|^2}\lesssim 1\,.
\end{equ}
Instead, for $x$ such that $|k_1/N-x|> |k_1/N|/2$, we use the bilinearity of $c$ 
in~\eqref{e:nonlinCoefficient} to get
\begin{equ}
c(x,k_1/N-x)^2=c(x,-x)^2+c(x,k_1/N) c(x,k_1/N-2x)
\end{equ}
so that
\begin{equs}
\Big|&\frac{c(x,k_1/N-x)^2}{|x|^2|k_1/N-x|^2}-\frac{c(x,-x)^2}{|x|^4}\Big|\\
&\leq \frac{c(x,-x)^2}{|x|^2}\left|\frac{1}{|k_1/N-x|^2}-\frac{1}{|x|^2}\right|+\frac{|c(x,k_1/N) c(x,k_1/N-2x)|}{|x|^2|k_1/N-x|^2}\\
&\lesssim \frac{|k_1/N||k_1/N-2x|}{|k_1/N-x|^2} \lesssim \frac{|k_1/N|}{|k_1/N-x|}\,.
\end{equs}
Here, we used that $c(x,-x)\leq |x|^2$.
%where in the last passage we used that $|k_1/N-x|> |k_1/N|/2$. 
Therefore, we obtain 
\begin{equs}
\int_{|k_1/N-x|> \tfrac12|k_1/N|}\dd x&\left|\frac{c(x,k_1/N-x)^2}{|x|^2|k_1/N-x|^2}-\frac{c(x,x)^2}{|x|^4}\right|\frac{1}{\Gamma_1(x)}\\
&\lesssim |k_1/N|\int_{|k_1/N-x|> \tfrac12|k_1/N|}\frac{\dd x}{|k_1/N-x|^3}\lesssim 1\,,
\end{equs}
and the proof is concluded. 
\end{proof}

\begin{remark}
Arguing as in the proof of~\eqref{e:Repl1}, one can also show 
\begin{equ}[e:Repl1.1]
\int_{\R^2}  \dd x\,(\nonlin_{xN,k_1-xN})^2 \left|\frac{1}{\mu_N +\Gamma(x)H(\Ll(N^2\Gamma_1(x)))}-\frac{1}{\Gamma_1(x)H(\Ll(N^2\Gamma_1(x)))}\right|
\end{equ}
is uniformly bounded
which will be used in the proof of Proposition~\ref{e:Dbulkn}. 
\end{remark}

We are now ready to prove Proposition~\ref{p:Approx}. 
\begin{proof}[of Proposition~\ref{p:Approx}]
As a first step, we note that, performing a Riemann sum approximation, 
the sum in~\eqref{e:J} can be written as 
\begin{align}
&\sum_{\ell + m=k_1}\frac{1}{N^2}(\nonlin_{N(\ell/N),N(m/N)})^2\times\nonumber\\
&\,\,\,\times \frac{\mu_N +\tfrac12(|\tfrac{\ell}{N}|^2+|\tfrac{m}{N}|^2+\beta_N)H^+(\Ll(N^2(\mu_N+\tfrac12(|\tfrac{\ell}{N}|^2+|\tfrac{m}{N}|^2+\beta_N))))}{[\mu_N +\tfrac12(|\tfrac{\ell}{N}|^2+|\tfrac{m}{N}|^2+\beta_N)H(\Ll(N^2(\mu_N+\tfrac12(|\tfrac{\ell}{N}|^2+|\tfrac{m}{N}|^2+\beta_N))))]^2}\label{e:RS}\\
&= \int \dd x (\nonlin_{Nx,k_1-N x})^2\frac{\mu_N +\Gamma(x) H^+(\Ll(N^2\Gamma_1(x)))}{[\mu_N +\Gamma(x)H(\Ll(N^2\Gamma_1(x)))]^2}+o(1)\nonumber
\end{align}
where we adopted the conventions in~\eqref{e:conv} and~\eqref{e:Gammas}, and the $o(1)$ vanishes 
for $N$ large as a consequence of the Riemann summation. 
Thanks to the previous and Lemma~\ref{l:MainRepl}, we have 
\begin{equ}
  P^N(\mu,k_{1:n})=\frac{4\hat{\lambda}^2}{\log N}\int \dd
  x\,\,(\nonlin_{Nx,-Nx})^2
  \frac{H^+(\Ll(N^2\Gamma_2(x)))}{\Gamma_2(x)(\Gamma_2(x)+1)H(\Ll(N^2\Gamma_2(x)))^2}
  +o(1)
\end{equ}
where $o(1)$ converges to $0$ as $(\log N)^{-1}$ uniformly over $\mu\ge0$ and $k_{1:n}\in\Z^{2n}\setminus\{0\}$. 
For the latter integral, we pass to polar coordinates and exploit the fact that $c(x,-x)^2=r^4\cos(2\theta)^2$, 
which gives
\begin{equs}
% &\frac{4\hat{\lambda}^2}{\log N}\int  \dd x\,\,(\nonlin_{x,x})^2\frac{H^+(\Ll(N^2\Gamma_2(x)))}{\Gamma_2(x)(\Gamma_2(x)+1)H(\Ll(N^2\Gamma_2(x)))}\\
&\frac{4\hat{\lambda}^2}{\log N^2}\frac{1}{(2\pi)^2}\int_{|x|\in[1/N,1]} \frac{c(x,-x)^2}{|x|^4}\frac{H^+(\Ll(N^2\Gamma_2(x)))}{\Gamma_2(x)(\Gamma_2(x)+1)H(\Ll(N^2\Gamma_2(x)))^2}\\
&=\frac{\hat{\lambda}^2}{\log N^2}\frac{1}{\pi^2}\int_0^{2\pi} \cos(2\theta)^2\int_{1/N}^1 \frac{H^+(\Ll(N^2(r^2+\alpha_N))) \,r\dd r}{(r^2+\alpha_N)(r^2+\alpha_N+1)H(\Ll(N^2(r^2+\alpha_N)))^2} \\
%&=\frac{\fc}{\log N^2}\int_{1/N^2+\alpha_N}^{1+\alpha_N} \frac{H^+(\Ll(N^2\rho)) \dd \rho}{\rho(\rho+1)H(\Ll(N^2\rho))^2}\\
&=\frac{\fc}{\log N^2}\int_{\alpha_N}^{1+\alpha_N} \frac{H^+(\Ll(N^2\rho)) \dd \rho}{\rho(\rho+1)H(\Ll(N^2\rho))^2}+o(1)
\end{equs}
where $\fc$ is defined in~\eqref{e:nueff} and the last step follows by
\begin{equation}\label{e:CoV}
\int_{\alpha_N}^{1/N^2+\alpha_N}\frac{H^+(\Ll(N^2\rho)) \dd \rho}{\rho(\rho+1)H(\Ll(N^2\rho))^2}\lesssim \int_{\alpha_N}^{1/N^2+\alpha_N}\frac{\dd \rho}{\rho}=\log\left(1+\frac{1}{\mu+\tfrac12|k_{1:n}|^2}\right)
\end{equation}
and the latter is uniformly bounded by $1$ as $|k_{1:n}|\geq 1$. By the definition of $\Ll$ in~\eqref{e:L}, we see that 
\begin{equs}
\frac{\fc}{\log N^2}\int_{\alpha_N}^{1+\alpha_N} \frac{H^+(\Ll(N^2\rho)) \dd \rho}{\rho(\rho+1)H(\Ll(N^2\rho))^2} %\int_{\Ll(N^2(1+\alpha_N))}^{\Ll(N^2\alpha_N)}\frac{H^+(y)}{H(y)^2}\dd y\\
=\int_0^{\Ll(N^2\alpha_N)}\frac{H^+(y)}{H(y)^2}\dd y+o(1)
\end{equs}
since 
\begin{equ}
\int_0^{\Ll(N^2(\alpha_N+1))}\frac{H^+(y)}{H(y)^2}\dd y\lesssim \Ll(N^2(\alpha_N+1))=\frac{\fc}{\log N^2}\log\left(1+\frac{1}{\alpha_N+1}\right)\lesssim \frac{1}{\log N^2}\,,
\end{equ}
where we bounded the ratio of $H^+$ and $H$ by $K$, and the proof is concluded. 
\end{proof}

\section{The operator $\cS$}
\label{app:S}
\begin{lemma}\label{l:OpS}
Let $\cS$ be the diagonal operator on $L^2(\eta)$ whose Fourier multiplier is defined as in~\eqref{e:S}. 
Let $j\in\N$ and  $\fg\in\fock_{j-1}$. Define $\ft^N\in\fock_{j}$ according to
\begin{equ}[e:RecGen3]
\ft^N\eqdef (-\gensy)^{-1}\genap\fg\,.
\end{equ}
Then, there exists a constant $C=C(j)>0$ such that 
\begin{equ}[e:SRec]
\|\cS^\half \ft^N\|^2\leq C\Big( \|\cS^\half \fg\|^2\mathds{1}_{j>2} +\frac{1}{\log N}\|(-\gensy)^\half \fg\|^2\Big)\,.
\end{equ}
\end{lemma}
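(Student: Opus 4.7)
Since both $\cS$ and $\gensy$ are Fourier-diagonal they commute, and with $\tilde\cS \eqdef \cS(-\gensy)^{-2}$ one has
\begin{equ}
\|\cS^{1/2}\ft^N\|^2 = \langle\tilde\cS\,\genap\fg,\genap\fg\rangle,
\end{equ}
where $\tilde\cS$ is itself diagonal on $\fock_j$ with multiplier $\tilde\sigma_j(q_{1:j}) = 4\sigma_j(q_{1:j})/|q_{1:j}|^4$. The idea is to apply Lemma~\ref{l:DiagOffDiag} to this scalar product, with $\phi_1=\phi_2=\fg\in\fock_{j-1}$ and diagonal operator $\tilde\cS$, decomposing it into one diagonal and two off-diagonal pieces and treating them separately.

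The off-diagonal pieces are the easier ones. From the definition~\eqref{e:S} and $|q_b|\ge 1$ one gets the crude bound $\tilde\sigma_j(q_{1:j})\le C(j)/|q_{1:j}|^2$, so that $\tilde\cS$ is dominated by a constant times $(-\gensy)^{-1}$. Plugging this into~\eqref{e:OffDiag1}--\eqref{e:OffDiag2} and running the same argument as in the proof of Lemma~\ref{l:GenBound} (Cauchy--Schwarz in the momentum variables, followed by the polar-coordinate estimate $\int|x|^{-1}(|k|^2+|x|^2)^{-1}\dd x\lesssim|k|^{-1}$), one obtains for each off-diagonal term a bound of order $(\log N)^{-1}\|(-\gensy)^{1/2}\fg\|^2$, which fits within the right-hand side of~\eqref{e:SRec}.

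The bulk of the work lies in the diagonal piece. Writing $\sigma_j(q_{1:j})=(j-2)!\sum_{a\neq b}|q_a|^2/|q_b|$ and substituting $(q_1,q_2,q_{3:j})=(\ell,m,k_{2:j-1})$ as demanded by~\eqref{e:Diag}, the plan is to split the double sum over the ordered pair $(a,b)$ into four regions, depending on whether each of $a,b$ belongs to the \emph{inner} set $\{1,2\}$ (labels of the summed pair $(\ell,m)$ with $\ell+m=k_1$) or the \emph{outer} set $\{3,\dots,j\}$ (labels of the external momenta $k_{2:j-1}$). When the denominator label $b$ lies in $\{1,2\}$, I would absorb $|q_a|^2$ into one of the factors $|q_{1:j}|^2$ in the denominator $|q_{1:j}|^4$ and reduce the inner sum to
\begin{equ}
\sum_{\substack{\ell+m=k_1\\ 0<|\ell|,|m|\le N}}\frac{1}{|q_b|\,E}\lesssim\frac{1}{|k_{1:j-1}|},\qquad E\eqdef|\ell|^2+|m|^2+|k_{2:j-1}|^2;
\end{equ}
together with the prefactor $\const^2=\hat\lambda^2/\log N$, $|k_1|^2\le|k_{1:j-1}|^2$ and the symmetry of $\hat\fg$ this yields the claimed $(\log N)^{-1}\|(-\gensy)^{1/2}\fg\|^2$ contribution. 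When instead $a\in\{1,2\}$ and $b\in\{3,\dots,j\}$, the outer factor $1/|q_b|$ pulls out of the inner sum, leaving
\begin{equ}
\sum_{\substack{\ell+m=k_1\\ 0<|\ell|,|m|\le N}}\frac{|\ell|^2+|m|^2}{E^2}\lesssim\log N,
\end{equ}
so the $\log N$ cancels the $1/\log N$ from $\const^2$; after symmetrising in the arguments of $\hat\fg$ one recognises a $j$-dependent constant times the Fourier multiplier $\sigma_{j-1}(k_{1:j-1})$ of $\cS$, producing the $\|\cS^{1/2}\fg\|^2$ contribution. Crucially, the region $b\in\{3,\dots,j\}$ is empty precisely when $j=2$, which accounts for the indicator $\mathds{1}_{j>2}$. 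The last region (both $a,b\in\{3,\dots,j\}$) enjoys the extra decay $\sum_{\ell+m=k_1}1/E^2\lesssim|k_{1:j-1}|^{-2}$, which more than absorbs the surviving $|k_a|^2/|k_b|$ and is again subsumed into $\|\cS^{1/2}\fg\|^2$.

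I expect the main technical difficulty to be the combinatorial bookkeeping in the region $a\in\{1,2\}$, $b\in\{3,\dots,j\}$: one must carefully count the $(j-2)!$ permutations in the expansion of $\sigma_j$ and invoke the symmetry of $\hat\fg$ to verify that the resulting sum reassembles exactly as $(j-1)!\sum_{k_{1:j-1}}\sigma_{j-1}(k_{1:j-1})|\hat\fg(k_{1:j-1})|^2=\|\cS^{1/2}\fg\|^2$, up to a multiplicative constant depending only on $j$ (which is permitted by the statement).
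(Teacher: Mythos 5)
Your proposal is correct and follows essentially the same route as the paper: decompose $\langle\cS(-\gensy)^{-1}\genap\fg,(-\gensy)^{-1}\genap\fg\rangle$ via Lemma~\ref{l:DiagOffDiag}, and split the permutation sum in the diagonal term according to whether the denominator index $b$ labels one of the convolved momenta $(\ell,m)$ (giving the bounded inner sum and hence the $(\log N)^{-1}\|(-\gensy)^{1/2}\fg\|^2$ term) or one of the outer arguments of $\hat\fg$ (giving the $\log N$ cancellation and the $\|\cS^{1/2}\fg\|^2$ term, which is vacuous for $j=2$, whence the indicator). The only cosmetic difference is that the paper disposes of the off-diagonal pieces by Cauchy--Schwarz against the diagonal, whereas you re-run the Lemma~\ref{l:GenBound} off-diagonal estimates with the pointwise bound $\tilde\sigma_j\lesssim|q_{1:j}|^{-2}$; both are valid.
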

\begin{proof}
First, we decompose the square of the left hand side of~\eqref{e:SRec} in diagonal and off-diagonal parts 
as in Lemma~\ref{l:DiagOffDiag} and bound, via Cauchy-Schwarz, the off-diagonals with the diagonal up 
to a constant that only depends on $j$. 
Therefore, we are left to bound
\begin{equs}[e:SRec1]
\langle \cS (-\gensy)^{-1}&\genap\fg, (-\gensy)^{-1}\genap\fg\rangle_{\Di}\\
&\lesssim \lambda_N^2 \sum_{a\ne b\in\{1,\dots, j\}} \sum_{k_{1:j}}\frac{|k_a|^2}{|k_b|}\frac{|k_1+k_2|^2}{|k_{1:j}|^4} (\nonlin_{k_1,k_2})^2 
|\hat\fg(k_1+k_2,k_{3:j})|^2\\
&\lesssim \lambda_N^2 \sum_{a\ne b\in\{1,\dots, j\}}\sum_{k_{1:j}}\frac{1}{|k_b|}\frac{|k_1+k_2|^2}{|k_{1:j}|^2} 
|\hat\fg(k_1+k_2,k_{3:j})|^2\mathds{1}_{|k_1|,|k_2|\le N}.
\end{equs}
We need to distinguish a few cases. If $\{a,b\}\cap\{1,2\}=\emptyset$, or $a\in\{1,2\}$ and $b>2$ 
(which is possible only if $j>2$), 
then the inner sum in~\eqref{e:SRec1} is bounded above by
\begin{equs}
\hat\lambda^2\sum_{k,\,k_{3:j}}\frac{|k|^2}{|k_b|} |\hat\fg(k,k_{3:j})|^2\Big(\frac{1}{\log N} \sum_{k_1+k_2=k}\frac{1}{|k_1|^2+|k_2|^2}\Big)\lesssim \|\cS^\half \fg\|^2
\end{equs} 
as the quantity in parenthesis is bounded above by a constant and $\hat\fg$ is symmetric by definition. 
If instead  either $b\in\{1,2\}$ and $a>2$, or $|\{a,b\}\cap\{1,2\}|=2$, then 
we control~\eqref{e:SRec1} as 
\begin{equs}
\const^2\sum_{k,\,k_{3:j}}|k|^2 |\hat\fg(k,k_{3:j})|^2\sum_{k_1+k_2=k}\frac{1}{|k_b|(|k_1|^2+|k_2|^2)}\lesssim \const^2\|(-\gensy)^\half \fg\|^2
\end{equs}
as the inner sum can be easily checked to be bounded. Since $\const^2\sim 1/\log N$, the result follows at once. 
\end{proof}

\begin{proof}[of Proposition \ref{prop:Spiccolo}]
  We can assume $j\ge2$ since otherwise the norm we want to estimate is just $0$.
  If $j>2$ then we recall that $\ffNn_{a,j}$ is defined by the second line in \eqref{e:Fjn} and in the computation of the norm
  $\|\cS^\half \ffNn_{a,j}\|$ we can replace $G_{n+2-j}$ (which is positive) by zero.
  We have then
  \begin{equs}
    \|\cS^\half \ffNn_{a,j}\|\le \|\cS^\half (-\gensy)^{-1}\genap \ffNn_{a,j-1}\|\le C(j)\left(\|\cS^\half \ffNn_{a,j-1}\|^2+\frac1{\log N}\|(-\gensy)^\half \ffNn_{a,j-1}\|^2
      \right)
    \end{equs}
    where the last inequality follows from Lemma \ref{l:OpS}. 
    The norm $\|(-\gensy)^\half \ffNn_{a,j-1}\|$ is bounded as $N\to\infty$ by 
    Proposition \ref{l:PrelimBounds} so that, by iteratively applying Lemma~\ref{l:OpS} over $j$ it is enough to show
    \begin{equs}
      \label{e:afa}
    \lim_{N\to\infty}\|\cS^\half \ffNn_{a,2}\|=0.
    \end{equs}
     Recall \eqref{e:Fjn}. If $a=1$ (so that $\ffNn_a=\fbNn$) then $\ffNn_{a,2}$ is defined by the second line 
     of \eqref{e:Fjn} and \eqref{e:afa} follows again by Lemma \ref{l:OpS}. If instead $a=2$ 
     (so that $\ffNn_a=\fhNn$) then $\ffNn_{a,2}$ is defined by the first line of \eqref{e:Fjn} with 
     $\fg_2={\mathfrak n}^N_0$ (see \eqref{eq:explFock}). Then,
     \begin{equs}
       \|\cS^\half \ffNn_{2,2}\|^2\le         \|\cS^\half(-\gensy)^{-1}{\mathfrak n}^N_0\|^2\lesssim\frac{1}{\log N}\sum_{k_1+k_2=0}\frac1{(|k_1|^2+|k_2|^2)^2}\frac{|k_1|^2}{|k_2|}(\nonlin_{k_1,k_2})^2\\
       \lesssim \frac{1}{\log N}\sum_{k_1\ne 0} \frac1{|k_1|^3}  \lesssim \frac{1}{\log N}\,
     \end{equs}
     from which the result follows at once.
 \end{proof}

\section{The variance of the quadratic variation: a concrete example}
\label{sec:example}

\begin{figure}\begin{center}
\includegraphics[width=4cm]{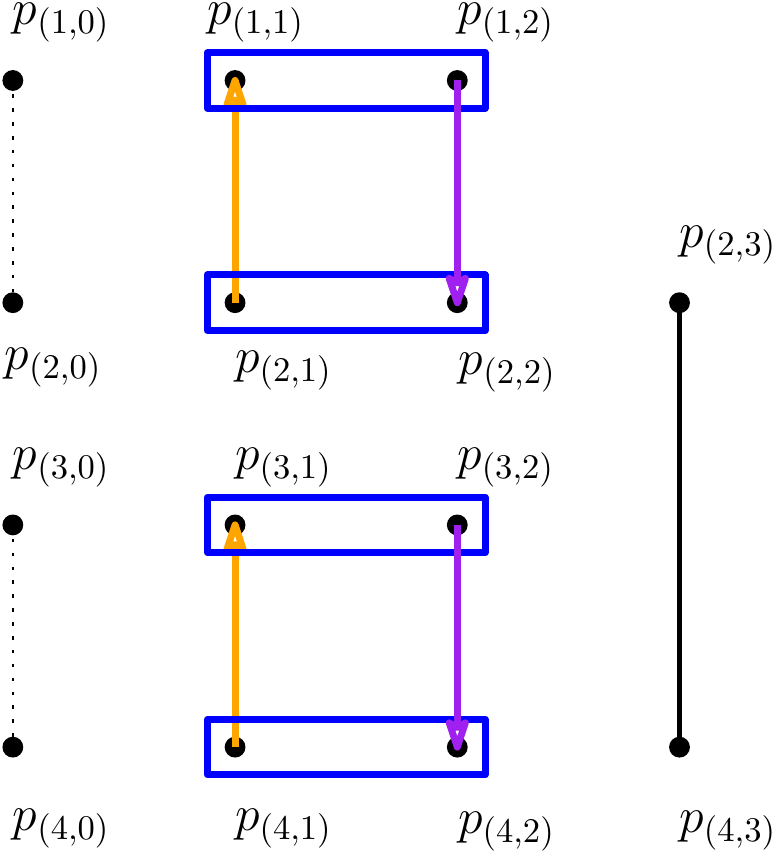}%\qquad\qquad\qquad\includegraphics[width=4cm]{k2fig2.pdf}
\caption{An example of a diagram involved in the estimation of the variance in \eqref{e:Var}.
}
\label{example}
\end{center}
\end{figure}

In this section we provide a concrete example to illustrate 
how to apply the estimates associated to the various colours outlined in Section~\ref{sec:tourdeforce}. 
We focus on the simplest case, namely $\kappa=0$. 
The graph we consider is given in Figure~\ref{example}, and our starting point is~\eqref{e:trem5}. 
We first note that, with the notation introduced in Section~\ref{sec:tourdeforce}, 
the only edge in $U$ is $((2,3), (4,3))$. 
We therefore first sum over $p_{(2,3)}$ and $p_{(4,3)}$. Hence, we estimate
\begin{equs}
&\sum_{p_{(2,3)}, p_{(4,3)}}\1_{p_{(2,3)}=-p_{(4,3)}}	|\fs^2(p_{(2,2)}+p_{(2,1)}, p_{(2,0)}, p_{(2,3)})||\fs^4(p_{(4,2)}+p_{(4,1)}, p_{(4,0)}, p_{(4,3)})|\\
&\leq \Big(\sum_{p_{(2,3)}}|\fs^2(p_{(2,2)}+p_{(2,1)}, p_{(2,0)}, p_{(2,3)})|^2\Big)^{1/2}\Big(\sum_{p_{(4,3)}} |\fs^4(p_{(4,2)}+p_{(4,1)}, p_{(4,0)}, p_{(4,3)})|^2\Big)^{1/2}\,.
	\end{equs}
It remains to sum over the remaining vertices. 
Observe that the remaining edges in $D$ form two closed loops, 
and we colour the edges purple and orange as in Figure~\ref{example}. 
Since in our graphical notation there are no other edges between the first two rows and the last two rows, 
we see that the remaining sum can be factorised and 
we can independently sum over $\{p_{(u,i)}:\, u\in\{1,2\},\, i\ge1\}$ and $\{p_{(u,i)}:\, u\in\{3,4\},\, i\ge1\}$. 
Since these two sums can be treated in the exact same way we only sum over $\{p_{(u,i)}:\, u\in\{1,2\},\, i\ge1\}$.
Note that in that case $(1,1)$ and $(2,1)$ are the vertices on the orange edge, 
so that by removing $|p_{(1,1)}|^2$ and $|p_{(2,1)}|^2$ from the denominator we are left to estimate
\begin{equs}[e:appC]
\sum_{p_{(u,i)}} &\1_{p_{(1,0)}=-p_{(2,0)}}\1_{p_{(1,1)}=-p_{(2,1)}}\1_{p_{(1,2)}=-p_{(2,2)}}\times\\
&\times\frac{\fs^1(p_{(1,1)}+p_{(1,2)}, p_{(1,0)})|f_2(p_{(2,1)}+p_{(2,2)}, p_{(2,0)})\prod_{u=1}^{2}|p_{(u,0)}||p_{(u,1)}+p_{(u,2)}|}{(|p_{(1,0)}|^2+|p_{(1,2)}|^2)(|p_{(2,0)}|^2+|p_{(2,2)}|^2)}\,,
	\end{equs}
where $u=1,2$ and $i=1,2,3$ and we defined
\begin{equ}
f_2(p_{(2,1)}+p_{(2,2)}, p_{(2,0)})= \Big(\sum_{p_{(2,3)}}|\fs^2(p_{(2,1)}+p_{(2,2)}, p_{(2,0)}, p_{(2,3)})|^2\Big)^{1/2}\,. 	
\end{equ}
We first perform the sum with respect to the vertices of the orange edge, i.e., over $p_{(1,1)}$ and $p_{(2,1)}$. 
We bound, using~\eqref{e:ode},
\begin{equs}
&\sum_{p_{(1,1)}, p_{(2,1)}} \1_{p_{(1,1)}=-p_{(2,1)}}	 \prod_{u=1}^{2}|p_{(u,1)}+p_{(u,2)}|\fs^1(p_{(1,1)}+p_{(1,2)}, p_{(1,0)})|f_2(p_{(2,1)}+p_{(2,2)}, p_{(2,0)})\\
&\quad\leq \Big(\sum_{p_{(1,1)}}|p_{(1,1)}|^2|\fs^1(p_{(1,1)}, p_{(1,0)})|^2\Big)^{1/2}\Big(\sum_{p_{(2,1)}}|p_{(2,1)}|^2 f_2(p_{(2,1)}, p_{(2,0)})^2\Big)^{1/2}\\
&\quad= \Big(\sum_{p_{(1,1)}}|p_{(1,1)}|^2|\fs^1(p_{(1,1)}, p_{(1,0)})|^2\Big)^{1/2}\Big(\sum_{p_{(2,1)}, p_{(2,3)}}|p_{(2,1)}|^2 |\fs^2(p_{(2,1)}, p_{(2,0)}, p_{(2,3)})|^2\Big)^{1/2}\,.
	\end{equs}
The next sum we perform is over the purple edge, i.e. over $p_{(1,2)}$ and $p_{(2,2)}$. 
To that end, note that the right hand side above does not depend on those two summation variables. 
Hence, the sum over $p_{(1,2)}$ and $p_{(2,2)}$ is given by
\begin{equs}\label{e:purpleappendix}
\sum_{p_{(1,2)}, p_{(2,2)}}\1_{p_{(1,2)}=-p_{(2,2)}}\frac{1}{(|p_{(1,0)}|^2+|p_{(1,2)}|^2)(|p_{(2,0)}|^2+|p_{(2,2)}|^2)}	\lesssim\frac{1}{|p_{(1,0)}||p_{(2,0)}|}\,,
	\end{equs}
where we made use of~\eqref{e:pde}.
Combining the above estimates, we are left with
\begin{equ}
\sum_{\substack{p_{(1,0)}, p_{(2,0)}\\ p_{(1,0)}=-p_{(2,0)}}}\Big(\sum_{p_{(1,1)}}|p_{(1,1)}|^2|\fs^1(p_{(1,1)}, p_{(1,0)})|^2\Big)^{1/2}\Big(\sum_{p_{(2,1)}, p_{(2,3)}}|p_{(2,1)}|^2 |\fs^2(p_{(2,1)}, p_{(2,0)}, p_{(2,3)})|^2\Big)^{1/2}.
	\end{equ}
Here, we used that the factor on the right hand side of~\eqref{e:purpleappendix} cancels the term $\prod_{u=1,2}|p_{(u,0)}|$ in \eqref{e:appC}. The above however equals the expression in~\eqref{e:kappa0almostfinal} and can be bounded in the same way as in~\eqref{e:kappa0final}. 
\end{appendix}

\section*{Acknowledgements}
The authors would like to thank Massimiliano Gubinelli for helpful discussions and suggestions. 
G. C. gratefully acknowledges financial support via the EPSRC grant EP/S012524/1. 
D. E. gratefully acknowledges financial support 
from the National Council for Scientific and Technological Development - CNPq via a 
Universal grant 409259/2018-7, and a Bolsa de Produtividade 303520/2019-1. F. T. gratefully acknowledges financial support of Agence Nationale de la Recherche via the
ANR-15-CE40-0020-03 Grant LSD.

\bibliography{bibtex}

\begin{thebibliography}{CHST21}
\expandafter\ifx\csname url\endcsname\relax
  \def\url#1{\texttt{#1}}\fi
\expandafter\ifx\csname urlprefix\endcsname\relax\def\urlprefix{URL }\fi
\expandafter\ifx\csname href\endcsname\relax
  \def\href#1#2{#2}\fi
\expandafter\ifx\csname burlalt\endcsname\relax
  \def\burlalt#1#2{\href{#2}{\texttt{#1}}}\fi

\bibitem[BCT16]{BCT}
\textsc{A.~Borodin}, \textsc{I.~Corwin}, and \textsc{F.~L. Toninelli}.
\newblock Stochastic heat equation limit of a (2 + 1)d growth model.
\newblock \emph{Communications in Mathematical Physics} \textbf{350}(2016).

\bibitem[BQS11]{BQS}
\textsc{M.~Bal\'azs}, \textsc{J.~Quastel}, and \textsc{T.~Sepp\"al\"ainen}.
\newblock Fluctuation exponent of the {KPZ}/stochastic {B}urgers equation.
\newblock \emph{Journal of the American Mathematical Society} \textbf{24},
  no.~3, (2011), 683--708.

\bibitem[CCM19]{CCM1}
\textsc{F.~{Comets}}, \textsc{C.~{Cosco}}, and \textsc{C.~{Mukherjee}}.
\newblock {Renormalizing the Kardar-Parisi-Zhang equation in $d\geq 3$ in weak
  disorder}.
\newblock \emph{arXiv e-prints}  arXiv:1902.04104.
\newblock \burlalt{arXiv:1902.04104}{http://arxiv.org/abs/1902.04104}.

\bibitem[CD20]{CD}
\textsc{S.~Chatterjee} and \textsc{A.~Dunlap}.
\newblock {Constructing a solution of the $(2+1)$-dimensional KPZ equation}.
\newblock \emph{The Annals of Probability} \textbf{48}, no.~2, (2020), 1014 --
  1055.

\bibitem[CES21]{CES}
\textsc{G.~Cannizzaro}, \textsc{D.~Erhard}, and \textsc{P.~Sch\"{o}nbauer}.
\newblock 2{D} anisotropic {KPZ} at stationarity: scaling, tightness and
  nontriviality.
\newblock \emph{Ann. Probab.} \textbf{49}, no.~1, (2021), 122--156.

\bibitem[CET20]{CET}
\textsc{G.~Cannizzaro}, \textsc{D.~Erhard}, and \textsc{F.~Toninelli}.
\newblock The stationary {AKPZ} equation: logarithmic superdiffusivity (2020).
\newblock \burlalt{arXiv:2007.12203}{http://arxiv.org/abs/2007.12203}.

\bibitem[CHST21]{CHT}
\textsc{G.~Cannizzaro}, \textsc{L.~Haunschmid-Sibitz}, and
  \textsc{F.~Toninelli}.
\newblock $\sqrt{\log t}$-superdiffusivity for a {Brownian} particle in the
  curl of the 2d {GFF} (2021).
\newblock \burlalt{arXiv:2106.06264}{http://arxiv.org/abs/2106.06264}.

\bibitem[CK20]{CK}
\textsc{G.~Cannizzaro} and \textsc{J.~Kiedrowski}.
\newblock Preprint, 2020.

\bibitem[CNN20]{CNN}
\textsc{C.~Cosco}, \textsc{S.~Nakajima}, and \textsc{M.~Nakashima}.
\newblock Law of large numbers and fluctuations in the sub-critical and ${L}^2$
  regions for {SHE} and {KPZ} equation in dimension $d\geq 3$.
\newblock \emph{arXiv e-prints} (2020).
\newblock \burlalt{arXiv:2005.12689}{http://arxiv.org/abs/2005.12689}.

\bibitem[CSZ20]{CSZ}
\textsc{F.~Caravenna}, \textsc{R.~Sun}, and \textsc{N.~Zygouras}.
\newblock The two-dimensional {KPZ} equation in the entire subcritical regime.
\newblock \emph{Ann. Probab.} \textbf{48}, no.~3, (2020), 1086--1127.

\bibitem[Dri03]{D}
\textsc{B.~K. Driver}.
\newblock \emph{Analysis Tools with Applications}.
\newblock Springer, 2003.

\bibitem[GJ13]{GubinelliJara2012}
\textsc{M.~Gubinelli} and \textsc{M.~Jara}.
\newblock Regularization by noise and stochastic {B}urgers equations.
\newblock \emph{Stoch. Partial Differ. Equ. Anal. Comput.} \textbf{1}, no.~2,
  (2013), 325--350.

\bibitem[GJ14]{GonJara}
\textsc{P.~Goncalves} and \textsc{M.~Jara}.
\newblock Nonlinear fluctuations of weakly asymmetric interacting particle
  systems.
\newblock \emph{Arch. Ration. Mech. Anal.} \textbf{212}, no.~2, (2014),
  597--644.

\bibitem[GP15]{GPnotes}
\textsc{M.~Gubinelli} and \textsc{N.~Perkowski}.
\newblock \emph{Lectures on singular stochastic {PDE}s}, vol.~29 of
  \emph{Ensaios Matem\'{a}ticos [Mathematical Surveys]}.
\newblock Sociedade Brasileira de Matem\'{a}tica, Rio de Janeiro, 2015.

\bibitem[GP17]{KPZreloaded}
\textsc{M.~Gubinelli} and \textsc{N.~Perkowski}.
\newblock K{PZ} reloaded.
\newblock \emph{Comm. Math. Phys.} \textbf{349}, no.~1, (2017), 165--269.

\bibitem[GP18a]{GPuni}
\textsc{M.~Gubinelli} and \textsc{N.~Perkowski}.
\newblock Energy solutions of {KPZ} are unique.
\newblock \emph{J. Amer. Math. Soc.} \textbf{31}, no.~2, (2018), 427--471.

\bibitem[GP18b]{GPabs}
\textsc{M.~Gubinelli} and \textsc{N.~Perkowski}.
\newblock Probabilistic approach to the stochastic {B}urgers equation.
\newblock In \emph{Stochastic partial differential equations and related
  fields}, vol. 229 of \emph{Springer Proc. Math. Stat.},  515--527. Springer,
  Cham, 2018.

\bibitem[GP20]{GPGen}
\textsc{M.~{Gubinelli}} and \textsc{N.~{Perkowski}}.
\newblock {The infinitesimal generator of the stochastic Burgers equation}.
\newblock \emph{Probability Theory and Related Fields} \textbf{178}, (2020),
  1067--1124.

\bibitem[GRZ18]{Gu2018b}
\textsc{Y.~Gu}, \textsc{L.~Ryzhik}, and \textsc{O.~Zeitouni}.
\newblock The {E}dwards-{W}ilkinson limit of the random heat equation in
  dimensions three and higher.
\newblock \emph{Comm. Math. Phys.} \textbf{363}, no.~2, (2018), 351--388.

\bibitem[GT20]{GT}
\textsc{M.~Gubinelli} and \textsc{M.~Turra}.
\newblock Hyperviscous stochastic {Navier–Stokes} equations with white noise
  invariant measure.
\newblock \emph{Stochastics and Dynamics} \textbf{20}, no.~6, (2020), 2040005.

\bibitem[Gu20]{Gu2020}
\textsc{Y.~Gu}.
\newblock Gaussian fluctuations from the 2{D} {KPZ} equation.
\newblock \emph{Stoch. Partial Differ. Equ. Anal. Comput.} \textbf{8}, no.~1,
  (2020), 150--185.

\bibitem[Hai13]{KPZ}
\textsc{M.~Hairer}.
\newblock Solving the {KPZ} equation.
\newblock \emph{Ann. of Math. (2)} \textbf{178}, no.~2, (2013), 559--664.

\bibitem[Hai14]{Hai}
\textsc{M.~Hairer}.
\newblock A theory of regularity structures.
\newblock \emph{Invent. Math.} \textbf{198}, no.~2, (2014), 269--504.

\bibitem[Jan97]{janson1997gaussian}
\textsc{S.~Janson}.
\newblock \emph{Gaussian {H}ilbert spaces}.
\newblock Cambridge university press, 1997.

\bibitem[KPZ86]{Kardar}
\textsc{M.~Kardar}, \textsc{G.~Parisi}, and \textsc{Y.~Zhang}.
\newblock Dynamic scaling of growing interfaces.
\newblock \emph{Physical Review Letters} \textbf{56}, no.~9, (1986), 889--892.

\bibitem[LZ20]{LZ}
\textsc{D.~Lygkonis} and \textsc{N.~Zygouras}.
\newblock {E}dwards-{W}ilkinson fluctuations for the directed polymer in the
  full ${L}^2$-regime for dimensions $d \geq 3$.
\newblock \emph{arXiv e-prints} (2020).
\newblock \burlalt{arXiv:2005.12706}{http://arxiv.org/abs/2005.12706}.

\bibitem[MU18]{Magnen}
\textsc{J.~Magnen} and \textsc{J.~Unterberger}.
\newblock The scaling limit of the {KPZ} equation in space dimension 3 and
  higher.
\newblock \emph{J. Stat. Phys.} \textbf{171}, no.~4, (2018), 543--598.

\bibitem[MW17]{MW}
\textsc{J.-C. Mourrat} and \textsc{H.~Weber}.
\newblock Convergence of the two-dimensional dynamic {I}sing-{K}ac model to
  {$\Phi^4_2$}.
\newblock \emph{Comm. Pure Appl. Math.} \textbf{70}, no.~4, (2017), 717--812.

\bibitem[Nua06]{Nualart2006}
\textsc{D.~Nualart}.
\newblock \emph{The Malliavin Calculus and Related Topics}.
\newblock Springer, 2006.

\bibitem[QS20]{QSkpz}
\textsc{J.~Quastel} and \textsc{S.~Sarkar}.
\newblock Convergence of exclusion processes and {KPZ} equation to the {KPZ}
  fixed point, 2020.

\bibitem[Spo12]{spohn2012large}
\textsc{H.~Spohn}.
\newblock \emph{Large scale dynamics of interacting particles}.
\newblock Springer Science \& Business Media, 2012.

\bibitem[Vir20]{Vir}
\textsc{B.~Virag}.
\newblock The heat and the landscape {I} (2020).
\newblock \burlalt{arXiv:2008.07241}{http://arxiv.org/abs/2008.07241}.

\bibitem[Wol91]{W91}
\textsc{E.~Wolf, Dietrich}.
\newblock Kinetic roughening of vicinal surfaces.
\newblock \emph{Phys. Rev. Lett.} \textbf{67}, (1991), 1783--1786.

\bibitem[Yau04]{Yau}
\textsc{H.-T. Yau}.
\newblock {$(\log t)^{2/3}$} law of the two dimensional asymmetric simple
  exclusion process.
\newblock \emph{Ann. of Math. (2)} \textbf{159}, no.~1, (2004), 377--405.

\end{thebibliography}
\bibliographystyle{Martin}

\end{document}